\newtheorem{prop}{Proposition}[section]
\newtheorem{thm}[prop]{Theorem}
\newtheorem*{thm*}{Theorem}
\newtheorem*{addendum*}{Addendum}
\newtheorem{cor}[prop]{Corollary}
\newtheorem{lem}[prop]{Lemma}
\newtheorem{thmintro}{Theorem}
\newtheorem{corintro}[thmintro]{Corollary}
\newtheorem{propintro}[thmintro]{Proposition}
\newtheorem*{convention*}{Convention}
\theoremstyle{definition}
\newtheorem*{defn*}{Definition}
\newtheorem{defn}[prop]{Definition}
\newtheorem{remark}[prop]{Remark}
\newtheorem{remarks}[prop]{Remarks}
\newtheorem*{scholium*}{Scholium}
\theoremstyle{remark}
\newtheorem{example}[prop]{Example}
\newtheorem*{example*}{Example}
\numberwithin{equation}{section}
\newcommand{\vareps}{\varepsilon}
\newcommand{\fhi}{\varphi}
\newcommand{\ro}{\varrho}
\newcommand{\teta}{\vartheta}
\newcommand{\EE}{\mathbf{E}}
\newcommand{\FF}{\mathbf{F}}
\newcommand{\HH}{\mathbf{H}}
\newcommand{\NN}{\mathbf{N}}
\newcommand{\QQ}{\mathbf{Q}}
\newcommand{\RR}{\mathbf{R}}
\newcommand{\ZZ}{\mathbf{Z}}
\newcommand{\GL}{\mathrm{GL}}
\newcommand{\SL}{\mathrm{SL}}
\newcommand{\la}{\langle}
\newcommand{\ra}{\rangle}
\newcommand{\inv}{^{-1}}
\newcommand{\centra}{\mathscr{Z}}
\newcommand{\se}{\subseteq}
\newcommand{\lra}{\longrightarrow}
\newcommand{\Id}{\mathrm{Id}}
\def\bs#1.{
              \def\temp{#1}
              \ifx\temp\empty
                   \mathcal{B}
              \else
                   \mathcal{B}(#1)
              \fi
}
\newcommand{\cat}{{\upshape CAT(0)}\xspace}
\newcommand{\tangle}[2]% angle de Tits
{\angle_\mathrm{T}(#1,#2)}
\newcommand{\aangle}[3]% angle d'Alexandrov
{\angle_{#1}(#2,#3)}
\newcommand{\cangle}[3]% angle de comparaison
{\overline{\angle}_{#1}(#2,#3)}
\DeclareMathOperator{\Stab}{Stab}  
\DeclareMathOperator{\Ad}{Ad}\DeclareMathOperator{\Ker}{Ker}
\DeclareMathOperator{\Isom}{Is}
\DeclareMathOperator{\LF}{Rad_{\mathscr{L\!E\!}}}
\newcommand{\bd}{\partial} % mathoperator ajoute beaucoup d'espace....
\def\Aut{\mathop{\mathrm{Aut}}\nolimits}
\def\Inn{\mathop{\mathrm{Inn}}\nolimits}
\def\Op{\mathop{\mathrm{Opp}}\nolimits}
\def\Ant{\mathop{\mathrm{Ant}}\nolimits}
\begin{document}
\title[Fixed points and amenablility]{Fixed points and amenability\\ in non-positive curvature}
\author[P.-E. Caprace]{Pierre-Emmanuel Caprace*}
\address{UCL -- Math, Chemin du Cyclotron 2, 1348 Louvain-la-Neuve, Belgium}
\email{pe.caprace@uclouvain.be}
\thanks{* F.R.S.-FNRS research associate. Supported in part by FNRS grant F.4520.11 and by the ERC}
\author[N. Monod]{Nicolas Monod$^\ddagger$}
\address{EPFL, 1015 Lausanne, Switzerland}
\email{nicolas.monod@epfl.ch}
\thanks{$^\ddagger$Supported in part by the Swiss National Science Foundation and the ERC}
\date{March 2012}%  ``\today'' est dangereux si on met le papier sur Arxiv, voir http://arxiv.org/help/faq/today
%\subjclass{20F65;  20F50, 20F67, 43A07} % AMS classification numbers
%\keywords{Non-positive curvature, \cat space, locally compact group, amenable group}
%
\begin{abstract}
Consider a proper cocompact \cat space $X$. We give a complete algebraic characterisation of amenable groups of isometries of $X$.
% It turns out that they admit a decomposition series into connected soluble groups, increasing unions of compact groups, and discrete soluble groups.
For amenable \emph{discrete} subgroups, an even narrower description is derived, implying $\QQ$-linearity in the torsion-free case.

We establish \emph{Levi decompositions} for stabilisers of points at infinity of $X$, generalising the case of linear algebraic groups to $\Isom(X)$. A geometric counterpart of this sheds light on the refined bordification of $X$ (\`a la Karpelevich) and leads to a converse to the Adams--Ballmann theorem. It is further deduced that \emph{unimodular} cocompact groups cannot fix any point at infinity except in the Euclidean factor; this fact is needed for the study of \cat lattices.

Various fixed point results are derived as illustrations.
\end{abstract}
\maketitle

\setcounter{tocdepth}{1}    %%%%  = seulement les sections 
\tableofcontents

%%%%%%%%%%%%%%%%%%%%%%%%%%%%%%%%%%%%%%%%%%%%%%%%%%%%%%%%%%%%%%%%%%%%%%%%%%%%%%
\section{Introduction}
%%%%%%%%%%%%%%%%%%%%%%%%%%%%%%%%%%%%%%%%%%%%%%%%%%%%%%%%%%%%%%%%%%%%%%%%%%%%%%

%%%%%%%%%%%%%%%%%%%%%%%%%%%%%%%%%%%%%%%%%%%%%%%%%%%%%%%%%%%%%%%%%%%%%%%%%%%
\subsection{Amenable isometry groups}
A celebrated theorem by Tits asserts that an arbitrary group $G \se \GL(V)$ of linear transformations of a finite-dimensional vector space $V$ over any field is subjected to the following alternative:   \emph{either $G$ contains a non-abelian free subgroup, or $G$ is soluble-by-\{locally finite\}} (see Theorems~1 and~2 in~\cite{Tits72}). In particular a subgroup $G \se \GL(V)$ is amenable if and only if it is soluble-by-\{locally finite\}. 
The importance of that result stimulated  since then an active search for larger classes of groups satisfying a similar alternative. It is in particular a notorious open problem to obtain a version of the Tits alternative for groups $G \se \Isom(X)$, where $X$ is a cocompact proper \cat space, i.e. a non-positively curved proper metric space with a cocompact isometry group.

While considering extensions of the Tits alternative to wider families of groups, it is natural to split the problem into two sub-questions, namely: 

\begin{enumerate}[(1)]
\item Does every non-amenable subgroup  contain a free group? 

\item  What is the algebraic structure of amenable subgroups? 
\end{enumerate}

Our first goal in this paper is to provide a complete answer to Question (2) for isometry groups of a cocompact proper \cat space. To this end, we recall that the \textbf{locally elliptic radical} $\LF$ of a locally compact group is the largest normal subgroup which can be written as increasing union of compact groups (see~\cite{Platonov}). In case of discrete groups, \emph{locally elliptic} is thus a synonym of \emph{locally finite}. 

\medskip
The following theorem shows that a subgroup is amenable if and only if it has a specific canonical decomposition into pieces that are either connected soluble or discrete soluble or locally elliptic, in analogy with  Tits' description of amenable linear groups.

\begin{thmintro}\label{thm:StructureAmenable}
Let $X$ be a proper cocompact \cat space.

A closed subgroup $H \se \Isom(X)$ is amenable if and only if the following three conditions hold:
\begin{enumerate}[(1)]
\item $H^\circ$ is soluble-by-compact,

\item $H^\circ \LF(H)$ is open in $H$, 

\item% the discrete quotient 
$H/ (H^\circ \LF(H))$ is virtually soluble.
\end{enumerate}
\end{thmintro}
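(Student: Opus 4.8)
The plan is to treat the two implications separately, the forward (sufficiency) direction being a formal consequence of the permanence properties of amenability, while the reverse (necessity) direction carries all the geometric content.

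\emph{Sufficiency.} Assuming (1)--(3), the group $H^\circ$ is amenable, being soluble-by-compact and hence an extension of amenable groups. The radical $\LF(H)$ is a directed union of compact subgroups, so it too is amenable; as it is normal, $H^\circ\LF(H)$ is a closed normal subgroup, amenable because it is an extension of $\LF(H)$ by $H^\circ\LF(H)/\LF(H)\cong H^\circ/(H^\circ\cap\LF(H))$, itself a quotient of $H^\circ$. By (2) this amenable subgroup is open, so $H/(H^\circ\LF(H))$ is discrete, and by (3) virtually soluble, hence amenable. Stability of amenability under extensions gives that $H$ is amenable.

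\emph{Necessity, condition (1).} Suppose now $H$ is amenable. Then $H^\circ$ is a connected amenable locally compact group, hence by the solution of Hilbert's fifth problem an inverse limit of connected Lie groups; and a connected amenable Lie group is soluble-by-compact, since its semisimple Levi quotient must be compact. Passing this structure through the projective limit --- the soluble radical surviving while any compact semisimple part is absorbed into the compact quotient --- yields a closed soluble normal subgroup of $H^\circ$ with compact quotient, which is (1).

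\emph{Necessity, conditions (2) and (3).} Here the geometry of $X$ enters through the Adams--Ballmann theorem: the amenable group $H$ either stabilises a flat $F\cong\EE^n$ in $X$ or fixes a point $\xi\in\partial_\infty X$. I would argue by induction on $\dim X$. In the flat case, restriction to $F$ gives $H\to\Isom(\EE^n)=O(n)\ltimes\RR^n$; its kernel fixes $F$ pointwise, hence lies in a point stabiliser, which is compact because $\Isom(X)$ acts properly on the proper space $X$, so the kernel is a compact normal subgroup contained in $\LF(H)$. The image has soluble-by-compact closure, a Lie group with \emph{finitely many} components, whence one reads off that $H^\circ\LF(H)$ is open with virtually soluble (indeed finite) quotient. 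In the boundary case I would invoke a Levi decomposition of the stabiliser $\Isom(X)_\xi$: writing it as an extension of a Levi factor $L_\xi$, acting on a lower-dimensional \cat space, by its horospherical (unipotent) radical $R_\xi$, one checks that $H\cap R_\xi$ feeds into $H^\circ\LF(H)$ --- its totally disconnected part being locally elliptic and its connected part soluble --- while the image of $H$ in $L_\xi$ is again amenable and handled by induction, the Busemann cocycle $\beta\colon H\to\RR$ supplying the remaining abelian direction.

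\emph{Main obstacle.} The crux is the totally disconnected behaviour: the assertion that $H^\circ\LF(H)$ is \emph{open}, equivalently that $H/(H^\circ\LF(H))$ is discrete. General amenable totally disconnected groups are far from locally elliptic, so this discreteness is genuinely geometric and must be extracted from the cocompactness of $X$ together with the rigidity of the horospherical and flat structure, controlling the interaction between $\LF(H)$ and the radicals $R_\xi$. Making the induction close up --- verifying that the pieces produced at each step assemble into exactly the three prescribed factors and that openness survives the pullbacks through the Levi quotients --- is the technical heart of the argument.
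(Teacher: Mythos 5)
Your sufficiency argument and your proof of condition (1) coincide with the paper's (the latter is Proposition~\ref{prop:iwasa:C}). The gap lies in conditions (2) and (3), and it is twofold. First, your induction ``on $\dim X$'' is not well-founded: a proper cocompact \cat space can be infinite-dimensional (the paper cites~\cite{Monod-Py} for such examples), and the Levi decomposition you invoke in the boundary case (Theorem~\ref{thm:Levi}) is only established when the stabiliser $G_\xi$ acts \emph{cocompactly} on $X$ --- a point fixed by your amenable subgroup $H$ has no reason to be such a cocompact point, so that tool is unavailable; in fact the paper never uses the Levi decomposition in its proof of Theorem~\ref{thm:StructureAmenable}. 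What replaces your induction is the \emph{iterated} Adams--Ballmann theorem (Proposition~\ref{prop:AB}): since cocompact spaces have finite depth (Proposition~\ref{prop:TransverseSpace}(\ref{it:FiniteDepth})), one obtains in a single stroke an $H$-invariant \emph{refined} flat $F\se X_{\xi_1,\dots,\xi_k}$, with no flat/boundary dichotomy to manage.

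Second, and more seriously, the step you yourself flag as the ``main obstacle'' --- that the part of $H$ acting trivially on the flat or horospherical data contributes only locally elliptic and connected-soluble pieces --- is precisely the theorem's content, and your proposal contains no mechanism for it. Note that the kernel $A$ of $H\to\Isom(F)$ is \emph{not} compact once $k\geq 1$ (your compactness argument only works for a flat of depth zero inside $X$), and, as you say, amenable totally disconnected groups are in general far from locally elliptic. The paper's mechanism is the notion of \emph{compactible} subgroups: the iterated lifting $f\colon\RR^k\times X_{\xi_1,\dots,\xi_k}\to X$ of Proposition~\ref{prop:ro_it} comes with an equivariating homomorphism $\ro$ that is a pointwise limit of \emph{inner} automorphisms of $\Isom(X)$ (Corollary~\ref{cor:ro:extract}) and that pushes $A$ into a compact point stabiliser; hence $A$ is compactible (Proposition~\ref{prop:RefinedCompacting}). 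Compactibility forces the component group of $\overline{A\cdot G^\circ}$ to be locally elliptic (Proposition~\ref{prop:CompactibleYamabis}), and the structure theorem for such groups (Theorem~\ref{thm:Yamabis}, resting on Yamabe, U\v{s}akov-type central extensions and the Tits alternative) then yields that $A^\circ\LF(A)$ is open in $A$ with virtually soluble discrete quotient; conditions (2) and (3) follow by combining this with the analysis of the image of $H$ in $\Isom(F)$. For that image one must work with the quotient topology via Proposition~\ref{prop:Lie} and Lemma~\ref{lem:quot-Lie}, because it need not be closed (Remark~\ref{rem:disc}); for the same reason your claim that its closure is ``a Lie group with finitely many components'' (with \emph{finite} discrete quotient) is incorrect --- take $\ZZ^n$ acting by translations on a flat --- the correct conclusion being a virtually soluble component group, not a finite one.
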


Thus a closed subgroup $H \se \Isom(X)$ is amenable if and only if it is \{connected soluble\}-by-\{locally elliptic\}-by-\{discrete virtually soluble\}.

\medskip
In the same way as the Tits alternative can be used to find obstructions to linearity, Theorem~\ref{thm:StructureAmenable} implies that many groups cannot appear as closed subgroups of a cocompact group of isometries of a proper \cat spaces. Since amenability passes to the closure, we still get restrictions on arbitrary amenable subgroups. As an extreme example, we recall that there is an active search for (infinite) finitely generated simple amenable groups; non-positively curved spaces will not be the natural habitat where to hunt for them:

\begin{corintro}\label{cor:SimpleGps}
Let $\Gamma$ be an infinite finitely generated amenable group. Assume that the only virtually abelian quotient of $\Gamma$ is the trivial one (e.g. $\Gamma$ is simple).  

Then there is no non-trivial isometric $\Gamma$-action whatsoever on any proper cocompact \cat space.
\end{corintro}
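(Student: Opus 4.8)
\emph{The plan is to argue by contradiction, feeding the hypothesis on $\Gamma$ into the structural trichotomy of Theorem~\ref{thm:StructureAmenable}.} Suppose $\fhi\colon \Gamma \to \Isom(X)$ is a non-trivial isometric action on a proper cocompact \cat space $X$, and set $G = \fhi(\Gamma)$ and $H = \overline{G} \se \Isom(X)$. Since $\Gamma$ is amenable so is $G$, and amenability passes to the closure, so $H$ is a closed amenable subgroup to which Theorem~\ref{thm:StructureAmenable} applies; moreover $G$ is finitely generated and dense in $H$. The engine of the proof is the rigidity of the hypothesis: finite groups are virtually abelian, and more generally every non-trivial finitely generated virtually soluble group admits a non-trivial virtually abelian quotient (pass to $S/[S,S]$ for a normal finite-index soluble subgroup $S$, which is normal in the whole group). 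Hence $\Gamma$, and therefore every quotient of $G$, has \emph{no} non-trivial finite quotient, no non-trivial abelian quotient, and no non-trivial finitely generated virtually soluble quotient.

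First I would dispose of the totally disconnected contributions. By condition~(3) the group $D = H/(H^\circ \LF(H))$ is virtually soluble, and by condition~(2) it is discrete, so the dense subgroup $G$ surjects onto it and $D$ is finitely generated; by the previous paragraph $D$ is trivial, whence $H = H^\circ \LF(H)$. Consequently $H/H^\circ$ is a continuous image of $\LF(H)$ and is thus an increasing union of compact subgroups; the finitely many generators of the image of $G$ lie in a single such compact subgroup, so that image is relatively compact, and being dense it forces $H/H^\circ$ to be compact, i.e. profinite. A non-trivial profinite group has a non-trivial finite quotient, onto which $G$ would surject -- impossible. Therefore $H/H^\circ$ is trivial and $H = H^\circ$ is connected.

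It remains to treat the connected amenable group $H = H^\circ$, which by condition~(1) is soluble-by-compact: let $S$ be the closed normal soluble subgroup provided by~(1), so that $K = H/S$ is compact and connected. \emph{Here lies the main obstacle}, because $K$ may a priori carry a compact \emph{semisimple} part such as $\mathrm{SU}(2)$, which is in no way excluded by the topological amenability of $H$ -- every compact group being amenable. This is precisely where one must use that $G$ is a \emph{discrete} finitely generated amenable group together with the Tits alternative quoted in the introduction: if $K$ were non-abelian it would surject onto a non-trivial compact simple Lie group $L$, and the dense image of $G$ in $L$ would be a finitely generated amenable \emph{linear} group, hence virtually soluble by Tits; but the closure of a virtually soluble subgroup of a Lie group is again virtually soluble, contradicting its density in the non-soluble group $L$. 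Thus $K$ is a torus, so $H$ is soluble; a non-trivial connected soluble locally compact group has a non-trivial abelian (Lie) quotient, which would yield a non-trivial abelian quotient of $G$ and hence of $\Gamma$ -- the final contradiction. Therefore $H$ is trivial, the action $\fhi$ is trivial, and the corollary follows.
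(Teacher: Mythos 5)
Your proof is correct, and although it opens exactly as the paper's does (apply Theorem~\ref{thm:StructureAmenable} to the closure $H$ of the image of $\Gamma$, and kill the discrete virtually soluble quotient $H/(H^\circ\LF(H))$ by density), it then takes a genuinely different route through the two remaining pieces, in the opposite order. The paper first kills the image of $\Gamma$ in $H/\LF(H)$ (a quotient of the soluble-by-compact group $H^\circ$) and then shows that $H=\LF(H)$ is compact and trivial; both steps rest on Lemma~\ref{lem:residually:finite} (images of finitely generated groups in compact groups are residually finite, after~\cite{HRV}), so the absence of finite quotients of $\Gamma$ does all the work, including for the compact \emph{connected} piece. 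You instead peel off $H/H^\circ$ first -- your profinite argument is sound and needs no residual-finiteness input, since $H/H^\circ$ is totally disconnected, so compactness genuinely produces finite quotients -- and then confront the compact connected quotient $K$ of $H^\circ$ head-on: you correctly identify that topological amenability of $H$ does not exclude a semisimple piece such as $\mathrm{SU}(2)$, and you rule it out via the Tits alternative~\cite{Tits72} together with the fact that the closure of a virtually soluble subgroup of a Lie group is virtually soluble. This is the real difference in key tools: the paper's Lemma~\ref{lem:residually:finite} treats the connected and disconnected compact pieces uniformly and yields a shorter proof, while your route avoids that lemma entirely at the cost of invoking Tits, which the paper quotes only for context. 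Two cosmetic slips, neither of which is a gap: the non-trivial virtually abelian quotient of a non-trivial finitely generated virtually soluble group $D$ is $D/[S,S]$ rather than $S/[S,S]$; and a compact connected abelian group need not be a torus -- but you only use that $H$ is then soluble, and a non-trivial soluble topological group always has the non-trivial abelian Hausdorff quotient $H/\overline{[H,H]}$, so your final contradiction stands.
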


%%%%%%%%%%%%%%%%%%%%%%%%%%%%%%%%%%%%%%%%%%%%%%%%%%%%%%%%%%%%%%%%%%%%%%%%%%%
\subsection{Discrete amenable subgroups}
Given  a \emph{discrete} group  $\Gamma $ acting properly and cocompactly on $X$, all the amenable subgroups of $\Gamma$ are virtually abelian and stabilise a flat in $X$: this was proved by Adams--Ballmann (Corollary~B in~\cite{AB98}) and generalises the Solvable Subgroup Theorem (Theorem~II.7.8 in~\cite{Bridson-Haefliger}).

We emphasize that Theorem~\ref{thm:StructureAmenable} \emph{does not suppose the existence of any discrete cocompact group of isometries}: only the (possibly indiscrete) \emph{full} isometry group $\Isom(X)$ is assumed cocompact. Although discrete amenable subgroups of $\Isom(X)$ need not be virtually abelian (for instance the Heisenberg group over $\ZZ$ is a discrete subgroup of $\SL_3(\RR)$), Theorem~\ref{thm:StructureAmenable} states that they must be \{locally finite\}-by-\{virtually soluble\}. We remark that the virtually soluble quotient need not be finitely generated in general (this applies \emph{a fortiori} to the discrete quotient of $H$ in Theorem~
 \ref{thm:StructureAmenable}). Indeed, a \cat lattice such as $\SL_n(\ZZ[1/p])$ contains the infinitely generated abelian group $\ZZ[1/p]$ as a subgroup.

Notice however that every finitely generated subgroup of $\ZZ[1/p]$ is cyclic. This reflects a  general property of $\QQ$-linear soluble groups, all of which have finite \textbf{Pr\"ufer rank}. Recall that this rank is the smallest integer $r$ such that every finitely generated subgroup can be generated by at most $r$ elements. Our next theorem establishes such a finiteness result in the generality of all proper cocompact \cat spaces.

\begin{thmintro}\label{thm:DiscreteAmen}
Let $X$ be a proper cocompact \cat space. Then there is a constant $r= r(X)$ such that the following holds. 

For every discrete amenable subgroup $\Gamma < \Isom(X)$, the quotient $\Gamma/\LF(\Gamma)$ is  virtually \{torsion-free soluble of Pr\"ufer rank~$\leq r$\}.
\end{thmintro}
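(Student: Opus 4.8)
The plan is to take Theorem~\ref{thm:StructureAmenable} as a black box and to refine its conclusion in the discrete case. Since $\Gamma$ is discrete it is closed in $G := \Isom(X)$, so the theorem applies with $\Gamma^\circ = 1$ and $\LF(\Gamma)$ equal to the locally finite radical; it yields that $Q := \Gamma/\LF(\Gamma)$ is virtually soluble, and by maximality of the radical one checks $\LF(Q)=1$. Passing to a finite-index subgroup (which only changes $\LF$ by intersection and is absorbed by the word ``virtually'' in the conclusion) I may assume $Q$ is soluble. Everything then reduces to two \emph{uniform} assertions about $Q$: that it has a finite-index torsion-free subgroup, and that its Pr\"ufer rank is bounded by a constant $r=r(X)$ depending only on the geometry of $X$. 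The natural geometric invariants governing $r$ are $\GeomDim(X)$, the maximal rank of a flat in $X$, and the dimension of the connected component $G^\circ$; all are finite and bounded by properness and cocompactness.

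First I would linearise $Q$ in controlled dimension. Using the canonical splitting $X \cong \RR^n \times Y$ with $Y$ free of Euclidean factors, together with the converse to the Adams--Ballmann theorem and the Levi decomposition of boundary stabilisers established earlier in the paper, I would arrange that a finite-index subgroup of $\Gamma$ fixes a point $\xi$ at infinity whose stabiliser carries a Levi decomposition $G_\xi = L \ltimes R$, with $R$ the horospherical (nilpotent, contracting) part and $L$ acting on a strictly lower-dimensional \cat space. The image of $\Gamma$ in $R$ is then nilpotent of rank at most $\dim R$, bounded geometrically; the image in $L$ is treated by descending induction on $\GeomDim$; and the translation action on the maximal flat $\RR^n$ is recorded separately. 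Assembling these layers, I would produce a faithful representation of a finite-index subgroup of $Q$ into a soluble \emph{real algebraic} group of dimension $d = d(X)$, whereupon Zassenhaus' theorem bounds the derived length and each unipotent and diagonalisable layer has dimension $\le d$.

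The hard part will be passing from ``bounded dimension over $\RR$'' to ``bounded Pr\"ufer rank''. This is a genuine gap, not a formality: the diagonalisable contributions live a priori in groups like $\RR^\times$ or $\QQ^\times$, which even torsion-free have \emph{infinite} Pr\"ufer rank as abstract groups (e.g. $\bigoplus_p \ZZ \hookrightarrow \QQ^\times$), so linearity alone — including the companion $\QQ$-linearity statement — cannot yield the bound. The resolution must be that discreteness together with cocompactness rigidifies the eigenvalue data. On the Euclidean factor this is Bieberbach's theorem, forcing the translation part to be finitely generated free abelian of rank $\le n$; transverse to it, cocompactness of $G$ should confine the multiplicative eigenvalue data to a subgroup of $\RR^\times$ generated by the finitely many ``translation lengths'' attached to the finitely many commensurability classes of directions, bounding that rank geometrically. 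Making this rigidification \emph{uniform} over all discrete amenable $\Gamma$ — so that a single $r(X)$ covers every one of them, including infinitely generated examples such as $\ZZ[1/p] < \SL_n(\ZZ[1/p])$ — is the crux of the argument. Virtual torsion-freeness would then follow because the torsion of the resulting bounded-rank soluble group is either pushed into $\LF(\Gamma)$ or bounded, and a characteristic finite-index torsion-free subgroup can be extracted from the controlled linear structure.
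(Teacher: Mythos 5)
Your reduction to the two uniform assertions about $Q=\Gamma/\LF(\Gamma)$ is sound, and you have correctly located the crux: bounded dimension of a linear target does not bound the Pr\"ufer rank, and uniformity over all (possibly infinitely generated) $\Gamma$ is the real difficulty. But your proposal does not close this gap, and the mechanism you gesture at (geometric rigidification of ``eigenvalue data'') is not the one that works. There are two concrete failures. First, your linearisation step is already broken: the homomorphism from $\Gamma$ to the isometry group of a refined flat (or to a Levi factor) does \emph{not} preserve discreteness --- the paper's Remark~\ref{rem:disc} is precisely a warning about this --- so the image of a finite-index subgroup of $\Gamma$ in your soluble real algebraic group of dimension $d(X)$ is merely an abstract soluble linear group, to which no Zassenhaus/Auslander-type theorem applies. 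The paper's way around this is Corollary~\ref{cor:ro:extract}: the lifting homomorphism $\ro$ attached to a refined flat is a \emph{pointwise limit of inner automorphisms} of $G$. Consequently, after passing to $\Gamma''=\ro\inv(O)$ for a fixed open subgroup $O$ with $O/\LF(O)$ an almost connected Lie group (Theorem~\ref{thm:Yamabis}), every finitely generated $\Lambda<\Gamma''$ is literally \emph{conjugate in $G$} to a subgroup of $O$. Conjugation preserves discreteness, so $\Lambda$ becomes a discrete amenable subgroup of a fixed compact-by-Lie group, where Proposition~\ref{prop:Auslander} applies: discrete amenable subgroups of a connected Lie group are virtually \emph{polycyclic} of uniformly bounded rank (via Raghunathan, Auslander, Zassenhaus). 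Polycyclicity of discrete soluble subgroups, not eigenvalue rigidity, is what kills your $\QQ^\times$ worry.

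Second, you have no mechanism for the local-to-global passage. Pr\"ufer rank is by definition detected on finitely generated subgroups, but virtual torsion-freeness is not, and $\Gamma$ may be infinitely generated (e.g.\ $\ZZ[1/p]$). The paper needs a genuinely nontrivial abstract lemma here (Lemma~\ref{lem:AbstractRank}): if every finitely generated subgroup $\Lambda$ of $\Gamma$ has $\LF(\Lambda)$ finite and $\Lambda/\LF(\Lambda)$ \{torsion-free polycyclic of rank~$\le r$\}-by-\{finite of bounded order\}, then $\Gamma/\LF(\Gamma)$ is virtually \{torsion-free soluble of rank~$\le r$\}; the torsion-freeness part of that lemma invokes Zelmanov's solution to the restricted Burnside problem. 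Your one-sentence treatment (``torsion is either pushed into $\LF(\Gamma)$ or bounded'') does not engage with this. Finally, note that for the action on the flat itself the paper cannot use the classical Bieberbach theorem directly, but proves a variant for \emph{closed, possibly non-discrete} subgroups $H \le \RR^n \rtimes O(n)$ (Proposition~\ref{prop:Bieberbach}), again because $\overline{\ro(\Gamma)}$ need not be discrete.
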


Remark that for each prime $p$, the lamplighter group  $(\ZZ/p) \wr \ZZ$ can be realised as a discrete group of isometries of a cocompact \cat space, since it embeds in the \cat lattice $\SL_n(\FF_p[t, t\inv])$. Theorem~\ref{thm:DiscreteAmen} implies that, on the other hand, the wreath product $\ZZ \wr \ZZ$ cannot. 

\medskip
Since torsion-free soluble groups of finite Pr\"ufer rank are known to be linear over $\QQ$ by a theorem of Wehrfritz~\cite[pp.25--26]{Wehrfritz}, Theorem~\ref{thm:DiscreteAmen} has the following consequence. 

\begin{corintro}
Let $X$ be a proper cocompact \cat space.

Then any torsion-free discrete amenable subgroup $\Gamma < \Isom(X)$ is $\QQ$-linear. \qed
\end{corintro}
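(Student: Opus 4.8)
The plan is to read off the statement from Theorem~\ref{thm:DiscreteAmen} together with the quoted theorem of Wehrfritz, the only genuinely new input being the effect of the torsion-freeness hypothesis on the locally elliptic radical. All of the substantial work is already packaged into those two results, so what remains is a short deduction.

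First I would check that $\LF(\Gamma)$ is trivial. By definition $\LF(\Gamma)$ is a directed union of compact subgroups; since $\Gamma$ is discrete these are finite, so $\LF(\Gamma)$ is locally finite. But a locally finite subgroup of the torsion-free group $\Gamma$ must be trivial: any element generates a finitely generated, hence finite, hence trivial subgroup. Thus $\Gamma = \Gamma/\LF(\Gamma)$, and Theorem~\ref{thm:DiscreteAmen} says directly that $\Gamma$ contains a finite-index subgroup $\Gamma_0$ that is torsion-free soluble of Pr\"ufer rank at most $r = r(X)$. By Wehrfritz's theorem~\cite{Wehrfritz}, $\Gamma_0$ embeds in $\GL_n(\QQ)$ for some $n$.

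It then remains to promote $\QQ$-linearity of the finite-index subgroup $\Gamma_0$ to $\Gamma$ itself. This is the standard fact that linearity over a field is inherited by finite extensions: inducing a faithful representation $\Gamma_0 \hookrightarrow \GL_n(\QQ)$ up to $\Gamma$ produces a $\QQ$-representation of dimension $n\cdot[\Gamma:\Gamma_0]$, and one checks it is faithful because an element acting trivially must first lie in the core $\bigcap_{g\in\Gamma} g\Gamma_0 g\inv \se \Gamma_0$ and then act trivially via the original faithful representation, forcing it to be the identity. Hence $\Gamma$ is $\QQ$-linear. The only point requiring any care is this last faithfulness verification; everything else is a direct appeal to the two cited results, so I do not expect a serious obstacle here — the real content of the corollary lives in Theorem~\ref{thm:DiscreteAmen}.
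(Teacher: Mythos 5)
Your proposal is correct and takes essentially the same approach as the paper: the paper derives the corollary directly from Theorem~\ref{thm:DiscreteAmen} together with Wehrfritz's theorem, stating it without further proof. The two details you spell out --- triviality of $\LF(\Gamma)$ for a torsion-free discrete group, and the promotion of $\QQ$-linearity from the finite-index subgroup to all of $\Gamma$ via induced representations --- are exactly the steps the paper leaves implicit.
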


By a theorem of Sh.~Rosset~\cite{Rosset}, the kernel of any homomorphism of a finitely generated group of subexponential growth to a virtually soluble group is itself finitely generated. As pointed out to us by Ami Eisenmann, the latter fact combined with  Theorem~\ref{thm:DiscreteAmen} yields the following, since a virtually soluble group of subexponential growth is virtually nilpotent~\cite{Milnor}. 

\begin{corintro}\label{cor:Ami}
Let $X$ be a proper cocompact \cat space.

Then every finitely generated discrete subgroup $\Gamma < \Isom(X)$ of subexponential growth is virtually nilpotent.

Thus $\Isom(X)$ does not admit discrete subgroups of intermediate growth.
\qed
\end{corintro}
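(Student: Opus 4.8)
The plan is to recognise $\Gamma$ as amenable, feed it into Theorem~\ref{thm:DiscreteAmen}, and then bootstrap from the structure of the quotient $\Gamma/\LF(\Gamma)$ to virtual solubility of $\Gamma$ itself, after which the Milnor--Wolf growth dichotomy closes the argument. First I would record that a finitely generated group of subexponential growth is amenable (one builds F\o lner sets directly from the growth function), so that $\Gamma$ satisfies the hypotheses of Theorem~\ref{thm:DiscreteAmen}. That theorem then supplies a constant $r=r(X)$ and the conclusion that $\Gamma/\LF(\Gamma)$ is virtually \{torsion-free soluble of Pr\"ufer rank $\leq r$\}; in particular it is virtually soluble. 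Since $\Gamma$ is discrete, the locally elliptic radical $\LF(\Gamma)$ is precisely its locally finite radical.

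The heart of the matter is to promote this ``soluble modulo $\LF(\Gamma)$'' statement to honest virtual solubility of $\Gamma$, and this is exactly what Rosset's theorem \cite{Rosset} is for. Applying it to the quotient homomorphism $\Gamma \lra \Gamma/\LF(\Gamma)$ --- whose source is finitely generated of subexponential growth and whose target is virtually soluble --- yields that the kernel $\LF(\Gamma)$ is finitely generated. A finitely generated locally finite group is finite, so $\LF(\Gamma)$ is finite, and therefore $\Gamma$ is \{finite\}-by-\{virtually soluble\}, hence virtually soluble.

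To finish, a finitely generated virtually soluble group of subexponential growth is virtually nilpotent by Milnor's theorem \cite{Milnor}; applying this to $\Gamma$ gives the first assertion. The statement about intermediate growth is then an immediate consequence: a group of intermediate growth has by definition subexponential but non-polynomial growth, whereas a virtually nilpotent group has polynomial growth, so no discrete subgroup of $\Isom(X)$ can realise intermediate growth without contradicting what we have just proved. I expect the only point requiring genuine care to be the verification that the refined conclusion of Theorem~\ref{thm:DiscreteAmen} does deliver a target to which Rosset's theorem applies (namely a \emph{virtually soluble} quotient) and that the resulting kernel, being finitely generated and locally finite, is forced to be finite; everything else is a concatenation of standard results.
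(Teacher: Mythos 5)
Your proposal is correct and follows exactly the paper's own argument, which is sketched in the paragraph preceding Corollary~\ref{cor:Ami}: subexponential growth gives amenability, Theorem~\ref{thm:DiscreteAmen} gives a virtually soluble quotient $\Gamma/\LF(\Gamma)$, Rosset's theorem forces the locally finite kernel to be finitely generated and hence finite, and Milnor's theorem upgrades virtual solubility to virtual nilpotence. The only detail you fill in beyond the paper's sketch (that a finite-by-\{virtually soluble\} group is virtually soluble, and that virtually nilpotent groups have polynomial growth) is standard and correctly handled.
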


Of course certain groups of intermediate growth can be embedded non-discretely into $\Isom(X)$, for instance if they are residually finite like Grigorchuk's group.

%%%%%%%%%%%%%%%%%%%%%%%%%%%%%%%%%%%%%%%%%%%%%%%%%%%%%%%%%%%%%%%%%%%%%%%%%%%
\subsection{Refining spaces}
A fundamental tool for the study of amenable subgroups of isometries of a proper \cat space $X$ is the Adams--Ballmann theorem~\cite{AB98} which states that such a group preserves a flat or fixes a point at infinity. The first case of this alternative is of course highly satisfactory since the isometries of Euclidean space form a very elementary Lie group, but the case of a fixed point at infinity seems at first sight to be of little help for the elucidation of the group.

\smallskip
The way forward here is to use the \textbf{transverse space} $X_\xi$ of a point $\xi\in\bd X$ together with the canonical action of the stabiliser of $\xi$ on $X_\xi$, see Section~\ref{sec:Iter} below (in the classical case of symmetric spaces, this construction goes back at least to the Karpelevich compactification; in general, see also~\cite{Leeb}, \cite{CapraceTD}). This opens the door to iterations, considering \textbf{refining sequences} $(\xi_1, \ldots, \xi_k)$, which are defined by $\xi_1\in\bd X$ and then $\xi_{i+1}\in \bd X_{\xi_1, \ldots, \xi_i}$. A \textbf{refined} point, flat, etc.\ refers to the corresponding object in some $X_{\xi_1, \ldots, \xi_k}$. With this terminology at hand, we can state a converse to the Adams--Ballmann theorem, whose proof relies on iterative constructions of liftings for transverse spaces with (generally non-continuous) associated homomorphisms, and on an appropriate version of the Kazhdan--Margulis theorem.

\begin{thmintro}\label{thm:converseAB}
Let $X$ be a proper cocompact \cat space. 

Then the stabiliser of every refined flat of $X$ is  amenable (and closed). 
\end{thmintro}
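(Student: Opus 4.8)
The plan is to argue by induction on the length $k$ of the refining sequence $(\xi_1, \ldots, \xi_k)$ that defines the refined flat $F \se X_{\xi_1, \ldots, \xi_k}$, peeling off one transverse space at each step. Thus I would prove the statement simultaneously for all proper cocompact \cat spaces, so that the inductive hypothesis may be applied to the transverse space $X_{\xi_1}$ with the strictly shorter sequence $(\xi_2, \ldots, \xi_k)$. I treat amenability as the main content and propagate closedness separately.

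For the base case $k=0$ the object is an ordinary flat $F \se X$, isometric to some $\EE^n$, with stabiliser $S = \Stab_{\Isom(X)}(F)$. Restriction to $F$ yields a homomorphism $\rho \colon S \to \Isom(F) \cong \Isom(\EE^n)$. Its kernel is the pointwise stabiliser of $F$, which is contained in the stabiliser of a single point $p \in F$; since $X$ is proper, $\Stab_{\Isom(X)}(p)$ is compact, so $\ker \rho$ is compact. The image lies in $\Isom(\EE^n)$, which is soluble-by-compact and hence amenable. An extension of an amenable group by a compact group is amenable, so $S$ is amenable; and it is closed, being the stabiliser of a closed subset.

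For the inductive step let $F$ be a refined flat with sequence $(\xi_1, \ldots, \xi_k)$, $k \geq 1$, and stabiliser $S$. By definition $S$ fixes $\xi_1$, so $S \se G_{\xi_1} := \Stab_{\Isom(X)}(\xi_1)$, and the canonical action on the transverse space gives a homomorphism $\rho_1 \colon G_{\xi_1} \to \Isom(X_{\xi_1})$ as in Section~\ref{sec:Iter}. The crucial input is the Levi decomposition for point stabilisers: I would use that $\ker \rho_1$ is a closed \emph{amenable} subgroup (the ``unipotent'' part of $G_{\xi_1}$). Now $F$, viewed inside $X_{\xi_1}$, is a refined flat for the shorter sequence $(\xi_2, \ldots, \xi_k)$, and $\rho_1(S)$ is contained in its stabiliser in $\Isom(X_{\xi_1})$. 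Provided the transverse space $X_{\xi_1}$ is again a proper cocompact \cat space — which is exactly what the machinery of Section~\ref{sec:Iter} must guarantee — the inductive hypothesis shows that stabiliser to be amenable, whence $\rho_1(S)$ is amenable. Since $S \cap \ker\rho_1$ is a subgroup of the amenable group $\ker\rho_1$ and $S/(S\cap\ker\rho_1) \cong \rho_1(S)$ is amenable, the group $S$ is amenable.

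The main obstacle is twofold. First, everything hinges on the amenability of $\ker\rho_1$, i.e.\ on establishing the Levi decomposition identifying the kernel of the transverse homomorphism with a genuine amenable radical — this is the real geometric work. Second, and more delicate, the homomorphisms $\rho_i$ attached to transverse spaces are in general \emph{not continuous}, so one cannot take for granted the topological bookkeeping (closedness of $S$, of $\ker\rho_1$, and the fact that the extension is a topological extension). This is precisely where I expect the explicit liftings for transverse spaces and the Kazhdan--Margulis-type input to be needed, to keep both the amenability and the closedness assertions under control through the iteration; in particular I would deduce closedness of $S$ by propagating it from the closedness of $G_{\xi_1}$ and of flat-stabilisers, rather than from any continuity of $\rho_1$.
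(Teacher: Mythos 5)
Your induction collapses at the inductive step, at exactly the point the paper warns about. You deduce amenability of $S$ from the amenability of $S\cap\ker\rho_1$ together with the amenability of $\rho_1(S)$, the latter "since it is contained in the amenable stabiliser" of the shorter refined flat in $\Isom(X_{\xi_1})$. But for the extension argument, $\rho_1(S)\cong S/(S\cap\ker\rho_1)$ must carry the quotient topology, and as such it merely injects \emph{continuously} into $\Isom(X_{\xi_1})$; it need not be closed there. Amenability of locally compact groups does not pass to non-closed subgroups, nor to locally compact groups admitting a continuous injection into an amenable group: a rank-two free group sits densely, hence non-closed, inside the compact group $\mathrm{SO}(3)$, and it is not amenable. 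This is precisely the obstruction the paper flags immediately before its proof: the theorem ``is not a direct consequence of the amenability of the kernel of the refined horoaction, because the image of a (refined) flat stabiliser in the isometry group of that flat need not be closed, see Remark~\ref{rem:disc}''. Your base case $k=0$ survives only for a reason you did not state: for a genuine flat $F\se X$ the restriction $S\to\Isom(F)$ is a \emph{proper} map (an isometry of $X$ moving a point of $F$ a bounded distance lies in a compact subset of $\Isom(X)$), so its image is closed and amenability does transfer. No such properness holds for the transverse homomorphism $\rho_1$, and Remark~\ref{rem:disc} shows its image really can fail to be closed. A secondary gap: your induction hypothesis requires $X_{\xi_1}$ to be proper \emph{and cocompact}, but Proposition~\ref{prop:TransverseSpace}(v) yields cocompactness of $X_{\xi_1}$ only when $\xi_1$ is a cocompact point, which a general $\xi_1\in\bd X$ of a cocompact space need not be; so ``proper cocompact'' is not inherited by transverse spaces, contrary to what you assume Section~\ref{sec:Iter} guarantees. (Also, the discontinuous maps are the liftings $\ro$ of Proposition~\ref{prop:ro_it} going from the transverse stabiliser back into $G$, not the transverse actions themselves, which are continuous by Proposition~\ref{prop:TransverseSpace}(ii); closedness of $S$ is in fact the easy part of the statement.)

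The paper's actual proof is built to circumvent exactly the non-closed-image problem, and its structure is quite different from yours: it argues by contradiction, assuming the closed stabiliser $H$ is non-amenable. After reducing to a minimal space and killing the Busemann characters, it uses the amenability of the pointwise stabiliser of the refined flat (Corollary~\ref{cor:GeomAmenable}) and Proposition~\ref{prop:Lie} (Tits alternative) to extract a \emph{discrete} non-abelian free subgroup $\Gamma<H$ acting faithfully and with non-discrete image on the refined flat. It then lifts the refined flat to a genuine flat $F'\se X$ via the $\ro$-equivariant embedding of Proposition~\ref{prop:ro_it}, so that $\ro(\Gamma)$ stabilises $F'$ and $\overline{\ro(\Gamma)}$ has non-trivial identity component. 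Finally, since $\ro$ is a pointwise limit of inner automorphisms (Corollary~\ref{cor:ro:extract}), the conjugates of $\Gamma$ converge in the Chabauty topology to a closed subgroup whose identity component is abelian by the Kazhdan--Margulis theorem (Proposition~\ref{prop:MargulisLemma}); this forces $\overline{\ro(\Gamma)}^\circ$ to be an abelian normal subgroup met trivially by the free group $\ro(\Gamma)$, hence trivial -- a contradiction. So Kazhdan--Margulis is not used ``to propagate closedness'', as you anticipated, but to rule out non-discrete closures of free groups; replacing your inductive transfer of amenability by this contradiction scheme is the genuinely hard content of the theorem.
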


This is a converse because both theorems combine to give the following geometric characterisation of all amenable subgroups, thus complementing the algebraic characterisation of Theorem~\ref{thm:StructureAmenable}:

\begin{corintro}\label{cor:converseAB}
Let $X$ be a proper cocompact \cat space.

Then a  closed subgroup of $\Isom(X)$ is amenable if and only if it preserves a refined flat.
\end{corintro}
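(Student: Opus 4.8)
The statement is an equivalence, and the plan is to treat the two implications separately: one is read off Theorem~\ref{thm:converseAB} directly, and the other is obtained by iterating the Adams--Ballmann theorem along refining sequences.

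For the direction ``preserves a refined flat $\Rightarrow$ amenable'', I would argue straight from Theorem~\ref{thm:converseAB}. If a closed subgroup $H \se \Isom(X)$ preserves a refined flat $F$ sitting in some transverse space $X_{\xi_1, \ldots, \xi_k}$, then $H$ is contained in the stabiliser of $F$; that stabiliser is amenable (and closed) by Theorem~\ref{thm:converseAB}, and amenability is inherited by closed subgroups, so $H$ is amenable. This half needs nothing beyond Theorem~\ref{thm:converseAB}.

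For the converse ``amenable $\Rightarrow$ preserves a refined flat'', I would induct on the geometric dimension $\GeomDim(X)$, which is finite since $X$ is proper and cocompact. Let $H$ be closed and amenable. By the Adams--Ballmann theorem~\cite{AB98}, either $H$ preserves a flat of $X$ --- which is a refined flat for the empty refining sequence, and we are done --- or $H$ fixes some $\xi_1 \in \bd X$. In that case $H$ acts canonically by isometries on the transverse space $X_{\xi_1}$, which is again proper and cocompact and, crucially, of strictly smaller geometric dimension (Section~\ref{sec:Iter}). Let $\bar H$ be the closure of the image of $H$ in $\Isom(X_{\xi_1})$. Granting that $\bar H$ is again closed and amenable --- the one non-formal point, discussed below --- the induction hypothesis produces a refined flat of $X_{\xi_1}$ preserved by $\bar H$, located in some $X_{\xi_1, \xi_2, \ldots, \xi_k}$. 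Then $(\xi_1, \ldots, \xi_k)$ is a refining sequence for $X$, the flat in question is a refined flat of $X$, and it is preserved by $H$ because $H$ acts on $X_{\xi_1, \ldots, \xi_k}$ through $\bar H$. The strict decrease of $\GeomDim$ forces the iteration to terminate in the ``preserves a flat'' alternative, so the induction closes.

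The main obstacle is precisely the deferred point: the transfer of amenability across the passage to the transverse space. The homomorphism $\Stab(\xi_1) \to \Isom(X_{\xi_1})$ is in general not continuous, so topological amenability of $H$ does not survive to its image by soft functoriality alone, and the closure $\bar H$ must be controlled by hand. This is exactly the difficulty already met and resolved inside the proof of Theorem~\ref{thm:converseAB}, through the iterative liftings for transverse spaces and an appropriate version of the Kazhdan--Margulis theorem; I would invoke that machinery --- together with the properness, cocompactness, and dimension drop of transverse spaces recorded in Section~\ref{sec:Iter} --- rather than reprove it here. Once those inputs are granted, the corollary is a clean combination of Theorem~\ref{thm:converseAB} with the Adams--Ballmann theorem.
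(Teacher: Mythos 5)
Your first direction (preserves a refined flat $\Rightarrow$ amenable) is exactly the paper's argument and is fine: the stabiliser of the refined flat is closed and amenable by Theorem~\ref{thm:converseAB}, and amenability passes to closed subgroups. Your second direction also follows the paper's route in spirit (the paper likewise just iterates Adams--Ballmann; this is Proposition~\ref{prop:AB}), but two steps fail as written. First, your induction variable is based on a false premise: $\GeomDim(X)$ need \emph{not} be finite for a proper cocompact \cat space --- the paper itself points to the Monod--Py examples of infinite-dimensional proper cocompact CAT($-1$) spaces. The quantity that makes the iteration terminate is the \emph{depth} of $X$, finite by Proposition~\ref{prop:TransverseSpace}(\ref{it:FiniteDepth}), or equivalently the dimension of the Tits boundary, which drops at each step by Corollary~\ref{cor:dim_chute} and relates to the flat rank by Corollary~\ref{cor:flat_rank}. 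Second, your induction scheme does not close: $X_{\xi_1}$ need not be cocompact, because $\xi_1$ is merely a point fixed by $H$ and not a cocompact point; Proposition~\ref{prop:TransverseSpace}(\ref{it:cocompact}) yields cocompactness of $X_{\xi}$ only when some subgroup of $\Isom(X)_{\xi}$ acts cocompactly on $X$. So the corollary cannot be invoked inductively for $X_{\xi_1}$. (Properness of $X_{\xi_1}$ is also not a consequence of properness of $X$ alone, but of bounded geometry, Proposition~\ref{prop:TransverseSpace}(iii), which fortunately is inherited.) The paper avoids both problems by formulating the iteration for proper spaces of finite depth, a class that \emph{is} stable under passing to transverse spaces.

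Your ``main obstacle'' is also misdiagnosed. The canonical homomorphism $\Isom(X)_{\xi_1} \to \Isom(X_{\xi_1})$ \emph{is} continuous --- this is Proposition~\ref{prop:TransverseSpace}(\ref{pt:TransverseSpace:cont}); what can fail is closedness of its image (Remark~\ref{rem:disc}). Neither matters for this direction: the Adams--Ballmann theorem applies to any amenable topological group acting continuously by isometries, so one simply lets the \emph{same} group $H$ act on $X_{\xi_1}$, with no need to form a closure $\overline{H}$ at all. (Even if one insisted on $\overline{H}$, its amenability is soft: a locally compact group containing a dense continuous image of an amenable group is amenable, by the fixed-point characterisation.) The Kazhdan--Margulis and lifting machinery you propose to import is what the paper needs for the \emph{other} implication, inside the proof of Theorem~\ref{thm:converseAB} itself --- precisely because there the non-closedness of the image of a flat stabiliser is a genuine obstruction; invoking that machinery here is unnecessary and conflates the two halves of the equivalence.
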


The `only if' direction follows from a simple iteration of the Adams--Ballmann theorem~\cite{AB98}. This iteration process terminates after finitely many steps because the maximal index $k$ of a refining sequence $(\xi_1, \ldots, \xi_k)$ in $X$ is bounded when $X$ is cocompact. This bound, which we call the \textbf{depth} of $X$, turns out to coincide with the flat rank (see Corollary~\ref{cor:flat_rank} below).

\medskip
There are groups for which every isometric action on a proper \cat space must preserve a refined flat without requesting the amenability of the group. It is easy to produce such examples in a way that runs afoul of the combination of Corollary~\ref{cor:converseAB} and Theorem~\ref{thm:StructureAmenable}, thus giving groups that cannot act at all. Here is an example:

\begin{corintro}\label{cor:T}
Richard Thompson's simple groups $T$ and $V$ do not admit any non-trivial isometric action whatsoever on any proper cocompact \cat space.
\end{corintro}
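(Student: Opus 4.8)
The plan is to argue by contradiction. Since $T$ and $V$ are simple, any non-trivial isometric action is faithful, so suppose $\Gamma\in\{T,V\}$ embeds in $\Isom(X)$. The strategy is to show that this action must preserve a refined flat; Corollary~\ref{cor:converseAB} then makes the closure of $\Gamma$ in $\Isom(X)$ amenable, hence $\Gamma$ itself amenable, and Corollary~\ref{cor:SimpleGps} rules out the action (both groups are infinite, finitely generated, and simple, so they admit no non-trivial virtually abelian quotient). The entire difficulty is thus to produce an invariant refined flat for a group that is emphatically \emph{not} amenable, so that the Adams--Ballmann theorem is not directly available.

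I would first record two elementary features of $\Gamma$: it is perfect, and it is not linear over any field (a finitely generated linear group is residually finite by Mal'cev, whereas an infinite simple group has no proper finite-index subgroup). These settle the degenerate outcomes. If $\Gamma$ stabilised a flat $F\cong\RR^k$, the rotation part $\Gamma\to O(k)$ would be a finite-dimensional representation, hence trivial by non-linearity and simplicity; $\Gamma$ would then act on $F$ by translations, and perfectness would kill the translation homomorphism $\Gamma\to\RR^k$, so $\Gamma$ would fix $F$ pointwise. As point stabilisers in a proper space are compact, hence amenable, this case (and likewise the case of a fixed point of $X$) is already dispatched by Corollary~\ref{cor:SimpleGps}.

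It therefore remains to run the iteration of transverse spaces: granting that at each stage $\Gamma$ fixes a point at infinity whenever it fails to stabilise a flat, one passes to the transverse space $X_\xi$, repeats the dichotomy, and---because the depth of $X$ is finite (Corollary~\ref{cor:flat_rank})---terminates at a genuine refined flat. To obtain the fixed point at infinity without amenability I would reduce, via the structure theory of $\Isom(X)$ for proper cocompact spaces, to the action of $\Gamma$ on the irreducible non-Euclidean factors: on the semisimple Lie factors non-linearity forbids a faithful map, while on the totally disconnected factors I would invoke the fixed-point properties of $T$ and $V$ for actions on trees and buildings (Serre's property (FA) and its higher analogue) to produce a fixed point or a boundary point.

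The main obstacle is exactly this last point: manufacturing a $\Gamma$-fixed point at infinity for the non-amenable groups $T,V$, and ensuring that their combinatorial fixed-point properties persist through the structure-theoretic reduction and through the successive transverse spaces, which need no longer be cocompact. Once that is secured the argument closes formally, and indeed $T$ and $V$ may be replaced throughout by any infinite finitely generated simple non-linear group exhibiting the same fixed-point behaviour.
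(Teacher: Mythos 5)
The decisive gap is exactly the step you flag as the main obstacle: you have no valid mechanism for producing the invariant refined flat (equivalently, the fixed points at infinity) for the non-amenable groups $T$ and $V$, and the route you sketch would fail on both counts. First, the decomposition of $\Isom(X)$ into Euclidean, semisimple Lie and totally disconnected factors from~\cite{Caprace-Monod_structure} is only available under the hypothesis that there is \emph{no} global fixed point at infinity (after passing to a minimal invariant subspace); analysing the contrary case is precisely what is at issue in your iteration, so the reduction is circular. Second, even granting such a decomposition, the totally disconnected factors act on proper \cat spaces which are in no way trees or Euclidean buildings, so Serre's property (FA) and its building analogues produce nothing there; no fixed-point property of $T$ or $V$ valid for general proper \cat spaces is cited, and none is known along these lines. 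The paper's solution is a different idea altogether, which your proposal is missing: one does not work with $T$ or $V$ at all, but with Thompson's group $F \le T, V$. By Corollary~2.3 of~\cite{Caprace-Monod_discrete}, \emph{every} isometric action of $F$ on \emph{every} proper \cat space fixes a point in the compactification $\overline{X'}$; iterating this through transverse spaces (which remain proper, the process terminating by finite depth) yields an $F$-fixed refined point, whence by Theorem~\ref{thm:converseAB} the closure $H$ of the image of $F$ is amenable. Theorem~\ref{thm:StructureAmenable} applied to $H$, together with the simplicity of $[F,F]$, nested copies $F_1\cong F\le [F,F]$ and $F_2\cong F\le [F_1,F_1]$, and Lemma~\ref{lem:residually:finite}, then forces $\pi([F_2,F_2])$ to be trivial, hence $\pi([F,F])=1$; since $T$ and $V$ are simple and contain $F$, a non-trivial kernel already inside $F$ makes the whole $T$- or $V$-action trivial.

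There is a second, independent error: from the amenability of the closure $H$ of $\Gamma$ in $\Isom(X)$ you infer that ``$\Gamma$ itself is amenable''. This implication is false; amenability passes from a subgroup to its closure, not conversely (a non-abelian free subgroup of $SO(3)$ has compact, hence amenable, closure). For the same reason your appeals to Corollary~\ref{cor:SimpleGps} are invalid: that corollary hypothesises that $\Gamma$ is amenable, which $T$ and $V$ are not, and this affects your degenerate cases (invariant flat, fixed point in $X$) as well. The repair is the one the paper carries out in proving Corollary~\ref{cor:SimpleGps} itself: apply Theorem~\ref{thm:StructureAmenable} directly to the amenable closed group $H$, and use simplicity together with Lemma~\ref{lem:residually:finite} (finitely generated subgroups of compact, or locally elliptic, groups have residually finite image) to kill the image of the simple group in each layer $H/(H^\circ\LF(H))$, $H/\LF(H)$, $\LF(H)$ of the decomposition. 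With that repair your closing scheme would work \emph{provided} the refined flat exists --- but establishing that existence is the whole content of the proof, and it is the part your proposal does not supply.
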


Recall that Thompson's group $T$ can be viewed as the group of all orientation preserving piecewise affine transformations of $\RR/\ZZ$ which have dyadic breaking points and whose slopes are integral powers of two. Alternatively, it admits the following finite presentation, see~\cite[\S5]{Cannon-Floyd-Parry}.
\begin{align*}
T = \big\langle a,b,c \ \big|\ &[ab^{-1}, a^{-1}ba],\ [ab^{-1},a^{-2}ba^2],\ c^{-1}ba^{-1}cb,\\
&(a^{-1}cba^{-1}ba)^{-1}ba^{-2}cb^2,\ a^{-1}c^{-1}(a^{-1}cb)^2,\ c^3 \big\rangle
\end{align*}
This group is known to have a very nice (proper) isometric action on a locally finite \cat cube complex~\cite{Farley05}, but the latter space is not cocompact -- indeed not finite-dimensional. A similar construction is available for $V$.

At the end of Section~\ref{sec:structure}, we shall give various examples of groups, amenable or not, some of them torsion-free, for which no isometric action on any proper cocompact \cat space can be faithful. Recall that every countable group embeds in a $2$-generated simple group (see~\cite{Hall} and~\cite{Schupp}). Moreover, if the countable group in question is torsion-free, then the simple group can also be chosen torsion-free, as follows from Theorem~A in~\cite{Meier85} (see also Remark~2 on p.~392 in loc.cit.). Applying this embedding theorem to one of the torsion-free groups without faithful action on a proper cocompact \cat space constructed in Section~\ref{sec:structure} below, we obtain the following. 

\begin{corintro}\label{cor:simple}
There is an infinite $2$-generated  torsion-free simple group that  does not admit any non-trivial isometric action on any proper cocompact \cat space. \qed
\end{corintro}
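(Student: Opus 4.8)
The plan is to reduce the statement to the existence, granted by Section~\ref{sec:structure}, of an infinite torsion-free countable group $G$ admitting no \emph{faithful} isometric action on any proper cocompact \cat space. Assuming such a $G$ is at hand, I would feed it into the embedding theorem of Meier~\cite{Meier85}, which embeds any torsion-free countable group into a $2$-generated torsion-free simple group; call the resulting group $S$, so that $G \le S$. As $S$ contains $G$ it is infinite, and being $2$-generated it is countable.

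The heart of the argument is then a short formal deduction from simplicity. Let $\varphi \colon S \to \Isom(X)$ be an arbitrary isometric action on a proper cocompact \cat space $X$. Its kernel $\Ker \varphi$ is normal in $S$, so simplicity forces either $\Ker\varphi = S$ (the action is trivial) or $\Ker\varphi = \{1\}$ (the action is faithful). I would exclude the latter: if $\varphi$ were injective, then its restriction $\varphi|_G \colon G \to \Isom(X)$ would again be injective, exhibiting a faithful isometric action of $G$ on the proper cocompact \cat space $X$ and contradicting the defining property of $G$. Hence $\varphi$ is trivial; as $X$ and $\varphi$ were arbitrary, $S$ admits no non-trivial isometric action on any proper cocompact \cat space, which is the assertion.

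The real difficulty lies not in this deduction but in the input from Section~\ref{sec:structure}: one must genuinely construct a torsion-free group with no faithful action, the examples being engineered to violate the algebraic constraints furnished by Theorem~\ref{thm:StructureAmenable} and Corollary~\ref{cor:converseAB}. A secondary but essential point is that the embedding preserve torsion-freeness; this is precisely the refinement of Meier's theorem recorded in Remark~2 on p.~392 of~\cite{Meier85}, which must be cited to ensure that $S$ is torsion-free as the statement demands.
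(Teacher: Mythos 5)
Your proposal is correct and follows essentially the same route as the paper: take one of the torsion-free groups from Section~\ref{sec:structure} admitting no faithful isometric action on any proper cocompact \cat space (e.g.\ the torsion-free group $G\cong G\times G$ built from $BS(2,3)$ in Corollary~\ref{cor:GGG}), embed it into a $2$-generated torsion-free simple group via Theorem~A and Remark~2 of~\cite{Meier85}, and conclude by the simplicity dichotomy that every action of the simple group is trivial. This is precisely why the paper states the corollary with an immediate \qed after recalling the embedding theorem in the introduction.
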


Notice that the assumption that a space be cocompact, which does not impose any restriction on the action of the groups that we consider, does not either imply any sort of local regularity nor of finite dimensionality. For very explicit examples of proper cocompact minimal CAT($-1$) spaces that are infinite-dimensional, see~\cite{Monod-Py}.

%%%%%%%%%%%%%%%%%%%%%%%%%%%%%%%%%%%%%%%%%%%%%%%%%%%%%%%%%%%%%%%%%%%%%%%%%%%
\subsection{Fixed points and Levi decompositions}
In the quest for understanding cocompact \cat spaces and their cocompact isometry groups, we have proposed in~\cite{Caprace-Monod_structure} some structural results under the assumption that no point at infinity be fixed by the entire cocompact group. Notice however that the results stated above do not request that hypothesis; their proofs therefore require to study the remaining case, i.e when some point at infinity has a stabiliser acting cocompactly on the space. 
We shall prove that such a stabiliser admits a Levi decomposition generalising the situation of parabolic subgroups of semi-simple algebraic groups; moreover, the corresponding ``Levi factor'' will again be represented as a cocompact isometry group of a suitable \cat subspace.

\medskip
Let thus $X$ be a proper \cat space, $G<\Isom(X)$ a closed subgroup and $\xi\in\bd X$. We define
$$G_\xi^\mathrm{u} := \Big\{\ g\in G \ : \ \lim_{t\to\infty} d\big(g\cdot r(t), r(t)\big)=0 \ \ \forall\, r \text{\ with\ } r(\infty)=\xi \Big\}$$
(where $r\colon \RR\to X$ are geodesic rays). This is a closed normal subgroup of $G_\xi$. We recall that $\Op(\xi)$ is the set of points $\xi'\in\bd X$ that are \emph{visually opposite} $\xi$, i.e. such that there is a  bi-infinite geodesic line in $X$ with extremities $\xi$ and $\xi'$. The union of all such lines is denoted by $P(\xi, \xi')$;  it is a closed convex subspace of $X$ with a non-trivial Euclidean factor. If the stabiliser $\Isom(X)_\xi$ acts cocompactly, then the set $\Op(\xi)$ is non-empty. 

\begin{thmintro}\label{thm:Levi}
Assume that $G_\xi$ acts cocompactly on $X$. Then for each $\xi' \in \Op(\xi)$ we have a  decomposition
$$G_\xi = G_{\xi, \xi'} \cdot G_\xi^\mathrm{u} $$
which is almost semi-direct in the sense that $G_{\xi, \xi'} \cap G_\xi^\mathrm{u} $ is compact. In particular, $G_\xi^\mathrm{u}$ acts transitively on $\Op(\xi)$.
\end{thmintro}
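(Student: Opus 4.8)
The plan is to reduce the whole statement to a single assertion: that $G_\xi^\mathrm{u}$ acts transitively on $\Op(\xi)$. Granting this, the decomposition is immediate. Since $G_\xi^\mathrm{u}$ is normal in $G_\xi$, the product $G_{\xi,\xi'}\cdot G_\xi^\mathrm{u}=G_\xi^\mathrm{u}\cdot G_{\xi,\xi'}$ is a subgroup; and for $g\in G_\xi$ the point $g\xi'$ again lies in $\Op(\xi)$ (apply $g$ to a line from $\xi'$ to $\xi$), so transitivity yields $u\in G_\xi^\mathrm{u}$ with $u\xi'=g\xi'$, whence $u^{-1}g\in G_{\xi,\xi'}$ and $g=u\cdot(u^{-1}g)\in G_\xi^\mathrm{u}\cdot G_{\xi,\xi'}=G_{\xi,\xi'}\cdot G_\xi^\mathrm{u}$. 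Since the transitivity assertion is literally the final clause of the theorem, it carries the entire content. For later use I would also record the elementary structure of the parallel set: $P(\xi,\xi')$ splits as $Z\times\RR$ with the $\RR$-factor running from $\xi'$ to $\xi$, and any isometry fixing $\xi$ and $\xi'$ preserves this splitting, since it permutes the geodesic lines $\{z\}\times\RR$ joining $\xi'$ to $\xi$.

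The crux, and the step I expect to be the main obstacle, is precisely this transitivity. Given $\xi',\xi''\in\Op(\xi)$ with lines $L',L''$ joining them to $\xi$, the two rays towards $\xi$ are asymptotic, so by \cat convexity $t\mapsto d(L'(t),L''(t))$ is convex, bounded, and hence non-increasing. The idea is to manufacture the required $u$ as a limit of elements of $G_\xi$ obtained by renormalisation: using the cocompactness of $G_\xi$ one pulls the common far ends of $L'$ and $L''$ back into a fixed compact region by isometries fixing $\xi$, and properness (Arzel\`a--Ascoli) extracts a limiting isometry $u\in G_\xi$ carrying $\xi'$ to $\xi''$. The real difficulty is to arrange that $u$ satisfies the stringent decay condition $d(u\cdot r(t),r(t))\to 0$ along \emph{every} ray $r$ to $\xi$, rather than merely fixing $\xi$: the mechanism is that the matching of $L'$ and $L''$ can be carried out arbitrarily deep towards $\xi$, so that any residual displacement is pushed out to infinity, and the convexity of the displacement functions then forces them to tend to $0$. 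This contraction-type behaviour near $\xi$, organised through the transverse space $X_\xi$ of Section~\ref{sec:Iter}, is exactly what certifies $u\in G_\xi^\mathrm{u}$ and not merely $u\in G_\xi$.

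Finally, for the almost semi-direct assertion I would show that $K:=G_{\xi,\xi'}\cap G_\xi^\mathrm{u}$ is compact. An element $g\in K$ fixes both $\xi$ and $\xi'$, hence preserves the splitting $P(\xi,\xi')=Z\times\RR$, acting as $(g_Z,\,t\mapsto t+c)$ for some isometry $g_Z$ of $Z$ and some $c\in\RR$. Evaluating the decay condition on the ray $r(t)=(z,t)$ to $\xi$ gives $d(g\cdot r(t),r(t))=\sqrt{d_Z(g_Z z,z)^2+c^2}$, which is \emph{constant} in $t$; the membership $g\in G_\xi^\mathrm{u}$ forces this constant to vanish, so $c=0$ and $g_Z=\id$ for every $z$, i.e.\ $g$ fixes $P(\xi,\xi')$ pointwise. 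Thus $K$ is contained in the stabiliser of any chosen point of $P(\xi,\xi')$, which is compact by properness of $X$; being closed, $K$ is itself compact. Together with the transitivity established above this completes the proof.
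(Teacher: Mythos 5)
Your outer framework is fine: the reduction (transitivity of $G_\xi^\mathrm{u}$ on $\Op(\xi)$ implies the decomposition, via normality of $G_\xi^\mathrm{u}$) is correct, and your compactness argument for $K=G_{\xi,\xi'}\cap G_\xi^\mathrm{u}$ is sound and close to the paper's: elements of $K$ preserve the splitting $P(\xi,\xi')\cong Z\times\RR$, have constant displacement along each line of that splitting, and membership in $G_\xi^\mathrm{u}$ forces that constant to vanish, so $K$ lies in a compact point stabiliser. The genuine gap is in your middle paragraph, which you yourself identify as carrying the entire content: the transitivity is never proved, and the sketch cannot be completed as described.

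There are two concrete failures. First, no convergent sequence of isometries is actually exhibited: the renormalising elements $g_n\in G_\xi$ pulling $L'(t_n)$, $t_n\to\infty$, back into a fixed compact set cannot themselves converge (they send $L'(0)$ off to infinity); what Arzel\`a--Ascoli yields are limits of the renormalised \emph{lines} $g_nL'(\,\cdot+t_n)$ and $g_nL''(\,\cdot+t_n)$, and these are merely two parallel lines ending at $\xi$ (their mutual distance being the limit of the non-increasing function $t \mapsto d(L'(t),L''(t))$); no isometry carrying $\xi'$ to $\xi''$ comes out of this. Second, the certification mechanism you invoke --- ``convexity then forces the displacement to tend to $0$'' --- is false as a principle: for any $g\in G_\xi$ the function $t\mapsto d(g\,r(t),r(t))$ along a ray to $\xi$ is convex and bounded, hence non-increasing and convergent, but its limit is in general a strictly positive constant; for instance a hyperbolic translation of $\HH^2$ whose axis ends at $\xi$ has displacement tending to its translation length along \emph{every} ray to $\xi$. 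Deciding which elements have vanishing limit displacement is exactly the content of the theorem, and the paper uses a genuinely different, group-theoretic mechanism for it: a radial sequence $\{g_n\}$ in $G_\xi$ produces a (possibly discontinuous) homomorphism $\ro$, a pointwise cluster of $\Ad(g_n)|_{G_\xi}$, with $\ro(G_\xi)\se G_{\xi,\xi'}$ (Proposition~\ref{prop:ro}); the equivariant lifting $P(\xi,\xi')\cong\RR\times X_\xi$ of Theorem~\ref{thm:lifting} and the resulting commutation $\omega_\xi\circ\ro=(\Id\times\Ad(\alpha))\circ\omega_\xi$, with $\alpha$ a limit of the images of the $g_n$ in $\Isom(X_\xi)$, give $\omega_\xi(G_{\xi,\xi'})=\omega_\xi(G_\xi)$, which is precisely the decomposition $G_\xi=G_{\xi,\xi'}\cdot\Ker(\omega_\xi)$. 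The transitivity of $G_\xi^\mathrm{u}$ is then deduced \emph{from} the decomposition together with the transitivity of $G_\xi$ on $\Op(\xi)$ (\cite[Proposition~7.1]{Caprace-Monod_structure}) --- that is, in the direction opposite to your reduction.
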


This algebraic Levi decomposition comes with the  geometric counterpart below, which generalises the fact that, for semi-simple Lie groups, the Levi factor acts on a totally geodesic copy of its symmetric space embedded in the ambient space by the Mostow--Karpelevich theorem.

\begin{propintro}\label{prop:Levi:geom}
In the setting of Theorem~\ref{thm:Levi}, the double stabiliser $G_{\xi, \xi'}$ acts cocompactly on $P(\xi, \xi')$ and there is a canonical isometry $P(\xi, \xi')\cong \RR\times X_\xi$ so that $G_{\xi, \xi'}\cap \Ker(\beta_\xi)$ acts cocompactly on $X_\xi$.
\end{propintro}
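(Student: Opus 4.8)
The plan is to combine the classical product decomposition of a parallel set with the Levi decomposition of Theorem~\ref{thm:Levi}, and to transport the cocompactness of $G_\xi$ on $X$ down to $P(\xi,\xi')$ and to $X_\xi$ through a canonical Lipschitz projection. First I would recall that $P:=P(\xi,\xi')$, being the union of the bi-infinite geodesics joining $\xi$ to $\xi'$, is closed and convex, and that by the Flat Strip Theorem any two such lines bound a flat strip; the Product Decomposition Theorem (see~\cite{Bridson-Haefliger}, II.2) then yields an isometric splitting $P\cong\RR\times Y$ in which the $\RR$-factor is the direction of the lines from $\xi'$ to $\xi$ and $Y$ is a closed convex cross-section. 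The first task is to identify $Y$ with the transverse space $X_\xi$: sending $y\in Y$ to the strong-asymptote class of the ray $t\mapsto(t,y)$ toward $\xi$ gives an isometric embedding $Y\hookrightarrow X_\xi$, and I would invoke the construction of Section~\ref{sec:Iter} to see that this map is onto and independent of the choice of $\xi'$, so that $P\cong\RR\times X_\xi$ canonically.

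Since every element of $G_{\xi,\xi'}$ fixes both $\xi$ and $\xi'$, it preserves $P$ and both factors of the splitting, acting on the $\RR$-factor by a translation (no reflection, as the two ends are fixed) whose magnitude is exactly $\beta_\xi$. Thus $\ell\mapsto(\beta_\xi(\ell),\bar\ell)$ is a homomorphism $G_{\xi,\xi'}\to\RR\times\Isom(X_\xi)$, and $G_{\xi,\xi'}\cap\Ker(\beta_\xi)$ is precisely the subgroup acting purely on the $X_\xi$-factor. Two elementary facts drive the rest: the canonical projection $q\colon X\to X_\xi$, $x\mapsto[r_x]$ (where $r_x$ is the ray from $x$ to $\xi$), is $1$-Lipschitz, $G_\xi$-equivariant and has dense image; and $G_\xi^{\mathrm u}\subseteq\Ker(\beta_\xi)$ \emph{acts trivially} on $X_\xi$, directly from the defining condition $d(g\cdot r(t),r(t))\to0$.

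The heart of the matter is cocompactness. Applying $q$ to a compact set $K$ with $G_\xi\cdot K=X$ yields $X_\xi=\overline{G_\xi\cdot q(K)}$, and properness of $X_\xi$ upgrades this to a genuine cocompact action of $G_\xi$ on $X_\xi$. By the Levi decomposition $G_\xi=G_{\xi,\xi'}\cdot G_\xi^{\mathrm u}$ and the triviality of the $G_\xi^{\mathrm u}$-action, $G_{\xi,\xi'}$ already acts cocompactly on $X_\xi$. To descend to the Busemann kernel I would run the same pushforward on a horosphere $b_\xi^{-1}(0)$, on which $N:=G_\xi\cap\Ker(\beta_\xi)$ acts: writing $x=gk$ for $x$ in the horosphere forces $\beta_\xi(g)$ into a bounded interval, and an open-mapping argument for $\beta_\xi$ lets me replace $g$ by an element of $N$ up to a compact factor, giving cocompactness of $N$ on the horosphere and hence, via $q$, on $X_\xi$; since $N=(G_{\xi,\xi'}\cap\Ker\beta_\xi)\cdot G_\xi^{\mathrm u}$ with $G_\xi^{\mathrm u}$ acting trivially, $G_{\xi,\xi'}\cap\Ker(\beta_\xi)$ inherits the cocompact action on $X_\xi$. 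Combining this with coboundedness of $\beta_\xi(G_{\xi,\xi'})$ in $\RR$ — obtained by translating a basepoint along the axis using the cocompact $G_\xi$-action — gives cocompactness of $G_{\xi,\xi'}$ on $P\cong\RR\times X_\xi$.

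The hard part is the passage from the full stabiliser to the Busemann kernel in the previous step: decoupling the flow direction from the transverse action requires that $\beta_\xi$ be well enough behaved, for instance that its image be a cocompact subgroup of $\RR$, which is exactly the regularity that can fail for badly embedded discrete groups and that must be extracted from the properties of $\beta_\xi$ and of $X_\xi$ established in Section~\ref{sec:Iter}. A secondary point requiring care is the surjectivity in the identification $Y\cong X_\xi$, namely that every strong-asymptote class toward $\xi$ is represented on a line ending at the prescribed $\xi'$; here the transitivity of $G_\xi^{\mathrm u}$ on $\Op(\xi)$ supplied by Theorem~\ref{thm:Levi} is the natural tool, carrying a representative line for an arbitrary opposite point onto a line through $\xi'$.
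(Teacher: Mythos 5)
Your plan has a genuine gap, and it sits exactly at the step you yourself flagged as the hard part: the passage from bounded $\beta_\xi$ to the kernel of $\beta_\xi$. The ``open-mapping argument'' you invoke would require, in effect, that $\beta_\xi(G_{\xi,\xi'})$ be a closed subgroup of $\RR$, so that an element with small Busemann character can be corrected to a kernel element at bounded cost. This cannot be extracted from Section~\ref{sec:Iter}, because in the stated generality (an arbitrary closed subgroup $G$ with $G_\xi$ cocompact, which is the setting of Theorem~\ref{thm:Levi}) it is simply false. Take $X=\EE^2$, let $G<\Isom(X)$ be the group of translations by the lattice spanned by $(1,\sqrt 3)$ and $(\sqrt 2,1)$, and let $\xi,\xi'$ be the two ends of the horizontal axis. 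Then $G_\xi=G_{\xi,\xi'}=G$ acts cocompactly on $X=P(\xi,\xi')$, so all hypotheses hold; but $\beta_\xi$ is the horizontal component of the translation vector, so $\beta_\xi(G)=\ZZ+\sqrt 2\,\ZZ$ is dense in $\RR$ and $\beta_\xi$ is injective on $G$: elements with tiny $\beta_\xi$ are enormous vertical translations, nowhere near the (trivial) kernel, and $G_{\xi,\xi'}\cap\Ker(\beta_\xi)=\{1\}$ is certainly not cocompact on $X_\xi\cong\RR$. So no argument can produce the decoupling at this level of generality. Worse, in your write-up the first assertion (cocompactness of $G_{\xi,\xi'}$ on $P(\xi,\xi')$) is assembled from this kernel statement together with coboundedness of $\beta_\xi(G_{\xi,\xi'})$, so your proof of that assertion collapses with it --- even though the assertion itself is true.

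The paper proves the cocompactness of $G_{\xi,\xi'}$ on $P(\xi,\xi')$ directly, never decoupling the $\RR$-direction from the transverse one, and this is the idea your proposal is missing. Given $p\in P(\xi,\xi')$, cocompactness of $G_\xi$ on $X$ yields $g\in G_\xi$ with $d(g.p,p_0)<R$; then $g.\xi'\in\Op(\xi)$, and the transitivity of $G_\xi^\mathrm{u}$ on $\Op(\xi)$ --- which is part of Theorem~\ref{thm:Levi} itself, and which you deploy only for the identification of the cross-section $Y$ --- yields $k\in G_\xi^\mathrm{u}$ with $kg.\xi'=\xi'$, i.e.\ $kg\in G_{\xi,\xi'}$. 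Since $k$ lies in the kernel of the horoaction, $kg.p$ has the same two coordinates in $\RR\times X_\xi$ as $g.p$; both $b_\xi$ and the canonical map $X\to X_\xi$ are $1$-Lipschitz, so the Pythagorean identity on $P(\xi,\xi')\cong\RR\times X_\xi$ from Theorem~\ref{thm:lifting} gives $d(p_0,kg.p)\le\sqrt 2\, d(p_0,g.p)<\sqrt 2\,R$, whence $P(\xi,\xi')\se G_{\xi,\xi'}.B(p_0,\sqrt 2 R)$. Two further points. First, the identification $P(\xi,\xi')\cong\RR\times X_\xi$ that you rebuild by hand is precisely Theorem~\ref{thm:lifting}; your surjectivity sketch (moving lines by $G_\xi^\mathrm{u}$) only reaches classes of rays lying on lines to some opposite point, not arbitrary points of the completion $X_\xi$, whereas the paper obtains surjectivity by playing mutual isometric embeddings against the fact that an isometric self-map of a cocompact space is onto (Proposition~4.5 of~\cite{Caprace-Sageev}). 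Second, the paper derives the final clause about $G_{\xi,\xi'}\cap\Ker(\beta_\xi)$ from the equivariance in Theorem~\ref{thm:lifting}; your instinct that this decoupling is the delicate point is sound --- the lattice example above shows it is no formality for an arbitrary closed $G$ --- but it must not be placed upstream of the main cocompactness statement, which has the independent proof just described.
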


\noindent
(Herein $\beta_\xi\colon G_\xi\to\RR$ denotes the Busemann character.)

\medskip
As we shall see in Section~\ref{sec:compactible}, the kernel of the $G_\xi$-action on $X_\xi$ is amenable. At this point, the following scheme emerges for a completely general cocompact group $G$ of isometries of a proper \cat space $X$:

\smallskip
If there is no global fixed point at infinity, then some structure results are provided in~\cite{Caprace-Monod_structure}. Otherwise, we have a new cocompact $G$-action on a transverse space $X_\xi$. On the one hand, the kernel of this $G$-action is amenable, and thus described by Theorem~\ref{thm:StructureAmenable}. On the other hand, the remaining quotient of $G$ is again a cocompact group of isometries, and we can therefore repeat this dichotomic analysis. This process terminates in finitely many steps since each transverse space will sit in $X$ with an additional Euclidean factor by Proposition~\ref{prop:Levi:geom}.

\bigskip
Another motivation for Theorem~\ref{thm:Levi} is the study of \cat lattices (in the sense of~\cite{Caprace-Monod_discrete}). Since many of the results in~\cite{Caprace-Monod_structure}, \cite{Caprace-Monod_discrete} depend on the assumption that no point at infinity be fixed simultaneously by all isometries, it was quite valuable to know that \emph{uniform} \cat lattices (a.k.a\ \cat groups) have no fixed point at infinity except possibly on the Euclidean factor (upon passing, if needed, to the canonical minimal invariant \cat subspace). This was essentially established in~\cite{BurgerSchroeder87}, see~\cite[Cor.~2.7]{AB98}. It has been generalised to finitely generated \cat lattices in Proposition~3.15 of~\cite{Caprace-Monod_discrete}. However, none of these arguments seem to apply to infinitely generated lattices; but using Theorem~\ref{thm:Levi} we obtain the desired statement:

\begin{thmintro}\label{thm:lattice:NoFixed}
Let $X$ be a proper \cat space without Euclidean factor and such that $\Isom(X)$ acts cocompactly and minimally.

If $\Gamma<\Isom(X)$ is any lattice, then there are no $\Gamma$-fixed points at infinity.
\end{thmintro}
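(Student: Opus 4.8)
The plan is to argue by contradiction, converting a hypothetical $\Gamma$-fixed point at infinity into a $G$-invariant probability measure on $\bd X$ by averaging over the finite-volume quotient $G/\Gamma$, and then to rule out such a measure using minimality together with the fact that a unimodular cocompact isometry group fixes no point at infinity. Throughout write $G=\Isom(X)$; we may assume that $X$ is not reduced to a point, so that $\bd X$ is a nonempty compact metrisable space. Suppose, for contradiction, that $\Gamma$ fixes some $\xi\in\bd X$. Since $\Gamma$ is a lattice, $G$ is unimodular and the quotient $G/\Gamma$ carries a $G$-invariant probability measure $m$.

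First I would produce a $G$-invariant measure at infinity. Because $\Gamma$ stabilises $\xi$, the orbit map $G\to\bd X,\ g\mapsto g\xi$ factors through a continuous, $G$-equivariant map $\phi\colon G/\Gamma\to\bd X$, $g\Gamma\mapsto g\xi$. Pushing $m$ forward along $\phi$ yields a probability measure $\bar\mu:=\phi_* m$ on $\bd X$. Its $G$-invariance is immediate from the equivariance of $\phi$ and the invariance of $m$: for $h\in G$ one has $h_*\bar\mu=\phi_*(L_h)_* m=\phi_* m=\bar\mu$, where $L_h$ denotes left translation by $h$ on $G/\Gamma$. Thus the existence of a single $\Gamma$-fixed boundary point forces the whole (possibly non-amenable) group $G$ to preserve a probability measure on its boundary.

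Next I would invoke the construction underlying the Adams--Ballmann theorem \cite{AB98}: any group preserving a probability measure on the boundary of a proper \cat space either fixes a point at infinity or stabilises a flat (amenability serving in \cite{AB98} only to furnish such a measure, which we already have). Applied to $\bar\mu$ this yields two cases. If $G$ stabilises a flat $F\subseteq X$, then $F$ is a nonempty closed convex $G$-invariant subset, so minimality of the $G$-action forces $F=X$; hence $X$ is a Euclidean space, contradicting the hypothesis that $X$ has no Euclidean factor (recall $X$ is not a point). If instead $G$ fixes a point $\eta\in\bd X$, then $G$ is a unimodular cocompact isometry group fixing a point at infinity of a space without Euclidean factor --- precisely the situation excluded by the fact, deduced from Theorem~\ref{thm:Levi}, that unimodular cocompact groups fix no point at infinity outside the Euclidean factor. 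Both alternatives are impossible, so $\Gamma$ can fix no point of $\bd X$.

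The genuine content sits in the final step, namely ruling out a $G$-fixed point at infinity for unimodular cocompact $G$. This is exactly where the Levi decomposition of Theorem~\ref{thm:Levi} enters, through the relation between the modular function of $G_\eta$ and the Busemann character $\beta_\eta$, which shows that a fixed point at infinity would force non-unimodularity along the transverse (non-Euclidean) directions. Everything else --- the averaging trick promoting a $\Gamma$-fixed point to a $G$-invariant measure, and the appeal to minimality to dispose of the flat case --- is soft; the main obstacle is to establish (or to cite cleanly) this unimodularity obstruction.
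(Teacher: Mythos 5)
Your overall architecture is close to the paper's actual route: the paper also begins by noting that the existence of a lattice makes $\Isom(X)$ unimodular, so that Theorem~\ref{thm:unimodular} excludes $\Isom(X)$-fixed points at infinity, and then finishes by citing the ``geometric Borel density'' theorem (Theorem~1.1 of~\cite{Caprace-Monod_discrete}); your averaging construction of $\bar\mu=\phi_*m$ is precisely the first step of the proof of that cited theorem. The genuine gap is the dichotomy you invoke to exploit $\bar\mu$: it is \emph{false} that an arbitrary group preserving a probability measure on the boundary of a proper \cat space must fix a point at infinity or stabilise a flat, and it is not accurate that amenability enters~\cite{AB98} only to furnish the measure. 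Concretely, let $X=\RR\times\HH^2$ and let $H$ be generated by $(\mathrm{id},h_1)$ and $(\sigma,h_2)$, where $\sigma$ is the reflection $t\mapsto-t$ of the $\RR$-factor and $h_1,h_2$ are hyperbolic isometries of $\HH^2$ generating a Schottky group. Then $H$ preserves the measure $\frac12(\delta_++\delta_-)$ carried by the two poles of $\bd X$ (each generator fixes or swaps them), yet $H$ fixes no point of $\bd X$ (a common fixed point would have to lie at angle $\pi/2$ from the poles above a common fixed point of $h_1$ and $h_2$ in $\bd\HH^2$, and a Schottky pair has none), and $H$ stabilises no flat: an invariant point would give a fixed point of the hyperbolic element $h_1$ in $\HH^2$, while an invariant line or $2$-flat would project to a geodesic of $\HH^2$ invariant under both $h_1$ and $h_2$, i.e.\ a common axis. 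This $H$ is of course non-amenable, which is consistent with~\cite{AB98}: in the Adams--Ballmann argument, once the first invariant measure has been exploited and the minimum set of the averaged Busemann function has been split as $(\text{flat})\times Y$, amenability is used \emph{again} to produce an invariant measure on $\bd Y$; a single measure on $\bd X$ tells you nothing about the action on $Y$ (in the example above, $Y=\HH^2$).

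The repair is to use only what one invariant measure genuinely yields, and to let minimality do the rest. Put $f(x)=\int_{\bd X}b_\xi(x)\dd\bar\mu(\xi)$ with Busemann functions normalised at a base point; invariance of $\bar\mu$ gives $f(gx)=f(x)+c(g)$ for a character $c$ of $G$. If $f$ attains no minimum, then $G$ fixes a point at infinity (this is the argument of Proposition~2.1(2) in~\cite{AB98}, used also in the paper's proof of Proposition~\ref{prop:AB:centre}), contradicting Theorem~\ref{thm:unimodular} exactly as in your second branch. If $f$ attains its minimum, then $c\equiv 0$ and the minimum set $M$ is a nonempty closed convex $G$-invariant subset, so $M=X$ by minimality; then every $b_\xi$ with $\xi$ in the support of $\bar\mu$ is affine on $X$, and non-constant since $X$ contains rays pointing to $\xi$. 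Converting this family of affine Busemann functions into a nontrivial Euclidean factor of $X$ --- rather than into an invariant flat, which as the example shows one cannot get --- is exactly what is carried out in the proof of Theorem~1.1 of~\cite{Caprace-Monod_discrete}, and it contradicts the hypothesis that $X$ has no Euclidean factor. So your strategy is sound and essentially unfolds the citation the paper uses, but as written it rests on a lemma that is wrong; you must either weaken the dichotomy as above and supply the affine-functions-to-Euclidean-factor step, or cite the geometric Borel density theorem directly.
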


As it turns out, the relevant consequence of Theorem~\ref{thm:Levi} applies much more generally to \emph{unimodular} groups:

\begin{thmintro}\label{thm:unimodular}
Let $X$ be a proper \cat space and $G<\Isom(X)$ a closed subgroup acting cocompactly and minimally on $X$.

If $G$ is unimodular, then its fixed points at infinity are contained in the boundary of the Euclidean factor.
\end{thmintro}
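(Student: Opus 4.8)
The plan is to argue by contradiction: suppose $G$ is unimodular, acts cocompactly and minimally on the proper \cat space $X$ without exploiting any a priori Euclidean splitting, and suppose that $G$ fixes a point $\xi\in\bd X$ lying outside the boundary of the (possibly trivial) Euclidean factor of $X$. The key tool will be the Levi decomposition of Theorem~\ref{thm:Levi} together with its geometric counterpart Proposition~\ref{prop:Levi:geom}, which we may invoke since $G=G_\xi$ already acts cocompactly. The mechanism I expect to drive the contradiction is the Busemann character $\beta_\xi\colon G_\xi\to\RR$: I would like to show that unimodularity forces $\beta_\xi$ to vanish identically, and then that this vanishing, combined with minimality and the absence of a Euclidean direction in the $\xi$-direction, is impossible.

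First I would reduce to understanding $\beta_\xi$. By Theorem~\ref{thm:Levi} we have the almost semi-direct decomposition $G_\xi=G_{\xi,\xi'}\cdot G_\xi^\mathrm{u}$ for any $\xi'\in\Op(\xi)$, where $\Op(\xi)$ is non-empty by cocompactness. By Proposition~\ref{prop:Levi:geom} there is a canonical isometry $P(\xi,\xi')\cong\RR\times X_\xi$, and the factor $G_{\xi,\xi'}\cap\Ker(\beta_\xi)$ acts cocompactly on $X_\xi$, while the $\RR$-direction records precisely the Busemann displacement measured by $\beta_\xi$. The crucial point is that $G_\xi^\mathrm{u}$, being the ``unipotent'' part that contracts all rays to $\xi$, is contained in $\Ker(\beta_\xi)$ and should be \emph{non-trivial and non-compact} precisely because $G_\xi^\mathrm{u}$ acts transitively on $\Op(\xi)$; the geometry along the $\RR$-factor is where a modular-function computation can be localised. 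I would then compute the modular function $\Delta_G$ on the subgroup generated by an element translating along the $\RR$-direction: such an element $g$ with $\beta_\xi(g)\neq 0$ conjugates $G_\xi^\mathrm{u}$ into a proper subgroup of itself (a contraction), so conjugation by $g$ scales a Haar measure on $G_\xi^\mathrm{u}$ by a factor that is not $1$, which feeds into $\Delta_G(g)\neq 1$ and contradicts unimodularity.

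The main obstacle, and the step deserving the most care, is making the contraction argument quantitative at the level of Haar measure on a possibly non-discrete, non-Lie group $G_\xi^\mathrm{u}$ whose structure we do not control explicitly. Specifically, I must show that for $g$ with $\beta_\xi(g)\ne 0$ the conjugation $h\mapsto ghg\inv$ maps $G_\xi^\mathrm{u}$ strictly inside itself with compact-but-not-open image — equivalently, that the modular contribution of this conjugation is genuinely nontrivial — rather than merely permuting a cocompact lattice of directions. This is where the transitivity of $G_\xi^\mathrm{u}$ on $\Op(\xi)$ and the identification $P(\xi,\xi')\cong\RR\times X_\xi$ must be leveraged: the translation length along $\RR$ is exactly $\beta_\xi(g)$, and pushing a ray's basepoint by Busemann amount $\beta_\xi(g)$ compresses the transverse horospherical geometry, hence compresses $G_\xi^\mathrm{u}$. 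I would formalise this as a relation $\Delta_G(g)=\exp(c\,\beta_\xi(g))$ for some fixed $c>0$ depending on the ``horospherical dimension'', so that unimodularity $\Delta_G\equiv 1$ forces $\beta_\xi\equiv 0$ on $G_\xi$.

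Finally, once $\beta_\xi\equiv 0$, I would derive the geometric contradiction. Vanishing of $\beta_\xi$ means that $G_\xi$ acts on $P(\xi,\xi')\cong\RR\times X_\xi$ without any net displacement in the $\RR$-factor, so the $\RR$-coordinate is $G$-invariant; since $G$ acts minimally on $X$, the convex function realised by this coordinate must be constant, which splits off a $G$-invariant Euclidean line. By the canonical splitting theory this line lies in the Euclidean factor of $X$ and its endpoints are precisely the Euclidean boundary points — but $\xi$ was assumed to be such an endpoint of $P(\xi,\xi')$, forcing $\xi$ into the boundary of the Euclidean factor, contrary to hypothesis. This closes the argument and yields Theorem~\ref{thm:unimodular}; Theorem~\ref{thm:lattice:NoFixed} then follows immediately since lattices are unimodular.
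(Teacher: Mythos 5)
The decisive step of your argument --- the formula $\Delta_G(g)=\exp\bigl(c\,\beta_\xi(g)\bigr)$ with a fixed $c>0$, from which you deduce $\beta_\xi\equiv 0$ --- is not just unproved but false, and the conclusion you want to extract from it can never hold under the hypotheses. If $G$ acts cocompactly and fixes $\xi$, then $\beta_\xi\not\equiv 0$ automatically: otherwise the Busemann function $b_\xi$ would be $G$-invariant, hence bounded on $X=G\cdot K$ ($K$ compact), contradicting $b_\xi\to-\infty$ along a ray to $\xi$. So any argument deriving $\beta_\xi\equiv 0$ from unimodularity would prove that unimodular cocompact groups fix \emph{no} point at infinity at all, which is false: $G=\RR^n$ acting on $X=\RR^n$ is closed, unimodular, cocompact and minimal, fixes every $\xi\in\bd X$, and has $\beta_\xi\neq 0$ while $\Delta_G\equiv 1$. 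In that example your constant $c$ would have to vanish, and this is the general phenomenon: $c=0$ exactly when $G_\xi^\mathrm{u}$ is compact. That ``degenerate'' case is not an enemy to be ruled out --- it is precisely the situation described by the theorem's conclusion ($\xi$ in the Euclidean boundary) --- and your proof by contradiction never engages with it. Two subsidiary claims fail for the same reason: transitivity of $G_\xi^\mathrm{u}$ on $\Op(\xi)$ does not make it non-compact (in the Euclidean example $\Op(\xi)$ is a single point and $G_\xi^\mathrm{u}$ is trivial), and a single element $g$ with $\beta_\xi(g)\neq 0$ need not contract $G_\xi^\mathrm{u}$, since its component on the transverse space $X_\xi$ may have unbounded orbits, so that $\{g^n\}$ is not a radial sequence.

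The paper keeps your overall theme (Levi decomposition, then Haar measure against contraction) but runs the unimodularity step so that the compact case comes out as the \emph{conclusion} rather than being assumed away. Radial sequences $\{g_n\}$ for $\xi$ exist in $G=G_\xi$ by cocompactness; they are compacting for $G_\xi^\mathrm{u}$, and the conjugates $g_ngg_n\inv$ stay bounded for every $g\in G_\xi$. Proposition~\ref{prop:unimod:compactible} --- a Baire category and measure-pigeonhole argument, with no modular-function formula: were $H=G_\xi^\mathrm{u}$ non-compact, one could place arbitrarily many disjoint translates $Uh_i$ inside a fixed compact set after conjugating, contradicting the conjugation-invariance of Haar measure --- then shows that $G_\xi^\mathrm{u}$ is compact. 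Minimality kills this compact normal subgroup, so $G_\xi^\mathrm{u}=1$; the decomposition of Theorem~\ref{thm:Levi} then gives $G=G_{\xi,\xi'}$, so $G$ preserves $P(\xi,\xi')$, which equals $X$ by minimality, and Theorem~\ref{thm:lifting} identifies $X\cong\RR\times X_\xi$ with $\{\xi,\xi'\}$ the boundary of the line factor. In other words, the splitting your last paragraph tries to reach as a contradiction is the actual, consistent outcome. To repair your proposal you would have to replace the formula $\Delta_G=\exp(c\,\beta_\xi)$ by the correct dichotomy: either $G_\xi^\mathrm{u}$ is compact, in which case one argues as above, or it is non-compact, in which case unimodularity is violated by a compacting-\emph{sequence} argument rather than by powers of a single Busemann-nontrivial element.
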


%\bigskip\noindent
%\textbf{Location of the proofs.}
\subsection*{Location of the proofs}
Theorems~\ref{thm:StructureAmenable}, \ref{thm:DiscreteAmen} and~\ref{thm:converseAB}, as well as their corollaries, are proved in the final section of the paper. They rely on the one hand, on some facts pertaining to  the structure theory of general  locally compact groups, which we  collect in a preliminary Section~\ref{sec:Yamabis}, and on the other hand on specific geometric tools, which are developed in Sections~\ref{sec:Horo} and~\ref{sec:compactible}. 

Section~\ref{sec:Horo} discusses the transverse spaces and refined bordification of $X$. It culminates in a proof of the Levi decomposition (Theorem~\ref{thm:Levi} and Proposition~\ref{prop:Levi:geom}). The amenability of the ``unipotent radical'' in the Levi decomposition is established in Section~\ref{sec:compactible}, using a notion of \emph{compactible subgroups}  that is analogous but quite more general than the \emph{compaction groups} studied in~\cite{CCMT}. Compactibility is some weak form of contractibility, which is confronted to the invariance of the Haar measure under conjugacy in order to deduce Theorems~\ref{thm:lattice:NoFixed} and~\ref{thm:unimodular}. The final Section~\ref{sec:Amen} is devoted to the proofs of the geometric and algebraic characterisations of amenable subgroups.

\subsection*{Acknowledgements}

The final writing of this paper was partly accomplished when both authors were visiting the Mittag-Leffler Institute, whose hospitality was greatly appreciated. Thanks are also due to Ami Eisenmann for pointing out Corollary~\ref{cor:Ami}. 

%%%%%%%%%%%%%%%%%%%%%%%%%%%%%%%%%%%%%%%%%%%%%%%%%%%%%%%%%%%%%%%%%%%%%%%%%%%%%%
\section{On the structure of locally compact groups}\label{sec:Yamabis}
%%%%%%%%%%%%%%%%%%%%%%%%%%%%%%%%%%%%%%%%%%%%%%%%%%%%%%%%%%%%%%%%%%%%%%%%%%%%%%

\begin{flushright}
\begin{minipage}[t]{0.75\linewidth}\itshape\small
It seems that all the problems concerning the structure of locally compact groups have been completely solved.
\begin{flushright}
\upshape Hidehiko Yamabe,~\cite[p.~352]{Yamabe53}
\end{flushright}
\end{minipage}
\end{flushright}

\vspace{.5cm}
According to the solution to Hilbert's fifth problem, which we shall refer to as \emph{Yamabe's theorem} in the sequel, every locally compact group $G$ such that $G/G^\circ$ is compact has a unique maximal compact normal subgroup $W$ such that $G/W$ is a virtually connected Lie group (see~\cite{Yamabe53} and Theorem~4.6 in~\cite{Montgomery-Zippin}). This plays a fundamental role in the proof of the following.

\begin{thm}\label{thm:Yamabis}
Let $Y$ be a locally compact group such that the group of components $Y/Y^\circ$ is locally elliptic. 

Then $Y/\LF(Y)$ is a Lie group, $Y^\circ \LF(Y)$ is open in $Y$ and the discrete quotient $Y/(Y^\circ \LF(Y))$ is virtually soluble. 
\end{thm}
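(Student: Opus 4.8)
The plan is to combine Yamabe's theorem with the directed-union structure coming from local ellipticity, reduce the Lie property and the solubility to the single openness assertion~(2), and prove the latter by a centralizer analysis on the Lie group $Y^\circ$.

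\emph{Reduction.} First I would dispose of the compact part of the identity component. Since $Y^\circ$ is connected, Yamabe's theorem provides a maximal compact normal subgroup $W_0 \triangleleft Y^\circ$; it is characteristic in $Y^\circ$, hence normal in $Y$, and coincides with $\LF(Y^\circ)$, while $Y^\circ/W_0$ is a connected Lie group with no nontrivial compact normal subgroup. As $W_0$ is compact, $W_0 \se \LF(Y)$ and $\LF(Y/W_0) = \LF(Y)/W_0$; moreover $Y^\circ \cap \LF(Y)$ is a normal locally elliptic subgroup of $Y^\circ$, hence lies in $\LF(Y^\circ)=W_0$, so in fact $Y^\circ \cap \LF(Y) = W_0$. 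All three conclusions for $Y$ are therefore equivalent to the same conclusions for $Y/W_0$, and I may assume $Y^\circ$ is a connected Lie group without nontrivial compact normal subgroup. Granting~(2), conclusion~(1) is then formal: the identity component of $Y/\LF(Y)$ is $Y^\circ\LF(Y)/\LF(Y) \cong Y^\circ/(Y^\circ\cap\LF(Y)) = Y^\circ/W_0$, a connected Lie group, so if $Y^\circ\LF(Y)$ is open then $Y/\LF(Y)$ has an open Lie subgroup and is itself a Lie group.

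\emph{Local structure.} To attack~(2), I would use that $D := Y/Y^\circ$ is totally disconnected and locally elliptic, hence (van Dantzig) a directed union $D = \bigcup_n K_n$ of compact open subgroups. Writing $Y_n$ for the preimage of $K_n$, each $Y_n$ is open with $Y_n/Y_n^\circ = K_n$ compact, so Yamabe applies: $Y_n$ has a maximal compact normal subgroup $W_n = \LF(Y_n)$ with $Y_n/W_n$ a virtually connected Lie group. Since $W_n \cap Y^\circ$ is compact normal in $Y^\circ$ it is trivial; thus $W_n$ is profinite, its image in $Y_n/W_n$ is trivial, and the image of $Y^\circ$ in the virtually connected Lie group $Y_n/W_n$ is open of finite index. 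Equivalently, $Y^\circ W_n$ is open in $Y$.

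\emph{The crux: openness.} I expect this to be the main obstacle: $W_n$ is only normal in $Y_n$, not in $Y$, so a priori it need not lie in $\LF(Y)$, and promoting this local datum to a genuinely $Y$-normal locally elliptic subgroup is the heart of the matter. Here I would bring in the adjoint representation $\Ad\colon Y \to \GL(\mathfrak{y})$ on $\mathfrak{y} = \mathrm{Lie}(Y^\circ)$, whose kernel is exactly $C := C_Y(Y^\circ)$ because $Y^\circ$ is connected. As $W_n$ is profinite, its image in the Lie group $\GL(\mathfrak{y})$ is a totally disconnected compact subgroup, hence finite; consequently $W_n^\flat := W_n \cap C$ is open of finite index in $W_n$ and centralizes $Y^\circ$, while $Y^\circ W_n^\flat$ remains open in $Y$. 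It then suffices to show that these compact subgroups $W_n^\flat$, all lying in the normal subgroup $C$ and centralizing $Y^\circ$, assemble into a $Y$-normal locally elliptic subgroup. Working inside $C$, whose identity component is the torsion-free central vector group $Z(Y^\circ)^\circ\cong\RR^a$ and whose quotient $C/Z(Y^\circ)$ embeds into the locally elliptic $D$, every compact subgroup meets the torsion-free central directions trivially; I would exploit this to see that the normal closure of $W_n^\flat$ stays locally elliptic, hence lands in $\LF(Y)$. This gives $Y^\circ\LF(Y) \supseteq Y^\circ W_n^\flat$ open, proving~(2). The difficulty is concentrated precisely in controlling the $Y$-conjugates of the $W_n$, which is why the centralizer analysis, rather than any formal manipulation, is needed.

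\emph{Virtual solubility.} With~(2) in hand, $Q := Y/(Y^\circ\LF(Y))$ is discrete, and being a quotient of the locally elliptic $D$ it is locally finite, so it contains no nonabelian free subgroup. It then suffices to realise $Q$ as a linear group, for by the Tits alternative any linear group without nonabelian free subgroups is virtually soluble (indeed virtually abelian in characteristic zero by Schur). I would again use $\Ad$: the image $C/(C\cap Y^\circ\LF(Y))$ of the centralizer in $Q$ is virtually abelian, since its connected and locally elliptic directions are absorbed into $Y^\circ\LF(Y)$, leaving the finitely generated abelian group $Z(Y^\circ)/Z(Y^\circ)^\circ$; and the residual quotient maps through $\Ad$ into $\GL(\mathfrak{y})$, where local finiteness forces it to be virtually soluble. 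As the class of virtually soluble groups is closed under extensions, $Q$ is virtually soluble, completing the proof.
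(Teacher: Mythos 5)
Your skeleton is parallel to the paper's: the same reduction to $\LF(Y^\circ)=1$ (so that $Y^\circ$ is a connected Lie group by Yamabe), and the same division of labour along the conjugation representation $\Ad\colon Y\to\GL(\mathrm{Lie}(Y^\circ))$, whose kernel is $C=\centra_Y(Y^\circ)$. One bookkeeping difference: the paper does not prove the openness of $Y^\circ\LF(Y)$ at all, but imports it from Theorem~A.5 of~\cite{CoTe}, whereas you attempt a direct proof via van Dantzig and local applications of Yamabe; that part of your outline (finiteness of $\Ad(W_n)$, openness of $Y^\circ W_n^\flat$) is sound. The first genuine gap is the step you yourself flag as the crux: the observation that compact subgroups of $C$ meet the vector group $Z(Y^\circ)^\circ\cong\RR^a$ trivially does not show that the $Y$-conjugates of $W_n^\flat$ generate a locally elliptic subgroup. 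What has to be proved is that finitely many compact subgroups of $C$ generate a relatively compact subgroup, and the obstruction sits in the possibly non-split central extension $1\to Z(Y^\circ)\to C\to C/Z(Y^\circ)\to 1$ with locally elliptic quotient. This is exactly the content of the paper's Proposition~\ref{prop:GenUshakov}, whose proof is not formal: one passes to the preimage $H\le C$ of a compact subgroup of $C/Z(Y^\circ)$, notes that commutation factors through the compact quotient so that $H$ is a topological FC-group, and invokes U{\v s}akov's theorem (Theorem~\ref{thm:Ushakov}). Your sketch can be completed along these lines, but this key lemma is a missing idea, not a routine verification.

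The second gap is more serious because it rests on a false principle. The residual quotient $Y/(C\,Y^\circ\LF(Y))$ is a \emph{quotient} of the linear group $\Ad(Y)$, not a subgroup of $\GL(\mathrm{Lie}(Y^\circ))$; Tits and Schur apply to subgroups, not to quotients. Locally finite quotients of linear groups need not be virtually soluble: every countable locally finite group --- for instance the infinite alternating group, which is simple --- is a quotient of a free group of countably infinite rank, and that free group is linear. Closing precisely this gap is the role of the paper's Proposition~\ref{prop:Lie}(ii): one endows $\Ad(Y)=Y/C$ with its quotient topology, making it a Lie group injecting continuously into $\GL_n(\RR)$ whose component group is, by Lemma~\ref{lem:quot-Lie}, the quotient in question; one then argues via closures inside $\GL_n(\RR)$, Hochschild's theorem on dense connected normal subgroups, and the Tits alternative applied to discrete subquotients that genuinely embed in Lie groups. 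Relatedly, your justification for the virtual abelianness of the image of $C$ in $Q$ is off target: $Z(Y^\circ)$ lies inside $Y^\circ$ and is absorbed completely, so what remains is not a subquotient of $Z(Y^\circ)/Z(Y^\circ)^\circ$ but a quotient of the locally elliptic group $C/Z(Y^\circ)$, and proving \emph{that} abelian is again Proposition~\ref{prop:GenUshakov}. In short, both decisive steps of your plan are exactly the paper's two key propositions, and in both places the justification you propose would not close the argument.
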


\noindent
The part of the statement not regarding virtual solubility was obtained in Theorem~A.5 from~\cite{CoTe}.

\medskip
We emphasize that the discrete quotient $Y/(Y^\circ \LF(Y))$ need not be virtually torsion-free. Indeed, this is illustrated by the semi-direct product 
$$Y = \RR^2 \rtimes C_{p^\infty} < \RR^2 \rtimes O(2),$$ 
where  $C_{p^\infty} $ denote the group of all $p^n$-roots of unity with $p$ a prime and $n \geq 0$ an arbitrary integer. In this example the radical $\LF(Y)$ is trivial and the group of components $Y/Y^\circ \cong  C_{p^\infty}$ is abelian and locally elliptic, but not virtually torsion-free. 

\bigskip
The proof of Theorem~\ref{thm:Yamabis} requires some preparation and will be given at the end of this chapter. 

%%%%%%%%%%%%%%%%%%%%%%%%%%%%%%%%%%%%%%%%%%%%%%%%%%%%%%%%%%%%%%%%%%%%%%%%%%%
\subsection{Locally compact subgroups of Lie groups}
Following the general convention in the theory of locally compact groups, we define a  \textbf{Lie group} as a locally compact group $G$ such that the identity component $G^\circ$ is open in $G$ and is a connected Lie group in the usual sense.

\begin{prop}\label{prop:Lie}
Let $G$ be a Lie group and $H$ be a locally compact group admitting a continuous faithful homomorphism into $G$. 

Then we have the following.

\begin{enumerate}[(i)]
\item $H$ is a Lie group. 

\item If  $G/G^\circ$ is virtually soluble, then the following assertions are equivalent:
\begin{enumerate}[(a)]
\item $H/H^\circ$ is virtually soluble; 

\item $H/H^\circ$ is amenable;

\item $H/H^\circ$ does not contain non-abelian free subgroups.
\end{enumerate}
\end{enumerate}
\end{prop}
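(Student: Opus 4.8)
The plan is to deduce (i) from the Gleason--Yamabe solution to Hilbert's fifth problem (no small subgroups $\iff$ Lie group, in the sense of Yamabe's theorem discussed above), and to reduce the nontrivial part of (ii) to the Tits alternative by linearising via the adjoint representation.

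For (i): since $G$ is a Lie group, its open identity component $G^\circ$ is a connected Lie group and hence has no small subgroups; as $G^\circ$ is open, an identity neighbourhood in $G^\circ$ witnessing this is also one in $G$, so $G$ itself has no small subgroups. Let $\iota\colon H\to G$ be the given continuous injective homomorphism. If $U\se G$ is an identity neighbourhood containing no non-trivial subgroup, then any subgroup of $H$ lying in $\iota\inv(U)$ is mapped by $\iota$ into $U$, hence is trivial by injectivity; thus $H$ has no small subgroups, and is therefore a Lie group. In particular $H^\circ$ is open and $H/H^\circ$ is discrete.

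For (ii), the implications (a)$\Rightarrow$(b)$\Rightarrow$(c) are general nonsense: a virtually soluble group is amenable (solubility gives amenability, which is inherited by finite-index overgroups), and an amenable group contains no non-abelian free subgroup. The substance is (c)$\Rightarrow$(a). Writing $\g=\mathrm{Lie}(G^\circ)$, I would consider $\Psi=\Ad_G\circ\iota\colon H\to\GL(\g)$. Its kernel is $\iota\inv(C_G(G^\circ))$, since $\Ker(\Ad_G)=C_G(G^\circ)$, and by injectivity of $\iota$ this kernel embeds into $C_G(G^\circ)$. Now $C_G(G^\circ)\cap G^\circ=Z(G^\circ)$ is abelian while $C_G(G^\circ)/Z(G^\circ)$ embeds into $G/G^\circ$, which is virtually soluble by hypothesis; an abelian-by-(virtually soluble) group is virtually soluble, so $\Ker\Psi$ is virtually soluble. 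Passing to $H/H^\circ$, the image of $\Ker\Psi$ is virtually soluble and the corresponding quotient is isomorphic to $\Psi(H)/\Psi(H^\circ)$; since virtually soluble groups are closed under extensions, it suffices to prove that $\Psi(H)/\Psi(H^\circ)$ is virtually soluble.

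Here $\Psi(H)\se\GL(\g)$ is a real linear group and $\Psi(H^\circ)$ is a connected normal subgroup. To apply Tits' alternative I would pass to the Zariski closure $A$ of $\Psi(H^\circ)$: it is a connected algebraic subgroup normalised by $\Psi(H)$, in which $\Psi(H^\circ)$ is normal with $A/\Psi(H^\circ)$ abelian. Then $\Psi(H)/(\Psi(H)\cap A)$ embeds into the linear algebraic group $N_{\GL(\g)}(A)/A$, hence is linear; being a quotient of $H/H^\circ$ it contains no non-abelian free subgroup, so Tits' alternative forces it to be virtually soluble. On the other hand $(\Psi(H)\cap A)/\Psi(H^\circ)$ lies inside the abelian group $A/\Psi(H^\circ)$, so it is abelian. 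Thus $\Psi(H)/\Psi(H^\circ)$ is abelian-by-(virtually soluble), hence virtually soluble, and tracing back through the extensions gives that $H/H^\circ$ is virtually soluble. The main obstacle is precisely this last passage: Tits' alternative constrains the abstract linear group $\Psi(H)$, whereas our hypothesis bears only on the component quotient $\Psi(H)/\Psi(H^\circ)$, in which the possibly non-amenable connected part has been killed. Reconciling the two requires the algebraic-group input, and the delicate point on which the reduction rests is the structural fact that the Zariski closure of a connected Lie subgroup has abelian quotient by it.
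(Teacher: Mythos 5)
Your proposal is correct, but it takes a genuinely different route from the paper's. Part~(i) is the same in both (no small subgroups). For (c)$\Rightarrow$(a), the paper stays inside $G$ and takes \emph{Hausdorff} closures: setting $B=\overline{\alpha(H)}$ and $A=\overline{\alpha(H^\circ)}$, it invokes Hochschild's theorem that the quotient of a connected Lie group by a dense connected normal subgroup is abelian, reduces to the case where $H$ is a closed subgroup of a connected $G$, and then analyses $H$ through the soluble radical of $H^\circ$, the finiteness of the outer automorphism group of the semisimple quotient $S$, and the Tits alternative. You instead linearise once and for all via $\Psi=\Ad_G\circ\iota$, dispose of $\Ker\Psi=\iota\inv(\centra_G(G^\circ))$ using the centre of $G^\circ$ and the hypothesis on $G/G^\circ$, and then replace Hochschild's theorem by its Zariski-topology analogue: Chevalley's theorem that the algebraic hull of a connected Lie subgroup $L\se\GL(\g)$ normalises $L$ and has derived algebra contained in the Lie algebra of $L$. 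That fact, which you rightly single out as the crux, is indeed classical, so your reduction is sound; the one imprecision is that over $\RR$ the group of real points of the Zariski closure may have finitely many Hausdorff components, so $A/\Psi(H^\circ)$ should only be asserted to be \emph{virtually} abelian rather than abelian --- which is all your extension argument needs. Note also that both your argument and the paper's rely implicitly on two standard facts: that the absence of non-abelian free subgroups passes to quotients (free subgroups lift through surjections, by projectivity of free groups --- needed when you say the quotient of $H/H^\circ$ ``contains no non-abelian free subgroup''), and that virtually soluble groups are closed under extensions. In exchange for importing algebraic-group machinery (Zariski closures, Chevalley quotients), your route avoids the paper's passage to closed subgroups and its radical/$\Out(S)$ analysis; the paper's route stays within Lie theory proper, with Hochschild's density theorem playing the structural role that Chevalley's hull theorem plays for you.
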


\begin{proof}
(i) is an immediate consequence of the characterization of Lie groups as those locally compact groups having no small subgroups, see~\cite{Montgomery-Zippin}. 

\medskip \noindent (ii)
Assume that $G/G^\circ$ is virtually soluble. The implications (a)~$\Rightarrow$~(b)~$\Rightarrow$~(c) are clear. We assume henceforth that (c) holds. 
Let $\alpha \colon H \to G$ be a continuous faithful homomorphism, let $B = \overline{\alpha(H)}$ and $A = \overline{\alpha(H^\circ)}$. Thus $A$ is a closed connected normal subgroup of $B$, hence $ A \leq B^\circ$.  

%? Since $\alpha$ induces a continuous homomorphism $H/H^\circ \to B/A$ with dense image, it follows that $B/A$ is amenable. In particular, so is $B/B^\circ$. 

By Theorem~2.1 from Chapter~XVI in~\cite{Hochschild}, the quotient of a connected Lie group by a dense connected normal subgroup is abelian. This implies that $A/\alpha(H^\circ)$ is abelian. So is thus $H^1/H^\circ$, where $H^1 = \alpha^{-1}(A)$. Let also $H^2 = \alpha^{-1}(B^\circ)$. Then the discrete group $H^2/H^1$ embeds in the connected Lie group $B^\circ /A$. By hypothesis $H^2/H^1$ does not contain non-abelian free subgroups. It must therefore be virtually soluble by the Tits alternative~\cite{Tits72}. Thus $H^2/H^\circ$ is virtually soluble, and it remains to show that $H/H^2$ is virtually soluble. Since the latter embeds continuously and faithfully in $B/B^\circ$, it suffices to prove that $B/B^\circ$ is virtually soluble.  In other words, we have reduced the problem to the special case when $H=B$ is a closed subgroup of a Lie group $G$ with $G/G^\circ$ virtually soluble. This is what we assume henceforth. 

Since $H^\circ $ is contained in $G^\circ$, there is no loss of generality in assuming that $G$ is connected. 
%?Let $R$ be the amenable radical of $H$. Since $H/R$ is virtually a direct product of a connected semisimple Lie group with no compact factor and a discrete group by REF?, it follows that $R$ maps onto a finite index subgroup of $H/H^\circ$. Thus it suffices to show that $R/R^\circ$ is virtually soluble. In other words, we may assume that $H$ is amenable. 

%?Thus it remains to prove that if $H$ is an amenable closed subgroup of a connected Lie group $G$, then $H/H^\circ$ is virtually soluble. (Is there already a REF for this fact?)

Let $R$ be the soluble radical of $H^\circ$ and $S = H^\circ /R$. Let $J$ be the inverse image in $H$ of $\centra_{H/R}(S)$. Since $J$ is an extension of $R$ by a subgroup of $H/H^\circ$, it follows that $J$ does not contain non-abelian free subgroups. Moreover, since   the outer automorphism group of the semisimple Lie group $S$ is finite, it follows that the image of $J$ in $H/H^\circ$ is of finite index. By the Tits alternative, a subgroup of a connected Lie group which does not have free subgroups must be virtually soluble. Thus $J$, and hence also $H/H^\circ$, is virtually soluble, as desired.
\end{proof}

%%%%%%%%%%%%%%%%%%%%%%%%%%%%%%%%%%%%%%%%%%%%%%%%%%%%%%%%%%%%%%%%%%%%%%%%%%%
\subsection{Central extensions of locally elliptic groups}

\begin{prop}\label{prop:GenUshakov}
Let $G$ be a locally compact group and $Z < G$ be a closed subgroup contained in the centre $\centra(G)$.

If $G/Z$ is locally elliptic, then $G/\LF(G)$ is abelian (and torsion-free).  
\end{prop}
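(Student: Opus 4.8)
The plan is to show that $\overline{[G,G]}$ is locally elliptic; since it is closed and normal (being the closure of the characteristic subgroup $[G,G]$), this places it inside $\LF(G)$ and makes $G/\LF(G)$ abelian. Torsion-freeness is then extracted from the structure of abelian locally compact groups.

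First I would record that, because $Z$ is central, the commutator $[g,h]$ depends only on the images $\bar g,\bar h\in G/Z$; hence the continuous map $(g,h)\mapsto[g,h]$ factors through a continuous map $\psi\colon (G/Z)\times(G/Z)\to G$, continuity following from the fact that $\pi\times\pi\colon G\times G\to (G/Z)\times(G/Z)$ is an open quotient map. Local ellipticity of $G/Z$ means every finite subset lies in a compact subgroup $\bar K$; writing $K$ for its preimage in $G$, the group $K$ is a central extension of the compact group $\bar K$ by $Z$, and the set of all commutators of $K$ is the compact set $C=\psi(\bar K\times\bar K)$. A finitely generated subgroup of $[G,G]$ is generated by finitely many commutators, whose arguments all lie in a single such $\bar K$ by directedness; it therefore sits inside $[K,K]$ for one central-by-compact $K$.

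The heart of the matter is the following Key Lemma, a topological form of Schur's theorem and the substance of the ``Ushakov'' ingredient: if $K$ is locally compact with $K/Z$ compact and $Z\se\centra(K)$, then $\overline{[K,K]}$ is compact. I would approach it by noting that the conjugation action of $K$ on itself factors through the compact group $\bar K=K/Z$, so each conjugacy class $x^K=\{gxg\inv\}$ is relatively compact, indeed $x^K\se C\cdot x$ for the fixed compact $C$ above; thus $K$ has uniformly relatively compact conjugacy classes. Since $\overline{[K,K]}$ surjects onto the compact group $\overline{[\bar K,\bar K]}\se\bar K$ with kernel $\overline{[K,K]}\cap Z$, compactness of $\overline{[K,K]}$ reduces to compactness of this central intersection. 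This last reduction is exactly the Schur phenomenon, and proving it is the step I expect to be the main obstacle: one must exploit that a compact group admits no nontrivial continuous homomorphism into the non-compact part $\RR^m\times(\text{discrete torsion-free})$ of the abelian group $Z$, which forces the commutators into a compact subgroup. Granting the Key Lemma, every finitely generated subgroup of $[G,G]$ lies in some compact $\overline{[K,K]}$; hence $[G,G]$ is a directed union of relatively compact subgroups and its closure $\overline{[G,G]}$ is locally elliptic (closures of locally elliptic subgroups being locally elliptic). Being closed and normal, $\overline{[G,G]}\se\LF(G)$, so $G/\LF(G)$ is abelian.

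Finally, for torsion-freeness I would pass to the quotient $A=G/\LF(G)$. Using that $\LF$ is idempotent---the preimage in $G$ of $\LF(A)$ is an extension of the locally elliptic group $\LF(G)$ by the locally elliptic group $\LF(A)$, hence locally elliptic (an extension of locally elliptic by locally elliptic is locally elliptic for closed normal subgroups of locally compact groups), and therefore contained in $\LF(G)$---one gets $\LF(A)=1$. Now $A$ is an abelian locally compact group whose locally elliptic radical is trivial, so it has no nontrivial element of finite order: such an element would generate a finite, hence compact, subgroup, which is normal by commutativity and thus contained in $\LF(A)=1$. Therefore $A=G/\LF(G)$ is abelian and torsion-free, as claimed.
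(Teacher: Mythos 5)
Your global architecture is sound: reducing finitely generated subgroups of $[G,G]$ to $[K,K]$ with $K$ central-by-compact, deducing that $\overline{[G,G]}\se\LF(G)$ from local ellipticity, and the torsion-freeness argument via idempotency of $\LF$ are all correct. The genuine gap is your Key Lemma, which is not a side issue but the entire mathematical content of the proposition, and the sketch you offer for it does not work. First, the group $\overline{[K,K]}\cap Z$ is not the continuous homomorphic image of any compact group --- the commutator map $\psi$ is not a homomorphism --- so the principle ``a compact group admits no nontrivial continuous homomorphism into a group of the form $\RR^m\times(\text{discrete})$'' has nothing to act on. Second, the structural claim behind it is false: a locally compact abelian $Z$ is $\RR^m\times Z_0$ with $Z_0$ containing a compact open subgroup $W$, but the discrete quotient $Z_0/W$ need not be torsion-free (take $Z=C_{p^\infty}$, discrete and all torsion), so $Z$ has no ``non-compact part $\RR^m\times(\text{discrete torsion-free})$'' to map into. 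As it stands, the Schur-type statement is exactly what remains to be proven; it is a real theorem (the Grosser--Moskowitz theorem on groups with cocompact centre), not something that falls out of a few lines on LCA groups.

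The repair is available from the paper itself, and it sits precisely where the paper invokes U\v{s}akov's theorem (Theorem~\ref{thm:Ushakov}). You have already shown that every conjugacy class of $K$ lies in $Cx$, hence has compact closure; U\v{s}akov's theorem then gives that $K/\LF(K)$ is abelian, i.e.\ $[K,K]\se\LF(K)$. This is all you need: every finitely generated subgroup of $[K,K]$ lies in the locally elliptic group $\LF(K)$, hence is relatively compact, and your directed-union argument proceeds verbatim (you never actually need compactness of $\overline{[K,K]}$; if you want it, it follows because the compact set $C$ of commutators lies in a compact open subgroup of $\LF(K)$, locally elliptic groups being directed unions of such subgroups). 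With this substitution your proof becomes correct and is then a reorganization of the paper's own argument: the paper likewise reduces to central-by-compact subgroups and applies U\v{s}akov there, but instead of working with $\overline{[G,G]}$ it first passes to $G/\LF(G)$ and runs a two-step argument on pairs of elements --- elliptic ones first, to show $G/\LF(G)$ has no nontrivial compact subgroups, then arbitrary ones.
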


Proposition~\ref{prop:GenUshakov} is a companion to the following classical result.

\begin{thm}[U{\v{s}}akov~\cite{Ushakov}]\label{thm:Ushakov}
Let $G$ be a locally compact group in which every conjugacy class has compact closure. 

Then the union of all compact subgroups of $G$ forms a closed normal subgroup, which therefore coincides with
$\LF(G)$, and the corresponding quotient $G/\LF(G)$ is abelian (and torsion-free).\qed
\end{thm}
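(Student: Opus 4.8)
The plan is to transplant the classical theory of discrete FC-groups to the locally compact setting, with Yamabe's theorem supplying the missing structural input. Write $T = T(G)$ for the union of all compact subgroups of $G$; since a conjugate of a compact subgroup is compact, $T$ is visibly invariant under conjugation, so the entire content of the first assertion is that $T$ is a \emph{closed subgroup}. The unifying reduction is that any statement about finitely many elements of $G$ takes place inside a \emph{compactly generated} closed subgroup $H \se G$, and such an $H$ inherits the hypothesis: the $H$-conjugacy class of any $h\in H$ sits inside the relatively compact $G$-conjugacy class of $h$, so $H$ is again a group with relatively compact conjugacy classes, i.e. an $[\mathrm{FC}]^-$-group.

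The crucial ingredient — and the step I expect to be the main obstacle — is a structure theorem for compactly generated $[\mathrm{FC}]^-$-groups: \emph{every compactly generated locally compact group $H$ with relatively compact conjugacy classes has a compact normal subgroup $K$ such that $H/K$ is torsion-free abelian} (in fact $H/K \cong \RR^a \times \ZZ^b$). To prove this I would first treat the connected case. A connected locally compact $[\mathrm{FC}]^-$-group is pro-Lie, and Yamabe's theorem produces a maximal compact normal subgroup $W$ with $G/W$ a connected Lie group; a connected Lie group with relatively compact conjugacy classes splits as $\RR^n \times (\text{compact})$, so the connected case is $\RR^n \times K$. For the general compactly generated $H$ one combines this with van Dantzig's theorem (a compact open subgroup of the totally disconnected quotient $H/H^\circ$) and Yamabe again to manufacture a maximal compact normal subgroup $K$; the quotient $H/K$ is then a compactly generated $[\mathrm{FC}]^-$ Lie group without nontrivial compact subgroups, which forces $H/K \cong \RR^a \times \ZZ^b$. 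This is where the genuine work lies.

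Granting this structure theorem, both assertions follow by arguments parallel to Dietzmann's lemma and Schur's theorem. For the first, given $a, b \in T$ (say $a$ lies in a compact $A$, $b$ in a compact $B$), apply the structure theorem to $H = \overline{\langle a, b\rangle}$: the images of the compact groups $\overline{\langle a\rangle} \se A$ and $\overline{\langle b\rangle} \se B$ in the torsion-free group $H/K$ are compact subgroups, hence trivial, so $a, b \in K$ and $\langle a, b\rangle$ is relatively compact. Thus every finite subset of $T$ lies in a single compact subgroup, so $T$ is a subgroup that is a directed (increasing) union of compact groups — a normal locally elliptic subgroup. Every element of $\LF(G)$ lies in a compact subgroup and hence in $T$, while $T \se \LF(G)$ by the maximality defining the locally elliptic radical; therefore $T = \LF(G)$, and since $\LF(G)$ is closed (Platonov), so is $T$. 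This proves the first assertion.

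For the second, pass to $\bar G = G/\LF(G)$, which is again $[\mathrm{FC}]^-$ because continuous images of the compact conjugacy-class closures stay compact. For torsion-freeness, suppose $\bar g^{\,n} = 1$, so $g^n$ lies in a compact $C \se T$; applying the structure theorem to $H = \overline{\langle g, C\rangle}$ gives $C \se K$, and then the image of $g$ in the torsion-free group $H/K$ is $n$-torsion, hence trivial, so $g \in K \se T$ and $\bar g = 1$. For commutativity, given $g, h \in G$ the compactly generated group $H = \overline{\langle g, h\rangle}$ has $H/K$ abelian, whence $[g,h] \in K \se T = \LF(G)$ and $[\bar g, \bar h] = 1$. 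Thus $\bar G$ is abelian and torsion-free, completing the proof.
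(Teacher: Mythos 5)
The paper does not actually prove this statement: it is quoted from U\v{s}akov's 1963 paper with an immediate \qed, so your argument can only be measured against the classical literature rather than against an in-paper proof. Measured that way, your derivation is correct and follows the standard modern route: everything reduces, exactly as you say, to the structure theorem for compactly generated locally compact $[\mathrm{FC}]^-$-groups (a compact normal subgroup $K$ with $H/K \cong \RR^a \times \ZZ^b$), which is a theorem of Grosser and Moskowitz (\emph{Compactness conditions in topological groups}, J. reine angew. Math. \textbf{246} (1971)); simply citing it would close your proof. Everything downstream of that theorem is complete: the triviality of compact subgroups of the torsion-free quotient gives that finite subsets of $T$ lie in compact subgroups, hence $T$ is a normal subgroup that is a directed union of compact groups, so $T \se \LF(G)$, while $\LF(G) \se T$ is immediate and closedness comes from Platonov; torsion-freeness and commutativity of $G/\LF(G)$ follow just as you argue. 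One small reduction step deserves a line of justification: you need $\overline{\langle a,b\rangle}$ to be compactly generated, which is true but not automatic-looking --- if $V$ is a compact identity neighbourhood of $H = \overline{\langle a,b\rangle}$, then $\langle a,b\rangle \cup V$ generates an open, hence closed, subgroup of $H$ containing the dense subgroup $\langle a,b\rangle$, so it is all of $H$.

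The one genuine soft spot, which you flagged yourself, is your sketch of the structure theorem. Van Dantzig and Yamabe by themselves only produce a maximal compact normal subgroup $W$ of some almost connected \emph{open} subgroup $U \le H$, and there is no reason for $W$ to be normal in $H$; pointwise precompactness of conjugacy classes does not automatically yield the uniform statement one needs, namely that the union of the conjugacy classes of a compact set is relatively compact (a topological Dietzmann lemma). In the Grosser--Moskowitz development this is exactly where the work lies, and it is resolved by a Schur-type theorem: for a compactly generated $[\mathrm{FC}]^-$-group the closed commutator subgroup $\overline{[H,H]}$ is compact, after which the class of any $v$ lies in $v\,\overline{[H,H]}$ and the maximal compact normal subgroup can be assembled. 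So as written, the ``manufacture'' of $K$ in the mixed (neither connected nor discrete) case is an assertion rather than a proof; with the Grosser--Moskowitz theorem cited in its place, your proposal is a correct and complete proof of U\v{s}akov's theorem.
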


\begin{proof}[Proof of Proposition~\ref{prop:GenUshakov}]
We are given a short exact sequence $1 \to Z \to G \to Q \to 1$ with $Z$ central in $G$ and $Q$ locally elliptic. 
Let $G' = G/\LF(G)$ and $Z'$ be the closure of the image of $Z$ in $G'$; write $Q'=G'/Z'$ and consider the short exact sequence 
$$1 \to Z' \to G' \to Q' \to 1.$$
The normal subgroup $Z'$ is central in $G'$ (since the image of $Z$ is dense) and the quotient $Q'$ is locally elliptic (since it is a quotient of $Q$). Thus the latter short exact sequence satisfies the hypotheses of the Proposition. We need to show that $G'$ is abelian.

\smallskip
Let now $g_1, g_2 \in G'$. Then $\{g_1, g_2\}$ is contained in a group $H$ which an extension of $Z'$ by a compact group since $Q'$ is locally elliptic. Thus every conjugacy class in $H$ has compact closure, and it follows from Theorem~\ref{thm:Ushakov} that $H/\LF(H)$ is abelian and torsion-free. 

We first specialize this observation to the case when $g_1, g_2$ are both elliptic. It then follows that $g_1, g_2 \in \LF(H)$, and thus that $g_1, g_2$ belong to a common compact subgroup of $G'$. Therefore  the set of elliptic elements of $G'$ is a subgroup, and must therefore be contained in $\LF(G')=1$. It follows that $G'$ has no nontrivial compact subgroup. 

Applying again the observation above to two arbitrary elements $g_1, g_2$, and taking now into account  that $\LF(H)$ is trivial (since $G'$ has no nontrivial compact subgroup), we infer that $H$ is abelian and, hence, that $g_1$ and $g_2$ commute. Thus $G'$ is abelian, as desired. 
\end{proof}

%%%%%%%%%%%%%%%%%%%%%%%%%%%%%%%%%%%%%%%%%%%%%%%%%%%%%%%%%%%%%%%%%%%%%%%%%%%
\subsection{Connected quotients and Proof of Theorem~\ref{thm:Yamabis}}
The last tool that we need is the following fact; for the proof, we refer to Lemma~2.4 in~\cite{CCMT}.

\begin{lem}\label{lem:quot-Lie}
Let $G$ be a locally compact group with a quotient map $\pi \colon G\twoheadrightarrow L$ onto a Lie group $L$. Then $\pi(G^\circ)=L^\circ$.\qed
\end{lem}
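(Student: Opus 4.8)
The plan is to establish both inclusions of $\pi(G^\circ)=L^\circ$. The inclusion $\pi(G^\circ)\subseteq L^\circ$ is immediate: the continuous image of the connected set $G^\circ$ is connected and contains the identity, hence lies in $L^\circ$. For the reverse inclusion I would first reduce to the case where $L$ is connected. Indeed $\pi^{-1}(L^\circ)$ is an open subgroup of $G$ containing $G^\circ$ (every open subgroup is clopen and hence contains the identity component), it has the same identity component $G^\circ$, and $\pi$ restricts to a quotient map $\pi^{-1}(L^\circ)\to L^\circ$. So I may assume $L$ is a connected Lie group and must show $\pi(G^\circ)=L$. The crucial input is the following elementary fact, call it $(\ast)$: \emph{a totally disconnected locally compact group admits no continuous open surjection onto a non-trivial connected Lie group.}

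To prove $(\ast)$, let $T$ be totally disconnected and $f\colon T\to M$ continuous, open and onto, with $M$ connected Lie. By van Dantzig's theorem $T$ contains a compact open subgroup $O$, which is profinite. Then $f(O)$ is a compact open subgroup of $M$; being open in the connected group $M$ it equals $M$, so $M\cong O/\ker(f|_O)$ is a Hausdorff quotient of a profinite group, hence again profinite and in particular totally disconnected. As $M$ is connected this forces $M$ to be trivial. From $(\ast)$ I first deduce that $\pi(G^\circ)$ is dense: the composite $G\to L\to L/\overline{\pi(G^\circ)}$ is open, surjective and kills $G^\circ$, so it factors through an open surjection of the totally disconnected group $G/G^\circ$ onto the connected Lie quotient $L/\overline{\pi(G^\circ)}$, which is therefore trivial. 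Second, I settle the case where $\ker\pi$ is \emph{compact}: then $\pi$ is proper, hence closed, so $\pi(G^\circ)$ is a closed connected normal subgroup, and the same factorisation shows $L/\pi(G^\circ)$ is a connected Lie quotient of the totally disconnected $G/G^\circ$, hence trivial by $(\ast)$. Thus $\pi(G^\circ)=L^\circ$ whenever the kernel is compact.

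For the general connected case I would invoke the Gleason--Yamabe approximation theorem: for every neighbourhood $\Omega$ of the identity in $G$ there is an open subgroup $U\supseteq G^\circ$ and a compact normal subgroup $W\trianglelefteq U$ with $W\subseteq\Omega$ and $U/W$ a Lie group. Applying the compact-kernel case to $U\to U/W$ shows that the image of $G^\circ=U^\circ$ is the identity component of $U/W$, which is open there; pulling back, $G^\circ W$ is open in $U$ and hence in $G$. Therefore $\pi(G^\circ)\pi(W)=\pi(G^\circ W)$ is an open subgroup of the connected group $L$, so it equals $L$. Finally I would let $\Omega$ shrink: since the Lie group $L$ has no small subgroups, fixing a neighbourhood $V$ of the identity containing no non-trivial subgroup and choosing $\Omega\subseteq\pi^{-1}(V)$ forces the compact subgroup $\pi(W)\subseteq V$ to be trivial. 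Then $L=\pi(G^\circ)\pi(W)=\pi(G^\circ)$, as desired.

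The main obstacle is precisely the passage from ``$\pi(G^\circ)$ dense in $L$'' to ``$\pi(G^\circ)=L$''. Density alone is genuinely insufficient: an irrational line exhibits a connected locally compact group mapping into a Lie group with dense, non-closed image, so it is the \emph{openness} of the full map $\pi$ that must be exploited. My plan isolates this difficulty in two places, namely the properness trick that closes the image in the compact-kernel case, and the Gleason--Yamabe together with the no-small-subgroups argument that eliminates the compact error term $\pi(W)$ in the general case.
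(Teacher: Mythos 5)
Your argument is correct; I checked each step and found no gaps. A point of comparison worth making: the paper does not actually prove this lemma---it simply defers to Lemma~2.4 of~\cite{CCMT}---so your proposal supplies a self-contained argument where the paper has only a citation. Your route uses exactly the toolbox one would expect to underlie the cited result: reduction to connected $L$ via the open subgroup $\pi^{-1}(L^\circ)$; the van Dantzig argument showing a totally disconnected locally compact group has no non-trivial connected Lie quotient (your fact $(\ast)$); the compact-kernel case, where properness of a quotient map with compact kernel makes $\pi(G^\circ)$ closed so that $(\ast)$ applies to $L/\pi(G^\circ)$; and finally Gleason--Yamabe plus the no-small-subgroups property of the Lie target to eliminate the compact error term $\pi(W)$. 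You correctly identified, and correctly exploited, the one genuinely delicate point: density of $\pi(G^\circ)$ is not enough (the irrational line shows this), and it is the openness of $\pi$, fed into the Gleason--Yamabe step, that closes the gap. Two cosmetic remarks. First, the density deduction in your second paragraph is never used afterwards: the Gleason--Yamabe argument re-derives everything it needs from the compact-kernel case, so that paragraph could be deleted without loss. Second, the statement of $(\ast)$ should read ``continuous open surjective \emph{homomorphism}'': your proof of $(\ast)$ uses the homomorphism property when it forms the subgroup $f(O)$ and the kernel $\ker(f|_O)$, and that is also the form in which you apply it.
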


As pointed out in~\cite{CCMT}, this statement can fail if $L$ is not Lie; for an elementary substitute in the general case, see Lemma~\ref{lem:image_conn} below.

\begin{proof}[Proof of Theorem~\ref{thm:Yamabis}]
Let $Y$ be a locally compact group such that $Y/Y^\circ$ is locally elliptic. In order to prove the Theorem, we may assume without loss of generality that $\LF(Y^\circ)=1$. Thus $Y^\circ$ is a connected Lie group by Yamabe's theorem. Theorem~A.5 from~\cite{CoTe} then yields that $Y^\circ \LF(Y)$ is open in $Y$. In particular $Y/\LF(Y)$ is a Lie group. 

It remains to show that $Y/(Y^\circ \LF(Y))$ is virtually soluble. 

Let $\pi \colon Y \to \Aut(Y^\circ)$ be the homomorphism induced by the conjugation action of $Y$ on $Y^\circ$. Since $\Aut(Y^\circ)$ acts faithfully on the Lie algebra of $Y^\circ$, we may assume that $\pi$ takes its values in $\mathrm{GL}_n(\RR)$ for some $n$. By Proposition~\ref{prop:Lie}, the group $\pi(Y)$ (endowed with the quotient topology from $Y/\Ker(\pi)$) is a Lie group. Consequently Lemma~\ref{lem:quot-Lie} yields $\pi(Y^\circ) = \pi(Y)^\circ$. In particular the group of components $\pi(Y)/\pi(Y)^\circ$ is a quotient of $Y/Y^\circ$, and is thus locally elliptic, hence amenable. Invoking Proposition~\ref{prop:Lie} again, it follows that $\pi(Y)/\pi(Y)^\circ$ is virtually soluble. 

Since $\Ker(\pi) = \centra_Y(Y^\circ) >  \LF(Y)$, all it remains to show is that 
$$
%Y^\circ \centra_Y(Y^\circ)/ Y^\circ \LF(Y) \cong \centra_Y(Y^\circ) / \centra(Y^\circ) \LF(Y)
Y^\circ \centra_Y(Y^\circ)/ Y^\circ \LF(Y) \cong \centra_Y(Y^\circ) / Y^\circ \LF(Y) \cap \centra(Y^\circ) 
$$
is virtually soluble. Observing further that 
$$
Y^\circ \LF(Y) \cap \centra(Y^\circ)  = (Y^\circ \cap  \centra_Y(Y^\circ)) \LF(Y) = \centra(Y^\circ) \LF(Y),
$$ 
the remaining  statement to be proven is that $\centra_Y(Y^\circ) / \centra(Y^\circ) \LF(Y)$ is virtually soluble. 

Notice that $Z = \centra(Y^\circ)$ is central in $\centra_Y(Y^\circ)$. Moreover the quotient $\centra_Y(Y^\circ)/Z \cong Y^\circ \centra_Y(Y^\circ)/ Y^\circ$ is locally elliptic, as it is isomorphic to a subgroup of $Y/Y^\circ$. We are thus in a position to apply Proposition~\ref{prop:GenUshakov}  to the group $\centra_Y(Y^\circ)$. This implies that $\centra_Y(Y^\circ)/ \LF(\centra_Y(Y^\circ))$ is abelian. Hence so are the groups $\centra_Y(Y^\circ)/ \LF(Y)$ and $ \centra_Y(Y^\circ) / \centra(Y^\circ) \LF(Y)$. This concludes the proof.
\end{proof}

%%%%%%%%%%%%%%%%%%%%%%%%%%%%%%%%%%%%%%%%%%%%%%%%%%%%%%%%%%%%%%%%%%%%%%%%%%%
%\section{Levi decompositions}
\section{Horoactions and Levi decompositions}\label{sec:Horo}
%%%%%%%%%%%%%%%%%%%%%%%%%%%%%%%%%%%%%%%%%%%%%%%%%%%%%%%%%%%%%%%%%%%%%%%%%%%

%%%%%%%%%%%%%%%%%%%%%%
\subsection{Transverse spaces and horoactions}
%%%%%%%%%%%%%%%%%%%%%%

To each point $\xi \in \bd X$  at infinity of a \cat space $X$, one associates a new \cat space $X_\xi$ called the \textbf{transverse space} of $\xi$ and defined as follows. Consider the set $X^*_\xi$ of all rays $r\colon\RR_+\to X$ pointing to $\xi$. We claim that the infimal distance
$$d_\xi(r, r')\ = \ \inf_{t, t'\geq 0} d(r(t), r'(t'))$$
between rays is a pseudometric. Indeed, this follows from the convexity of the distance: more precisely, there is a constant $C$ (which is none other than the difference between the Busemann functions associated to $r$ and $r'$) such that
\begin{equation}\label{eq:d_xi}
d_\xi(r, r')\ = \ \lim_{t\to\infty} d(r(t), r'(t+C))
\end{equation}
holds. Now we define $X_\xi$ to be the (Hausdorff) metric completion of the
pseudometric space $X^*_\xi$, still denoting the resulting metric by
$d_\xi$. The formula~(\ref{eq:d_xi}) shows that $X_\xi$ is a \cat
space and there is a canonical $1$-Lipschitz map $X\to X_\xi$ with
dense image.

%\medskip

\medskip

Any isometry $g$ of $X$ induces an isometry $X_\xi \to X_{g\xi}$. In particular, there is a canonical isometric action of the stabiliser $\Isom(X)_\xi$ on $X_\xi$.

\begin{defn}
The isometric $\Isom(X)_\xi$-action on $\RR\times X_\xi$ given by
$$\begin{array}{rcll}
\omega_\xi\colon &G_\xi &\lra& \Isom\big(\RR\times X_\xi\big)\\
\omega_\xi(g)\colon &(t, x) &\longmapsto& (t + \beta_\xi(g), g.x)
\end{array}$$
is called the \textbf{horoaction}. Here, $\beta_\xi(g)$ is the \textbf{Busemann character} associated to $\xi$, which we recall can be expressed as
$$\beta_\xi(g) = \lim_{t\to\infty} \Big( d(r(t), x_0) - d(r(t), g x_0) \Big),$$
where $r$ is any ray pointing to $\xi$ and $x_0$ any point of $X$.
\end{defn}

\begin{remark}\label{rem:horokernel}
This definition shows that the kernel $\Ker(\omega_\xi)$ of the horoaction is precisely the normal subgroup $G_\xi^\mathrm{u} \lhd G_\xi$ introduced before the statement of Theorem~\ref{thm:Levi}.
\end{remark}

Some basic properties of the transverse space $X_\xi$ are collected in the following proposition. A point $\xi\in\partial X$ is called a \textbf{cocompact} point at infinity if its stabiliser $\Isom(X)_\xi$ acts cocompactly on $X$.

\begin{prop}\label{prop:TransverseSpace}
Let $\xi \in X_\xi$. We have the following.
\begin{enumerate}[(i)]
\item $X_\xi$ is a complete \cat space. 

\item The canonical homomorphism $\Isom(X)_\xi \to \Isom(X_\xi)$ is continuous.\label{pt:TransverseSpace:cont}

\item If $X$ is of bounded geometry, then so is $X_\xi$; in particular, $X_\xi$ is proper in that case.

\item \label{it:FiniteDepth} If $X$ is of bounded geometry, then it has finite depth.

\item\label{it:cocompact} If a subgroup of $\Isom(X)_\xi$ acts cocompactly on $X$, then it acts cocompactly on $X_\xi$. In particular, $X_\xi$ is a cocompact space whenever $\xi$ is a cocompact point.
\end{enumerate}
\end{prop}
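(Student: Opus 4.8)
The plan is to treat the five assertions of Proposition~\ref{prop:TransverseSpace} in turn, the heart of the matter being the finite-depth statement~\ref{it:FiniteDepth}.

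For (i), completeness is built into the definition since $X_\xi$ is a metric completion; to see $X_\xi$ is \cat\ I would verify the (CN) inequality of Bruhat--Tits on the dense subset $X^*_\xi$ and propagate it to the completion. Given rays $r,r'$ pointing to $\xi$ and the constant $C$ of~(\ref{eq:d_xi}), a candidate midpoint ray is obtained by taking, for each $t$, the midpoint of $r(t)$ and $r'(t+C)$; since~(\ref{eq:d_xi}) expresses every $d_\xi$-distance as a limit of honest distances in $X$, the comparison inequality of $X$ passes to the limit, and completeness then upgrades this to the \cat\ property. For (ii), I would use that the canonical map $\pi\colon X\to X_\xi$ is $1$-Lipschitz, has dense image, and is equivariant, $g\cdot\pi(x)=\pi(gx)$ for $g\in\Isom(X)_\xi$ (the ray from $x$ to $\xi$ is carried by $g$ to the ray from $gx$ to $g\xi=\xi$). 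As $\Isom(X)_\xi$ carries the topology of uniform convergence on compacta, $g_n\to g$ forces $g_nx\to gx$, whence $d_\xi(g_n\pi(x),g\pi(x))\le d(g_nx,gx)\to 0$; thus $g\mapsto g\cdot y$ is continuous for every $y$ in the dense set $\pi(X)$, and since all the $g$ act isometrically this extends to all of $X_\xi$, which is exactly continuity of $\Isom(X)_\xi\to\Isom(X_\xi)$.

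For (iii) I take bounded geometry to mean a uniform packing bound: for all $R,\eps>0$ there is $N_X(R,\eps)$ bounding the number of $\eps$-separated points in any $R$-ball. Given $\eps$-separated points $y_1,\dots,y_m$ in an $R$-ball of $X_\xi$, I would represent them (up to arbitrarily small error, by density of $\pi(X)$) by rays $r_i$ to $\xi$ and push their basepoints towards $\xi$: by~(\ref{eq:d_xi}) the points $z_i=r_i(T)$, placed on a common horosphere, satisfy $d(z_i,z_j)\to d_\xi(y_i,y_j)$ as $T\to\infty$. Hence for large $T$ the $z_i$ are roughly $\eps$-separated and lie in a ball of radius $\approx R$ in $X$, so $m\le N_X(2R,\eps/2)$; this gives bounded geometry of $X_\xi$, and combined with the completeness from (i) it makes closed balls totally bounded and complete, hence compact, so $X_\xi$ is proper.

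The finite-depth assertion~\ref{it:FiniteDepth} is where I expect the real work, and I would isolate it as the main obstacle. The plan is to exhibit a finite, integer-valued invariant that strictly decreases under passage to a transverse space with non-empty boundary; the natural candidate is the geometric (covering) dimension. A uniform packing bound forces finite covering dimension $n=\dim X<\infty$, and the transverse-space construction at a boundary point strictly lowers it, $\dim X_\xi\le\dim X-1$ (heuristically the direction pointing to $\xi$ is split off, which is the content of the dimension theory of finite-dimensional \cat\ spaces). Granting this, a refining sequence $(\xi_1,\dots,\xi_k)$ requires each intermediate transverse space to be unbounded, hence of dimension $\ge 1$, forcing $k\le\dim X\le n$, so the depth is finite. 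Establishing the strict dimension drop cleanly is the delicate step.

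Finally, for~\ref{it:cocompact}, let $H\le\Isom(X)_\xi$ act cocompactly, so $X=H\cdot K$ with $K\subseteq X$ compact (replacing $H$ by its closure alters neither orbit closures nor cocompactness). The composite $q\circ\pi\colon X\to X_\xi/H$ is $H$-invariant and continuous, hence factors through a continuous $f\colon X/H\to X_\xi/H$ with compact domain, whose image is dense because $\pi$ has dense image and quotient maps are open. To see the image is everything, take $\bar z\in X_\xi/H$ and a sequence with $q\pi(x_m)\to\bar z$; writing $x_m=h_mk_m$ with $k_m\to k\in K$, invariance gives $q\pi(x_m)=q\pi(k_m)\to q\pi(k)$, so $\bar z$ and the image point $q\pi(k)$ are at quotient-distance $0$. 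Passing to the metric quotient therefore identifies every point with one in the compact image of $f$, so $X_\xi/H$ is compact, which is cocompactness of the $H$-action; the final clause follows by taking $H=\Isom(X)_\xi$ at a cocompact point. The one subtlety is that $X_\xi$ need not be proper, so the argument is carried out at the level of the Hausdorffified quotient rather than by producing a single group element realising the infimum.
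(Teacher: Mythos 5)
Your parts (i), (ii), (iii) and (v) are essentially workable sketches, and it is worth noting that the paper itself does not argue them at all: it cites Proposition~4.3 and Corollary~4.4 of~\cite{CapraceTD} for (i)--(iv) and calls (v) straightforward, so you are attempting something more self-contained. Two repairs are needed. In (i), the curve $t\mapsto \mathrm{midpoint}\bigl(r(t), r'(t+C)\bigr)$ is in general \emph{not} a geodesic ray of $X$, hence not an element of $X^*_\xi$; the correct move is to use the ray from such a midpoint to $\xi$ as an \emph{approximate} midpoint, combine this with the four-point \cat inequality (which, unlike the CN inequality, needs no midpoints and visibly passes to the limit in~(\ref{eq:d_xi})), and then invoke completeness. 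In (v), compactness of the Hausdorffified metric quotient yields a compact set $C\subseteq X_\xi$ with $H\cdot C=X_\xi$ only when $X_\xi$ is locally compact; in the intended setting this is supplied by (iii) (or simply by taking the closed $1$-neighbourhood of $\pi(K)$ in a proper $X_\xi$), but your final step uses it silently while claiming to avoid properness.

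The genuine gap is in (iv). Your induction rests on the claim that a uniform packing bound forces $\dim X<\infty$, and this is false: the introduction of this very paper cites the Monod--Py examples~\cite{Monod-Py} of proper \emph{cocompact} --- hence bounded geometry --- minimal CAT($-1$) spaces that are infinite-dimensional. So the covering dimension of $X$ cannot serve as the finite invariant decreasing along a refining sequence, and the argument never gets started. Moreover, the mechanism you invoke for the dimension drop ("splitting off the direction towards $\xi$") is in substance the isometric embedding $\RR\times X_\xi\to X$ of Proposition~\ref{prop:lift:facile}, which requires $\xi$ to be a \emph{radial limit point}; bounded geometry alone does not provide radiality. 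The invariant that actually works is not the dimension of $X$ but the dimension of the \emph{Tits boundary}: under cocompactness it is finite by Kleiner's theorem (Theorem~C of~\cite{Kleiner}, which identifies it with the flat rank minus one), every boundary point is then radial, and each passage to a transverse space drops it by at least one (Corollary~\ref{cor:dim_chute}); this is precisely how the paper deduces that the depth equals the flat rank in Corollary~\ref{cor:flat_rank}. For the bounded-geometry statement itself the paper simply defers to Corollary~4.4 of~\cite{CapraceTD}; in any case a repair of your proof must replace $\dim X$ by a boundary invariant, not merely tighten the dimension-drop step you flagged as delicate.
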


\begin{proof}
For (i), (ii) and (iii), see Proposition~4.3 from~\cite{CapraceTD}. As for~(\ref{it:FiniteDepth}), it follows from Corollary~4.4 in~\cite{CapraceTD}. The last item is straightforward.
\end{proof}

\begin{remark}\label{rem:disc}
Given a proper \cat space $X$ and a closed subgroup $G  \se \Isom(X)$ fixing a point $\xi \in \bd X$, we warn the reader that the image $\omega_\xi(G)$ of $G$ under the horoaction need not be closed in $\Isom(\RR \times X_\xi)$. Indeed, consider the \cat space $X = H \times T$, defined as  the product of the hyperbolic plane $H =  \HH^2$ and the $p+1$-regular tree $T$, where $T$ is  viewed as the Bruhat--Tits tree of $\SL_2(\QQ_p)$. Let $G \se \Isom(X)$ be the infinite cyclic group 
$G =\{ \left(\begin{array}{cc}
1 & z\\
0 & 1
\end{array}\right) \; |  \; z \in \ZZ\}$ diagonally embedded in $\SL_2(\RR) \times \SL_2(\QQ_p) \se \Isom(X)$. Then $G$ is discrete, hence closed in $\Isom(X)$. It fixes a point $\xi$ at infinity of the $\HH^2$-factor, annihilates the Busemann character $\beta_\xi$ associated to that point, and fixes a point $x \in T$ in the tree factor.  

%The geodesic ray $\rho$ emanating from $x$ and pointing to $\xi$ represents a point of the transverse space $X_\xi$ which is fixed by $H$. 
Since $\xi$ lies on the boundary of the factor $H$, we have $X_\xi = (H \times T)_\xi  = H_\xi \times T \cong T $ since $H_\xi$ is reduced to a singleton because $H$ is CAT($-1$). In other words there is a canonical isometry  $T \to X_\xi$   which is $\Isom(X)_\xi$-equivariant. 
In particular the closure of the image $\omega_\xi(G)$ of $G$ under the horoaction in  $\Isom(\RR \times X_\xi)$ is isomorphic to the closure of $G$ in $\SL_2(\QQ_p)$, and is thus compact and infinite. In particular $\omega_\xi(G)$ is  not closed. 
\end{remark}

%%%%%%%%%%%%%%%%%%%%%%%%%%%%%%%%%%%%%%%%%%%%%%%%%%%%%%%%%%%%%%%%%%%%%%%%%%%
\subsection{Lifting the transverse space}
%%%%%%%%%%%%%%%%%%%%%%

\begin{flushright}
\begin{minipage}[t]{0.75\linewidth}\itshape\small
\ldots quelque chose d'aussi na\"if que l'entreprise d'atteindre l'horizon en marchant devant soi.
\begin{flushright}
\upshape Marcel Proust, \emph{\`A la recherche du temps perdu -- Albertine disparue}, 1927
\end{flushright}
\end{minipage}
\end{flushright}

\vspace{.5cm}

Our next goal is to construct an isometric section $X_\xi\to X$ for the projection $X\to X_\xi$ when $\xi$ is a cocompact point at infinity (or opposite a cocompact point). We first record in Proposition~\ref{prop:lift:facile} below an elementary observation which is valid under a much weaker assumption.

\begin{defn}\label{defn:radial}
Let $G$ be any group acting by isometries on a \cat space $X$. We call $\xi\in \bd X$ a \textbf{radial limit point} (of $G$) if there is a sequence $\{g_n\}$ in $G$ such that for some ray $r_0\colon\RR^+\to X$ pointing to $\xi$, the sequence $g_n\inv r_0(0)$ converges to $\xi$ while remaining at bounded distance of $r_0$.  
\end{defn}

Notice that in that case, the condition holds for all rays $r$ pointing to $\xi$. In fact, $g_n\inv x_0$ converges to $\xi$ while remaining at bounded distance of $r$ for any $x_0\in X$. Therefore we shall simply call such a sequence $\{g_n\}$ \textbf{radial for $\xi$}.

\begin{remarks}
\ 
\begin{enumerate}
\item If $X$ is cocompact, then any $\xi\in\bd X$ is a radial limit point.
\item This definition should be distinguished from certain notions of \emph{conical} limit points that are not equivalent in the absence of negative curvature (for instance, the terminology of~\cite{Hattori} is incompatible with that of~\cite{Albuquerque}).
\item When no $G$ is given, it is always understood that $G=\Isom(X)$.
\end{enumerate}
\end{remarks}

For radial limit points, a simple limiting argument provides the following weak form of lifting, to be strengthened for cocompact points in Theorem~\ref{thm:lifting} below.

\begin{prop}\label{prop:lift:facile}
Let $X$ be a proper \cat space. For every radial limit point $\xi\in\bd X$, there exists an isometric embedding $\RR\times X_\xi\to X$.
\end{prop}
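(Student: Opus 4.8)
The plan is to realise $\Phi\colon\RR\times X_\xi\to X$ as a geometric limit under the radial sequence, and then to verify that it is isometric by exhibiting flat strips between the lines it produces. Fix a basepoint $x_0$ and let $\{g_n\}$ be radial for $\xi$, so that $g_n\inv x_0\to\xi$ while staying within bounded distance of a fixed ray $r_0$ pointing to $\xi$; accordingly choose parameters $s_n\to\infty$ with $d(g_n\inv x_0, r_0(s_n))$ bounded. For each $\bar x\in X^*_\xi$ pick a representing ray $r_{\bar x}$, parametrised by arclength and normalised so that the Busemann character is matched, i.e. $\beta_\xi(r_{\bar x}(0))$ takes the same value for all $\bar x$. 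With this normalisation the constant $C$ in~\eqref{eq:d_xi} vanishes, so $d_\xi(\bar x,\bar y)=\lim_{t\to\infty}d(r_{\bar x}(t),r_{\bar y}(t))$.

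First I would check that, for fixed $a$ and $\bar x$, the points $g_n\,r_{\bar x}(s_n+a)$ remain in a bounded region: since $d(g_n\,r_{\bar x}(s_n+a),x_0)=d(r_{\bar x}(s_n+a),g_n\inv x_0)$, this is controlled by the triangle inequality through $r_0(s_n)$ using the matched Busemann levels. As $X$ is proper, a diagonal extraction over a countable dense subset $D\se X^*_\xi$ and over $a\in\QQ$ yields a subsequence along which $\Phi(a,\bar x):=\lim_n g_n\,r_{\bar x}(s_n+a)$ exists for all $(\bar x,a)\in D\times\QQ$. Because the $g_n$ are isometries, $d(\Phi(a,\bar x),\Phi(b,\bar y))=\lim_n d(r_{\bar x}(s_n+a),r_{\bar y}(s_n+b))$. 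Two special cases are immediate: for $\bar x=\bar y$ this gives $|a-b|$, so $a\mapsto\Phi(a,\bar x)$ is a geodesic line $\gamma_{\bar x}$; and for $a=b$ it gives $\lim_{t\to\infty}d(r_{\bar x}(t),r_{\bar y}(t))=d_\xi(\bar x,\bar y)$, independent of $a$, since $s_n+a\to\infty$. Hence the lines $\gamma_{\bar x}$ and $\gamma_{\bar y}$ are at constant distance $d_\xi(\bar x,\bar y)$.

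The heart of the matter, and the step I expect to be the main obstacle, is to upgrade this constant-distance information to the full product (Pythagorean) formula $d(\Phi(a,\bar x),\Phi(b,\bar y))=\sqrt{(a-b)^2+d_\xi(\bar x,\bar y)^2}$. For this I would invoke the Flat Strip Theorem~\cite{Bridson-Haefliger}: two geodesic lines of a \cat space that remain at bounded distance bound a flat strip $\RR\times[0,D]$. Since $\gamma_{\bar x}$ and $\gamma_{\bar y}$ are at constant distance $D=d_\xi(\bar x,\bar y)$, they are the two boundary lines of such a strip; and as $d\big((s,0),(s',D)\big)=\sqrt{(s-s')^2+D^2}$ equals $D$ only when $s=s'$, the equality $d(\gamma_{\bar x}(a),\gamma_{\bar y}(a))=D$ forces the two parametrisations to be aligned, which yields the Pythagorean formula. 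This is exactly the statement that $\Phi$ is distance-preserving on $D\times\QQ$ for the product metric of $\RR\times X_\xi$.

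Finally, since $X$ is complete and $D\times\QQ$ is dense in the complete space $\RR\times X_\xi$ (because $X_\xi$ is the completion of $X^*_\xi\supseteq D$), the distance-preserving map extends uniquely to an isometric embedding $\RR\times X_\xi\to X$, as required. I would stress that no uniqueness or canonicity of $\Phi$ is needed here, only its existence after passing to the extracted subsequence; the sharper, canonical lifting is reserved for the cocompact case in Theorem~\ref{thm:lifting}.
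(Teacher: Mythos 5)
Your construction of the limit map is essentially the one the paper uses: both proofs push rays forward by the radial sequence, extract limits using properness, and obtain a family of parallel geodesic lines indexed by $X^*_\xi$. The difference lies in how the product structure is then extracted, and this is exactly where your write-up has a genuine gap --- at the step you yourself single out as the heart of the matter. The Flat Strip Theorem \cite[II.2.13]{Bridson-Haefliger} applied to $\gamma_{\bar x}$ and $\gamma_{\bar y}$ produces a strip $\RR\times[0,D']$ whose width $D'$ is the \emph{infimal} distance between the two lines, not the constant value $D=d_\xi(\bar x,\bar y)$ computed at matched parameters. Inside such a strip the second line may a priori be parametrised with a shift, $\gamma_{\bar y}(t)=(t+\tau,D')$, and then $d(\gamma_{\bar x}(t),\gamma_{\bar y}(t))=\sqrt{\tau^2+D'^2}$ is constant in $t$; so constancy at matched parameters is compatible with any pair $(\tau,D')$ satisfying $\tau^2+D'^2=D^2$ and does not by itself force $\tau=0$. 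Your sentence ``equals $D$ only when $s=s'$'' silently assumes the strip width is $D$, which is precisely what must be proved. The repair is short: for all $a,b$ one has $d(\gamma_{\bar x}(a),\gamma_{\bar y}(b))=\lim_n d\big(r_{\bar x}(s_n+a),r_{\bar y}(s_n+b)\big)\geq \inf_{t,t'}d\big(r_{\bar x}(t),r_{\bar y}(t')\big)=d_\xi(\bar x,\bar y)$, because the $g_n$ are isometries and $d_\xi$ is by definition an infimum. Hence the infimal distance between the two lines equals $D$, so $D'=D$ and $\tau=0$, and your Pythagorean formula follows.

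A second, more minor point: the normalisation ``all representing rays start at the same Busemann level'' is not available in general, since a \cat space need not be geodesically complete, so a ray cannot be prolonged backwards to reach a prescribed level; there need not exist any level $c_0$ lying below the starting levels of representatives of a dense set of classes. This is harmless: parametrise each ray by its Busemann level (an affine reparametrisation with domain $[c_{\bar x},\infty)$); since you only evaluate at parameters $s_n+a\to\infty$, nothing else in your argument changes. For comparison, the paper's proof avoids both issues at once: it measures the \emph{Hausdorff} distance between the limit lines, shows it equals $d_\xi$ exactly (the lower bound is the same infimum argument as above, the upper bound comes from the monotonicity of $t\mapsto d(r(t),r'(t+C))$), and then invokes \cite[II.2.14]{Bridson-Haefliger}, which states that the union of all lines parallel to a fixed line is a closed convex set splitting as $\RR\times T$ with $T$ complete. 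That yields the embedding of $\RR\times X_\xi$ globally, with no pairwise alignment argument and with completeness of $T$ replacing your density-and-extension step.
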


The proposition has the following immediate consequence for the geometric dimension of the Tits boundary; a stronger statement for cocompact spaces will be given in Corollary~\ref{cor:flat_rank} below.

\begin{cor}\label{cor:dim_chute}
If $\xi\in\bd X$ is a radial point of the proper \cat space $X$, then $\dim(\bd X_\xi) \leq \dim(\bd X)- 1$.\qed
\end{cor}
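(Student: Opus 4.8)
The plan is to read off the bound directly from the isometric embedding $\iota\colon\RR\times X_\xi\to X$ furnished by Proposition~\ref{prop:lift:facile}, by passing to Tits boundaries and comparing geometric dimensions. The first step I would carry out is to observe that the image $\iota(\RR\times X_\xi)$ is \emph{convex} in $X$: since $X$ is uniquely geodesic and $\iota$ preserves distances, the $\iota$-image of a geodesic segment is a path realising the distance between its endpoints, hence is the geodesic segment of $X$ joining them. Thus $\iota$ sends geodesic rays to geodesic rays and induces a map $\bd(\RR\times X_\xi)\to\bd X$. Because the image is convex, the Tits angle between two boundary points of the image — which is governed by the asymptotic growth of the distance $d\big(r_1(t),r_2(t)\big)$ between rays, a quantity preserved by $\iota$ — is the same whether computed inside the image or inside $X$. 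Hence this boundary map is a Tits-isometric embedding onto a Tits-convex subset of $\bd X$, and in particular $\dim\bd(\RR\times X_\xi)\le\dim\bd X$.

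The second step is to identify the source boundary as a spherical suspension. Since $\bd\RR=S^0$ consists of two points at Tits distance $\pi$, the join description of the Tits boundary of a product gives a canonical isometry $\bd(\RR\times X_\xi)\cong S^0*\bd X_\xi$, i.e.\ the spherical suspension of $\bd X_\xi$. Invoking the additivity of Kleiner's geometric dimension under spherical joins, $\dim\big(S^0*\bd X_\xi\big)=\dim(S^0)+\dim\bd X_\xi+1=\dim\bd X_\xi+1$. Combining this with the inequality from the first step yields $\dim\bd X_\xi+1\le\dim\bd X$, which is exactly the claimed bound $\dim\bd X_\xi\le\dim\bd X-1$.

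The only point requiring genuine care — and hence the main obstacle — is the first step: one must check that an isometric (that is, distance-preserving) embedding of \cat spaces has \emph{convex} image, and that convexity upgrades the induced boundary map from merely Tits-nonexpanding to honestly Tits-isometric, so that no collapse of dimension occurs. Everything else is an assembly of standard facts: the join decomposition of the Tits boundary of a product (Bridson--Haefliger), the behaviour of geometric dimension under spherical joins and under passage to convex (Tits-closed) subspaces. I expect no further difficulty once convexity of the image is secured.
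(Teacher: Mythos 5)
Your proof is correct and is exactly the argument the paper leaves implicit: the corollary is stated as an immediate consequence of Proposition~\ref{prop:lift:facile}, the point being that the isometric embedding $\RR\times X_\xi\to X$ has convex image (by unique geodesics), so that $\bd X$ contains a Tits-isometric copy of $\bd(\RR\times X_\xi)\cong S^0\ast\bd X_\xi$, whose dimension is $\dim(\bd X_\xi)+1$. Your identification of the convexity of the image, and the resulting agreement of Tits angles computed in the image and in $X$, as the one point needing verification matches the paper's intended reasoning, with the join decomposition and the behaviour of dimension under spherical joins being standard, as you say.
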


\begin{proof}[Proof of Proposition~\ref{prop:lift:facile}]
Let $r_0$, $\{g_n\}$ be as in Definition~\ref{defn:radial}. Let $t_n\geq 0$ be such that $r_0(t_n)$ is the projection of $g_n\inv  r_0(0)$ to $r_0$. Then, for any given $r$ in $X^*_\xi$, the sequence $g_n (r(t_n))$ remains bounded in $X$. Therefore there is a simultaneous accumulation point for the family (indexed by $r \in X^*_\xi$) of sequences of isometric maps
$$[-t_n, \infty) \lra X,\kern10mm t\longmapsto g_n \big(r(t+t_n)\big),$$
uniformly on bounded intervals. This accumulation point is therefore a map $f^*$ from $X^*_\xi$ to the set of isometric maps $\RR\to X$. By construction, any two maps in the image of $f^*$ remain at bounded distance. More precisely, the (Hausdorff) distance between $f^*(r)(\RR)$ and $f^*(r')(\RR)$ is exactly $d_\xi(r, r')$ for any $r, r'\in X^*_\xi$. Denote by $T$ the metric space of all lines at finite distance of $f^*(r_0)(\RR)$ endowed with Hausdorff distance.  The union of all these lines is a convex subset of $X$ isometric to the product $\RR\times T$, see~\cite[II.2.14]{Bridson-Haefliger}; in fact this union is closed, i.e.\ $T$ is complete (compare e.g.\ the last sentence before Remark~40 in~\cite{Monod_superrigid}). The map $f^*\colon X^*_\xi\to T$ preserves (pseudo-)distances; therefore, $X_\xi$ is isometric to a subset of $T$, finishing the proof.
\end{proof}

Following~\cite{Caprace-Monod_structure}, we say that a point $\xi' \in \bd X$ is \textbf{opposite}  to $\xi$ if there is a geodesic line in $X$ whose extremities are $\xi$ and $\xi'$.  We say that $\xi' $ is \textbf{antipodal}  to $\xi$ if $\tangle \xi {\xi'} = \pi$. Thus the set $\Op(\xi)$ of points opposite to $\xi$ is contained in the set $\Ant(\xi)$ consisting of all antipodes of $\xi$. It is shown in~\cite[Proposition~7.1]{Caprace-Monod_structure} that if $\Isom(X)_\xi$ acts cocompactly, then $\Op(\xi)$ is non-empty and any closed cocompact subgroup of $\Isom(X)_\xi$ acts transitively on $\Op(\xi)$.

\smallskip

Given $\xi' \in \Op(\xi)$, we denote by $P(\xi, \xi')$ the union of all geodesic lines joining $\xi$ to $\xi'$. The set $P(\xi, \xi')$ is closed and convex in $X$; moreover there is a canonical isometric identification
\begin{equation}\label{eq:P}
P(\xi, \xi') \cong \RR \times Y
\end{equation}
for some complete \cat space $Y$ (see~\cite[II.2.14]{Bridson-Haefliger}). When $\xi$ is cocompact, the following Theorem identifies $Y$ with $X_\xi$ and in particular provides an isometric section $X_\xi\to X$ for the canonical map $X\to X_\xi$.

\begin{thm}\label{thm:lifting}
Assume that $\xi$ is cocompact.

Then, for any $\xi'\in\Op(\xi)$, the factor $Y$ in~(\ref{eq:P}) is canonicallly isometrically identified with $X_\xi$ under the map $X\to X_\xi$. Thus, there is a canonical isometric identification $P(\xi, \xi') \cong \RR \times X_\xi $ which is $\Isom(X)_{\xi, \xi'}$-equivariant, the space $\RR \times X_\xi $ being endowed with the horoaction.
\end{thm}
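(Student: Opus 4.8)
The plan is to realise the identification as the restriction of the canonical $1$-Lipschitz projection $\pi\colon X\to X_\xi$ to the parallel set $P(\xi,\xi')$, to show that this restriction is an isometric embedding by a direct computation, and then to prove surjectivity by feeding the weak lift of Proposition~\ref{prop:lift:facile} into the transitivity of the stabiliser on $\Op(\xi)$.

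First, the easy direction. Orient the splitting $P(\xi,\xi')\cong\RR\times Y$ of~(\ref{eq:P}) so that $\xi$ is the $+\infty$ end of the $\RR$-factor. For $(t,y)\in\RR\times Y$ the geodesic ray from $(t,y)$ to $\xi$ runs along $\RR\times\{y\}$, so its class in $X_\xi$ is independent of $t$; hence $\pi$ restricted to $P(\xi,\xi')$ factors through the projection $\RR\times Y\to Y$ and yields a well-defined map $\bar\pi\colon Y\to X_\xi$. Writing $r_i\colon s\mapsto(s,y_i)$ for the rays associated with $y_1,y_2\in Y$ and using the product metric, one computes
$$d_\xi(r_1,r_2)=\inf_{s,s'\geq 0}\sqrt{(s-s')^2+d_Y(y_1,y_2)^2}=d_Y(y_1,y_2),$$
so $\bar\pi$ is an isometric embedding. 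In particular its image is complete, hence closed in $X_\xi$.

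The main point is the surjectivity of $\bar\pi$. Since $\Isom(X)_\xi$ acts cocompactly on $X$, the point $\xi$ is a radial limit point, and Proposition~\ref{prop:lift:facile} provides an isometric embedding $\iota\colon\RR\times X_\xi\to X$ built from a radial sequence $\{g_n\}$ (Definition~\ref{defn:radial}). Each line $\iota(\RR\times\{x\})$ is asymptotic to $\xi$ at its forward end, and any two of them lie at finite Hausdorff distance; being parallel in the \cat space $X$ they share both endpoints at infinity, so they have a common backward endpoint $\xi''\in\Op(\xi)$ and $\iota(\RR\times X_\xi)\subseteq P(\xi,\xi'')$. Consider $\psi:=\pi\circ\iota(0,\cdot)\colon X_\xi\to X_\xi$. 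Unwinding the construction of $\iota$ and using that $\pi$ is continuous and $\Isom(X)_\xi$-equivariant, one gets $\psi(x)=\lim_n g_n\cdot x$ pointwise; as $X$ is cocompact and proper it has bounded geometry, whence $X_\xi$ is proper by Proposition~\ref{prop:TransverseSpace}, and a pointwise limit of isometries of a proper space is again a surjective isometry (the orbit of a basepoint under $\{g_n\inv\}$ stays bounded because $g_n$ is isometric). Thus $X_\xi=\psi(X_\xi)\subseteq\bar\pi''(Y'')\subseteq X_\xi$, where $\bar\pi''$ is the embedding of the previous paragraph attached to $\xi''$; hence $\bar\pi''$ is onto and the theorem holds for $\xi''$. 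For the prescribed $\xi'$, we invoke that $\Isom(X)_\xi$, being closed and cocompact, acts transitively on $\Op(\xi)$, and pick $g$ with $g\xi''=\xi'$: conjugating the picture by $g$ (which fixes $\xi$, maps $P(\xi,\xi'')$ isometrically onto $P(\xi,\xi')$ respecting the splittings, and acts on $X_\xi$) turns $\bar\pi''$ into $\bar\pi$ up to the isometry $g$ of $X_\xi$, so $\bar\pi$ is onto as well and is therefore an isometry $Y\to X_\xi$.

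Finally, naturality and the horoaction. The double stabiliser $\Isom(X)_{\xi,\xi'}$ fixes both endpoints, hence preserves $P(\xi,\xi')$ together with its canonical splitting, acting by a translation on the $\RR$-factor and by isometries on $Y$. The translation length on the $\RR$-factor is, by definition, the Busemann character $\beta_\xi$, while the action on $Y\cong X_\xi$ agrees with the canonical one because $\bar\pi$ is a restriction of the equivariant map $\pi$. Therefore the identification $P(\xi,\xi')\cong\RR\times X_\xi$ intertwines the $\Isom(X)_{\xi,\xi'}$-action with the horoaction, as claimed. I expect the main obstacle to be exactly the surjectivity argument: upgrading the weak lift of Proposition~\ref{prop:lift:facile} into a genuine surjection onto $X_\xi$ landing in the \emph{prescribed} parallel set, which is what forces both the properness-based limit-of-isometries argument and the transitivity of $\Isom(X)_\xi$ on $\Op(\xi)$.
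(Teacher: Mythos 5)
Your proof is correct, and its crucial step --- surjectivity of the canonical map $Y\to X_\xi$ --- runs along a genuinely different route than the paper's. The paper also starts from Proposition~\ref{prop:lift:facile}, but it chooses the radial sequence $\{g_n\}$ inside $\Isom(X)_\xi$ adapted to a geodesic line $r_0$ with $r_0(+\infty)=\xi$ and $r_0(-\infty)=\xi'$ (requiring $d(g_n r_0(0), r_0(-n))$ to stay bounded), so that the lifted lines have \emph{both} endpoints under control and the resulting embedding $\iota\colon X_\xi\to Y$ lands directly in the prescribed parallel set $P(\xi,\xi')$. Surjectivity then follows at one stroke: the composite $\theta\circ\iota$, where $\theta\colon Y\to X_\xi$ is the canonical map (your $\bar\pi$), is an isometric self-map of $X_\xi$, which is a \emph{cocompact} space by Proposition~\ref{prop:TransverseSpace}(\ref{it:cocompact}), and any isometric self-embedding of a cocompact space is onto by Proposition~4.5 of~\cite{Caprace-Sageev}. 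You, by contrast, only control the forward endpoint, so you land in some $P(\xi,\xi'')$ with $\xi''$ unprescribed; you then obtain surjectivity there by recognising $\pi\circ\iota(0,\cdot)$ as a pointwise limit of the surjective isometries that the $g_n$ induce on the \emph{proper} space $X_\xi$ (an elementary Arzel\`a--Ascoli-type argument), and finally you transport the conclusion from $\xi''$ to $\xi'$ via the transitivity of $\Isom(X)_\xi$ on $\Op(\xi)$ from~\cite[Proposition~7.1]{Caprace-Monod_structure}. The trade-off: the paper's sharper choice of radial sequence makes both your limit-of-isometries identification and your transitivity step unnecessary, at the price of quoting the Caprace--Sageev surjectivity result; your route replaces that citation by an elementary properness argument, at the price of an extra external input (transitivity on $\Op(\xi)$), which the paper in any case records just before the theorem.

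One point you should make explicit: your claims that the lifted lines are forward-asymptotic to $\xi$ and that $\pi$ intertwines $g_n$ with its induced action on $X_\xi$ are valid only if the radial sequence is chosen \emph{within} $\Isom(X)_\xi$. Definition~\ref{defn:radial} by itself does not give this: for a general radial sequence the forward endpoint of the limit lines is $\lim_n g_n\xi$, which may well differ from $\xi$ (this is precisely the situation $\eta\neq\xi$ handled in Proposition~\ref{prop:ro}). Your justification --- cocompactness of $\Isom(X)_\xi$ --- does produce such a sequence, and your later appeal to $\Isom(X)_\xi$-equivariance shows this is what you intend, but the choice should be stated, exactly as the paper does.
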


\begin{proof}
Let $r_0\colon \RR\to X$ be a geodesic line with $r_0(-\infty)=\xi'$ and $r_0(+\infty)=\xi$. Since $\xi$ is cocompact, there is a sequence $\{g_n\}$ in $\Isom(X)_\xi$ such that $d(g_n .r_0(0), r_0(-n))$ remains bounded. By convexity, $g_n (r_0(n))$ remains bounded in $X$ and therefore we are in particular in the situation of the proof of Proposition~\ref{prop:lift:facile}. We thus obtain an isometric map
$$\RR \times X_\xi \lra P(\xi, \xi') \cong \RR \times Y \ \se\ X$$
preserving the product decomposition, yielding in particular a (non-canonical) isometric embedding $\iota\colon X_\xi\to Y$. On the other hand, the restriction of $X\to X_\xi$ to $P(\xi, \xi')$ induces a canonical isometric map $\teta\colon Y\to X_\xi$ by the definition of $X_\xi$. Now $\teta\circ\iota$ is an isometric self-map of the space $X_\xi$ which is cocompact by Proposition~\ref{prop:TransverseSpace}(\ref{it:cocompact}). It follows that this self-map is onto (Proposition~4.5 in~\cite{Caprace-Sageev}) and thus $\teta$ is onto too. 

The action of $\Isom(X)_{\xi, \xi'}$ preserves the decomposition~(\ref{eq:P}) by construction and thus is a product action, see~\cite[I.5.3(4)]{Bridson-Haefliger}. The formula with $\beta_\xi(g)$ follows by direct computation.
\end{proof}

The cocompactness of $\xi$ does not imply the cocompactness of its opposites (as illustrated by Heintze manifolds~\cite{Heintze74}). Nevertheless, these opposites still enjoy the conclusions of Theorem~\ref{thm:lifting}. To make this statement precise, observe that the canonical isometry  $P(\xi, \xi') \cong \RR \times X_\xi $ induces a canonical isometric embedding $X_\xi \to X_{\xi'}$.

\begin{prop}\label{prop:OppositePair}
Let $\xi'$ be any point opposite a cocompact point $\xi$. Then the canonical isometric embedding $X_\xi \to X_{\xi'}$ is an isometry. In particular, $P(\xi, \xi') \cong \RR \times X_{\xi'}$ and this identification is compatible with the horoaction $\omega_{\xi'}$ of $\Isom(X)_{\xi, \xi'}$.
\end{prop}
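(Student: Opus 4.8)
The plan is to reduce the statement to the surjectivity of the canonical isometric embedding $\iota\colon X_\xi\to X_{\xi'}$ introduced just before the proposition. Indeed, once $\iota$ is known to be an isometry, the identification $P(\xi,\xi')\cong\RR\times X_\xi\cong\RR\times X_{\xi'}$ and the compatibility with the horoaction $\omega_{\xi'}$ follow exactly as in the last paragraph of the proof of Theorem~\ref{thm:lifting}: the $\RR$-factor is the very same geodesic direction, merely reoriented, so that the translation recorded by the $\Isom(X)_{\xi,\xi'}$-action is now $\beta_{\xi'}=-\beta_\xi$, matching the definition of $\omega_{\xi'}$. So the whole content is that $\iota$ is onto.

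To prove surjectivity of $\iota$, I would manufacture an isometric embedding in the opposite direction, $\bar\lambda\colon X_{\xi'}\to X_\xi$, and then note that $\bar\lambda\circ\iota$ is an isometric self-map of $X_\xi$. Since $\xi$ is cocompact, $X_\xi$ is a cocompact \cat space by Proposition~\ref{prop:TransverseSpace}(\ref{it:cocompact}), so any isometric self-embedding of $X_\xi$ is onto by Proposition~4.5 in~\cite{Caprace-Sageev}. This forces $\bar\lambda$ to be onto, hence an isometry, and therefore $\iota=\bar\lambda\inv\circ(\bar\lambda\circ\iota)$ is onto as well; being an isometric embedding, $\iota$ is then an isometry.

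The construction of $\bar\lambda$ is where the work lies, and the key idea is to run the lifting of Proposition~\ref{prop:lift:facile} at $\xi'$ using a radial sequence chosen \emph{inside} $\Isom(X)_\xi$. Such a sequence exists: fix a geodesic line $r_0$ from $\xi'$ to $\xi$ and let $r_0'$ be the ray $s\mapsto r_0(-s)$ pointing to $\xi'$; the cocompactness of $\Isom(X)_\xi$ yields $g_n\in\Isom(X)_\xi$ with $g_n\inv r_0(0)$ at bounded distance from $r_0(-n)$, so that $\{g_n\}$ is radial for $\xi'$ in the sense of Definition~\ref{defn:radial}. Feeding this sequence into Proposition~\ref{prop:lift:facile} produces a family of pairwise parallel geodesic lines $f^*(r)$ ($r\in X^*_{\xi'}$) whose union is a convex subset isometric to $\RR\times T'$, with $X_{\xi'}$ embedding isometrically into the cross-section $T'$. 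The crucial observation is that, because every $g_n$ fixes $\xi$, the backward ray of $f^*(r_0')$ is a limit of rays pointing to $\xi$ and hence itself points to $\xi$; thus $f^*(r_0')$, and by parallelism every line $f^*(r)$, runs from $\xi$ to the common point $\zeta:=f^*(r_0')(+\infty)\in\Op(\xi)$. Consequently the whole flat bundle lies in $P(\xi,\zeta)$, and $T'$ is exactly the second factor of $P(\xi,\zeta)\cong\RR\times X_\xi$ furnished by Theorem~\ref{thm:lifting} applied to the pair $(\xi,\zeta)$ (here $\xi$ is still cocompact). Composing the isometric embedding $X_{\xi'}\hookrightarrow T'$ with this identification $T'\cong X_\xi$ produces the desired map $\bar\lambda$.

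The main obstacle I anticipate is the endpoint bookkeeping of the previous paragraph. One must verify with care that the lifted lines genuinely emanate from $\xi$, so that $T'$ is a cross-section of a set $P(\xi,\zeta)$ rather than of some unrelated flat strip, and that the identification $T'\cong X_\xi$ does not hinge on the accidental value of $\zeta$. The point that makes this robust is precisely that Theorem~\ref{thm:lifting} identifies the transverse factor of $P(\xi,\zeta)$ with $X_\xi$ for \emph{every} $\zeta\in\Op(\xi)$, not only for $\zeta=\xi'$; thus I never need to control $\lim_n g_n\xi'$, only to know that the lifted lines share the endpoint $\xi$ and that their opposite endpoint lands in $\Op(\xi)$, which is automatic.
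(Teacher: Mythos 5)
Your proof is correct and follows essentially the same route as the paper's: you lift at $\xi'$ along a radial sequence lying in $\Isom(X)_\xi$ (the paper's $\{g_n\inv\}$ from the proof of Theorem~\ref{thm:lifting}), note that the lifted lines emanate from $\xi$ so that the cross-section $T'$ sits inside some $P(\xi,\zeta)$ with $\zeta\in\Op(\xi)$ (the paper's $\xi''$), identify $T'$ with $X_\xi$ by applying Theorem~\ref{thm:lifting} to the pair $(\xi,\zeta)$, and conclude from the mutual isometric embeddings by cocompactness of $X_\xi$ and Proposition~4.5 of~\cite{Caprace-Sageev}. Your explicit verification that the backward rays are limits of rays pointing to $\xi$, hence themselves point to $\xi$, is precisely the (unstated) justification behind the paper's assertion that ``one endpoint of the $\RR$-factor is $\xi$''.
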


\begin{proof}
Keep the notation of the proof of Theorem~\ref{thm:lifting} and consider the ray $r^-_0\colon \RR^+\to X$ defined by $r^-_0(t) = r_0(-t)$. Observe that the sequence $\{g_n\inv\}$ is as required in the proof of Proposition~\ref{prop:lift:facile} with $\xi'$ instead of $\xi$. We thus obtain an isometric embedding $\RR\times X_{\xi'}\to X$ such that one endpoint of the $\RR$-factor is $\xi$; we denote the opposite endpoint by $\xi''$. We therefore have an isometric embedding of $\RR\times X_{\xi'}\to X$ into $P(\xi'', \xi) \cong \RR\times Z$ (for some $Z$) preserving the product structures. Moreover, Theorem~\ref{thm:lifting} provided canonical isometries
$$P(\xi', \xi)\ \cong\ P(\xi'', \xi)\ \cong\ \RR\times X_\xi$$
preserving the product structures. At this point we have mutual isometric embeddings of $\RR\times X_{\xi'}$ and $\RR\times X_{\xi}$ into each other and conclude as in the proof of Theorem~\ref{thm:lifting} since $\RR\times X_{\xi}$ is cocompact by Proposition~\ref{prop:TransverseSpace}(\ref{it:cocompact}).
\end{proof}

\begin{remark}
It follows from Theorem~\ref{thm:lifting} and Proposition~\ref{prop:OppositePair} that if $\xi \in \bd X$ is a cocompact point, or if it is opposite a cocompact point, then the (non-Hausdorff) space $X^*_\xi$ is already complete and the map $X\to X_\xi$ is onto. 
\end{remark}

%%%%%%%%%%%%%%%%%%%%%%%%%%%%%%%%%%%%%%%%%%%%%%%%%%%%%%%%%%%%%%%%%%%%%%%%%%%
\subsection{Existence of a `Levi decomposition'}\label{sec:Levi}
%%%%%%%%%%%%%%%%%%%%%%

The following result will be our tool to investigate large stabilisers of points at infinity. Incidentally, it refines Proposition~7.1 in~\cite{Caprace-Monod_structure} which stated that if $G_\xi$ acts cocompactly on $X$, then it acts transitively on $\Op(\xi)$.

\begin{thm}\label{thm:Levi:bis}
Let $G <\Isom(X)$ be a closed subgroup and assume $G_\xi$ acts cocompactly on $X$. Let $N<G_\xi$ be a subgroup that is normalised by some radial sequence $\{h_n\}$ in $G_\xi$ for $\xi$. Let $\xi'\in\bd X$ be a limit point of $\{h_n\}$; hence $\xi' \in \Op(\xi)$.

Then, writing  $N^\mathrm{u}=N\cap \Ker(\omega_\xi)$ where $\omega_\xi$ is the horoaction, we have a decomposition
$$N = N_{\xi'} \cdot N^\mathrm{u} $$
which is almost semi-direct in the sense that $N_{\xi'} \cap N^\mathrm{u} $ has compact closure. In particular $N^\mathrm{u}$ acts transitively on the $N$-orbit of $\xi'$ in
$\Op(\xi)$.
\end{thm}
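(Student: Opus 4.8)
The plan is to treat the two assertions separately: first the relative compactness of $N_{\xi'}\cap N^\mathrm{u}$, then the factorisation $N=N_{\xi'}\cdot N^\mathrm{u}$, from which the stated transitivity is a formal consequence. For the latter implication I would first record that $N^\mathrm{u}=N\cap\Ker(\omega_\xi)$ is \emph{normal} in $N$, since $\Ker(\omega_\xi)=G_\xi^\mathrm{u}$ is normal in $G_\xi$ by Remark~\ref{rem:horokernel}. Hence $N=N_{\xi'}\cdot N^\mathrm{u}$ is equivalent to $N=N^\mathrm{u}\cdot N_{\xi'}$ (apply inversion), and writing $g=u\ell$ with $u\in N^\mathrm{u}$, $\ell\in N_{\xi'}$ gives $g\xi'=u\ell\xi'=u\xi'\in N^\mathrm{u}\xi'$; thus $N\xi'=N^\mathrm{u}\xi'$, which is the asserted transitivity.

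For the relative compactness the crucial input is Theorem~\ref{thm:lifting}: as $\xi$ is cocompact and $\xi'\in\Op(\xi)$, it provides an $\Isom(X)_{\xi,\xi'}$-equivariant identification $P(\xi,\xi')\cong\RR\times X_\xi$ intertwining the ambient action with the horoaction. An element of $N_{\xi'}\cap N^\mathrm{u}$ fixes both $\xi$ and $\xi'$ and lies in $\Ker(\omega_\xi)$, so it acts trivially on $\RR\times X_\xi$ and therefore fixes $P(\xi,\xi')$ pointwise. Choosing any $p\in P(\xi,\xi')$, such an element lies in $\Isom(X)_p$, which is compact because $X$ is proper; hence $N_{\xi'}\cap N^\mathrm{u}$ is contained in a compact set and in particular has compact closure.

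The heart of the matter is the factorisation, for which I would run a contracting-conjugation argument driven by the normalising radial sequence. Given $g\in N$, form $g_n:=h_n g h_n\inv$, which lie in $N$ precisely because $\{h_n\}$ normalises $N$. Since $X$, and hence $X_\xi$ by Proposition~\ref{prop:TransverseSpace}, is proper, I would extract a subsequential limit $g_n\to\ell$ in $\Isom(X)$; as $G_\xi$ is closed, $\ell$ fixes $\xi$. The point of conjugating by a sequence that is radial for $\xi$ with limit $\xi'$ is that it attracts $\xi'$ and repels $\xi$, and so contracts the horospherical factor $G_\xi^\mathrm{u}=\Ker(\omega_\xi)$; this should force $\ell$ to fix $\xi'$ and to satisfy $\omega_\xi(\ell)=\omega_\xi(g)$, i.e. $u:=\ell\inv g\in\Ker(\omega_\xi)$. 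Granting this, $g=\ell u$ is the sought algebraic Levi decomposition, with $\ell\in\Isom(X)_{\xi,\xi'}$ and $u\in\Ker(\omega_\xi)$.

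The main obstacle — and the step that genuinely uses the normalisation hypothesis rather than the bare existence of a radial sequence — is to ensure that the two factors lie in $N$ itself and not merely in its closure: a priori $\ell=\lim g_n$ only belongs to $\overline N$, and correspondingly $u\in\overline N\cap\Ker(\omega_\xi)$. The natural route is to exhibit $u$ honestly in $N^\mathrm{u}$: note that $u\inv=g\inv\ell=\lim g\inv g_n=\lim[g\inv,h_n]$ is a limit of elements of $N$, and, reading off the product structure of the horoaction on $\RR\times X_\xi$ (where the $\RR$-coordinates always commute), the only obstruction to $[g\inv,h_n]$ lying in $\Ker(\omega_\xi)$ is the failure of the $X_\xi$-parts of $h_n$ and $g$ to commute — a failure that the contraction annihilates in the limit. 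Pinning this limit inside $N^\mathrm{u}$ rather than $\overline{N^\mathrm{u}}$ is the delicate point; it rests on a properness/closedness property of the $N^\mathrm{u}$-action near $\xi'$, and this is exactly where it matters that $\xi'$ is constrained to be the limit of the \emph{normalising} sequence (in the extreme case $\xi'$ is fixed by all of $N$, and the decomposition degenerates to $N=N_{\xi'}$). Once $u\in N^\mathrm{u}$ is secured, $\ell=gu\inv\in N$ automatically fixes $\xi'$, so $\ell\in N_{\xi'}$ and $N=N_{\xi'}\cdot N^\mathrm{u}$, as required.
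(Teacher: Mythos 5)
Your reduction of the transitivity claim to the factorisation, and your proof that $N_{\xi'}\cap N^\mathrm{u}$ has compact closure (trivial horoaction plus invariance of $P(\xi,\xi')$, then Theorem~\ref{thm:lifting} and properness of $X$), are correct and coincide with the paper's own arguments. The factorisation itself, however, contains a genuine gap, and it is not the one you flag at the end.

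The gap is the claim that $\ell:=\lim_n h_n g h_n\inv$ satisfies $\omega_\xi(\ell)=\omega_\xi(g)$, so that $u=\ell\inv g\in\Ker(\omega_\xi)$. Conjugation by $h_n$ acts on $\omega_\xi$-images as $\Ad(\omega_\xi(h_n))$: it is trivial on the $\RR$-coordinate (so $\beta_\xi(\ell)=\beta_\xi(g)$ does hold), but on the $X_\xi$-coordinate it is conjugation by the image $\bar h_n$ of $h_n$ in $\Isom(X_\xi)$. Radiality makes the sequence $\{\bar h_n\}$ bounded, hence convergent after extraction to some $\alpha\in\Isom(X_\xi)$; but $\alpha$ is in general non-trivial, and $\Ad(\bar h_n)\to\Ad(\alpha)$, not to the identity. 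Nothing is contracted in the $X_\xi$-direction: the contraction phenomenon concerns conjugates of elements of $\Ker(\omega_\xi)$, which accumulate on the compact pointwise stabiliser of $P(\xi,\xi')$, not the $X_\xi$-parts of general elements of $G_\xi$. Hence $\omega_\xi(\ell)=(\Id\times\Ad(\alpha))(\omega_\xi(g))$, whose $X_\xi$-part is $\alpha\bar g\alpha\inv$, and your $u$ lies in $\Ker(\omega_\xi)$ only when $\bar g$ commutes with $\alpha$. Concretely: let $X=\RR\times\EE^2$, $\xi$ an endpoint of the line factor, $k$ an irrational rotation of $\EE^2$ about $p_0$, $h_n=(-n,k^n)$ (radial for $\xi$, with limit point $\xi'$ opposite $\xi$), $N=G_\xi$, and $g=(0,\bar g)$ with $\bar g$ a translation by $v\neq 0$. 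Along a subsequence with $k^n\to\alpha$, $\alpha v\neq v$, one gets $u=\ell\inv g=(0,\text{translation by } v-\alpha v)\neq\Id$, while $\Ker(\omega_\xi)$ is trivial here; so your decomposition fails even though the theorem holds (trivially) for Euclidean space. In particular your closing concern --- pinning $u$ inside $N^\mathrm{u}$ rather than $\overline{N^\mathrm{u}}$ --- is not the delicate point: the element $u$ fails to be in $\Ker(\omega_\xi)$ at all.

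This is exactly why the paper uses the normalising sequence set-wise rather than element-wise. Since $h_nNh_n\inv=N$, each $\Ad(\omega_\xi(h_n))$ preserves the subgroup $\omega_\xi(N)$, hence so does the limiting automorphism $\Id\times\Ad(\alpha)$; combined with the commutative diagram $\omega_\xi\circ\ro=(\Id\times\Ad(\alpha))\circ\omega_\xi$, where $\ro$ is the cluster point of $\Ad(h_n)|_{G_\xi}$ (so that $\ro(N)\se N\cap G_{\xi'}=N_{\xi'}$ by Proposition~\ref{prop:ro}), this yields the set equality $\omega_\xi(N)=\omega_\xi(\ro(N))\se\omega_\xi(N_{\xi'})\se\omega_\xi(N)$. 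Given $g\in N$ one then chooses $\ell\in N_{\xi'}$ --- in general \emph{not} equal to $\ro(g)$ --- with $\omega_\xi(\ell)=\omega_\xi(g)$, and $u=\ell\inv g\in N\cap\Ker(\omega_\xi)=N^\mathrm{u}$ by construction, with no closure issue for $u$. (The closure issue you raise does surface in the paper's proof, but elsewhere: the inclusion $\ro(N)\se N$ uses that cluster points of sequences in $N$ lie in $N$, which is automatic when $N$ is closed, as in all the applications, e.g.\ $N=G_\xi$.) In short, the surjectivity of $\Id\times\Ad(\alpha)$ on the set $\omega_\xi(N)$, i.e.\ the normalisation hypothesis, is what replaces your false element-wise identity $\omega_\xi(\ell)=\omega_\xi(g)$, and without it your scheme cannot be repaired.
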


The theorem applies in particular when $N$ is a normal subgroup of $G_\xi$. In that case, the above decomposition hold for \emph{any} $\xi' \in \Op(\xi)$ since $G_\xi$ acts transitively on $\Op(\xi)$ by Proposition~7.1 in~\cite{Caprace-Monod_structure}. Thus we obtain Theorem~\ref{thm:Levi} as the case $N=G_\xi$, recalling Remark~\ref{rem:horokernel} for the notation $G_\xi^\mathrm{u}$.

\bigskip

We shall carry out the first part of the argument in a more general setting: \emph{for the time being, we only assume that $\xi$ is a radial limit point of $G$}.

\smallskip

Let $r_0$ be a ray pointing to $\xi$ and $\{g_n\}$ be any radial sequence as in Definition~\ref{defn:radial}. At this point there is no additional restriction on $\{g_n\}$, though for Theorem~\ref{thm:Levi:bis} we will later choose $g_n=h_n$. As in the proof of Proposition~\ref{prop:lift:facile}, we have (upon passing to subsequences) a sequence of numbers $t_n\geq 0$ such that the sequence of maps
$$[-t_n, \infty) \lra X,\kern10mm t\longmapsto g_n \big(r_0(t+t_n)\big)$$
converges to a geodesic line $\sigma\colon\RR\to X$ uniformly on bounded intervals. In particular, $g_n\xi$ converges to the point $\eta:=\sigma(\infty)$ and $g_nr_0(0)$ to $\eta':=\sigma(-\infty)$. We denote by $f\colon\RR\times X_\xi\to X$ the isometric embedding constructed in Proposition~\ref{prop:lift:facile} and observe that $f$ ranges in $P(\eta, \eta')$ by construction.

A convexity argument shows that the sequence $\Ad(g_n)(g)=g_n g g_n\inv$ remains bounded in $G$ whenever $g\in G_\xi$. Therefore, we can chose a map $\ro\colon G_\xi\to G$ that is a (point-wise) cluster point of the sequence $\Ad(g_n)|_{G_\xi}$. By construction, $\ro$ is a group homomorphism; it need not be continuous, see Example~\ref{ex:ro} below.

\begin{prop}\label{prop:ro}
In the above setting, the following hold.
\begin{enumerate}[(i)]
\item $\ro(G_\xi)\se G_{\eta, \eta'}$,\label{ro:pt:image}
%
%\item $\ro(G_\xi)$ preserves the image of $\RR\times X_\xi$ in $P(\eta, \eta')$,
%
\item $\Ker(\ro) \se \Ker(\omega_\eta\circ \ro) \se \Ker(\omega_\xi)$,\label{ro:pt:noyau}
%
%\item $\Ker
%
\item $\beta_\eta \circ \ro = \beta_\xi$,\label{ro:pt:char}
\item The isometric embedding $f\colon\RR\times X_\xi\to P(\eta, \eta')$ is $\ro$-equivariant for $G_\xi$.\label{ro:pt:equi}
\end{enumerate}
\end{prop}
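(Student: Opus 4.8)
The plan is to establish the four claims of Proposition~\ref{prop:ro} by unwinding the construction of $\ro$ as a pointwise cluster point of $\Ad(g_n)|_{G_\xi}$ and matching it against the limiting geodesic $\sigma$ and the embedding $f$. First I would fix notation: passing to a subsequence, $\ro(g) = \lim_n g_n g g_n\inv$ exists pointwise on $G_\xi$, the maps $t\mapsto g_n(r_0(t+t_n))$ converge to $\sigma$, so that $g_n\xi\to\eta=\sigma(\infty)$ and $g_n r_0(0)\to\eta'=\sigma(-\infty)$. The key mechanism throughout is that if $g$ fixes $\xi$, then $g_n g g_n\inv$ fixes $g_n\xi$, and one passes to the limit using that the $G$-action on $X\cup\bd X$ and the assignment of Busemann characters behave continuously along the convergent data.

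For part~(\ref{ro:pt:image}), I would argue that each $g_n g g_n\inv$ stabilises $g_n\xi$ and also $g_n r_0(0)$ lies on the relevant lift; taking limits and using continuity of the action on the compactified space $X\cup\bd X$ (properness of $X$) shows $\ro(g)$ fixes both $\eta$ and $\eta'$, i.e.\ $\ro(g)\in G_{\eta,\eta'}$. For part~(\ref{ro:pt:char}), the Busemann character is defined by the limit $\beta_\xi(g)=\lim_t(d(r(t),x_0)-d(r(t),gx_0))$; conjugating by $g_n$ transports a ray pointing to $\xi$ to one pointing to $g_n\xi\to\eta$, and the defining limit for $\beta_\eta(\ro(g))$ is obtained by interchanging the limit in $t$ with the limit in $n$, giving $\beta_\eta\circ\ro=\beta_\xi$. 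The two inclusions in part~(\ref{ro:pt:noyau}) then partly reduce to bookkeeping: $\Ker(\ro)\se\Ker(\omega_\eta\circ\ro)$ is immediate since $\omega_\eta$ is applied after $\ro$, while the inclusion $\Ker(\omega_\eta\circ\ro)\se\Ker(\omega_\xi)$ combines part~(\ref{ro:pt:char}) (controlling the $\RR$-coordinate $\beta$) with part~(\ref{ro:pt:equi}) (controlling the $X_\xi$-coordinate through the $\ro$-equivariant map $f$), since $\omega_\xi(g)=1$ exactly when $\beta_\xi(g)=0$ and $g$ acts trivially on $X_\xi$.

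The substance lies in part~(\ref{ro:pt:equi}), the $\ro$-equivariance of $f\colon\RR\times X_\xi\to P(\eta,\eta')$, and I expect this to be the main obstacle. The map $f$ arises as the cluster point of the isometric maps $t\mapsto g_n(r(t+t_n))$ for $r\in X^*_\xi$, so $f$ is itself built from the same subsequence $\{g_n\}$ that defines $\ro$. The plan is to show that for $g\in G_\xi$ and a point $x\in X_\xi$ represented by a ray $r$, one has $f(\omega_\xi(g)(s,x)) = \ro(g)\cdot f(s,x)$ by writing both sides as limits along $n$: the left side uses that $g$ translates rays pointing to $\xi$ and shifts the Busemann coordinate by $\beta_\xi(g)$, while the right side applies $g_n g g_n\inv$ to $g_n(r(\cdot))$, which is simply $g_n(g\cdot r(\cdot))$. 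The delicate point is interchanging the two limiting processes (the limit defining $f$ as a cluster point over bounded intervals, and the limit defining $\ro(g)$) and verifying that the Busemann shift $\beta_\xi(g)$ survives as the $\RR$-coordinate translation under $f$ in accordance with part~(\ref{ro:pt:char}); this requires care because $\ro$ need not be continuous, so one cannot appeal to joint continuity and must instead diagonalise carefully and invoke convexity of the metric to control everything uniformly on bounded intervals.

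Once equivariance is in place, the remaining inclusions and identities fall out formally. Throughout I would lean on properness of $X$ (for existence of cluster points and continuity of the boundary action), on the convexity of the \cat metric (to keep conjugates $g_n g g_n\inv$ bounded and to pass to limits uniformly on compacta), and on the product structure of $P(\eta,\eta')\cong\RR\times X_\xi$ supplied by Theorem~\ref{thm:lifting} and the construction in Proposition~\ref{prop:lift:facile}.
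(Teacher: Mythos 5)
Your proposal is correct, and for parts~(\ref{ro:pt:image}), (\ref{ro:pt:char}) and~(\ref{ro:pt:equi}) it coincides with the paper's argument: the paper disposes of these three points by saying they follow from the construction of $\ro$ and $f$ (plus, for~(\ref{ro:pt:char}), the fact that pointwise convergence of Busemann functions corresponds to the cone topology), and your plan of writing both sides of the equivariance identity as limits along the same sequence, using $(g_n g g_n\inv)\big(g_n(r(t+t_n))\big)=g_n\big(g\,r(t+t_n)\big)$, is exactly that construction. One comment there: the limit interchange you worry about is not delicate, because one takes a \emph{simultaneous} cluster point of the family of maps and of the conjugations $\Ad(g_n)|_{G_\xi}$; along the resulting subnet the group elements converge uniformly on compacta while the relevant points converge in $X$, so no diagonalisation is needed and the discontinuity of $\ro$ plays no role (each verification involves a single fixed $g$). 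Where you genuinely differ from the paper is point~(\ref{ro:pt:noyau}): the paper proves the inclusion $\Ker(\omega_\eta\circ\ro)\se\Ker(\omega_\xi)$ by a direct contrapositive --- if $g$ acts non-trivially on $X_\xi$ there is a ray $r_1$ asymptotic to $r_0$ and $\vareps>0$ with $d(g r_1(t), r_1(s))\geq\vareps$ for all $s,t\geq 0$, whence $\ro(g)$ moves a line parallel to $\sigma(\RR)$ by at least $\vareps$ and $\omega_\eta(\ro(g))\neq 1$ --- whereas you deduce~(\ref{ro:pt:noyau}) formally from~(\ref{ro:pt:char}) and~(\ref{ro:pt:equi}). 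Your deduction is valid, but the step you leave implicit should be made explicit: if $\omega_\eta(\ro(g))=\Id$, then $\ro(g)$ lies in $G^{\mathrm{u}}_\eta$ (Remark~\ref{rem:horokernel}), so $d\big(\ro(g)f(t,x), f(t,x)\big)\to 0$ as $t\to\infty$ along each ray $t\mapsto f(t,x)$ pointing to $\eta$; by~(\ref{ro:pt:char}) one has $\beta_\xi(g)=0$, so equivariance gives $\ro(g)f(t,x)=f(t,gx)$, and since $f$ is an isometric embedding of the \emph{product} $\RR\times X_\xi$, the displacement $d\big(f(t,gx),f(t,x)\big)=d_\xi(gx,x)$ is constant in $t$, hence zero. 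This is the same geometric mechanism as the paper's, just run forwards instead of contrapositively; it buys a proof that is purely formal once~(\ref{ro:pt:char}) and~(\ref{ro:pt:equi}) are in place.

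Two minor corrections. First, $\ro$ is defined as a pointwise \emph{cluster point} of $\Ad(g_n)|_{G_\xi}$; since $G_\xi$ may be uncountable, ``passing to a subsequence'' so that $\ro(g)=\lim_n g_n g g_n\inv$ for \emph{all} $g$ simultaneously is not available (the paper itself warns about this after Corollary~\ref{cor:ro:extract}) --- work with a subnet, or note that each claim of the Proposition only involves finitely many elements at a time. Second, your continuity argument that $\ro(g)$ fixes $\eta'$ is shaky as stated: the conjugates converge only uniformly on compacta, while $g_n r_0(0)\to\eta'$ leaves every compact set, so one cannot directly pass to the limit in ``$g_n g g_n\inv$ moves $g_n r_0(0)$ boundedly''. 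It is cleaner to observe that, by equivariance, $\ro(g)$ permutes the lines $f(\RR\times\{x\})$ preserving their orientation, and all these lines have $\eta'$ as their $-\infty$ endpoint.
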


\begin{proof}
Points~(\ref{ro:pt:image}) and~(\ref{ro:pt:equi}) follow from the construction of $\ro$ and $f$. Likewise for point~(\ref{ro:pt:char}), using that the pointwise convergence on Busemann functions corresponds to the cone topology on the boundary.

Turning to~(\ref{ro:pt:noyau}), let $g \in G_\xi$ be an element acting non-trivially on $X_\xi$. 
Then there is a ray $r_1$ asymptotic to $r_0$ and $\vareps>0$ such that
$d(g r_1(t), r_1(s))\geq \vareps$ for all $s,t\geq 0$. This implies that $\ro(g)$ moves a line parallel to
$\sigma(\RR)$ by at least $\vareps$. Thus $\omega_\eta(\ro(g)) \neq 1$, which shows
that $\Ker(\omega_\eta\circ\ro)$ acts trivially on the transverse space $X_\xi$. Together with~(\ref{ro:pt:char}), this establishes the non-trivial inclusion of~(\ref{ro:pt:noyau}).
\end{proof}

\begin{proof}[Proof of Theorem~\ref{thm:Levi:bis}]
We now assume that $G_\xi$ is cocompact and consider the radial sequence $\{h_n\}$ in $G_\xi$ given by hypothesis. We first observe that the image of $\{h_n\}$ in $\Isom(X_\xi)$ is bounded and hence we can assume that it converges to some $\alpha\in \Isom(X_\xi)$.  We now specialise the above discussion to the case $g_n=h_n$. In particular, $\eta=\xi$ and $\eta'=\xi'$.

The sequence $\Ad(\omega_\xi( h_n))$ converges to $\Id \times \Ad(\alpha)$ as automorphisms of $\Isom(\RR\times X_\xi)$ and hence the diagram
$$\xymatrix{
G_\xi \ar[rr]^{\ro} \ar[d]_{\omega_\xi} && G_{\xi, \xi'}\ar[d]^{\omega_\xi}\\
\Isom(\RR\times X_\xi)\ar[rr]_{\Id \times \Ad(\alpha)} && \Isom(\RR\times X_\xi)
}$$
is commutative. Since each $h_n$ normalises $N$, the automorphism $\Id \times \Ad(\alpha)$ preserves $\omega_\xi(N)$. We deduce that $\omega_\xi(N) =\omega_\xi(\ro(N)) \se \omega_\xi(N_{\xi'})$ and therefore $\omega_\xi(N) = \omega_\xi(N_{ \xi'})$. It follows that we have
$$N = N_{\xi'} \cdot \Ker(\omega_\xi|_N).$$
This provides the desired decomposition of $N$.

\smallskip
For the compactness statement about $N_{\xi'} \cap N^\mathrm{u}$, we claim that this intersection acts trivially on $P(\xi, \xi')$. Indeed, $N_{\xi'}$ preserves $P(\xi, \xi')$ and the horoaction of $N^\mathrm{u}$ on $\RR\times X_\xi$ is trivial; therefore, the claim follows from Theorem~\ref{thm:lifting}.
\end{proof}

\begin{proof}[Proof of Proposition~\ref{prop:Levi:geom}]
Pick $p_0 \in P(\xi, \xi')$ and let $R>0$ be such that $X = G_\xi.B(p_0, R)$. Given $p \in P(\xi, \xi')$, there is $g \in G_\xi$ such that $d(g.p, p_0) < R$. By Proposition~7.1 in~\cite{Caprace-Monod_structure}, $G_\xi$ acts transitively on $\Op(\xi)$; therefore, the decomposition of Theorem~\ref{thm:Levi:bis} applied to $N=G_\xi$ shows that $G^\mathrm{u}_\xi = G \cap \Ker(\omega_\xi)$ also acts transitively. Thus there exists $k \in G^\mathrm{u}_\xi$ be such that $kg.\xi' = \xi'$; in particular we have $kg \in G_{\xi, \xi'}$. Since both $p_0$ and $kg.p$ belong to $P(\xi, \xi')$, we deduce from Theorem~\ref{thm:lifting} that
$$\begin{array}{rcl}
d(p_0, kg.p)^2 & = & (b_\xi(p_0) - b_\xi(kg.p))^2 + d(\pi_\xi(kg.x), \pi_\xi(p_0))^2\\
& = &  (b_\xi(p_0) - b_\xi(g.p))^2 + d(\pi_\xi(g.x), \pi_\xi(p_0))^2\\
& \leq & d( g.x, p_0)^2 + d(g.x, p_0)^2\\
& < & 2 R^2,
\end{array}$$
where $\pi_\xi$ denotes the $G_\xi$ action on $X_\xi$. Thus we have $P(\xi, \xi') \se G_{\xi, \xi'}.B(p_0, \sqrt 2 R)$, which confirms that $G_{\xi, \xi'}$ acts cocompactly on $P(\xi, \xi')$. The additional statements now follow by applying Theorem~\ref{thm:lifting}.
\end{proof}

The following example shows that $\ro$ need not be continuous.

\begin{example}\label{ex:ro}
Consider the simplicial line $L$ whose vertex set $(v_n)_{n \in \ZZ}$ is linearly ordered by the integers. Let $T$ be the tree obtained by adding to $L$ a collection of vertices of valency one, say $\{v'_n, v''_n \; : \; n \in \ZZ\}$, where $v'_n$ and $v''_n$ are declared adjacent to $v_n$. We view $T$ as a metric tree with all edges of length one. Thus $T$ is a \cat space with two endpoints, say $\xi$ and $\xi'$. The isometry group $G = \Isom(T)$ acts cocompactly. Let $t \in G$ be the translation defined by $t \colon (v_n, v'_n, v''_n) \mapsto (v_{n-1}, v'_{n-1}, v''_{n-1})$ for all $n$. We have  $G_\xi = G_{\xi, \xi'} \cong (\prod_{\ZZ} \ZZ/2) \rtimes \ZZ$, where the cyclic factor $\ZZ$ is the group generated by $t$.

Let now $\ro\colon G_\xi\to G$ be a limit point of the sequence $\Ad t^n|_{G_\xi}$. We claim that $\ro$ is not continuous.

Indeed, let $f_n \in G$ be  defined by
$$
f_n \colon  (v_m, v'_m, v''_m) \mapsto
\left\{
\begin{array}{ll}
(v_{m}, v'_{m}, v''_{m}) & \text{if } m < n\\
(v_{m}, v''_{m}, v'_{m}) & \text{if } m \geq n
\end{array} \right.
$$
and set $f_\infty \colon (v_m, v'_m, v''_m) \mapsto (v_{m}, v''_{m}, v'_{m})$ for all $m$.
Then $\lim_n f_n = \Id$. On the other hand, for all $n$ and we have $ \lim_k  t^{k} f_n  t^{-k} = f_\infty$ so that $\rho(f_n) = f_\infty \neq \Id$. In particular   $\ro$ is discontinuous at the identity.
\end{example}

Notice however that, although $\ro$ is not continuous, the composite map $\omega_\xi \circ \ro $ is always continuous, in view of the commutative diagram from the proof of Theorem~\ref{thm:Levi:bis}, recalling that the horoaction is continuous by Proposition~\ref{prop:TransverseSpace}(\ref{pt:TransverseSpace:cont}).

%%%%%%%%%%%%%%%%%%%%%%
\subsection{Iterations and the refined bordification}\label{sec:Iter}
%%%%%%%%%%%%%%%%%%%%%%
The concept of transverse space suggests iterative constructions; this is  formalised in the following definition.

\begin{defn}
A \textbf{refining sequence} for the \cat space $X$ is a sequence $(\xi_1, \ldots , \xi_k)$ with $k\geq 0$, $\xi_1\in\bd X$ and $\xi_{i+1}\in \bd X_{\xi_1, \ldots, \xi_i}$ for $i\geq 1$ ($k=0$ corresponds to the empty sequence). The \textbf{refined bordification} of $X$ is the set of \textbf{refined points} $(\xi_1, \ldots , \xi_k; x)$, wherein $(\xi_1, \ldots , \xi_k)$ is a refining sequence and $x\in X_{\xi_1, \ldots, \xi_k}$. Thus, $k=0$ corresponds simply to a point $x\in X$. We call $k$ the \textbf{depth} of the refined point. The \textbf{depth} of the space $X$ is the supremum of the depths of its refined points. (A characterisation of the depth of cocompact spaces will be given in Corollary~\ref{cor:flat_rank} below.)
\end{defn}

Thus the space $X$ is contained in the refined bordification, while $\bd X$ is a quotient of it.

\medskip
Our next goal is to iterate the constructions described in the previous sections in order to lift transverse spaces of depth~$>1$ in an equivariant way with respect to their stabilisers. 
Given a refining sequence $(\xi_1, \dots, \xi_k)$, we endow the space
$$\RR^{k}\times X_{\xi_1, \ldots, \xi_k}$$ 
with the \textbf{refined horoaction} (of depth $k$) of $\Isom(X)_{\xi_1, \ldots, \xi_k}$, which is the product of the canonical action on $X_{\xi_1, \ldots, \xi_k}$ and the translation action on $\RR^k$ provided by the product $\beta_{\xi_1} \times \dots \times \beta_{\xi_k}$ of  the Busemann characters.

\medskip
The case $k=1$ in the following result is covered by Propositions~\ref{prop:lift:facile} and~\ref{prop:ro}. 

\begin{prop}\label{prop:ro_it}
Let $X$ be a proper \cat space and $G < \Isom(X)$ be a closed subgroup acting cocompactly. 
For every refining sequence $(\xi_1, \dots, \xi_k)$,  there is an isometric embedding  $f\colon \RR^{k}\times X_{\xi_1\ldots, \xi_k}\to X$ and a (possibly discontinuous) homomorphism $\ro \colon  G_{\xi_1, \ldots, \xi_k} \to G$ such that $f$  is $\ro$-equivariant for $G_{\xi_1, \ldots, \xi_k}$. 
\end{prop}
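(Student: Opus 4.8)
The plan is to induct on the length $k$ of the refining sequence, the case $k=1$ being exactly Propositions~\ref{prop:lift:facile} and~\ref{prop:ro}. For the inductive step I would peel off the last coordinate. Write $Y=X_{\xi_1,\dots,\xi_{k-1}}$ and let $H\se\Isom(Y)$ be the image of the iterated stabiliser $G_{\xi_1,\dots,\xi_{k-1}}$ under its canonical action on $Y$; by Proposition~\ref{prop:TransverseSpace}, $Y$ is again a proper \cat space. The inductive hypothesis supplies an isometric embedding $f_{k-1}\colon\RR^{k-1}\times Y\to X$ and a homomorphism $\ro_{k-1}\colon G_{\xi_1,\dots,\xi_{k-1}}\to G$ for which $f_{k-1}$ is $\ro_{k-1}$-equivariant with respect to the depth-$(k-1)$ refined horoaction. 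Since $\xi_k\in\bd Y$, I apply the case $k=1$ inside $Y$ to obtain an isometric embedding $f_\xi\colon\RR\times X_{\xi_1,\dots,\xi_k}\to Y$ (recall $Y_{\xi_k}=X_{\xi_1,\dots,\xi_k}$) and a homomorphism $\ro_\xi\colon H_{\xi_k}\to H$ making $f_\xi$ equivariant. The desired embedding is then the composite
\[
f\colon \RR^{k}\times X_{\xi_1,\dots,\xi_k}\;\xrightarrow{\ \Id\times f_\xi\ }\;\RR^{k-1}\times Y\;\xrightarrow{\ f_{k-1}\ }\;X,
\]
after the reshuffling $\RR^k=\RR^{k-1}\times\RR$; it is an isometric embedding since both maps are, and the new $\RR$-coordinate records the Busemann character $\beta_{\xi_k}$, so that $f$ intertwines the depth-$k$ horoaction, which is by construction the product of the depth-$(k-1)$ horoaction with the horoaction of $\xi_k$.

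Defining $\ro$ requires care, because the quotient map $G_{\xi_1,\dots,\xi_{k-1}}\twoheadrightarrow H$ has no reason to split -- its kernel is the relevant ``unipotent'' part -- so I cannot simply lift $\ro_\xi$ and compose it with $\ro_{k-1}$; moreover $\ro_\xi(\bar g)$ differs from the $Y$-action $\bar g$ of $g$, since it lies in a limit-conjugated stabiliser, as in Proposition~\ref{prop:ro}. The remedy is to build $\ro$ by the very recipe of Proposition~\ref{prop:ro}: take a radial sequence for $\xi_k$ in $H$, lift it to a sequence $\{\tilde b_n\}$ in $G_{\xi_1,\dots,\xi_{k-1}}$, and let $\ro$ be a pointwise cluster value of the maps $g\mapsto\ro_{k-1}\bigl(\tilde b_n\, g\, \tilde b_n\inv\bigr)$ on $G_{\xi_1,\dots,\xi_k}$ (this is well defined, as $\tilde b_n g \tilde b_n\inv\in G_{\xi_1,\dots,\xi_{k-1}}$). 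Because conjugations are homomorphisms and a pointwise limit of homomorphisms into the topological group $G$ is again a homomorphism, $\ro$ is automatically a homomorphism; phrasing everything through conjugation-limits is precisely what makes the two equivariances compose despite the absence of a section. Equivariance of $f$ for $\ro$ then follows by stacking the equivariances of $f_\xi$ and $f_{k-1}$.

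The step that is not formal is the application of the case $k=1$ inside $Y$: it demands that $\xi_k\in\bd Y$ be a \emph{radial limit point} of $H$ in the sense of Definition~\ref{defn:radial}. This is the crux, and it is genuinely delicate because radiality does \emph{not} obviously descend to transverse spaces: $G$ acts cocompactly on $X$, but $H$ need not act cocompactly on $Y$, since Proposition~\ref{prop:TransverseSpace}(\ref{it:cocompact}) guarantees cocompactness of a transverse space only at a \emph{cocompact} point. I would therefore strengthen the induction hypothesis to assert in addition that \emph{every} point of $\bd Y$ is a radial limit point of $H$ (true for $k-1=0$, where $G$ is cocompact on $X$), and prove that this property survives one transverse step. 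Concretely, given $\zeta\in\bd X_{\xi_1,\dots,\xi_k}$, I would lift a ray pointing to $\zeta$ through $f_\xi$ and then through $f_{k-1}$ to a ray in $X$ pointing to some $\hat\zeta\in\bd X$; cocompactness of $G$ makes $\hat\zeta$ a radial limit point of $G$, and the remaining task is to convert the resulting radial sequence in $G$ into one lying in the iterated stabiliser $G_{\xi_1,\dots,\xi_k}$ and tracking $\zeta$ inside $X_{\xi_1,\dots,\xi_k}$. Carrying out this correction -- projecting an ambient radial sequence onto the stabiliser while preserving radiality, by means of the equivariant lift $f_{k-1}$ together with nearest-point-projection and convexity estimates of the kind underlying the proof of Proposition~\ref{prop:lift:facile} -- is where the real work lies.
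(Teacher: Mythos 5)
Your reduction to the case $k=1$ \emph{inside the transverse space} $Y=X_{\xi_1,\dots,\xi_{k-1}}$ breaks down exactly where you say it does, and the repair you sketch does not close the gap. Propositions~\ref{prop:lift:facile} and~\ref{prop:ro} require $\xi_k$ to be a radial limit point \emph{of the acting group}, here the image $H$ of $G_{\xi_1,\dots,\xi_{k-1}}$ in $\Isom(Y)$. Since an iterated stabiliser need not act cocompactly on $X$, and Proposition~\ref{prop:TransverseSpace}(\ref{it:cocompact}) gives cocompactness of a transverse space only at a \emph{cocompact} point, no radiality for $H$ is available; your strengthened induction hypothesis (every point of $\bd Y$ is radial for $H$) is precisely the statement you would have to prove, and you do not prove it. The conversion step you propose cannot work as described: starting from a radial sequence for $\hat\zeta$ in the ambient cocompact group $G$, there is no operation turning these group elements into elements of $G_{\xi_1,\dots,\xi_{k-1}}$ that track $\zeta$ -- nearest-point projection and convexity control points of $X$, not elements of $G$, and the stabiliser may simply be too small to contain any such sequence. (A further hazard of working inside $Y$ with $H$: the image of a stabiliser under the horoaction need not be closed, see Remark~\ref{rem:disc}, so pointwise cluster values of conjugation sequences need not lie in $H$.)

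The paper's proof is arranged so that radiality is \emph{never} needed inside a transverse space. At step $i$ it uses the previously constructed embedding $f^{(i-1)}\colon\RR^{i-1}\times X_{\xi_1,\dots,\xi_{i-1}}\to X$ to push the new point forward: $\xi'_i:=f^{(i-1)}(\xi_i)$ lies in $\bd X$, where every point is radial because $G$ acts cocompactly on $X$. Propositions~\ref{prop:lift:facile} and~\ref{prop:ro} are then applied in $X$ at $\xi'_i$, producing $\tilde f^{(i)}\colon\RR\times X_{\xi'_i}\to X$ and $\tilde\ro^{(i)}\colon G_{\xi'_i}\to G$. Since $f^{(i-1)}$ induces an isometric embedding $\RR^{i-1}\times X_{\xi_1,\dots,\xi_i}\to X_{\xi'_i}$ which is $\ro^{(i-1)}$-equivariant, and since equivariance forces $\ro^{(i-1)}(G_{\xi_1,\dots,\xi_i})\se G_{\xi'_i}$, one simply sets $f^{(i)}=\tilde f^{(i)}\circ(\Id\times f^{(i-1)})$ and $\ro^{(i)}=\tilde\ro^{(i)}\circ\ro^{(i-1)}$. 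This also dissolves your lifting and cluster-point compatibility worries: each $\tilde\ro^{(i)}$ is defined on an honest stabiliser in $G$ and the homomorphisms compose directly, with no lifting through the quotient $G_{\xi_1,\dots,\xi_{k-1}}\twoheadrightarrow H$. If you replace ``apply the case $k=1$ inside $Y$'' by ``apply the case $k=1$ in $X$ at the pushed-forward point $f_{k-1}(\xi_k)$'', the rest of your composition scheme goes through essentially verbatim.
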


\begin{proof}
We shall construct inductively for $0 \leq i \leq k$ an isometric embedding
$$
f^{(i)} \colon \RR^{i} \times X_{\xi_1, \ldots , \xi_{i}}
$$ 
and a homomorphism
$$
\ro^{(i)} \colon G_{\xi_1, \ldots , \xi_{i}} \to G
$$
such that $f^{(i)}$ is $\ro^{(i)}$-equivariant for $ G_{\xi_1, \ldots , \xi_{i}} $. The final step $i=k$ will yield a map $f= f^{(k)} $ and a homomorphism $\ro = \ro^{(k)}$ enjoying the requested properties. 

We start with $f^{(0)} = \Id \colon X \to X$ and $\ro^{(0)} = \Id \colon G \to G$. For the inductive step, we now let $i >0$. 

Viewing $X_{\xi_1, \ldots , \xi_{i-1}}$ as a subspace of $\RR^{i-1} \times X_{\xi_1, \ldots , \xi_{i-1}}$, we may view $\xi_i$ as a point of $\bd (\RR^{i-1} \times X_{\xi_1, \ldots , \xi_{i-1}})$. It therefore makes sense to consider the image $f^{(i-1)}(\xi_i)$, which we denote by $\xi'_i \in \bd X$. In particular we obtain an induced isometric embedding 
$$
\RR^{i-1} \times X_{\xi_1, \ldots , \xi_{i}} \to X_{\xi'_i},
$$
which we also denote by $f^{(i-1)}$. The induction hypothesis ensures that the isometric embedding
$$
\Id \times f^{(i-1)} \colon \RR \times \RR^{i-1} \times X_{\xi_1, \ldots , \xi_{i}} \to \RR \times X_{\xi'_i}
$$
is $\ro^{(i-1)}$-equivariant for $G_{\xi_1, \ldots , \xi_{i}}$ (where the domain $\RR \times \RR^{i-1} \times X_{\xi_1, \ldots , \xi_{i}}$ is endowed with the refined horoaction of depth $i$ of the stabiliser $G_{\xi_1, \ldots , \xi_{i}}$). 

Since $G$ is cocompact, we can choose a sequence of isometries  $\{g^{(i)}_n\}_n$ in $G$ that is radial for $\xi'_i$. Propositions~\ref{prop:lift:facile} and~\ref{prop:ro} then yield an isometric embedding
$$
\tilde f^{(i)} \colon \RR \times X_{\xi'_i} \to X
$$
and a homomorphism
$$
\tilde \ro^{(i)} \colon G_{\xi'_i} \to G
$$
such that $\tilde f^{(i)}$ is $\tilde \ro^{(i)} $-equivariant for $ G_{\xi'_i}$. We then set 
$$
f^{(i)} = \tilde f^{(i)} \circ (\Id \times f^{(i-1)} ) \colon \RR \times \RR^{i-1} \times X_{\xi_1, \ldots , \xi_{i}} \to  X
$$
and 
$$
\ro^{(i)} = \tilde \ro^{(i)} \circ \ro^{(i-1)}.
$$
Since $\Id \times f^{(i-1)}$ was shown to be $\ro^{(i-1)}$-equivariant for $G_{\xi_1, \ldots , \xi_{i}}$, it follows that $f^{(i)}$ is $\ro^{(i)}$-equivariant, which completes the induction step. 
\end{proof}

\begin{remark}\label{rem:ro}
We record for later references that the homomorphism $\ro$ is constructed as a composed map $\ro = \tilde \ro^{(k)} \circ \dots \circ \tilde \ro^{(1)}$, where each $\tilde \ro^{(i)}$ is constructed as a pointwise limit, on its domain of definition,  of a sequence of inner automorphisms of $G$. 
%It follows that $\ro$ is also a pointwise limit of inner automorphisms of $G$ on its domain.
\end{remark}

The latter remark combines with the following elementary fact. 

\begin{lem}\label{lem:iter}
Let $G$ be a second countable locally compact  group, $H \se G$ any subgroup and $\ro_1, \ldots, \ro_k$ a sequence of (possibly discontinuous) group homomorphisms with $\ro_1\colon H\to G$ and  $\ro_{i+1}\colon \mathrm{Im}(\ro_{i})\to G$. 

If each $\ro_i$ is a pointwise limit, on its domain of definition, of a sequence of inner automorphisms of $G$, then so is the composed map $\ro = \ro_k\circ\cdots\circ\ro_1 \colon H \to G$.
\end{lem}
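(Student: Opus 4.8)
The plan is to prove the statement by induction on $k$, reducing everything to a single diagonal extraction in the case $k=2$. The base case $k=1$ is vacuous. For the inductive step, I would set $\ro' = \ro_{k-1}\circ\cdots\circ\ro_1\colon H\to G$; by the induction hypothesis $\ro'$ is a pointwise limit of inner automorphisms, and its image satisfies $\mathrm{Im}(\ro')\se\mathrm{Im}(\ro_{k-1})$, which is precisely the domain of $\ro_k$. Thus $\ro=\ro_k\circ\ro'$ is a composition of two maps, each a pointwise limit of inner automorphisms, and it suffices to treat the case $k=2$.

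For $k=2$, write $\ro_1(h)=\lim_n \Ad(a_n)(h)$ for all $h\in H$ and $\ro_2(g)=\lim_m \Ad(b_m)(g)$ for all $g\in\mathrm{Im}(\ro_1)$, so that $\ro=\ro_2\circ\ro_1$. The first point I would record is that each intermediate map $\psi_m:=\Ad(b_m)\circ\ro_1$ is \emph{itself} a pointwise limit of inner automorphisms of $G$: since $\Ad(b_m)$ is a homeomorphism of $G$ it commutes with the limit, giving $\psi_m(h)=\Ad(b_m)\big(\lim_n\Ad(a_n)(h)\big)=\lim_n\Ad(b_m a_n)(h)$ for every $h\in H$. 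Moreover, by definition of $\ro_2$ we have $\ro(h)=\lim_m\psi_m(h)$ pointwise. The task is therefore reduced to extracting, from the doubly-indexed family $\{\Ad(b_m a_n)\}$, a single sequence $c_\ell=b_{m(\ell)}a_{n(\ell)}$ with $\Ad(c_\ell)\to\ro$ pointwise on $H$. Here I would invoke second countability twice: first, that a second countable locally compact group is metrizable, fixing a compatible metric $d$ on $G$ so that ``pointwise limit'' is a genuine sequential notion; and second, that $H$, being a subset of a second countable space, is separable.

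The diagonal would then be built in two stages, exploiting the order of the limits. At step $\ell$, over the first $\ell$ elements of a countable dense subset of $H$, I would first choose $m(\ell)$ large enough that $d\big(\Ad(b_{m(\ell)})(\ro_1(h_i)),\ro(h_i)\big)<1/\ell$ (possible since $\psi_m\to\ro$ pointwise), and then, with $m(\ell)$ now fixed, choose $n(\ell)$ large enough that $d\big(\Ad(b_{m(\ell)}a_{n(\ell)})(h_i),\Ad(b_{m(\ell)})(\ro_1(h_i))\big)<1/\ell$, which is possible because $\Ad(b_{m(\ell)})$ is a fixed continuous map and $\Ad(a_n)(h_i)\to\ro_1(h_i)$. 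The triangle inequality then yields $\Ad(c_\ell)\to\ro$ on the chosen countable set.

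The main obstacle, and the delicate point of the whole argument, is to ensure that the single sequence $c_\ell$ converges to $\ro$ at \emph{every} point of $H$ simultaneously, not merely on a countable dense subset. This is genuinely subtle precisely because $\ro$ is typically discontinuous (Example~\ref{ex:ro}), so one cannot transfer convergence from a dense set to all of $H$ by continuity. The resolution is to organize the two-stage choice above so that the inner index $n(\ell)$ is taken large enough to absorb the (point-dependent) modulus of continuity of the fixed map $\Ad(b_{m(\ell)})$: since for a fixed $h$ the outer estimate holds for all large $m$ and the inner estimate then holds for all sufficiently large $n$, a diagonal with $m(\ell),n(\ell)\to\infty$ chosen as above yields $\Ad(c_\ell)(h)\to\ro(h)$. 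It is exactly the sequential (metric) structure provided by second countability that makes this extraction legitimate, which is why the hypothesis on $G$ is indispensable.
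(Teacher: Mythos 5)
There is a genuine gap, and it sits exactly at the step you yourself flag as delicate: passing from convergence on a countable dense subset of $H$ to convergence at \emph{every} point of $H$. Your proposed resolution asserts that $n(\ell)$ can be taken ``large enough to absorb the point-dependent modulus of continuity of $\Ad(b_{m(\ell)})$'', but $n(\ell)$ is a single integer chosen at stage $\ell$ using only the finitely many points $h_1,\dots,h_\ell$, whereas the threshold it would need to exceed, say $N\big(m(\ell),h\big)$, depends on the point $h$ and is unbounded as $h$ ranges over the (typically uncountable) set $H$. Nothing forces $n(\ell)\geq N\big(m(\ell),h\big)$ for a point $h$ outside your dense set, so the inner estimate $d\big(\Ad(b_{m(\ell)}a_{n(\ell)})(h),\Ad(b_{m(\ell)})(\ro_1(h))\big)\to 0$ is simply unavailable there. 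This cannot be repaired by a cleverer diagonal: it would require equicontinuity of the family $\{\Ad(b_m)\}$, which would force the limit $\ro_2$ (hence $\ro$) to be continuous, contradicting the phenomenon exhibited in Example~\ref{ex:ro}. Indeed the statement you are actually trying to prove is stronger than the lemma and should not be expected to hold: a genuine pointwise sequential limit of continuous maps is of Baire class~$1$, while a composition of two such maps is in general only of Baire class~$2$.

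The underlying issue is one of interpretation. In this paper ``pointwise limit of a sequence'' means \emph{limit point (cluster point) in the topology of pointwise convergence}: the map $\ro$ of Proposition~\ref{prop:ro} is explicitly constructed as a pointwise cluster point of $\Ad(g_n)$, and the paragraph following Corollary~\ref{cor:ro:extract} warns that no subsequence need converge pointwise to $\ro$ when $H$ is uncountable --- a warning that would be vacuous under your reading, since a pointwise convergent sequence is its own convergent subsequence. Accordingly, the paper's proof never attempts a single diagonal over all of $H$: given one finite set $F\se H$ and one neighbourhood of $\ro(F)$, it chooses $\alpha\in\Inn(G)$ approximating $\ro_2$ on $\ro_1(F)$, pulls the neighbourhood back through the continuous map $\alpha$, then chooses $\beta\in\Inn(G)$ approximating $\ro_1$ on $F$, so that $\alpha\beta$ approximates $\ro$ on $F$; second countability enters only to place all these $\alpha,\beta$ in a fixed countable dense subgroup of $G$, which makes the resulting family $\{\alpha\beta\}_{U,F}$ countable, i.e.\ a sequence having $\ro$ as a limit point. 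Your two-stage estimate (choose $m(\ell)$ first, then $n(\ell)$ via continuity of $\Ad(b_{m(\ell)})$) is the same mechanism as the paper's $\alpha$-then-$\beta$ step; applied one finite set at a time it proves the correct (weak) statement, but welded into one sequence meant to converge at every point of $H$ it proves nothing.
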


\begin{proof}
It suffices to prove the statement for $k = 2$; the general case follows by induction. Let thus  $F$ be a finite subset of $H$ and $U$ be an open neighbourhood of $\ro_2(H)$ in $G$.  There is $\alpha\in \Inn(G)$ such that $\alpha(\ro_1(F))\se U$. Since $\alpha$ is continuous, there is a neighbourhood $V$ of $\ro_1(F)$ in $G$ such that $\alpha(V)\se U$. Let $\beta\in\Inn(G)$ be such that $\beta(F)\se V$. Then $\alpha\beta(F)\se U$. Notice moreover that the inner automorphisms $\alpha$ and $\beta$ can be chosen in some fixed countable dense subgroup of $G$, which exists since $G$ is assumed second countable. 

Repeating the argument for arbitrarily small neighbourhoods $U$ of $\ro_2(H)$ and arbitrarily large finite subsets $F \se H$, we get a net of inner automorphisms of $G$ of the form $\{\alpha \beta\}_{U, F}$, whose restrictions of $H$ have   the composed homomorphism $\ro = \ro_2 \circ \ro_1$ as a limit point. By construction each element of the net belongs to a common countable subgroup of $G$; therefore the above net is actually a sequence. 
\end{proof}

\begin{cor}\label{cor:ro:extract}
The homomorphism $\ro \colon H \to G$ provided by Proposition~\ref{prop:ro_it} is a  pointwise limit of a sequence  of inner automorphisms of $G$.
\end{cor}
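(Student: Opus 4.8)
The plan is to combine Remark~\ref{rem:ro} with Lemma~\ref{lem:iter} in the most direct way possible. Remark~\ref{rem:ro} tells us that the homomorphism $\ro \colon H \to G$ produced by Proposition~\ref{prop:ro_it} decomposes as $\ro = \tilde\ro^{(k)} \circ \cdots \circ \tilde\ro^{(1)}$, where $H = G_{\xi_1, \ldots, \xi_k}$, and each factor $\tilde\ro^{(i)}$ is itself a pointwise limit, on its domain of definition, of a sequence of inner automorphisms of $G$. Lemma~\ref{lem:iter} is stated precisely to absorb such a composition: given a chain of homomorphisms $\ro_1, \ldots, \ro_k$ each of which is a pointwise limit of inner automorphisms (with $\ro_1 \colon H \to G$ and $\ro_{i+1}$ defined on $\mathrm{Im}(\ro_i)$), the composite is again a pointwise limit of inner automorphisms. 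So the entire corollary is an immediate matching of the two statements.

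First I would verify that the hypotheses of Lemma~\ref{lem:iter} are met. The ambient group $G < \Isom(X)$ is a closed subgroup of the isometry group of a \emph{proper} \cat space, so it is locally compact and second countable, which is the standing assumption of the lemma. Next, setting $\ro_i = \tilde\ro^{(i)}$, one checks the domain compatibility: in the inductive construction of Proposition~\ref{prop:ro_it}, $\tilde\ro^{(i)}$ is defined on $G_{\xi'_i}$ and the composite $\tilde\ro^{(i-1)} \circ \cdots \circ \tilde\ro^{(1)} = \ro^{(i-1)}$ maps into $G_{\xi'_i}$ (this is exactly what makes $f^{(i-1)}$ land in $X_{\xi'_i}$, so the stabilisers are nested as the lemma requires). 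Hence the chain $\tilde\ro^{(1)}, \ldots, \tilde\ro^{(k)}$ satisfies the formal hypotheses: each is defined on the image of the previous composite, and each is a pointwise limit of inner automorphisms by Remark~\ref{rem:ro}.

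Once these hypotheses are checked, Lemma~\ref{lem:iter} applies verbatim and yields that $\ro = \tilde\ro^{(k)} \circ \cdots \circ \tilde\ro^{(1)}$ is a pointwise limit of a sequence of inner automorphisms of $G$, which is the assertion of the corollary. There is essentially no computation to perform; the work has already been front-loaded into Remark~\ref{rem:ro} (tracking the construction) and into Lemma~\ref{lem:iter} (the diagonal extraction using second countability to turn a net into a sequence).

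The only point that requires a moment's care — and hence the mild ``main obstacle'' — is the domain-matching in the inductive chain: one must confirm that $\tilde\ro^{(i)}$ is genuinely a pointwise limit of inner automorphisms \emph{when restricted to the relevant domain} $\mathrm{Im}(\ro^{(i-1)})$, not merely on all of $G_{\xi'_i}$. But a restriction of a pointwise limit of inner automorphisms is again such a limit on the smaller domain, so this causes no difficulty. Thus the proof reduces to citing Remark~\ref{rem:ro} and invoking Lemma~\ref{lem:iter}.
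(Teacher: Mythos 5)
Your proposal is correct and is essentially identical to the paper's own proof, which simply cites Remark~\ref{rem:ro} and Lemma~\ref{lem:iter} together with the observation that $G$ is second countable because $X$ is proper. Your additional verification of the domain-matching in the inductive chain is a sound (if implicit in the paper) elaboration of the same argument.
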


\begin{proof}
Immediate from Remark~\ref{rem:ro} and Lemma~\ref{lem:iter}, recalling that $G$ is second countable because $X$ is proper. 
\end{proof}

We warn the reader that, although $\ro$ is a pointwise limit of a sequence $(\alpha_n)_{n \geq 0}$ of inner automorphisms of $G$,  this does however not imply that $(\alpha_n)_{n \geq 0}$ admits a subsequence whose restriction to $H$ converges pointwise to $\ro$ (this reflects the fact that an uncountable product of metrizable topological spaces need not be first countable). However, there does exist such a subsequence in case the subgroup $H$ is countable.

\bigskip
The \textbf{flat rank} of a \cat space is the supremum of the dimensions of its isometrically embedded Euclidean subspaces. By Theorem~C in~\cite{Kleiner}, the flat rank of a cocompact proper \cat space $X$ is given by~$1+\dim(\bd X)$, where $\bd X$ is endowed with the Tits metric and $\dim$ is the supremum of the topological dimensions of compact subsets. Proposition~\ref{prop:lift:facile} leads to an additional characterisation:

\begin{cor}\label{cor:flat_rank}
Let $X$ be a cocompact proper \cat space. Then the depth of $X$ coincides with its flat rank. 

In fact, we have $\dim(\bd X_{\xi_1, \ldots, \xi_{k}}) \leq \dim(\bd X) -k$ for every refining sequence $(\xi_1, \ldots, \xi_k)$.
\end{cor}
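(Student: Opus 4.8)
The plan is to reduce everything to the displayed inequality $\dim(\bd X_{\xi_1, \ldots, \xi_k}) \le \dim(\bd X) - k$, together with the formula $\text{flat rank} = 1 + \dim(\bd X)$ recorded above from \cite{Kleiner}. The crucial tool for the inequality is Proposition~\ref{prop:ro_it}: for \emph{every} refining sequence $(\xi_1, \ldots, \xi_k)$ it produces an isometric embedding $f \colon \RR^k \times X_{\xi_1, \ldots, \xi_k} \to X$. This is exactly what lets me bypass the delicate question of whether the intermediate transverse spaces $X_{\xi_1, \ldots, \xi_i}$ are themselves cocompact --- a property that would be needed to iterate Corollary~\ref{cor:dim_chute} level by level, but which is not guaranteed when the $\xi_i$ fail to be cocompact points. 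The iterated lifting instead hands me the embedding for the whole sequence at once, using only the cocompactness of $X$.

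Next, to prove the inequality, I would observe that an isometric embedding of complete \cat spaces carries geodesic rays to geodesic rays and preserves Tits angles, hence induces a Tits-isometric embedding $\bd(\RR^k \times X_{\xi_1, \ldots, \xi_k}) \hookrightarrow \bd X$; since it maps compact subsets isometrically onto compact subsets, it cannot increase the dimension (the supremum of covering dimensions of compact subsets). On the other hand $\bd(\RR^k \times X_{\xi_1, \ldots, \xi_k})$ is the spherical join $S^{k-1} * \bd X_{\xi_1, \ldots, \xi_k}$, the boundary of a product being the spherical join of the boundaries (see~\cite[II.9]{Bridson-Haefliger}), i.e.\ the $k$-fold spherical suspension of $\bd X_{\xi_1, \ldots, \xi_k}$. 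Granting that this suspension raises the dimension by exactly $k$, the two facts combine to give $k + \dim(\bd X_{\xi_1, \ldots, \xi_k}) \le \dim(\bd X)$, which is the claim.

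With the inequality in hand I would deduce depth $\le$ flat rank: a refining sequence of length $k$ requires $\bd X_{\xi_1, \ldots, \xi_{k-1}} \neq \emptyset$ in order to choose $\xi_k$, so $\dim(\bd X_{\xi_1, \ldots, \xi_{k-1}}) \ge 0$, and feeding this into the inequality yields $0 \le \dim(\bd X) - (k-1)$, that is $k \le 1 + \dim(\bd X)$. For the reverse bound I would exhibit a refining sequence realising the flat rank $n$ by peeling a maximal flat direction by direction: starting from an isometric copy of $\RR^n = \RR \times \RR^{n-1}$ in $X$, the endpoint $\xi_1$ of the first $\RR$-factor is a boundary point whose transverse space contains, straight from the definition of $d_\xi$, an isometric copy of the perpendicular sub-flat $\RR^{n-1}$. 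Iterating produces $(\xi_1, \ldots, \xi_n)$ with each $X_{\xi_1, \ldots, \xi_i}$ still containing a flat $\RR^{n-i}$, hence a legitimate refining sequence of length $n$, so depth $\ge n$. Together the two bounds give the asserted equality.

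The main obstacle, and the only genuinely non-formal point, is the dimension bookkeeping for the spherical suspension: one must know that passing from $\bd X_{\xi_1, \ldots, \xi_k}$ to $S^{k-1} * \bd X_{\xi_1, \ldots, \xi_k}$ raises the dimension by exactly $k$. The direction needed here is the lower bound, which I would handle by choosing, for any $d < \dim(\bd X_{\xi_1, \ldots, \xi_k})$, a compact subset $C$ of dimension $\ge d$ and noting that $S^{k-1} * C$ is the $k$-fold suspension $\Sigma^k C$, a compact set of covering dimension $\dim C + k$, since suspension (equivalently, multiplication by $[0,1]$ followed by collapsing the two ends) raises the covering dimension of a compactum by exactly one. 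This is precisely the mechanism already underlying the case $k=1$ of Corollary~\ref{cor:dim_chute}, so no new phenomenon appears; the only work is to carry the count through $k$ successive suspensions.
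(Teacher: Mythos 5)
Your proposal is correct and follows essentially the same route as the paper: the key inequality is extracted from the embedding $\RR^{k}\times X_{\xi_1,\ldots,\xi_k}\to X$ of Proposition~\ref{prop:ro_it}, combined with Kleiner's formula $\mathrm{flat\ rank}=1+\dim(\bd X)$, and the reverse bound comes from peeling off directions of a maximal flat, exactly as in the paper's proof. The only differences are cosmetic: you spell out the dimension bookkeeping (Tits boundary of a product as a spherical join, suspension raising the dimension of a compactum by exactly one) that the paper leaves implicit, and you derive $\mathrm{depth}\leq\mathrm{flat\ rank}$ from the inequality plus Kleiner's theorem, whereas the paper reads the flat $\RR^{k}\se X$ directly off the embedding at a refined point of maximal depth.
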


\begin{proof}
The embedding  $\RR^{k}\times X_{\xi_1,\ldots, \xi_k}\to X$ from Proposition~\ref{prop:ro_it} indeed yields the inequality $\dim(\bd X_{\xi_1, \ldots, \xi_{k}}) \leq \dim(\bd X) -k$. Taking a refined point of maximal depth, we deduce in particular that the flat rank of $X$ is at least its depth. On the other hand, it follows readily from the definitions that the flat rank of any \cat space is a lower bound for its depth.
\end{proof}

%%%%%%%%%%%%%%%%%%%%%%%%%%%%%%%%%%%%%%%%%%%%%%%%%%%%%%%%%%%%%%%%%%%%%%%%%%%%%%
\section{Compactible subgroups and unimodularity}\label{sec:compactible}
%%%%%%%%%%%%%%%%%%%%%%%%%%%%%%%%%%%%%%%%%%%%%%%%%%%%%%%%%%%%%%%%%%%%%%%%%%%%%%
\subsection{Compactible subgroups}
Let $G$ be a locally compact group. We propose the following generalization of the concept of \emph{contractible groups} introduced by P.~M\"uller-R\"omer~\cite{Mueller-Roemer}.

A (not necessarily closed) subgroup $A < G$ is a \textbf{compactible subgroup} with limit $K$ if $K<G$ is a compact subgroup such that for every neighbourhood $U$ of $K$ in $G$ and every finite set $F\se A$ there is $\alpha\in\Aut(G)$ such that $\alpha(F)\se U$. (We write $\Aut(G)$ for the group of continuous automorphisms of $G$.)

A sequence $\{\alpha_n\}$ in $\Aut(G)$ such that $\alpha_n(a)\to K$ for all $a\in A$ will be called a \textbf{compacting sequence}, and we also call a sequence $\{g_n\}$ in $G$ compacting if the sequence of inner automorphisms $\Ad(g_n)$ is compacting.

\medskip
The main difference with the notion of \emph{compaction groups} from~\cite{CCMT} is that we deform $A$ within the ambient group $G$ rather than only within itself.

\begin{prop}\label{prop:CompactibleAmenable}
  If $A < G$ is closed and compactible, then it is  amenable.
\end{prop}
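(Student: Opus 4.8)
The plan is to produce an $A$-invariant mean on $L^\infty(G)$ for the left-translation action and then restrict it to obtain amenability of $A$. Since $A$ is closed, a Borel section for $A\backslash G$ identifies $L^\infty(G)$ with functions on $A\times(A\backslash G)$ on which $A$ acts by left translation in the first variable; the subalgebra of functions of the $A$-coordinate alone is an $A$-invariant copy of $L^\infty(A)$ carrying the regular action, so any $A$-invariant mean on $L^\infty(G)$ restricts to an $A$-invariant mean on $L^\infty(A)$, which is exactly amenability of $A$. To build the mean, I will exhibit, for every finite subset $F\se A$ and every $\eps>0$, a unit vector $\hat\xi\in L^2(G)$ with $\|\lambda(a)\hat\xi-\hat\xi\|_2<\eps$ for all $a\in F$, where $\lambda$ denotes the left regular representation; the weak-$*$ cluster points of the associated densities $|\hat\xi|^2\in L^1(G)$ then furnish the mean. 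Controlling only finite sets $F$ suffices here, because invariance of a mean on $L^\infty(G)$ is tested element by element.

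The source of almost-invariance is the compact group $K$. Fix $\eps>0$. Choosing a relatively compact symmetric neighbourhood $V$ of the identity, put $\eta=\mu_G(KV)^{-1/2}\,\mathbf 1_{KV}\in L^2(G)$, a unit vector. Since $KV$ is left-$K$-invariant, $\eta$ is fixed by $\lambda(\kappa)$ for every $\kappa\in K$. By strong continuity of $\lambda$ there is an open neighbourhood $V'\se V$ of the identity with $\|\lambda(v)\eta-\eta\|_2<\eps$ for all $v\in V'$. Then $U:=KV'$ is an open neighbourhood of $K$, and writing any $w\in U$ as $w=\kappa v$ with $\kappa\in K$ and $v\in V'$, the triangle inequality together with $\lambda(\kappa)\eta=\eta$ gives $\|\lambda(w)\eta-\eta\|_2\le\|\lambda(v)\eta-\eta\|_2<\eps$. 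Thus $\eta$ is uniformly $\eps$-invariant under the whole neighbourhood $U$ of $K$.

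Now fix a finite set $F\se A$. By compactibility there is $\alpha\in\Aut(G)$ with $\alpha(F)\se U$. Set $\hat\xi=\mathrm{mod}(\alpha)^{1/2}\,(\eta\circ\alpha)$, normalised to be a unit vector of $L^2(G)$ (supported on the relatively compact set $\alpha\inv(KV)$). A direct computation gives $\lambda(a)(\eta\circ\alpha)=(\lambda(\alpha(a))\eta)\circ\alpha$, and since precomposition with $\alpha$ multiplies every $L^2$-norm by the same fixed positive scalar, this scalar cancels in the normalised vector and we obtain the exact identity $\|\lambda(a)\hat\xi-\hat\xi\|_2=\|\lambda(\alpha(a))\eta-\eta\|_2$. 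As $\alpha(a)\in U$ for $a\in F$, the previous paragraph yields $\|\lambda(a)\hat\xi-\hat\xi\|_2<\eps$, as required.

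The main obstacle — and the reason the naive constructions fail — is that automorphisms of $G$ do not preserve $A$, and the inverse maps $\alpha\inv$ typically expand small neighbourhoods of the identity; consequently any attempt to transport the Haar measure of $K$ \emph{back} to the $A$-side produces measures whose approximate invariance is destroyed by this distortion. The device above circumvents this precisely because one never applies $\alpha\inv$ to small sets: the almost-invariant vector is kept on the $G$-side near $K$, and $\alpha$ enters only through $\eta\circ\alpha$, where it acts as a scalar multiple of a unitary, so that every distortion cancels in the identity $\|\lambda(a)\hat\xi-\hat\xi\|_2=\|\lambda(\alpha(a))\eta-\eta\|_2$. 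It remains only to assemble the vectors: ordering the pairs $(F,\eps)$ by inclusion of $F$ and decreasing $\eps$, the densities $|\hat\xi_{F,\eps}|^2$ form a net in the weak-$*$ compact set of means on $L^\infty(G)$; any cluster point $m$ is $A$-invariant, since for a fixed $a$ the approximate invariance holds for all $(F,\eps)$ with $a\in F$. Restricting $m$ as explained in the first paragraph completes the proof.
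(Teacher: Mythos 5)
Your construction is correct, and its engine is exactly the paper's: a left $K$-invariant vector, a neighbourhood of the identity supplied by strong continuity of translations, and a compactibility automorphism $\alpha$ whose modulus cancels after normalisation, so that almost-invariance under $\alpha(F)\subseteq KV'$ is transported back to almost-invariance under $F$. The genuine difference is which amenability criterion absorbs the fact that $A$ is merely a \emph{closed subgroup} of $G$. The paper runs the transport in $L^1(G)$ and then quotes Reiter's theorem (Proposition~1 of \S 3 in \cite{Reiter71}), which says that a closed subgroup $A<G$ is amenable as soon as, for every finite $S\subseteq A$, there are norm-one non-negative functions in $L^1(G)$ that are almost $S$-invariant; that single citation performs the entire descent from $G$ to $A$, for an arbitrary locally compact $G$. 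You instead run the transport in $L^2(G)$, pass to an $A$-invariant mean on $L^\infty(G)$ (fine: the standard $|\xi|^2$ plus Cauchy--Schwarz argument), and then must still descend from a mean on $L^\infty(G)$ to amenability of $A$. That descent is the thin spot of your write-up: the identification of $L^\infty(G)$ with functions on $A\times(A\backslash G)$ rests on (i) the existence of a Borel cross-section for $A\backslash G$, which is a theorem (Mackey, Feldman--Greenleaf) available when $G$ is second countable --- true for every group occurring in this paper, but not a hypothesis of the proposition --- and (ii) the Weil--Mackey disintegration of Haar measure, needed so that ``functions of the $A$-coordinate'' form a well-defined subalgebra of $L^\infty(G)$ and the restricted mean is well defined on $L^\infty(A)$-classes (alternatively, one can pull back bounded Borel functions pointwise and restrict to $C_b(A)$, which trades (ii) for the standard equivalence between amenability and invariant means on $C_b$). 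None of this is wrong, but in full generality the statement you are re-proving at this step \emph{is} the Reiter/Greenleaf theorem the paper cites, whose usual proof uses Bruhat functions precisely to avoid cross-sections. So: same core trick, but your version either needs these supporting citations or an implicit second-countability assumption, whereas quoting Reiter as the paper does makes the whole proof three lines.
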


\begin{proof}
By a result of Reiter's (Proposition~1 of~\S3 in~\cite{Reiter71}), it suffices to prove the following: for every finite set $S\se A$ there are non-negative $\fhi\in L^1(G)$ of integral one with $\|s\fhi - \fhi\|_1$ arbitrarily small for all $s\in S$.

Let thus $S$ be given. Let $\psi$ be some left $K$-invariant non-negative $\psi\in L^1(G)$ of integral one.
By continuity of the $G$-representation on $L ^1(G)$, there is a neighbourhood $V$ of the identity in $G$ such that $\|v\psi - \psi\|_1$ is as small as desired for
all $v\in V$. By assumption there is $\alpha\in\Aut(G)$ such that $\alpha(S)\se VK$. Let $\Delta_G(\alpha)$ be the modulus of
$\alpha$ (VII~\S1~No~4 in~\cite{BourbakiINT79_anglais}); then $\fhi=\Delta_G(\alpha)\psi\circ\alpha$ has
the required property.
\end{proof}

The algebraic structure of a compactible subgroup can in fact be described more precisely using the following observation combined with Theorem~\ref{thm:Yamabis}. 

\begin{prop}\label{prop:CompactibleYamabis}
Let $G$ be a locally compact group and $A < G$ be a compactible subgroup. Let $Y = \overline{A \cdot G^\circ}$. 

Then $Y/Y^\circ$ is locally elliptic. 
\end{prop}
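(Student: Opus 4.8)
The plan is to pass to the totally disconnected quotient $\bar G := G/G^\circ$ and reduce the whole statement to a claim about finitely generated subgroups. First I would record that $G^\circ \subseteq A\cdot G^\circ \subseteq Y$, whence $Y^\circ = G^\circ$: any connected subset of $G$ containing the identity lies in $G^\circ$, giving $Y^\circ \subseteq G^\circ$, while conversely $G^\circ$ is connected and contained in $Y$. Thus $Y/Y^\circ = Y/G^\circ$ is, via the quotient map $\pi\colon G \to \bar G$, the closed subgroup $\pi(Y)$ (closed since $Y\supseteq\Ker\pi$ is closed). I would then identify $\pi(Y) = \overline{\pi(A)}$: one has $\pi(Y)\supseteq\pi(A)$ and $\pi(Y)$ closed, while $\pi^{-1}\big(\overline{\pi(A)}\big)$ is a closed subgroup containing $A\cdot G^\circ$, hence containing $Y$. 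Writing $\bar A=\pi(A)$, it remains to prove that $H:=\overline{\bar A}$ is locally elliptic in the totally disconnected group $\bar G$.

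Next I would transport compactibility to $\bar G$. Since $G^\circ$ is characteristic, every $\alpha\in\Aut(G)$ descends to $\bar\alpha\in\Aut(\bar G)$ with $\pi\alpha=\bar\alpha\pi$. Pulling neighbourhoods of $\bar K:=\pi(K)$ back along $\pi$ and lifting finite subsets of $\bar A$ into $A$ shows that $\bar A$ is compactible in $\bar G$ with the compact limit $\bar K$. By van Dantzig's theorem, and averaging a small compact open subgroup $V$ over the compact group $\bar K$ (so that $V_0:=\bigcap_{k\in\bar K}kVk\inv$ is a $\bar K$-invariant compact open subgroup and $\bar K V_0$ is a compact open subgroup containing $\bar K$), I may fix a compact open subgroup $W\le\bar G$ with $\bar K\subseteq W$.

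The heart of the matter is then immediate for $\bar A$ itself. Given a finite set $\bar F\subseteq\bar A$, compactibility furnishes $\bar\alpha\in\Aut(\bar G)$ with $\bar\alpha(\bar F)\subseteq W$, so that $\langle\bar F\rangle\subseteq\bar\alpha\inv(W)$; since $\bar\alpha\inv(W)$ is a compact subgroup, the closure $\overline{\langle\bar F\rangle}$ is compact. Hence every finitely generated subgroup of $\bar A$ is relatively compact, i.e. $\bar A$ is locally elliptic.

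The remaining, genuinely delicate point is to upgrade local ellipticity from $\bar A$ to its closure $H$. Here I would use the characterisation that a totally disconnected locally compact group is locally elliptic precisely when each of its compactly generated open subgroups is compact. Given such an open subgroup $O\le H$, density of $\bar A$ together with openness of a compact open subgroup of $O$ lets me take the finitely many topological generators of $O$ inside $\bar A$; thus $O$ is compactly generated and contains the dense, locally elliptic subgroup $O\cap\bar A$, every element of which is elliptic. The task reduces to showing that a compactly generated totally disconnected locally compact group with a dense locally elliptic subgroup is compact. I expect this to be the main obstacle: it is exactly the point where the varying, non-inner compacting automorphisms cease to help, since they need not preserve $H$, and one must instead invoke a fixed-point phenomenon. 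Concretely, I would make $O$ act on its Cayley--Abels graph and argue that a cobounded group all of whose elements have bounded orbits on a connected locally finite graph must itself have bounded orbits, forcing the graph to be finite and $O$ compact. The subtlety is that ``every element elliptic'' is not a closed condition, so it is this fixed-point input from the structure theory of totally disconnected groups, rather than any formal limiting manipulation of the automorphisms, that carries the real content.
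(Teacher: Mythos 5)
Your argument tracks the paper's own proof almost to the end: the reduction to the totally disconnected quotient (the paper phrases it as ``since $Y^\circ=G^\circ$ we may assume $G$ totally disconnected''), the fattening of the compact limit to a compact open subgroup $W$ (the paper simply cites Lemma~3.1 of \cite{CapraceTD}; your averaging argument reproves it), and the observation that compactibility then places every finite subset of $\bar A$ inside the compact subgroup $\bar\alpha\inv(W)$, so that $\bar A$ is locally elliptic. The divergence is in the very last step, passing from local ellipticity of $\bar A$ to local ellipticity of its closure $H$. The paper does not argue this at all: it invokes Lemma~2.1 of \cite{CapraceTD}, which is Platonov's theorem that the closure of a locally elliptic subgroup of a locally compact group is again locally elliptic. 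You correctly identify this as the real content, but the substitute argument you sketch does not work.

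Concretely, the graph-theoretic claim you propose to prove --- ``a cobounded group all of whose elements have bounded orbits on a connected locally finite graph must itself have bounded orbits'' --- is false. Take any finitely generated infinite torsion group $\Gamma$ (Grigorchuk's group, or a free Burnside group of large odd exponent) acting by left translations on its Cayley graph with respect to a finite generating set: the graph is connected, locally finite and infinite, the action is simply transitive (hence cobounded), and every element, having finite order, has finite orbits; yet $\Gamma$-orbits are unbounded. What you actually have available in your situation is the stronger hypothesis that every \emph{finitely generated subgroup} of the dense subgroup $O\cap\bar A$ is relatively compact (Grigorchuk's group, not being locally finite, is no counterexample to that version). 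But with this hypothesis the statement ``a compactly generated totally disconnected locally compact group with a dense locally elliptic subgroup is compact'' is not a lemma one gets from an elementary fixed-point argument on the Cayley--Abels graph: it is precisely (the totally disconnected case of) Platonov's closure theorem, i.e.\ exactly what was to be proved, and your sketch supplies no proof of it. So either quote Platonov's theorem (or Lemma~2.1 of \cite{CapraceTD}) as the paper does, or give a genuine proof of that closure statement; as written, the final step is a gap resting on a false auxiliary claim.
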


\begin{proof}
Since $Y^\circ = G^\circ$, there is no loss of generality in assuming that $G$ is totally disconnected. 
% all we have to show is that, in a totally disconnected locally compact group, the closure of a compactible subgroup is locally elliptic. 

Recall that every compact subgroup of a totally disconnected locally compact group is contained in a compact open subgroup (see Lemma~3.1 from~\cite{CapraceTD}). It follows that every finite set of elements in $A$ is contained in some compact subgroup of $G$. Thus $A$ is locally elliptic. Hence so is its closure $\overline A$ by Lemma~2.1 from~\cite{CapraceTD}.
\end{proof}

\begin{cor}\label{cor:Compactible}
Let $G$ be a locally compact group and $A < G$ be a closed compactible subgroup.

Then $A^\circ \LF(A)$ is open in $A$ and the discrete quotient $A/(A^\circ \LF(A))$ is virtually soluble. 
\end{cor}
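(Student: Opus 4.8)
The plan is to strip the locally elliptic radical off $A$ so as to land inside a Lie group, and then read off both conclusions from the Lie‑theoretic results of Section~\ref{sec:Yamabis}, using amenability to force virtual solubility.

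First I would record that $A$ is amenable by Proposition~\ref{prop:CompactibleAmenable}, and set $Y=\overline{A\cdot G^\circ}$. By Proposition~\ref{prop:CompactibleYamabis} the group of components $Y/Y^\circ$ is locally elliptic, so Theorem~\ref{thm:Yamabis} applies to $Y$: the quotient $L:=Y/\LF(Y)$ is a Lie group, and its group of components $L/L^\circ\cong Y/(Y^\circ\LF(Y))$ is virtually soluble (using Lemma~\ref{lem:quot-Lie} to identify $L^\circ$ with the image of $Y^\circ$). Write $\pi\colon Y\to L$ for the quotient map.

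The core of the argument is to transport this structure to $A$. The kernel of $\pi|_A$ is $A\cap\LF(Y)$, which is closed in the locally elliptic group $\LF(Y)$ and hence itself locally elliptic, and which is normal in $A$ because $\LF(Y)\lhd Y$; therefore $A\cap\LF(Y)\se\LF(A)$. Consequently $A^*:=A/(A\cap\LF(Y))$ injects continuously into the Lie group $L$, so Proposition~\ref{prop:Lie}(i) shows that $A^*$ is a Lie group. Moreover $A^*$ is amenable, being a quotient of $A$, so its group of components is amenable; since $L/L^\circ$ is virtually soluble, Proposition~\ref{prop:Lie}(ii) (the implication (b)$\Rightarrow$(a)) yields that $A^*/(A^*)^\circ$ is virtually soluble. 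Now $A/\LF(A)$ is the quotient of the Lie group $A^*$ by the closed normal subgroup $\LF(A)/(A\cap\LF(Y))$, hence is again a Lie group, and Lemma~\ref{lem:quot-Lie} identifies its group of components with a quotient of $A^*/(A^*)^\circ$, so it too is virtually soluble.

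It then remains only to descend from $A/\LF(A)$ to $A$. Since $A/\LF(A)$ is a Lie group, its identity component is open; applying Lemma~\ref{lem:quot-Lie} to the quotient map $A\to A/\LF(A)$ shows that the preimage of that identity component is exactly $A^\circ\LF(A)$, which is therefore open in $A$, while the discrete quotient $A/(A^\circ\LF(A))$ is isomorphic to the group of components of $A/\LF(A)$ and hence virtually soluble. The delicate point—and the step I expect to require the most care—is the third paragraph: one must check that the non-closed image $\pi(A)\se L$ still yields a genuine Lie group after dividing out $\LF(A)$, i.e.\ that the locally elliptic radical is well behaved under $\pi|_A$ (so that $A\cap\LF(Y)\se\LF(A)$ and $A/\LF(A)$ remains Lie), since this is precisely what allows the two halves of Proposition~\ref{prop:Lie} to be combined, one to produce Lie‑ness and the other to produce virtual solubility.
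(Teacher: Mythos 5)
Your proposal is correct and follows essentially the same route as the paper: form $Y=\overline{A\cdot G^\circ}$, combine Proposition~\ref{prop:CompactibleYamabis} with Theorem~\ref{thm:Yamabis} to get the Lie quotient $Y/\LF(Y)$ with virtually soluble component group, push $A$ into it and apply Propositions~\ref{prop:CompactibleAmenable} and~\ref{prop:Lie} to the image $\pi(A)=A^*$, then descend to $A/\LF(A)$ and finish with Lemma~\ref{lem:quot-Lie}. The only differences are that you spell out two points the paper leaves implicit, namely why $A\cap\LF(Y)\se\LF(A)$ and why $L/L^\circ$ is virtually soluble, which is fine; the ``delicate point'' you flag at the end is indeed handled correctly by your argument, since Proposition~\ref{prop:Lie}(i) needs only a continuous faithful homomorphism, not a closed image.
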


\begin{proof}
Let $Y = \overline{A \cdot G^\circ}$. Thus $Y /Y^\circ$ is locally elliptic by Proposition~\ref{prop:CompactibleYamabis}. Let 
$$\pi \colon Y \to Y/\LF(Y)$$
 be the canonical projection. By Theorem~\ref{thm:Yamabis}, the quotient $Y/\LF(Y)$ is a Lie group whose group of components is virtually soluble. Moreover $A$, and hence also $A/A^\circ$, is amenable by Proposition~\ref{prop:CompactibleAmenable}. Therefore $\pi(A)$ is a Lie group and $\pi(A)/\pi(A)^\circ$ is virtually soluble by Proposition~\ref{prop:Lie}. 

Since $A \cap \Ker(\pi)$ is contained in $\LF(A)$, we infer that $A/\LF(A)$ is a quotient of $\pi(A)$, and is thus  itself a Lie group whose group of components is virtually soluble. From Lemma~\ref{lem:quot-Lie}, it follows that $A^\circ \LF(A) /\LF(A) = (A/\LF(A))^\circ$. Therefore $A/A^\circ \LF(A)$ is discrete and virtually soluble, as desired. 
\end{proof}

%%%%%%%%%%%%%%%%%%%%%%%%%%%%%%%%%%%%%%%%%%%%%%%%%%%%%%%%%%%%%%%
\subsection{Compactible subgroups  and the refined bordification}\label{sec:com_ref}
%%%%%%%%%%%%%%%%%%%%%%%%%%%%%%%%%%%%%%%%%%%%%%%%%%%%%%%%%%%%%%%
Given a refined point $(\xi_1, \ldots, \xi_k; x)$, all the Busemann characters $\beta_{\xi_i}$ are well-defined on the iterated stabiliser $G_{\xi_1, \ldots, \xi_k}$ and hence also on $G_{\xi_1, \ldots, \xi_k;x}$.

\begin{prop}\label{prop:RefinedCompacting}
Let $X$ be a proper \cat space and $G < \Isom(X)$ be a closed subgroup acting cocompactly. For any refined point $(\xi_1, \ldots, \xi_k; x)$, there is a sequence in $G$ which is compacting for the stabiliser $G_{\xi_1, \dots, \xi_k; x} \cap (\bigcap_{i=1}^k \Ker \beta_{\xi_i})$.
\end{prop}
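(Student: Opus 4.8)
The goal is to produce a single sequence in $G$ that simultaneously compacts the stabiliser of a \emph{refined} point after killing all the Busemann characters $\beta_{\xi_i}$. The plan is to leverage Proposition~\ref{prop:ro_it} together with Corollary~\ref{cor:ro:extract}, which are precisely the iterated versions of the lifting and the $\ro$-equivariance. First I would apply Proposition~\ref{prop:ro_it} to the refining sequence $(\xi_1, \dots, \xi_k)$ to obtain an isometric embedding $f\colon \RR^k \times X_{\xi_1, \ldots, \xi_k} \to X$ together with a homomorphism $\ro\colon G_{\xi_1, \ldots, \xi_k} \to G$ for which $f$ is $\ro$-equivariant, where the domain carries the refined horoaction of depth $k$. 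The point $x \in X_{\xi_1, \ldots, \xi_k}$ together with the origin of $\RR^k$ gives a point $p_0 = f(0, x)$ in $X$, and $\ro$ restricts to a homomorphism on the smaller stabiliser $H := G_{\xi_1, \ldots, \xi_k; x} \cap \bigcap_{i=1}^k \Ker\beta_{\xi_i}$.

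The key observation is that on this subgroup $H$ the refined horoaction degenerates to an action fixing $p_0$: every element of $H$ fixes $x$ in $X_{\xi_1, \ldots, \xi_k}$ and annihilates each $\beta_{\xi_i}$, so it acts trivially on the $\RR^k$-factor and fixes $x$ in the transverse factor, hence fixes the point $(0,x)$. By $\ro$-equivariance of $f$, this means $\ro(h)$ fixes the point $p_0 = f(0,x)$ for every $h \in H$; that is, $\ro(H) \se G_{p_0} = \Stab_G(p_0)$. Since $X$ is proper and $G$ acts properly, the stabiliser $G_{p_0}$ is compact. Setting $K := \overline{\ro(H)} \se G_{p_0}$, we obtain a compact subgroup of $G$ into which $\ro$ maps $H$.

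It remains to convert this into an honest compacting sequence, and this is where Corollary~\ref{cor:ro:extract} enters: it guarantees that $\ro$ is a pointwise limit of a sequence $(\alpha_n)$ of inner automorphisms of $G$. So for any finite set $F \se H$ and any neighbourhood $U$ of $K$ in $G$, since $\ro(F) \se K \se U$ and $\alpha_n(h) \to \ro(h)$ for each $h \in F$, we have $\alpha_n(F) \se U$ for $n$ large enough. Because each $\alpha_n = \Ad(g_n)$ for some $g_n \in G$, the sequence $\{g_n\}$ is exactly a compacting sequence for $H$ with limit $K$, as required by the definition of compactibility.

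\textbf{Main obstacle.} The delicate point I anticipate is the passage from ``$\ro$ is a pointwise limit of inner automorphisms'' to the existence of a \emph{single} sequence $\{g_n\}$ that compacts all of $H$ simultaneously. As the paper itself warns just after Corollary~\ref{cor:ro:extract}, pointwise convergence of a sequence of inner automorphisms to $\ro$ does not in general survive restriction to an arbitrary (possibly uncountable) subgroup $H$, for lack of first-countability of the relevant product. The definition of compactible subgroup, however, only requires that for \emph{each} finite $F$ and neighbourhood $U$ there exist a \emph{single} automorphism (equivalently inner automorphism $\Ad(g)$) with $\alpha(F) \se U$; this is a net-level, not sequence-level, statement, and is supplied directly by the fact that each $\ro(F)$ already lands inside the compact limit $K$ together with the definition of pointwise limit. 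Thus the obstacle is essentially bypassed by reading the definition of compactibility carefully, and no genuine uncountability issue arises.
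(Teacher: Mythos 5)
Your proof is correct and takes essentially the same approach as the paper's: the paper likewise observes that the group in question is the point stabiliser of $(0,x)$ for the refined horoaction, uses the $\ro$-equivariance of $f$ from Proposition~\ref{prop:ro_it} to place the image of that stabiliser under $\ro$ inside the compact point stabiliser $G_{f(0,x)}$, and then invokes Corollary~\ref{cor:ro:extract} to conclude that the approximating sequence $(\Ad g_n)$ of inner automorphisms is compacting. One caveat on your closing paragraph: the proposition literally asserts the existence of a compacting \emph{sequence}, not merely compactibility, so retreating to the net-level definition proves formally less than the stated claim; but since the paper's own proof rests on the same convergence reading of Corollary~\ref{cor:ro:extract} that your main argument uses, this is a subtlety in the paper's phrasing rather than a gap in your proof.
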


This strengthens a statement for totally disconnected groups contained in Proposition~4.5(iii) in~\cite{CapraceTD}. 

\begin{proof}
The group $G_{\xi_1, \dots, \xi_k; x} \cap (\bigcap_{i=1}^k \Ker \beta_{\xi_i})$ is a point stabiliser for the refined horoaction of $G_{\xi_1, \dots, \xi_k}$ on $\RR^k\times X_{\xi_1, \dots, \xi_k}$. Let $f \colon \RR^k\times X_{\xi_1, \dots, \xi_k} \to X$ and $\ro \colon G_{\xi_1, \dots, \xi_k} \to G$ be the maps provided by Proposition~\ref{prop:ro_it}. The $\ro$-equivariance of $f$ implies that $\ro(G_{\xi_1, \dots, \xi_k; x})$ fixes the point $f(x) \in X$, and is thus relatively compact in $G$. By Corollary~\ref{cor:ro:extract}, the homomorphism $\ro$ is a pointwise limit of some sequence $(\Ad g_n)$ of inner automorphisms of $G$. Since $\ro(G_{\xi_1, \dots, \xi_k; x})$ is contained in the compact group $G_{f(x)}$, it follows that $(\Ad g_n)$ is a compacting sequence for $G_{\xi_1, \dots, \xi_k; x}$.
%We conclude by invoking Corollary~\ref{cor:compact:it}, which indeed applies in view of Remark~\ref{rem:ro}.
\end{proof}

\begin{cor}\label{cor:GeomAmenable}
Let $G$ be a locally compact group acting continuously, properly and cocompactly by isometries on $X$. Then the stabiliser of every point in the refined bordification is amenable.
\end{cor}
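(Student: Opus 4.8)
The plan is to realise the stabiliser $H := G_{\xi_1, \ldots, \xi_k; x}$ as an extension of a closed compactible (hence amenable) subgroup by an abelian group, and then to invoke the stability of amenability under extensions of locally compact groups. The compactible piece will be the common kernel of the Busemann characters, the abelian quotient will be read off via those same characters, and all the geometric content will be imported wholesale from Proposition~\ref{prop:RefinedCompacting}.

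First I would reduce to the case treated in Proposition~\ref{prop:RefinedCompacting}, namely $G < \Isom(X)$ closed and cocompact. Since the $G$-action is continuous and proper, the induced homomorphism $G \to \Isom(X)$ is a proper map; hence its kernel $K$ is compact and its image $\bar G$ is a closed, cocompact subgroup of $\Isom(X)$. The stabiliser of any refined point in $G$ is then a $K$-by-(stabiliser in $\bar G$) extension. As $K$ is compact (thus amenable), and amenability passes to such extensions, it suffices to establish the statement for $\bar G$; so I may assume $G < \Isom(X)$ is closed and cocompact.

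Next, set $H_0 := H \cap \bigcap_{i=1}^k \Ker(\beta_{\xi_i})$. Each $\beta_{\xi_i}$ is a continuous homomorphism to $\RR$ on the relevant iterated stabiliser (continuity being a consequence of the continuity of the horoaction, cf.\ Proposition~\ref{prop:TransverseSpace}(\ref{pt:TransverseSpace:cont})), and $H$ itself is closed, being an iterated stabiliser of points under continuous actions on the successive transverse spaces. Consequently $H_0$ is a closed normal subgroup of $H$. By Proposition~\ref{prop:RefinedCompacting} there is a sequence in $G$ compacting for $H_0$, so $H_0$ is a closed compactible subgroup and is therefore amenable by Proposition~\ref{prop:CompactibleAmenable}. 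Finally, the product $\beta_{\xi_1} \times \cdots \times \beta_{\xi_k}$ restricts to a homomorphism $H \to \RR^k$ whose kernel is exactly $H_0$; thus $H/H_0$ is, as an abstract group, a subgroup of $\RR^k$, hence abelian, hence amenable. Since $H$ is an extension of the amenable group $H_0$ by the amenable group $H/H_0$, it is amenable.

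The only genuinely delicate point is the bookkeeping ensuring that $H_0$ is closed and that the Busemann characters are well-defined and continuous on the iterated stabilisers, which is exactly what allows Proposition~\ref{prop:CompactibleAmenable} to be applied; once that is in place, everything of substance has already been carried out inside Proposition~\ref{prop:RefinedCompacting}, and the remaining argument is a short structural one.
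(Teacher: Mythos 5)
Your proof is correct and takes essentially the same route as the paper: the paper's own proof consists of the single line ``Immediate from Propositions~\ref{prop:CompactibleAmenable} and~\ref{prop:RefinedCompacting}'', and your argument simply makes explicit the steps that this ``immediate'' conceals, namely the reduction to a closed cocompact subgroup of $\Isom(X)$ and the fact that the full stabiliser is an extension of the compactible (hence amenable) common kernel of the Busemann characters by an abelian subgroup of $\RR^k$. The bookkeeping you flag (closedness of the iterated stabilisers and continuity of the $\beta_{\xi_i}$) is exactly what the paper relies on implicitly, so nothing is missing.
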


\begin{proof}
Immediate from Propositions~\ref{prop:CompactibleAmenable} and~\ref{prop:RefinedCompacting}.
\end{proof}

%%%%%%%%%%%%%%%%%%%%%%%%%%%%%%%%%%%%%%%%%%%%%%%%%%%%%%%%%%%%%%%%%%%%%%%%%%%
\subsection{Unimodular groups do not fix points at infinity}
We begin with a general restriction for compactible subgroups in unimodular groups.

\begin{prop}\label{prop:unimod:compactible}
Let $G$ be a unimodular locally compact group, $H < G$ a closed subgroup and $\{g_n\}_{n\in\NN}$ a compacting sequence for $H$ in $G$.

If $\{g_n g g_n\inv\}$ remains bounded for each $g\in G$, then $H$ is compact.
\end{prop}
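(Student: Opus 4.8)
The plan is to exploit the interplay between the compacting sequence $\{g_n\}$, which ``shrinks'' $H$ towards a compact limit, and the boundedness of the conjugation orbits, which controls how the inner automorphisms $\Ad(g_n)$ can move elements of $G$. The key measure-theoretic tool will be the modulus: since $G$ is unimodular, every inner automorphism has modulus one, and I expect the argument to hinge on the fact that $\Ad(g_n)$ therefore preserves the Haar measure of any fixed relatively compact set, contradicting the contraction of $H$ unless $H$ already sits inside a compact set.

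First I would fix a compact symmetric neighbourhood $U$ of the identity and consider the compact group $K$ that is the limit of the compactible subgroup $H$ (so $\Ad(g_n)(h)\to K$ for all $h\in H$). I would pick a relatively compact open set $\Omega$ containing $K$, say $\Omega = VK$ for a small open $V\ni e$, so that for each finite $F\se H$ eventually $\Ad(g_n)(F)\se\Omega$. The heart of the matter is then to compare Haar volumes: I would look at the set $H\cap C$ for a large compact $C$ and track the volume of $g_n(H\cap C)g_n\inv$ under $\Ad(g_n)$. Because $G$ is unimodular, conjugation is measure-preserving, so $\mathrm{vol}\big(\Ad(g_n)(S)\big)=\mathrm{vol}(S)$ for every measurable $S$. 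Meanwhile the compacting property forces $\Ad(g_n)(S)$ into the fixed bounded region $\Omega$ (at least for $S$ ranging over finite, and by approximation relatively compact, subsets of $H$), whose volume is finite. The conclusion I am aiming for is that $H$ itself must have finite Haar measure as a subgroup, and a closed subgroup of finite Haar measure in its own right is compact.

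To make this precise I would use the hypothesis that $\{g_ngg_n\inv\}$ stays bounded for \emph{each} $g\in G$: this guarantees that the automorphisms $\Ad(g_n)$ do not blow up generic elements, so that the ``return map'' is well-behaved and one can legitimately integrate. Concretely, I expect to build a nonzero compactly supported function and compare $\int f$ with $\int f\circ\Ad(g_n)\inv$; unimodularity makes these integrals equal, while the compacting behaviour of $H$ localises the mass, forcing an invariant finite measure on $H$ that can only exist when $H$ is compact. An alternative, cleaner route is to invoke Proposition~\ref{prop:CompactibleAmenable}: $H$ is amenable, and then argue on the structure supplied by Corollary~\ref{cor:Compactible}, reducing to showing that the open subgroup $H^\circ\LF(H)$ and the discrete virtually soluble quotient are both compact/trivial using the boundedness of conjugacy orbits.

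The main obstacle I anticipate is the step where I pass from the finite sets $F$ (on which the compacting hypothesis is literally stated) to a genuine volume estimate on all of $H\cap C$, i.e.\ upgrading a pointwise contraction statement into a uniform measure-theoretic one; controlling this uniformity is where the hypothesis that \emph{every} conjugacy orbit $\{g_ngg_n\inv\}$ is bounded should be essential, since it prevents the images $\Ad(g_n)(H\cap C)$ from escaping to infinity in an uncontrolled way. Making the equality of moduli interact correctly with the limit group $K$ (rather than with a single point) is the delicate quantitative point of the proof.
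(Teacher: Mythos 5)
Your overall strategy---unimodularity makes conjugation measure-preserving, while the compacting sequence pushes $H$ into a bounded region, so a volume count should force compactness---is exactly the idea of the paper's proof. But your implementation has a genuine gap at its core: you propose to track the Haar volume of $g_n(H\cap C)g_n\inv$, and a closed subgroup $H<G$ is typically a Haar-null subset of $G$ (think of a line in $\RR^2$), so this comparison reads $0=0$ and carries no information; the intrinsic Haar measure of $H$ ``as a group in its own right'' cannot be substituted for it, because $\Ad(g_n)$ does not map $H$ into itself and so preserves no measure on $H$. The correct objects to measure are \emph{thickenings} of elements of $H$: if $H$ is closed and non-compact, then for every integer $p$ one can choose $h_1,\dots,h_p\in H$ whose right translates $Uh_1,\dots,Uh_p$ of a fixed non-empty relatively compact open set $U$ are pairwise disjoint, each of Haar measure $|U|>0$. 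By unimodularity the conjugates $g_nUh_ig_n\inv=(g_nUg_n\inv)(g_nh_ig_n\inv)$ are still disjoint and still have measure $|U|$, yet for large $n$ they all lie in $QS$, where $S$ is a compact neighbourhood of the limit group $K$ (swallowing every $g_nh_ig_n\inv$) and $Q$ is a compact set containing $\bigcup_n g_nUg_n\inv$. Choosing $p$ with $p|U|>|QS|$ gives the contradiction; this is the paper's argument.

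The second gap is the existence of such a $Q$: this is precisely the pointwise-to-uniform upgrade that you flag as the ``main obstacle'' but do not resolve, and it genuinely requires an argument beyond restating the hypothesis. The paper's solution is a Baire category argument: one may assume $G$ is $\sigma$-compact; for $C$ running over a countable exhaustion of $G$ by compacta, the sets $U_C=U\cap\bigcap_n g_n\inv Cg_n$ are relatively closed in $U$ and cover $U$, so some $U_C$ has non-empty interior; since $\overline U$ is compact, finitely many right translates $U_CF$ (with $F\se G$ finite) cover $U$, and the pointwise hypothesis applied to the finite set $F$ then bounds $\bigcup_n g_nUg_n\inv$. Without this step (or a substitute) the proof does not close. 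Your fallback route via Proposition~\ref{prop:CompactibleAmenable} and Corollary~\ref{cor:Compactible} cannot close it either: the group $H=\RR$ is amenable and satisfies all the structural conclusions of Corollary~\ref{cor:Compactible}, so amenability and that structure theory can never by themselves force compactness---the measure-theoretic input above is where unimodularity must do its work.
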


\begin{proof}
Let $K<G$ be a compact subgroup with $g_n h g_n\inv\to K$ for all $h\in H$. Let further $U\se G$ be any non-empty relatively compact open set. We claim that the union of all $g_n U g_n\inv$ is relatively compact.

For this claim there is no loss of generality in assuming $G$ to be $\sigma$-compact. For each compact set $C\se G$, let $U_C=U\cap \bigcap_n g_n\inv C g_n$. As $C$ ranges over a countable exhaustion of $G$, the sets $U_C$ form a cover of $U$ by sets closed in $U$. Therefore, Baire's theorem implies that there is some compact $C$ such that $U_C$ has non-empty interior. Since $U$ has compact closure, we can find a finite set $F\se G$ such that the set $U_C F$ covers $U$. Since the union of all $g_n Fg_n\inv$ is relatively compact, the claim follows.

We thus have a compact set $Q\se G$ containing all $g_n U g_n\inv$. Fix any compact neighbourhood $S$ of $K$ in $G$. Let $p$ be an integer such that $p|U| > |QS|$, where $|\cdot|$ denotes a Haar measure on $G$. If, towards a contradiction, $H$ is non-compact, then we can find $h_1, \ldots, h_p\in H$ such that all $U h_i$ are disjoint. However, when $n$ is large enough, all $g_n U h_i g_n\inv$ lie in $QS$, which is impossible by comparing the measures.
\end{proof}

\begin{proof}[Proof of Theorem~\ref{thm:unimodular}]
Let $\xi \in \bd X$ be a point fixed by the unimodular group $G$. By~\cite[Prop.~7.1]{Caprace-Monod_structure} the set $\Op(\xi)$ is non-empty. Let $\xi' \in \Op(\xi)$. Theorem~\ref{thm:Levi} yields a decomposition
$$G = G_{\xi, \xi'}\cdot G_\xi^\mathrm{u},$$
where $G_\xi^\mathrm{u} = \Ker (\omega_\xi)$ is the kernel of the horoaction. Any radial sequence $\{g_n\}$ for $\xi$ is compacting for $G_\xi^\mathrm{u}$ (this is in fact a very basic case of Proposition~\ref{prop:RefinedCompacting}).  Moreover, $g_n g g_n\inv$ remains bounded for all $g$ in $G=G_\xi$, as observed in Section~\ref{sec:Levi}. Therefore, Proposition~\ref{prop:unimod:compactible} implies that $G_\xi^\mathrm{u}$ is compact. However, a compact normal subgroup of a group acting minimally must be trivial. We deduce that $G = G_{\xi, \xi'}$; in particular, $G$ preserves $P(\xi, \xi')$ which is thus equal to $X$. In conclusion, $X$ admits a product decomposition $X \cong \RR \times X_\xi$ such that the pair $\{\xi, \xi'\}$ corresponds to the boundary of the line-factor. In particular every point $\xi \in \bd X$ fixed by $G$ lies on the boundary of the Euclidean de Rham factor of $X$, as desired.
\end{proof}

\begin{proof}[Proof of Theorem~\ref{thm:lattice:NoFixed}]
Since $\Isom(X)$ admits a lattice, it is unimodular. Therefore Theorem~\ref{thm:unimodular} implies that $\Isom(X)$ does not have a fixed point in $\bd X$. Now the ``geometric Borel density'' established in Theorem~1.1 of~\cite{Caprace-Monod_discrete} yields the conclusion.
\end{proof}

%%%%%%%%%%%%%%%%%%%%%%%%%%%%%%%%%%%%%%%%%%%%%%%%%%%%%%%%%%%%%%%
\section{Amenable isometry groups}\label{sec:Amen}
%%%%%%%%%%%%%%%%%%%%%%%%%%%%%%%%%%%%%%%%%%%%%%%%%%%%%%%%%%%%%%%

\begin{flushright}
\begin{minipage}[t]{0.75\linewidth}\itshape\small
La m\'ethode de cet artificieux Bouillon, c'est [\ldots] de rester, en fin de compte, le moyenneur des situations.
\begin{flushright}
\upshape Charles-Augustin Sainte-Beuve, \emph{Causeries du lundi}, tome 12, 1856
\end{flushright}
\end{minipage}
\end{flushright}

%\vspace{.5cm}
%%%%%%%%%%%%%%%%%%%%%%%%%%%%%%%%%%%%%%%%%%%%%%%%%%%%%
\subsection{Refined flats}% and structure of amenable closed subgroups} 
%%%%%%%%%%%%%%%%%%%%%%%%%%%%%%%%%%%%%%%%%%%%%%%%%%%%%

We define a  \textbf{refined flat of depth $k$} in $X$ to be a flat contained in a space of the form $X_{\xi_1, \dots, \xi_k}$, where $(\xi_1, \dots, \xi_k)$ is a refining sequence. Thus a flat of depth~$0$ is a flat in $X$. 

A key ingredient needed in the proof of Theorem~\ref{thm:StructureAmenable} is the theorem by Adams--Ballmann from~\cite{AB98}. We shall use the following formulation of their result.

\begin{prop}[Adams--Ballmann]\label{prop:AB}
Let $X$ be a proper \cat space of finite depth (e.g.\ cocompact, see Proposition~\ref{prop:TransverseSpace}(\ref{it:FiniteDepth})). 

Every amenable group acting continuously by isometries on $X$ stabilises a {refined flat}. 
\end{prop}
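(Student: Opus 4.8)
The plan is to prove the statement by induction on the depth of $X$, using the original Adams--Ballmann dichotomy of~\cite{AB98} as the engine at each stage. Recall that the classical result asserts that an amenable group $H$ acting continuously by isometries on a proper \cat space either stabilises a flat or fixes a point of the boundary at infinity. A flat in $X$ is precisely a refined flat of depth~$0$, so the first alternative already yields the conclusion; the content of the iteration is to handle the second alternative by descending into a transverse space.

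For the base case, suppose $X$ has depth~$0$. Then $\bd X = \emptyset$ (a refining sequence of length~$1$ is nothing but a boundary point), so the ``fixed point at infinity'' alternative is vacuous and Adams--Ballmann forces $H$ to stabilise a flat in $X$. For the inductive step, assume the statement holds for all proper \cat spaces of finite depth strictly less than $d$, and let $X$ have depth $d\geq 1$. Applying~\cite{AB98} to the continuous action of $H$ on $X$, we either obtain an $H$-stabilised flat in $X$ --- and are done --- or an $H$-fixed point $\xi_1\in\bd X$. In the latter case $H\se\Isom(X)_{\xi_1}$, so by Proposition~\ref{prop:TransverseSpace}(\ref{pt:TransverseSpace:cont}) the induced action of $H$ on the transverse space $X_{\xi_1}$ is again continuous and isometric. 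Any refining sequence $(\eta_1,\dots,\eta_m)$ for $X_{\xi_1}$ prolongs to the refining sequence $(\xi_1,\eta_1,\dots,\eta_m)$ for $X$; this shows simultaneously that $X_{\xi_1}$ has finite depth bounded by $d-1$, and that a flat in $(X_{\xi_1})_{\eta_1,\dots,\eta_m}=X_{\xi_1,\eta_1,\dots,\eta_m}$ is automatically a refined flat of $X$. Invoking the induction hypothesis for $X_{\xi_1}$ therefore produces an $H$-stabilised refined flat, which is the desired refined flat of $X$. The process terminates after at most $d$ steps, by the strict drop in depth.

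The crux of the argument --- and the one hypothesis that must be verified before reapplying~\cite{AB98} to $X_{\xi_1}$ --- is that the transverse space $X_{\xi_1}$ be a \emph{proper} \cat space. Completeness and the \cat inequality are guaranteed by Proposition~\ref{prop:TransverseSpace}(i), and the finite-depth and continuity requirements are handled as above; but properness of a transverse space does not follow from properness of $X$ alone. This is exactly where the standing hypothesis enters: in the cocompact case (and more generally under bounded geometry) Proposition~\ref{prop:TransverseSpace}(iii) and~(\ref{it:FiniteDepth}) ensure that bounded geometry, hence properness together with finite depth, is inherited by every $X_{\xi_1}$, so the induction runs without further input. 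Thus I expect the main obstacle to be not conceptual but geometric: confirming that the regularity demanded by the Adams--Ballmann theorem survives the passage to the transverse space, after which the inductive concatenation of refining sequences closes the argument.
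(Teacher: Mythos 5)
Your proof is essentially identical to the paper's, whose entire argument is the single sentence that the result ``follows from a repeated application of the Adams--Ballmann theorem~\cite{AB98}; the hypothesis that the depth is finite guarantees that the process terminates after finitely many steps'' --- precisely your induction on depth, with the same appeal to Proposition~\ref{prop:TransverseSpace} to carry continuity of the action and finiteness of depth over to the transverse space. If anything you are more scrupulous than the paper: the properness of $X_{\xi_1}$ needed to re-apply~\cite{AB98} is passed over in silence there, and your observation that it is supplied by bounded geometry (Proposition~\ref{prop:TransverseSpace}(iii)), i.e.\ by the cocompact reading of the hypothesis, is a refinement of, not a departure from, the paper's argument.
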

 
\begin{proof}
Follows from a repeated application of the Adams--Ballmann theorem~\cite{AB98}; the hypothesis that the depth is finite guarantees that the process terminates after finitely many steps.
\end{proof}

Before proceeding to the proof of Theorem~\ref{thm:StructureAmenable}, we shall undertake to prove Theorem~\ref{thm:converseAB}, providing a converse to Proposition~\ref{prop:AB} and thus establishing Corollary~\ref{cor:converseAB}. We point out that Theorem~\ref{thm:converseAB} is not a direct consequence of the amenability of the kernel of the refined horoaction, because the image of a (refined) flat stabiliser in the isometry group of that flat need not be closed, see Remark~\ref{rem:disc}. Therefore, we first need to record a consequence of a theorem by Kazhdan--Margulis (see Theorem~8.16 from~\cite{Raghunathan}).

\medskip
To this end, we recall that the space of all closed subgroups of a locally compact group $G$ is compact when endowed with the Chabauty topology~\cite{Chabauty}. Moreover the Chabauty space of closed subgroups is metrisable provided $G$ is second countable; this happens e.g. when $G$ is a closed subgroup of $\Isom(X)$, where $X$ is a proper metric space. 

\begin{prop}\label{prop:MargulisLemma}
Let $G$ be a locally compact group whose identity component $G^\circ$ has no non-trivial compact normal subgroup. Let $(\Gamma_j)_{j\in J}$ be a net of discrete free subgroups converging in the Chabauty topology to a closed subgroup $L \se G$. 

Then the neutral component $L^\circ$ is an abelian Lie group. 
\end{prop}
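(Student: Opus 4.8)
The plan is to prove the two assertions separately. That $L^\circ$ is a \emph{Lie group} is the soft part: a Chabauty limit of subgroups is a closed subgroup, so $L^\circ$ is a closed connected subgroup of $G$ and hence $L^\circ \se G^\circ$. The hypothesis that $G^\circ$ has no non-trivial compact normal subgroup forces, via Yamabe's theorem applied to the (connected, hence almost connected) group $G^\circ$, that $G^\circ$ is a connected Lie group in the usual sense; Cartan's closed-subgroup theorem then exhibits $L^\circ$ as a Lie subgroup. All the real work goes into showing that $L^\circ$ is \emph{abelian}, and the mechanism I would use is the confrontation of two rigidity phenomena: the Kazhdan--Margulis (Zassenhaus) neighbourhood, which makes a discrete group locally nilpotent, and the fact that a nilpotent subgroup of a free group is cyclic (Nielsen--Schreier together with the non-nilpotency of $F_2$).

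The hard part will be that $G$ is merely locally compact, whereas the Kazhdan--Margulis neighbourhood (Theorem~8.16 in~\cite{Raghunathan}) is a tool for Lie groups. I would remove the totally disconnected part near the identity as follows. By van Dantzig, the totally disconnected group $G/G^\circ$ has a compact open subgroup, whose preimage $V \se G$ is an open, almost connected subgroup with $V^\circ = G^\circ$. Yamabe's theorem supplies a maximal compact normal subgroup $W \lhd V$ with $V/W$ a Lie group; here $W \cap G^\circ = 1$ since it is a compact normal subgroup of $G^\circ$, so $G^\circ$ embeds as the identity component of $V/W$ by Lemma~\ref{lem:quot-Lie}. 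The decisive point is that each $\Gamma_j$ is \emph{torsion-free}, being free, whence $\Gamma_j \cap W = 1$; consequently the quotient map $q\colon V \to V/W$ is injective on $\Gamma_j \cap V$, and, $W$ being compact (so that $q$ is proper), it carries this discrete group to a discrete and still free subgroup $\bar\Gamma_j \se V/W$.

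With this reduction the core argument becomes local and short. I would fix the Zassenhaus neighbourhood $\bar U$ of the identity in the Lie group $V/W$ and choose a neighbourhood $N$ of the identity in $L^\circ$ so small that $q(N)$ lies well inside $\bar U$. Given any $\ell, \ell' \in N$, Chabauty convergence provides (along a subnet) elements $\gamma_j, \gamma'_j \in \Gamma_j$ with $\gamma_j \to \ell$ and $\gamma'_j \to \ell'$; for large $j$ these lie in the open set $V$ and their images $q(\gamma_j), q(\gamma'_j)$ lie in $\bar U \cap \bar\Gamma_j$. By Kazhdan--Margulis the subgroup they generate is nilpotent, while as a subgroup of the free group $\bar\Gamma_j$ it is free, hence cyclic; in particular $q(\gamma_j)$ and $q(\gamma'_j)$ commute. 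Since then $[\gamma_j, \gamma'_j] \in \Gamma_j \cap \Ker q = \Gamma_j \cap W = 1$, the approximants already commute in $G$, and passing to the limit yields $[\ell, \ell'] = 1$. Thus every pair of elements of $N$ commutes; as $N$ generates the connected group $L^\circ$, this group is abelian, completing the proof.
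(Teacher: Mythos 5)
Your proof is correct, and its skeleton is the same as the paper's: use van Dantzig and Yamabe to remove the compact ``non-Lie'' part near the identity, exploit the freeness (hence torsion-freeness) of the $\Gamma_j$ to see that this removal is faithful on the $\Gamma_j$, invoke the Kazhdan--Margulis/Zassenhaus neighbourhood together with the fact that nilpotent subgroups of free groups are cyclic, and pass to the limit over a connected generating neighbourhood of the identity in $L^\circ$. The differences are in execution, and they are worth recording. First, where you quotient the van Dantzig subgroup $V$ by its maximal compact normal subgroup $W$ and work inside the Lie group $V/W$, the paper instead arranges a direct-product splitting: it invokes Theorem~\ref{thm:Yamabis} to see that $\LF(O)G^\circ$ is open in its van Dantzig subgroup $O$, uses the hypothesis on $G^\circ$ to see that $\LF(O)G^\circ\cong \LF(O)\times G^\circ$, replaces $O$ by this product, and then projects onto the Lie factor $G^\circ$; your route is marginally lighter on machinery, since classical Yamabe suffices and no splitting (hence no appeal to Theorem~\ref{thm:Yamabis}) is needed. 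Second, your limiting argument is element-wise: the commutators $[\gamma_j,\gamma'_j]$ lie in $\Gamma_j\cap W=1$ and converge to $[\ell,\ell']$; the paper argues at the level of subgroups, via the chain $L^\circ=\langle U\cap\pi(L^\circ)\rangle\subseteq\langle U\cap\pi(L)\rangle\subseteq\lim_j\langle U\cap\pi(\Gamma_j)\rangle$ and the observation that a Chabauty limit of abelian subgroups is abelian. Your pointwise version is somewhat more careful -- in particular it makes explicit the point $\Gamma_j\cap W=1$, which the paper uses only implicitly when asserting that the proper projection $\pi$ carries $\Gamma_j$ to a discrete \emph{free} group (injectivity, not just discreteness of the image, is what freeness requires) -- while the paper's subgroup-level version is shorter. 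Both arguments are sound and prove the same statement.
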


\begin{proof}
By van Dantzig's theorem~\cite[page~18]{vanDantzig31}, the group $G$ has an open subgroup $O$ (thus containing $G^\circ$) such that $O/G^\circ$ is compact. By Yamabe's theorem $\LF(O)$ is compact and $G^\circ$ is a Lie group. Moreover Theorem~\ref{thm:Yamabis} implies that  $\LF(O) G^\circ \cong \LF(O) \times G^\circ$ is open in $O$. Upon replacing $O$ by that subgroup, we shall assume henceforth that $O$ is a direct product of a connected Lie group and a compact (profinite) group. 

Since $O$ is open, we have $L \cap O = \lim_j (\Gamma_j \cap O)$. Since the desired conclusion concerns the group $L^\circ \se G^\circ \se O$,  there is therefore no loss of generality in assume that all the subgroups $\Gamma_j$ are contained in $O$ or, equivalently, that $G=O$. Then $G \cong K \times G^\circ$, where $K = \LF(G)$ is compact and the identity component $G^\circ$ is a Lie group. Since $G^\circ$ is a Lie group, so is   $L^\circ$. 

Let $\pi \colon G \to G^\circ$ be the canonical projection, which is proper since $K$ is compact. Thus $\pi(\Gamma_j)$ is a discrete free group for all $j$. Let $U$ be an open relatively compact identity neighbourhood of the connected Lie group $G^\circ$ as provided by the Kazhdan--Margulis theorem (see Theorem~8.16 from~\cite{Raghunathan}). Since $L^\circ \se G^\circ$, we have 
$$
L^\circ = \pi(L^\circ) =  \la U \cap \pi(L^\circ) \ra \se \la U \cap \pi(L) \ra \se \lim_j \la U \cap \pi(\Gamma_j) \ra.
$$
The Kazhdan--Margulis Theorem ensures that  $\la U \cap \pi(\Gamma_j) \ra$ is nilpotent, hence cyclic, for all $j$. Since a limit of abelian subgroups is abelian, it follows that $L^\circ$ is abelian. 
\end{proof}

We shall also need an elementary general fact related to Lemma~\ref{lem:quot-Lie} above:

\begin{lem}\label{lem:image_conn}
Let $N$ be a closed normal subgroup of a locally compact group $H$. Then the image of $H^\circ$ in $H/N$ is dense in $(H/N)^\circ$. In particular, if $N$ is compact, then we obtain an identification $H^\circ/(H^\circ\cap N) \cong (H/N)^\circ$.
\end{lem}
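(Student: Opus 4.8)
The plan is to treat the two assertions separately, establishing the density statement first by a connectedness-versus-total-disconnectedness argument, and then deducing the splitting in the compact case by a properness argument.

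For the density statement, write $q\colon H\to H/N$ for the canonical projection; this is a continuous, surjective and \emph{open} homomorphism. Since $H^\circ$ is connected and contains the identity, its image $q(H^\circ)$ is a connected subset of $H/N$ through the identity, whence $q(H^\circ)\se (H/N)^\circ$; and since $H^\circ$ is normal in $H$, the closure $M:=\overline{q(H^\circ)}$ is a closed normal subgroup of $H/N$ contained in $(H/N)^\circ$. The goal is to prove $M=(H/N)^\circ$. To this end I pass to the further quotient $p\colon H/N\to (H/N)/M$ and consider the composite $p\circ q\colon H\to (H/N)/M$, which is again continuous, surjective and open. By construction $p\circ q$ annihilates $H^\circ$, so it factors as an open continuous surjection $\psi\colon H/H^\circ\to (H/N)/M$.

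The crux is to observe that an open continuous homomorphic image of the \emph{totally disconnected} locally compact group $H/H^\circ$ is again totally disconnected. This is where van Dantzig's theorem~\cite{vanDantzig31} enters: $H/H^\circ$ admits a compact open subgroup $O$, which is profinite; its image $\psi(O)$ is a compact open subgroup of $(H/N)/M$, hence a continuous image of a profinite group and therefore totally disconnected; as an open subgroup necessarily contains the neutral component, the component $\big((H/N)/M\big)^\circ$ is trapped inside the totally disconnected group $\psi(O)$ and must be trivial. Thus $(H/N)/M$ is totally disconnected, so the connected set $p\big((H/N)^\circ\big)=(H/N)^\circ/M$ reduces to a point, giving $M=(H/N)^\circ$ as desired. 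I expect this step—that total disconnectedness is inherited by open homomorphic images—to be the main obstacle; everything around it is formal.

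For the second assertion, assume $N$ compact. Then $q$ is a proper map, hence closed, so $q(H^\circ)$ is already closed and the first part yields $q(H^\circ)=(H/N)^\circ$. The restriction $q|_{H^\circ}\colon H^\circ\to (H/N)^\circ$ is therefore a continuous surjective homomorphism with kernel $H^\circ\cap N$, and it is itself proper: for compact $C\se (H/N)^\circ$ the set $(q|_{H^\circ})^{-1}(C)=H^\circ\cap q^{-1}(C)$ is a closed subset of the compact set $q^{-1}(C)$, hence compact. A proper continuous surjection onto a locally compact Hausdorff group is closed, so the induced continuous bijective homomorphism $H^\circ/(H^\circ\cap N)\to (H/N)^\circ$ is closed and therefore a topological isomorphism, establishing the identification $H^\circ/(H^\circ\cap N)\cong (H/N)^\circ$.
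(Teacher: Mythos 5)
Your proof is correct and takes essentially the same route as the paper: both pass to the quotient of $H/N$ by the closure of the image of $H^\circ$, identify that quotient as a (locally compact) quotient of the totally disconnected group $H/H^\circ$, and invoke van Dantzig's theorem to conclude that total disconnectedness passes to such quotients, with the compact case handled by properness of $H\to H/N$. The only difference is cosmetic: you spell out the inheritance of total disconnectedness (via a compact open subgroup and its profinite image) and the properness argument, which the paper merely cites or asserts.
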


This statement fails for general topological groups; recall for instance that $\RR$ is the quotient of a totally disconnected group (see Exercice~17 for III\,\S2 in~\cite{BourbakiTGIII}).

\begin{proof}[Proof of the lemma]
The image of $H^\circ$ in $H/N$ lies in $(H/N)^\circ$; denote its closure by $L$. Now $(H/N)^\circ/L$ is a connected group contained in $(H/N)/L$. Writing the latter as $H/\overline{H^\circ N}$, we see that it is a quotient of the totally disconnected group $H/H^\circ$. For \emph{locally compact} groups, total disconnectedness passes to quotients (by van Dantzig's theorem~\cite[page~18]{vanDantzig31}). Thus $(H/N)^\circ/L$ is also totally disconnected, hence trivial as claimed. The additional statement for $N$ compact follows from the properness of the map $H\to H/N$.
\end{proof}

%?\begin{lem}\label{lem:NonAmenLie}
%Let $G$ and $H$ be  Lie groups and $\pi : H \to G$ be a continuous embedding. Assume that $G$ is amenable. Then  $H^\circ$ is amenable as well, and   $H$ is amenable if and only if $H/H^\circ$ does not contain non-abelian free groups. 
%\end{lem}

%\begin{proof}
%If $H^\circ$ were not amenable, it would contain a non-compact simple subgroup. Such a group does not embed continuously in any amenable locally compact group, since any non-trivial continuous homomorphism of a simple Lie group is proper (ref?). 

%If $H$ is amenable, then $H/H^\circ$ is virtually soluble by Proposition~\ref{prop:Lie}, and thus does not contain any non-abelian free subgroup. 
%\end{proof}

Here is a last preparation for the proof of Theorem~\ref{thm:converseAB}.

\begin{lem}\label{lem:refined_flat}
Let $G$ be a group acting isometrically on a complete \cat space $X$ and let $Y\se X$ be a $G$-invariant closed convex subspace of finite coradius in $X$. If $G$ preserves a refined flat of $X$, then it also preserves a refined flat (of the same dimension) of $Y$.
\end{lem}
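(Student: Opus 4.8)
The plan is to induct on the depth $k$ of the refined flat, establishing for every $k$ the following statement: for every complete \cat space $X$, every isometric action of $G$, and every $G$-invariant closed convex subspace $Y\se X$ of finite coradius $R$ (i.e.\ $X\se N_R(Y)$), a $G$-preserved refined flat of depth $k$ and dimension $d$ in $X$ yields one of depth $k$ and dimension $d$ in $Y$. Recall that preserving a refined flat $(\xi_1,\ldots,\xi_k;F)$ means $G$ fixes $\xi_1\in\bd X$, hence acts on $X_{\xi_1}$, fixes $\xi_2\in\bd X_{\xi_1}$, and so on, finally stabilising the flat $F\se X_{\xi_1,\ldots,\xi_k}$.

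For the base case $k=0$, the group $G$ stabilises an honest flat $F\cong\RR^d$ in $X$. I would use the nearest-point projection $\pi_Y\colon X\to Y$, which is $1$-Lipschitz and, since $Y$ is $G$-invariant, $G$-equivariant. The convex function $x\mapsto d(x,Y)$ is bounded by $R$ on $F$, hence constant on $F$ (a bounded convex function on $\RR^d$ is constant). A standard flat-strip argument (Bridson--Haefliger~\cite{Bridson-Haefliger}, II.2) then shows that $\pi_Y$ restricts to an isometry of $F$ onto a parallel $d$-flat $F'=\pi_Y(F)\se Y$; since $F$ is $G$-invariant and $\pi_Y$ is equivariant, $F'$ is the desired $G$-invariant flat in $Y$.

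For the inductive step, assume $G$ preserves $(\xi_1,\ldots,\xi_k;F)$ with $k\geq 1$, so $G$ fixes $\xi_1$ and acts on $X_{\xi_1}$, which is a complete \cat space by Proposition~\ref{prop:TransverseSpace}(i). The whole step reduces to realising $Y_{\xi_1}$ as a $G$-invariant closed convex subspace of finite coradius in $X_{\xi_1}$: granting this, $(\xi_2,\ldots,\xi_k;F)$ is a $G$-preserved refined flat of depth $k-1$ and dimension $d$ in $X_{\xi_1}$, the inductive hypothesis applied to the pair $(X_{\xi_1},Y_{\xi_1})$ produces a $G$-preserved refined flat of depth $k-1$ and dimension $d$ in $Y_{\xi_1}$, and prepending $\xi_1\in\bd Y$ gives the sought refined flat of depth $k$ in $Y$. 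Now rays in $Y$ pointing to $\xi_1$ are rays in $X$ pointing to $\xi_1$ and $d_{\xi_1}$ is computed from the same ambient metric, so the inclusion induces an isometric embedding $\iota\colon Y_{\xi_1}\to X_{\xi_1}$, which is $G$-equivariant because $G$ fixes $\xi_1$ and preserves $Y$. Since $Y$ is cobounded one has $\xi_1\in\bd Y$, and by the construction of asymptotic rays in complete \cat spaces (\cite{Bridson-Haefliger}, II.8.2) there is, from any point of $Y$, a ray in $Y$ to $\xi_1$. For finite coradius, given a ray $r$ in $X$ to $\xi_1$ put $y=\pi_Y(r(0))$, so $d(r(0),y)\leq R$; taking a ray $\rho$ in $Y$ from $y$ to $\xi_1$ and evaluating the infimum defining $d_{\xi_1}$ at the parameter $0$ gives $d_{\xi_1}([r],[\rho])\leq d(r(0),\rho(0))\leq R$, so $\iota(Y_{\xi_1})$ is $R$-dense in $X_{\xi_1}$.

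The one delicate point, and the main obstacle, is the convexity of $\iota(Y_{\xi_1})$ in $X_{\xi_1}$: the canonical map $X\to X_{\xi_1}$ is only $1$-Lipschitz and need not send geodesics to geodesics, so convexity cannot be read off directly. I would handle it with an approximate-midpoint argument. It suffices to treat $p,q\in Y_{\xi_1}$ represented by genuine rays $r_p,r_q$ in $Y$, the general case then following by closedness of $\iota(Y_{\xi_1})$ (it is complete, being a metric completion) and continuity of geodesics in their endpoints. Normalise $r_p,r_q$ by a Busemann shift so that $d(r_p(t),r_q(t))\to\delta:=d_{\xi_1}(p,q)$, let $m(t)\in Y$ be the midpoint of $r_p(t)$ and $r_q(t)$ (it lies in $Y$ by convexity), and let $\rho_t$ be a ray in $Y$ from $m(t)$ to $\xi_1$. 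Evaluating the infimum defining $d_{\xi_1}$ at the parameters $t$ and $0$ yields $d_{\xi_1}(p,[\rho_t])\leq d(r_p(t),m(t))=\tfrac12 d(r_p(t),r_q(t))$, and symmetrically for $q$; hence both distances tend to $\delta/2$. Thus $([\rho_t])$ is an approximate-midpoint sequence for $p$ and $q$, so by uniqueness and stability of midpoints in the complete \cat space $X_{\xi_1}$ it converges to the midpoint of $p$ and $q$. As each $[\rho_t]$ lies in the closed set $\iota(Y_{\xi_1})$, so does the midpoint; iterating over dyadic parameters shows the whole geodesic $[p,q]$ lies in $\iota(Y_{\xi_1})$, establishing convexity and completing the induction.
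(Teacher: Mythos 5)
Your proposal is correct, and its overall route is the same as the paper's: reduce to depth zero by realising $Y_{\xi_1}$ as a $G$-invariant closed convex subspace of finite coradius in $X_{\xi_1}$, then handle an honest flat by the nearest-point projection onto $Y$. The differences lie in where the work is placed, and they are worth recording. What you single out as the main obstacle --- convexity of $\iota(Y_{\xi_1})$ in $X_{\xi_1}$ --- needs no approximate-midpoint argument: $Y$ is itself a complete \cat space, so Proposition~\ref{prop:TransverseSpace}(i), which you already invoke for $X_{\xi_1}$, applies equally to $Y$ and shows that $Y_{\xi_1}$ is a complete, hence geodesic, \cat space; a distance-preserving image of a geodesic space inside the uniquely geodesic space $X_{\xi_1}$ is automatically convex (the image of a geodesic of $Y_{\xi_1}$ is a geodesic of $X_{\xi_1}$, necessarily the unique one joining its endpoints), and it is closed by completeness. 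In effect, your midpoint construction re-proves the geodesicity of the transverse space of $Y$ rather than quoting it; this is why the paper can dispatch the entire reduction in a single sentence. Conversely, your base case is exactly where the paper spends its effort, and your appeal to a ``standard flat-strip argument'' hides the one step that genuinely needs proof: to apply the Sandwich Lemma~\cite[II.2.12]{Bridson-Haefliger} one must first show that $\pi_Y(F)$ is convex. The paper's argument is short but not vacuous: for $x,x'\in F$ and $y$ on $[\pi_Y(x),\pi_Y(x')]$, the point $z$ on $[x,x']$ with the same affine parameter satisfies $d(y,z)\le D$ by convexity of the metric, where $D$ is the constant value of $d(\cdot,Y)$ on $F$, so uniqueness of the projection forces $y=\pi_Y(z)$; thus $\pi_Y(F)$ is convex, $d(\cdot,F)$ is constant on it, and the sandwich lemma makes $\pi_Y|_F$ an isometry onto a parallel flat. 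With that step written out, your induction closes correctly.
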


\begin{proof}
The statement is formulated in such a way that it passes to transverse spaces, since $\bd X= \bd Y$ and since there is a canonical isometric map of finite coradius $Y_\xi\to X_\xi$ for all $\xi\in \bd X$. Therefore, it suffices to deal with the case of depth zero, i.e.\ a $G$-invariant flat $F\se X$. Since the projection $\pi\colon X\to Y$ is $G$-equivariant, it is enough to show that $\pi$ is isometric on $F$. The function $x\mapsto d(x,Y)$ is convex and bounded, therefore it is constant on $F$. Denote this constant by $D$. We claim that $\pi(F)$ is convex; this claim will finish the proof because the function $y\mapsto d(y, F)$ is constant on $\pi(F)$ and therefore the sandwich lemma~\cite[II.2.12]{Bridson-Haefliger} will apply.

Let thus $x,x'\in F$ and let $y$ be any point on the segment $[\pi(x), \pi(x')]$. By convexity of the metric, the distance between $y$ and the corresponding point $z$ on the segment $[x,x']$ (for simultaneous affine parametrisations) is at most $D$. However, $y$ is in $Y$ and $z$ is in $F$. By uniqueness of the projection, we deduce $y=\pi(z)$ and the claim is proved.
\end{proof}

\begin{proof}[Proof of Theorem~\ref{thm:converseAB}]
Suppose for a contradiction that $H < G:=\Isom(X)$ is a non-amenable closed subgroup which stabilises a refined flat $F \se X_{\xi_1, \dots, \xi_k}$. Upon replacing $X$ by a minimal non-empty $G$-invariant closed convex subset, we may  assume that $G$ acts minimally. Indeed, Lemma~\ref{lem:refined_flat} ensures that $H$ still preserves a refined flat (still denoted by $F$ as above) and the restriction homomorphism to the isometries of the minimal subset has compact kernel. This reduction ensures that $G$ has no non-trivial compact normal subgroup, and hence the unique maximal compact normal subgroup $\LF(G^\circ)$ of $G^\circ$ is trivial.

\medskip

Notice that $H \cap \big(\bigcap_{i=1}^k \Ker(\beta_{\xi_i})\big)$ is closed and co-amenable in $H$. Therefore it is non-amenable, and upon replacing $H$ by that intersection, we may assume that $H$ annihilates each Busemann character $\beta_{\xi_i}$. 

Let $N$ be the pointwise stabiliser of $F$ in $H$. Thus $N$ is amenable by Corollary~\ref{cor:GeomAmenable}, so that the quotient $H/N$ endowed with the quotient topology is non-amenable. The $H$-action on $F$ induces a continuous embedding $\pi \colon H/N \to \Isom(F)$. Since $\Isom(F)$ is a Lie group, so is $H/N$ by Proposition~\ref{prop:Lie}(i). Moreover, since $\Isom(F)$ is amenable, the neutral component $(H/N)^\circ$ must be amenable as well, since non-amenable connected Lie groups cannot be continuously embedded in amenable Lie groups (this follows e.g.\ from the fact that any non-trivial continuous homomorphism of a simple Lie group is proper, see~\cite[Lemma~5.3]{BM96} or~\cite{Cornulier_lengths}). Therefore the group of components $(H/N) / (H/N)^\circ$ is non-amenable, and hence contains a non-abelian free subgroup by Proposition~\ref{prop:Lie}(ii). 

Denoting by $H^1$ the preimage of $(H/N)^\circ$ in $H$, we infer that $H^1$ is an open normal amenable subgroup of $H$, and that the quotient  $H/H^1$ contains a non-abelian free subgroup. Let now $a, b \in H$ be elements which freely generate a non-abelian free subgroup modulo $H^1$. Then $\la a, b \ra \cap H^1$ is trivial, since otherwise there would be a word in the generators $a, b$ which gets killed in the quotient $H/H^1$, contradicting the presupposed freeness.  It follows that $\Gamma = \la a, b \ra$ is a discrete non-abelian free subgroup of $H$, hence of $G$, which stabilises the refined flat $F \se X_{\xi_1, \dots, \xi_k}$, acts faithfully on $F$, and  annihilates each Busemann character $\beta_{\xi_i}$.

\medskip
Let $f \colon \RR^k \times X_{\xi_1, \dots, \xi_k} \to X$ and $\ro \colon \Gamma \to G$ be the maps provided by Proposition~\ref{prop:ro_it}. Thus $F' = f(\RR^k \times F)$ is a flat in $X$. Moreover,  the equivariance afforded by Proposition~\ref{prop:ro_it} ensures that the map $\ro$ is injective and that its image $\ro(\Gamma)$ stabilises $F'$.

By assumption the image of $\Gamma$ in the Lie group $\Isom(F)$ is non-discrete. In particular the closure of its image has a non-trivial identity component. Consequently the closure of the image of $\ro(\Gamma)$ in $\Isom(F')$ has a non-trivial identity component. 

Let $K$ be the pointwise stabiliser of $F'$ in $G$. Thus $K$ is compact, and it follows from Lemma~\ref{lem:image_conn} that 
$\overline{\ro(\Gamma)}^\circ$ has a non-trivial image in $\Isom(F')$. Moreover Proposition~\ref{prop:Lie}(i) implies that $\overline{\ro(\Gamma)}^\circ$ is open in $\overline{\ro(\Gamma)}$. 

\medskip
By Corollary~\ref{cor:ro:extract}, the homomorphism $\ro \colon \Gamma \to G$ is a pointwise limit of a sequence $(\alpha_n)_{n \geq 0}$ of inner automorphisms of $G$. Since $\Gamma$ is countable, we may assume, upon extracting, that the sequence of restrictions $(\alpha_n |_\Gamma)_n$ converges pointwise to $\ro$. Up to a further extraction, we may also assume that the sequence of conjugates $\alpha_n(\Gamma)$ converges in the Chabauty topology to some closed subgroup $L$. It then follows that $\ro(\Gamma)$, and hence also the closure $\overline{\ro(\Gamma)}$, is contained in $ L$. 

By  Proposition~\ref{prop:MargulisLemma} the identity component $L^\circ$ is an abelian Lie group. Thus $\overline{\ro(\Gamma)}^\circ$ is an abelian open normal subgroup of $\overline{\ro(\Gamma)}$. Therefore  $\ro(\Gamma) \cap \overline{\ro(\Gamma)}^\circ$ is an abelian normal subgroup of $\ro(\Gamma)$ which is dense in $\overline{\ro(\Gamma)}^\circ$.  By construction $\ro(\Gamma)$ is a non-abelian free group, and has thus no non-trivial abelian normal subgroup. Consequently $ \ro(\Gamma) \cap \overline{\ro(\Gamma)}^\circ$ is trivial, and so is $\overline{\ro(\Gamma)}^\circ$. This contradicts the fact that $\overline{\ro(\Gamma)}^\circ$ has a non-trivial image in $\Isom(F')$. 
\end{proof}

%%%%%%%%%%%%%%%%%%%%%%%%%%%%%%%%%%%%%%%%%%%%%%%%%%%%%
\subsection{Structure of amenable closed subgroups}\label{sec:structure}
%%%%%%%%%%%%%%%%%%%%%%%%%%%%%%%%%%%%%%%%%%%%%%%%%%%%%

We finally record the following classical characterisation of amenability for connected locally compact groups:

\begin{prop}\label{prop:iwasa:C}
A connected locally compact group is amenable if and only if it is \{connected soluble\}-by-compact.
\end{prop}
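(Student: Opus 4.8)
The plan is to prove the easy implication directly and to reduce the converse to the structure of connected Lie groups via Yamabe's theorem. For the \emph{if} direction, recall that soluble groups and compact groups are amenable and that amenability is preserved under extensions; hence a connected locally compact group admitting a closed connected soluble normal subgroup $R$ with $G/R$ compact is an extension of an amenable group by an amenable group, and is therefore amenable.

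For the \emph{only if} direction, let $G$ be connected, locally compact and amenable. Since $G/G^\circ=1$ is (trivially) compact, Yamabe's theorem provides a maximal compact normal subgroup $W\lhd G$ (which here coincides with $\LF(G)$) such that $G/W$ is a connected Lie group; as a quotient of $G$ it is amenable. First I would settle the Lie case. For the connected Lie group $G/W$, the soluble radical $\bar R=\mathrm{Rad}(G/W)$ is a closed connected soluble normal subgroup, and the Levi quotient $(G/W)/\bar R$ is a connected semisimple Lie group. A connected semisimple Lie group is amenable if and only if it is compact: a non-compact one contains a closed subgroup locally isomorphic to $\SL_2(\RR)$, hence a non-abelian free subgroup by the Tits alternative~\cite{Tits72}. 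Amenability of $G/W$ therefore forces $(G/W)/\bar R$ to be compact, i.e.\ $G/W$ is \{connected soluble\}-by-compact.

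It remains to lift this decomposition through the compact kernel $W$, and this is the step I expect to be the main obstacle. Writing $R^+$ for the preimage of $\bar R$ in $G$, we get $G/R^+$ compact, but $R^+$ is a priori only \{compact\}-by-\{connected soluble\}: the compact group $W$ sits at the bottom and may well be non-soluble (for instance a compact semisimple group), so $R^+$ itself need not be soluble. The way forward is to pass to $R:=\mathrm{Rad}(R^+)$, the largest connected soluble normal subgroup of $R^+$; being characteristic in $R^+$ and $R^+$ being normal in $G$, it is normal in $G$. One then shows that $R^+/R$ is compact, so that $G/R$ is compact-by-compact and $R$ is the required connected soluble normal subgroup. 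The heart of the matter is the last claim, equivalently: a \{compact\}-by-\{connected soluble\} group with trivial soluble radical must be compact. I would establish it by descending to a minimal non-trivial connected abelian piece of the soluble quotient and analysing its preimage, which is a connected group with relatively compact commutator subgroup; by the structure theory of such groups (Grosser--Moskowitz: they are of the form $\RR^k\times(\text{compact})$) one exhibits a non-trivial connected soluble normal subgroup unless the piece is trivial, yielding the contradiction. This structural input into the lifting step --- reorganising a \{compact\}-by-\{soluble\} group as \{connected soluble\}-by-\{compact\} --- is the only non-formal ingredient, and may alternatively be quoted from the classical literature on connected (pro-Lie) locally compact groups.
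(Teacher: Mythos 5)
Your argument is correct in substance and arrives at the same decomposition, but by a genuinely different route at both key points. The paper's proof consists of three citations: Yamabe's theorem makes $G$ compact-by-\{connected Lie\}; Furstenberg's theorem~\cite[Theorem~1.7]{Furstenberg} \emph{is} the characterisation of amenable connected Lie groups as \{connected soluble\}-by-compact; and Iwasawa's theorem~\cite[Theorem~18]{Iwasawa49}, asserting that the class of \{connected soluble\}-by-compact locally compact groups is closed under extensions, performs exactly the lifting through the compact kernel $W$ that you single out as the main obstacle. You instead re-prove Furstenberg's statement from the Levi decomposition together with the Tits alternative~\cite{Tits72}, and you replace Iwasawa's extension theorem by a direct argument: pass to the soluble radical $R$ of the preimage $R^+$ and show that a \{compact\}-by-\{connected soluble\} group with trivial radical is compact, using the Grosser--Moskowitz splitting $\RR^k\times(\text{compact})$ of connected locally compact groups with precompact commutator subgroup. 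The paper's route buys brevity and avoids all [FC]-structure theory; yours buys a self-contained proof whose only black box is that one classical splitting.

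Two details in your lifting step need repair, though both are fixable by standard means. First, the preimage $\tilde A$ in $H$ of a connected abelian piece $A$ of the soluble quotient is \emph{not} connected in general, since the compact kernel need not be connected; the fix is to pass to the identity component $\tilde A^\circ$, which is still normal in $H$ (it is characteristic in the normal subgroup $\tilde A$) and still maps \emph{onto} $A$ precisely because the kernel is compact --- this is Lemma~\ref{lem:image_conn}. Second, once $\tilde A^\circ\cong\RR^k\times C$ with $C$ compact connected, the factor $\RR^k$ is in general not characteristic in $\tilde A^\circ$ (consider graph automorphisms of $\RR\times\TT$), so it cannot directly serve as the normal subgroup you want to exhibit; take instead the identity component of the centre, $\centra(\tilde A^\circ)^\circ\cong\RR^k\times\centra(C)^\circ$, which is characteristic in $\tilde A^\circ$, hence normal in $H$, connected and abelian. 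If it is trivial, then $\tilde A^\circ=C$ is compact semisimple, so its abelian quotient $A$ is trivial, giving the desired contradiction. Finally, the existence of a largest connected closed soluble normal subgroup $\mathrm{Rad}(R^+)$ of a possibly non-connected locally compact group, and the fact that the quotient $R^+/R$ again has trivial radical (connectedness of the pullback of a connected soluble normal subgroup), both rest on the same kind of connected-image argument as Lemma~\ref{lem:image_conn} and should be made explicit or cited.
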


\begin{proof}
Let $G$ be a connected locally compact group. By the solution of Hilbert's fifth problem~\cite[4.6]{Montgomery-Zippin}, $G$ is compact-by-Lie. Furstenberg has shown that a connected Lie group is amenable if and only if it is \{connected soluble\}-by-compact~\cite[Theorem~1.7]{Furstenberg}. Finally, Iwasawa has shown that the class of  \{connected soluble\}-by-compact l.c.\ groups is closed under extensions~\cite[Theorem~18]{Iwasawa49}.
\end{proof}

\begin{proof}[Proof of Theorem~\ref{thm:StructureAmenable}]
The `if' part is clear. Conversely, let $H < \Isom(X)$ be a closed amenable subgroup. Then $H^\circ$ is soluble-by-compact in view of Proposition~\ref{prop:iwasa:C} and hence~(1) holds.

By  Proposition~\ref{prop:AB}, we find an $H$-invariant refined flat  $F \subset X_{\xi_1, \dots, \xi_k}$ of depth~$k$. In particular we have a continuous homomorphism
$$\varphi \colon H \to \Isom(F) \cong \RR^n \rtimes O(n),$$
where $n = \dim F$. We endow the image $\varphi(H)$ with the quotient topology from $H$. By Proposition~\ref{prop:Lie}, the amenable locally compact group $\varphi(H)$ is a Lie group and its group of components $\varphi(H)/\varphi(H)^\circ$ is virtually soluble. By Lemma~\ref{lem:quot-Lie}, we have $\varphi(H^\circ) = \varphi(H)^\circ$. Thus  the normal subgroup $K = H^\circ \cdot \Ker(\varphi)$ is open in $H$ and the quotient $H/K \cong \varphi(H)/\varphi(H)^\circ$ is virtually soluble.

The kernel $A = \Ker(\varphi)$ fixes $F$ pointwise, and is thus compactible by Proposition~\ref{prop:RefinedCompacting}. Therefore $A^\circ \LF(A)$ is open in $A$ and the quotient $A/A^\circ \LF(A)$ is virtually soluble by Proposition~\ref{prop:CompactibleYamabis}. In particular the group
$$H^\circ \cdot A^\circ \LF(A) = H^\circ  \LF(A) $$
is open in $K = H^\circ \cdot A$, hence in $H$. Since $\LF(A)$ is characteristic in $A$, we have $\LF(A) \leq \LF(H)$, and we infer that $H^\circ  \LF(H)$ is open in $H$, which proves assertion~(2).

For (3), recall from the above that  $H/K$ and $A/A^\circ \LF(A) $ are both virtually soluble. Therefore, so is $ H^\circ A/H^\circ \LF(A) = K/ H^\circ  \LF(A) $. Virtual solubility being stable under group extensions, we infer that  $H/H^\circ  \LF(A)$, and~(3) follows since $\LF(A) \leq \LF(H)$. 
\end{proof}

The following lemma is well-known; a proof can be found in~\cite[Proposition~4]{HRV}. 

\begin{lem}\label{lem:residually:finite}
A homomorphic image of a finitely generated group in any compact group is residually finite (thus the same holds for its image in a locally elliptic group).\qed
\end{lem}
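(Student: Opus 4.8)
The plan is to reduce the statement to Malcev's theorem that finitely generated linear groups are residually finite. Write $G = \phi(\Gamma)$ for the homomorphic image; it is finitely generated since $\Gamma$ is. Replacing the ambient compact group by the closure $\overline{G}$, we may assume that $G$ is a finitely generated subgroup of a compact group $K$.

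First I would invoke the Peter--Weyl theorem: the continuous finite-dimensional unitary representations of a compact group separate points, so $K$ admits a continuous injective homomorphism into a product $\prod_{i \in I} U(n_i)$ of unitary groups. Composing with the inclusion $G \hookrightarrow K$ yields an injection $G \hookrightarrow \prod_{i \in I} U(n_i)$.

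Next, for each index $i$ let $G_i$ denote the image of $G$ under the projection to $U(n_i)$. Then $G_i$ is a finitely generated subgroup of $U(n_i) \se \GL_{n_i}(\CC)$, hence a finitely generated linear group, and so is residually finite by Malcev's theorem. The embedding $G \hookrightarrow \prod_i G_i$ then exhibits $G$ as a subgroup of a product of residually finite groups: given $1 \neq g \in G$, some coordinate $g_i \in G_i$ is non-trivial, a finite quotient of $G_i$ separates $g_i$ from the identity, and pulling this back along $G \to G_i$ produces a finite quotient of $G$ in which the image of $g$ is non-trivial. Thus $G$ is residually finite.

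Finally, the parenthetical assertion follows by reduction to the compact case: if $L$ is locally elliptic, i.e.\ a directed union of compact subgroups, then the finitely many images under $\phi$ of a finite generating set of $\Gamma$ all lie in a single member $C \se L$ of this directed union, whence $\phi(\Gamma) \se C$ and the argument above applies verbatim. The only genuinely non-elementary ingredient is Malcev's theorem; everything else is a formal consequence of the structure of compact groups and of the stability of residual finiteness under passage to subgroups of products, so I do not expect any serious obstacle beyond correctly assembling these standard facts.
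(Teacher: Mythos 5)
Your proof is correct, and it is essentially the canonical argument: the paper itself offers no proof, deferring instead to Proposition~4 of~\cite{HRV}, where the lemma is established by exactly this reduction — Peter--Weyl embedding of the compact group into a product of unitary groups, Malcev's theorem applied to the finitely generated projections, and the directedness argument for the locally elliptic case. Your handling of the one essential hypothesis (finite generation, without which the statement fails, e.g.\ for $C_{p^\infty}$ inside the circle group) is also where it should be, namely in the application of Malcev to each projection.
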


\begin{proof}[Proof of Corollary~\ref{cor:SimpleGps}]
Let $X$ be a cocompact proper \cat space with an isometric $\Gamma$-action and let $H$ be the closure of the image of $\Gamma$ in $\Isom(X)$. The image of $\Gamma$ in $H/ (H^\circ \LF(H))$ is trivial since this quotient is virtually soluble by Theorem~\ref{thm:StructureAmenable}; thus $H=H^\circ \LF(H)$.  Since $H^\circ$ is soluble-by-compact by Theorem~\ref{thm:StructureAmenable}, we deduce from Lemma~\ref{lem:residually:finite} that the image of $\Gamma$ in $H/ \LF(H)$ is soluble and hence trivial. It follows $H=\LF(H)$, but this implies that $H$ is compact since $\Gamma$ is finitely generated; we finally conclude that $H$ is trivial by applying again the residual finiteness argument.
\end{proof}

We now turn to Corollary~\ref{cor:T} and refer to~\cite{Cannon-Floyd-Parry} for an introduction to Thompson's groups $F$, $T$ and $V$. Since $T$ and $V$ are simple and contain $F$, it suffices to produce a non-trivial kernel in $F$ to ensure that the entire $T$-\ or $V$-action be trivial. Thus the following statement implies Corollary~\ref{cor:T}.

\begin{cor}
Any isometric action of Thompson's group $F$ on any proper cocompact \cat space factors through the abelianisation $F\to F/[F,F]\cong \ZZ^2$.
\end{cor}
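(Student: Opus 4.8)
The abelianisation $F\to F/[F,F]\cong\ZZ^2$ is the maximal abelian quotient and the commutator subgroup $[F,F]$ is an infinite simple group; moreover $[F,F]$ contains finitely generated copies of $F$ itself, for instance the subgroup $F'$ of those elements supported in a proper dyadic subinterval of $[0,1]$, which have trivial germs at $0$ and $1$ and hence lie in $[F,F]$, while being isomorphic to $F$. Writing $\varphi\colon F\to\Isom(X)$ for the action, simplicity of $[F,F]$ forces $\varphi|_{[F,F]}$ to be either injective or trivial; in the latter case $\varphi$ factors through $\ZZ^2$, as claimed. So the plan is to rule out injectivity of $\varphi|_{[F,F]}$. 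I also record that, since $[F,F]$ is infinite and simple, every finite quotient of $F$ factors through $\ZZ^2$; in particular $F$ is \emph{not} residually finite.

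The core of the argument is the claim that $H:=\overline{\varphi(F)}$ is amenable; granting this, here is how I would conclude. By Theorem~\ref{thm:StructureAmenable}, $H^\circ$ is soluble-by-compact, $H^\circ\LF(H)$ is open in $H$, and $H/(H^\circ\LF(H))$ is discrete and virtually soluble. Since an infinite simple group has no nontrivial virtually soluble quotient, the image of $[F,F]$ in this discrete quotient is trivial, so $\varphi([F,F])\se H^\circ\LF(H)$. Next, using Lemma~\ref{lem:residually:finite} applied to the finitely generated group $F$, the image of $F$ in the compact part of the soluble-by-compact connected group $H^\circ\LF(H)/\LF(H)$ is residually finite; its subgroup, the image of $[F,F]$, is therefore a residually finite quotient of $[F,F]$, hence trivial (an infinite simple group is not residually finite), and its image in the soluble radical is soluble, hence trivial. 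Thus $\varphi([F,F])\se\LF(H)$. Finally $\varphi(F')$ is the image of the finitely generated group $F'$ in a locally elliptic group, so Lemma~\ref{lem:residually:finite} shows $\varphi(F')$ is residually finite; were $\varphi|_{[F,F]}$ injective this would force $F'\cong F$ to be residually finite, a contradiction. Hence $\varphi|_{[F,F]}$ is non-injective, so $\varphi([F,F])=1$ and $\varphi$ factors through $\ZZ^2$. (This is exactly the mechanism by which such groups ``run afoul'' of Theorem~\ref{thm:StructureAmenable}.)

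It remains to establish that $H$ is amenable, equivalently (Corollary~\ref{cor:converseAB}) that $\varphi(F)$ preserves a refined flat. This is the step which must exploit a property of $F$ \emph{other} than amenability, which is unknown, and it is the main obstacle. The approach I would take uses the self-similarity of $F$: given $g\in[F,F]$ of infinite order, pick $t\in F$ contracting $[0,1]$ towards an endpoint so that the supports $t^{n}(\mathrm{supp}\,g)$, $n\ge0$, are pairwise disjoint; then the conjugates $g_n:=t^{n}gt^{-n}$ commute pairwise and generate a free abelian group $A\cong\bigoplus_{\NN}\ZZ$ normalised by $t$, so that $W=\langle A,t\rangle$ is metabelian, hence amenable. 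Applying Proposition~\ref{prop:AB} to $\overline{\varphi(W)}$ and invoking the bound on the flat rank, i.e.\ the depth of $X$ (Corollary~\ref{cor:flat_rank}), the infinitely many commuting conjugates $\varphi(g_n)$ cannot all translate along independent directions without producing a discrete free abelian subgroup of Prüfer rank exceeding $r(X)$, in violation of Theorem~\ref{thm:DiscreteAmen}; iterating through the transverse spaces of the refined bordification (Proposition~\ref{prop:TransverseSpace}, Theorem~\ref{thm:Levi}), where the flat rank strictly drops at each stage, one should force each $\varphi(g)$ to be elliptic and eventually $\varphi(F)$ to preserve a refined flat.

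The delicate points concentrated in this last step are precisely where I expect the real work: excluding non-semisimple (parabolic) behaviour of the $\varphi(g_n)$, controlling the possible failure of discreteness needed to invoke Theorem~\ref{thm:DiscreteAmen}, and upgrading element-wise ellipticity to the preservation of a \emph{single} refined flat by all of $\varphi(F)$. It is for exactly these that the descent through the refined bordification and the Levi decomposition of Section~\ref{sec:Horo} are designed, and they constitute the technical heart of the proof; the reductions and the residually-finite endgame above are then routine.
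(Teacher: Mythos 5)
Your overall architecture -- reduce to showing $H=\overline{\varphi(F)}$ is amenable, then run $\varphi([F,F])$ down through the layers of Theorem~\ref{thm:StructureAmenable} using Lemma~\ref{lem:residually:finite} and the simplicity of derived subgroups -- is exactly the paper's, but the step you yourself flag as ``the main obstacle'' is a genuine gap, and the mechanism you propose for it would not work. First, Theorem~\ref{thm:DiscreteAmen} constrains only \emph{discrete} subgroups, and nothing makes $\varphi(W)$ or the group generated by the commuting conjugates $\varphi(g_n)$ discrete: a free abelian group of countably infinite rank embeds in a compact group, hence in a point stabiliser of $\Isom(X)$, so arbitrarily many independent commuting isometries create no tension with the flat rank or with any Pr\"ufer rank bound. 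Second, even if you could show that every $\varphi(g)$, $g\in[F,F]$, is elliptic, there is no route offered (and none is easy) from element-wise ellipticity to a single $\varphi(F)$-invariant refined flat; that implication is a hard open-ended problem in its own right. Third, and most decisively: the only property of $F$ your sketch exploits is that it contains wreath-product-like subgroups $W=\langle A,t\rangle$. But $\ZZ\wr\ZZ$ itself acts faithfully, with unbounded orbits and hyperbolic elements, on regular locally finite trees -- proper cocompact \cat spaces -- as the paper itself points out just before Corollary~\ref{cor:couronne}. So no argument using only this subgroup structure can force ellipticity or an invariant refined flat; whatever closes this step must use a property of $F$ that its amenable subgroups do not share.

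The paper closes it with an external input you are missing: by Corollary~2.3 of~\cite{Caprace-Monod_discrete}, \emph{every} isometric action of $F$ on \emph{any} proper \cat space fixes a point of the bordification $\overline{X'}$. Iterating this through transverse spaces as in Proposition~\ref{prop:AB} produces an $F$-fixed refined point, and Theorem~\ref{thm:converseAB} then gives amenability of $H$; from there the argument is essentially your cascade. One repair is needed in that cascade as well: Lemma~\ref{lem:residually:finite} applies to \emph{finitely generated} groups, whereas $[F,F]$ is not finitely generated, and $F$ itself does not map into $H^\circ\LF(H)$, so ``the image of $F$ in the compact part of $H^\circ\LF(H)/\LF(H)$'' is not defined. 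The fix -- which is what the paper does, and what your subgroup $F'$ is tailor-made for -- is to nest: apply the lemma to a copy $F_1\cong F$ inside $[F,F]$ to kill the image of $[F_1,F_1]$ in the relevant compact (or locally elliptic) quotient, then to a copy $F_2\cong F$ inside $[F_1,F_1]$ at the next layer; this makes $\Ker(\varphi)\cap[F,F]$ non-trivial, and simplicity of $[F,F]$ finishes. That inaccuracy is minor and fixable; the real gap is the fixed-point input above.
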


\begin{proof}
Let $X$ be any proper cocompact \cat space and assume that $F$ acts isometrically on $X$; let $\pi\colon F\to\Isom(X)$ be the associated group homomorphism. According to Corollary~2.3 of~\cite{Caprace-Monod_discrete}, the group $F$ fixes a point in the compactification $\overline{X'}$ whenever it acts isometrically on any proper \cat space $X'$.
Thus in particular it satisfies the conclusion of the Adams--Ballmann theorem and therefore we can deduce as in Proposition~\ref{prop:AB} that $F$ preserves a refined flat (in fact a refined point) for $X$. Now Theorem~\ref{thm:converseAB} implies that the closure $H<\Isom(X)$ of $\pi(F)$ is amenable; we can therefore apply Theorem~\ref{thm:StructureAmenable}. Recalling that the derived subgroup $[F,F]$ is simple, we deduce that $\pi([F,F])$ lies in $H^\circ \LF(H)$. Recall next that $[F,F]$ contains many isomorphic copies of $F$; we chose one such subgroup $F_1\cong F$ in $[F,F]$. By Lemma~\ref{lem:residually:finite}, the homomorphism from $F_1$ to $\big(H^\circ \LF(H)\big)/H^\circ$ must annihilate $[F_1, F_1]$ since the latter is simple. Thus $\pi([F_1, F_1])$ is contained in the soluble-by-compact group $H^\circ$. Choosing once again an isomorphic copy $F_2\cong F$ in $[F_1,F_1]$, one more application of Lemma~\ref{lem:residually:finite} shows that $\pi([F_2, F_2])$ lies i
 n the soluble radical of $H^\circ$. This shows that $\pi([F_2, F_2])$ is trivial since $[F_2, F_2]$ is simple (non-abelian). We have established that $\pi$ has a non-trivial kernel; this implies that $\pi$ is trivial on all of $[F,F]$ since that group is simple.
\end{proof}

The strategy of the above proof is the following: (i)~prove a refined fixed point (or flat) property for a subgroup (not necessarily involving amenability); (ii)~apply Theorem~\ref{thm:converseAB} to deduce that its closure in $\Isom(X)$ is amenable; (iii)~play off the structure provided by Theorem~\ref{thm:StructureAmenable} against the structure of the given group.

\smallskip
This strategy can be implemented in many other cases; here is a first example:

\begin{cor}\label{cor:GGG}
Let $G$ be a finitely generated group isomorphic to $G\times G$. For any isometric $G$-action  on any proper cocompact \cat space, there is a decomposition $G\cong G\times G$ for which that action factors through the first factor.

\medskip
Moreover, if one chooses a priori decompositions $G\cong G^n$ for $n\in \NN$, then for any isometric $G$-action on any proper cocompact \cat space of flat rank~$<n$, one of the $n$ factors in $G^n$ acts trivially.
\end{cor}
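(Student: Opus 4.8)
My plan is to run the three-step strategy highlighted above, letting the flat rank hypothesis control an iteration through transverse spaces. I will prove the ``Moreover'' part and deduce the first assertion at the end. Fix the decomposition $G=G_1\times\cdots\times G_n$ with each $G_i\cong G$, and let $\rho\colon G\to\Isom(X)$ be the action.

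\smallskip
\emph{Algebraic core.} First I would record that a non-trivial finitely generated group $G$ with $G\cong G\times G$ is infinite, perfect, and has \emph{no} non-trivial finite quotient. Perfectness holds because $G^{\mathrm{ab}}\cong G^{\mathrm{ab}}\times G^{\mathrm{ab}}$ forces the finitely generated abelian group $G^{\mathrm{ab}}$ to vanish. If $G$ had a non-trivial finite quotient it would have a simple one, necessarily non-abelian by perfectness, say $S$; iterating $G\cong G\times G$ gives $G\cong G^{k}$, hence a surjection $G\twoheadrightarrow S^{k}$ for every $k$, which is absurd since $G$ is $d$-generated for a fixed $d$ while the minimal number of generators of $S^{k}$ tends to infinity with $k$. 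Combining this with perfectness, $G$ has no non-trivial virtually soluble quotient at all (a virtually soluble quotient of $G$ has a soluble subgroup of finite index, whose finite cokernel must be trivial, leaving a perfect soluble group); the same holds for each factor $G_i\cong G$.

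\smallskip
\emph{Commuting limit points and the recursion.} Assume, for a contradiction, that every factor acts non-trivially. I would build a refining sequence by feeding the factors in one at a time, using the observation that if $A,B\se\Isom(Z)$ commute and $A$ has unbounded orbits on a proper \cat space $Z$, then $B$ fixes any limit point $\xi\in\bd Z$ of an $A$-orbit: choosing $a_k\in A$ with $a_kp\to\xi$, for $b\in B$ one has $b\xi=\lim_k b a_k p=\lim_k a_k(bp)=\xi$, the last step because $a_k(bp)$ stays at the fixed distance $d(p,bp)$ from $a_kp$ and convergence to a boundary point of a proper \cat space is stable under bounded perturbation. Now suppose inductively that I have produced a refining sequence $(\xi_1,\dots,\xi_j)$ fixed, with vanishing Busemann characters by perfectness, by $G_1,\dots,G_{n-j}$, so that these factors act on the transverse space $X_{\xi_1,\dots,\xi_j}$, which is again proper since bounded geometry is inherited (Proposition~\ref{prop:TransverseSpace}(iii)). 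If $G_{n-j}$ has a \emph{bounded} orbit there it fixes a point $z$ (its circumcentre), so $\rho(G_{n-j})$ stabilises the refined flat $(\xi_1,\dots,\xi_j;z)$ of depth $j<n$; otherwise $G_{n-j}$ has unbounded orbits and I pick a boundary limit point $\xi_{j+1}$, which by the observation (applied inside $X_{\xi_1,\dots,\xi_j}$) is fixed by $G_1,\dots,G_{n-j-1}$, continuing the recursion.

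\smallskip
\emph{Termination and conclusion.} In the ``bounded'' alternative, Theorem~\ref{thm:converseAB} places $\rho(G_{n-j})$ inside an amenable closed subgroup; then the structure argument proving Corollary~\ref{cor:SimpleGps} --- which needs only finite generation, amenability of the closure, and the absence of non-trivial virtually soluble quotients (played against Theorem~\ref{thm:StructureAmenable} via Lemma~\ref{lem:residually:finite}) --- forces $\rho(G_{n-j})=1$, contradicting non-triviality. So the recursion must stay in the ``unbounded'' alternative at every stage; but then it produces $(\xi_1,\dots,\xi_n)$, i.e.\ a refined point of depth $n$, contradicting the fact that the depth of $X$ equals its flat rank and is $<n$ (Corollary~\ref{cor:flat_rank}). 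Either way we reach a contradiction, so some factor acts trivially. Finally, the first assertion is the case in which the decomposition is ours to choose: a proper cocompact \cat space has finite flat rank $r$, so writing $G\cong G^{r+1}$ and applying the ``Moreover'' part yields a trivially acting factor; regrouping the remaining $r$ factors into one copy of $G$ gives a splitting $G\cong G\times G$ through whose first factor $\rho$ factors.

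\smallskip
I expect the two genuinely substantial points to be the algebraic core --- in particular the non-existence of finite quotients, which rests on the growth of the generator number of $S^{k}$ for a non-abelian finite simple group $S$ --- and the bookkeeping ensuring that the commuting-limit-point construction descends correctly through the iterated transverse spaces, namely that the induced actions of distinct factors still commute and that each previously chosen boundary point stays fixed by the factors introduced later. Once these are in place, the rest is exactly the orchestration of Theorems~\ref{thm:StructureAmenable} and~\ref{thm:converseAB} prescribed by the strategy above, with the key conceptual point being that a bounded orbit in a transverse space is nothing but the stabilisation of a ($0$-dimensional) refined flat, which is what triggers Theorem~\ref{thm:converseAB}.
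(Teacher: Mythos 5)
Your proof is correct, and its endgame coincides with the paper's: both apply Theorem~\ref{thm:converseAB} to the factor that fixes a refined point, then play Theorem~\ref{thm:StructureAmenable} together with Lemma~\ref{lem:residually:finite} against a finitely generated group having no non-trivial finite, abelian, or virtually soluble quotient; the reduction of the first assertion to the ``Moreover'' part is also the same. Where you genuinely diverge is in step~(i), the production of the refined point. The paper does not process the factors one at a time: it invokes the splitting theorem of~\cite{Monod_superrigid} (Corollary~10 therein), observing that a proper space of flat rank~$<n$ cannot contain a product of $n$ unbounded convex subspaces, so at each stage either \emph{all} of $G$ fixes a point at infinity (and one passes to the transverse space, whose flat rank has dropped, via Proposition~\ref{prop:ro_it}) or some factor already has bounded orbits. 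Your replacement of this by the elementary observation that the centraliser of a subgroup with unbounded orbits fixes every limit point of those orbits is a real simplification: it avoids the superrigidity splitting machinery entirely, and your termination argument --- a refining sequence of length $n$ would force $\mathrm{depth}(X)\geq n$, contradicting $\mathrm{depth}=\mathrm{flat\ rank}<n$ from Corollary~\ref{cor:flat_rank} --- is a clean use of the paper's own result where the paper instead runs an induction on flat rank. The price you pay is that your boundary point $\xi_{j+1}$ is fixed only by the not-yet-processed factors rather than by all of $G$, but your bookkeeping correctly shows this is enough, since the recursion only ever needs the remaining factors to descend to the next transverse space (and properness of the iterated transverse spaces is indeed supplied by Proposition~\ref{prop:TransverseSpace}(iii)). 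Finally, the paper simply asserts the algebraic fact that a finitely generated group isomorphic to its own square has no non-trivial finite or abelian quotient; your argument via perfectness of $G^{\mathrm{ab}}$ and the growth of the minimal number of generators of $S^{k}$ supplies a correct proof of this assertion, and your upgrade to the absence of virtually soluble quotients is exactly what the structure argument needs.
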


We recall that every countable group can be embedded in a group $G\cong G\times G$ as above~\cite[Cor.~6]{Meier82}. Moreover, there are torsion-free examples; for instance, following the proof of Proposition~7 in~\cite{Meier82}, consider two copies $U,V$ of the Baumslag--Solitar group $BS(2,3)$:
$$U=\big\langle a,t : t\inv a^2 t = a^3 \big\rangle, \kern10mm V=\big\langle b,s : s\inv b^2 s = b^3 \big\rangle.$$
Then $t$ and $[a, t\inv a t]$ freely generate a free group, as do $[b, s\inv b s]$ and $s$. Define $T$ to be the free product of $U$ and $V$ amalgamated over these free subgroups (with the generators identified in the order given above). Then Meier exhibits a (non-trivial) finitely generated subgroup $G\cong G\times G$ in the full product $T^\NN$. Notice that $T$ is obtained by successive HNN-extensions and amalgamated products, starting with infinite cyclic groups and amalgamating over free subgroups (of rank one and two). Elementary Bass-Serre theory (or classical HNN-theory~\cite[\S5]{Neumann54}) thus implies that $T$ is torsion-free; hence $G$ is torsion-free as well.

\begin{proof}[Proof of Corollary~\ref{cor:GGG}]
It suffices to prove the second statement since the factors of $G^n$ can be suitably re-arranged as $G\times G$. Therefore, we fix some decomposition $G\cong G^n$ and an isometric $G$-action on a proper cocompact \cat space $X$ of flat rank~$< n$. In order to be able to pass to successive transverse spaces, we only retain the weaker information that $X$ is proper of flat rank~$< n$ (which is preserved thanks to Proposition~\ref{prop:ro_it}). Such a space cannot contain a product of $n$ unbounded convex subspaces. Therefore, the splitting theorem of~\cite{Monod_superrigid} (specifically, Corollary~10 therein) implies that either $G$ fixes a point at infinity of $X$, or one of the factors of $G^n$ preserves a bounded subset of $X$, hence fixing a point.  In any case, it follows by induction that one factor $H\cong G$ of $G^n$ fixes a refined point. Applying successively Theorem~\ref{thm:converseAB} and Theorem~\ref{thm:StructureAmenable}, we find that the image of $H$ is $\Isom
 (X)$ is obtained by various extensions of soluble and residually finite groups (appealing to Lemma~\ref{lem:residually:finite} for the residual finiteness). However, a finitely generated group isomorphic to its own square does not have any non-trivial finite or abelian quotient; thus $H$ acts trivially indeed.
\end{proof}

It was pointed out in the introduction, as a consequence of Theorem~\ref{thm:DiscreteAmen}, that the wreath product $\ZZ \wr \ZZ$ cannot be a discrete subgroup of $\Isom(X)$ for any proper cocompact \cat space $X$. The assumption of discreteness is essential in that observation, since the group $\ZZ \wr \ZZ$ does admit faithful actions by automorphisms on regular locally finite trees. This should however be contrasted with the torison-free elementarily amenable group $G=  \Big((\ZZ \wr \ZZ) \wr \ZZ\Big) \wr \ZZ \cdots$, which is  more precisely defined as the increasing union of the groups
$$G_1=\ZZ, \kern10mm G_{n+1}= G_n\wr \ZZ = \Big(\bigoplus_{\ZZ} G_n\Big)\rtimes \ZZ,$$
where $G_n$ is identified with its copy at the coordinate zero in $G_{n+1}$.

\begin{cor}\label{cor:couronne}
For any isometric $G$-action  on any proper cocompact \cat space, there is a subgroup of $G$ isomorphic to $G$ which acts trivially.
\end{cor}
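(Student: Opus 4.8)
The plan is to follow the three-step strategy isolated after Corollary~\ref{cor:GGG}: since $G$ is (elementarily) amenable, any isometric action $\pi\colon G\to\Isom(X)$ has amenable image, so $H=\overline{\pi(G)}$ is an amenable closed subgroup and Theorem~\ref{thm:StructureAmenable} applies. Concretely, $H$ sits in a normal series $1\trianglelefteq C\trianglelefteq M\trianglelefteq H$ with $C$ connected soluble, $M/C$ locally elliptic and $H/M$ (discrete) virtually soluble. I would peel these three layers off one at a time, each time producing a fresh copy of $G$ inside $G$ whose $\pi$-image lands one layer deeper, until a copy lands in the kernel of $\pi$.

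The engine is the self-similarity $G\cong G\wr\ZZ=(\bigoplus_{\ZZ}G)\rtimes\ZZ$, which follows at once from the definition by commuting the direct limit with the restricted wreath construction. I record two consequences, writing $G^{(d)}$ for the $d$-th derived subgroup and $\iota_i\colon G\to\bigoplus_{\ZZ}G$ for the $i$-th coordinate inclusion. First, each $G^{(d)}$ contains a copy of $G$: one builds inductively on the $G_n$ an automorphism $\theta\in\Aut(G)$ inducing $-\id$ on $G^{\mathrm{ab}}\cong\ZZ^{(\NN)}$ (send the top generator to its inverse and reflect the base coordinates $i\mapsto-i$), and then $g\mapsto\iota_0(g)\,\iota_1(\theta(g))$ is a homomorphism embedding $G$ into $\ker\big(\bigoplus_{\ZZ}G\to G^{\mathrm{ab}}\big)=[G,G]$; iterating yields $G\hookrightarrow G^{(d)}$ for every $d$. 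Second, $G$ is locally soluble (every finitely generated subgroup lies in some soluble $G_n$), so every finite quotient of $G$ is soluble; hence any finite-index subgroup $G^*<G$ contains its normal core, which has finite derived length, and therefore contains $G^{(D)}$ for some finite $D$.

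With these facts the two peeling lemmas are clean. \emph{Virtually soluble target:} if $\phi\colon G\to V$ has virtually soluble image, choose a finite-index $G^*<G$ with $\phi(G^*)$ soluble of derived length $\le d$; then $(G^*)^{(d)}\supseteq(G^{(D)})^{(d)}=G^{(D+d)}$ lies in $\ker\phi$ and contains a copy of $G$. \emph{Locally elliptic target:} if $\psi\colon G\to L$ lands in a locally elliptic group, let $D_0,D_1,\dots$ be the base factors of $G\cong G\wr\ZZ$, which pairwise commute and are conjugate by powers of the top generator $s$. For $g,h\in D_0$ I claim $\psi([g,h])=1$: otherwise $\langle\psi(g),\psi(h),\psi(s)\rangle$, being finitely generated in a locally elliptic group, is residually finite (Lemma~\ref{lem:residually:finite}), so some finite quotient $\rho$ detects $[\psi(g),\psi(h)]$; taking $m$ equal to the order of $\rho\psi(s)$ and using $[D_0,D_m]=1$ gives $1=\rho\psi\big([g,\,s^mhs^{-m}]\big)=[\rho\psi(g),\rho\psi(h)]\ne1$ (as $\rho\psi(s)^m=e$), a contradiction. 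Thus $[D_0,D_0]\subseteq\ker\psi$, and $[D_0,D_0]\cong[G,G]$ contains a copy of $G$ by the first structural fact.

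Finally I assemble the three applications: apply the virtually soluble lemma to $G\to H/M$ to obtain $G_1\cong G$ with $\pi(G_1)\subseteq M$; apply the locally elliptic lemma to $G_1\to M/C$ to obtain $G_2\cong G$ inside $G_1$ with $\pi(G_2)\subseteq C$; and apply the virtually soluble lemma once more, now with $C$ genuinely soluble, to $G_2\to C$, obtaining $G_3\cong G$ inside $G_2$ with $\pi(G_3)=1$. Then $G_3$ is the sought trivially acting copy. I expect the main obstacle to be the locally elliptic lemma — the commuting-conjugates argument fed by residual finiteness — together with the structural input that $[G,G]$ already contains a copy of $G$; the virtually soluble steps and the extraction of the normal series from Theorem~\ref{thm:StructureAmenable} are comparatively routine.
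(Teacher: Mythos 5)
Your overall architecture is exactly the paper's: the paper likewise peels off the three layers of Theorem~\ref{thm:StructureAmenable} and reduces everything to the observation that the kernel of any homomorphism from $G$ to a soluble group, or to a locally \{residually finite\} group, contains a copy of $G$. The gap is in your proof of that observation: the ``self-similarity'' $G\cong G\wr\ZZ$ is \emph{false}, and it does not follow by commuting the direct limit with the wreath construction. The two direct systems have the same terms $G_{n+1}=G_n\wr\ZZ$ but different connecting maps: inside $G$, the copy of $G_{n+1}$ sits in $G_{n+2}$ at coordinate zero of the base (so its top generator $t_{n+1}$ becomes a base element one level down), whereas in $\varinjlim(G_n\wr\ZZ)=G\wr\ZZ$ the connecting maps act coordinate-wise on the base and fix the top generator. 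And the limits are genuinely non-isomorphic: $[G,G]$ is perfect, because for $g,h\in G_n$ one has, inside $G_{n+1}$, the identity $\iota_0([g,h])=\bigl[[\iota_0(g),t_{n+1}],\,[\iota_0(h),t_{n+1}^{2}]\bigr]$ (the correction terms sit at base coordinates $1$ and $2$ and cancel), so every $G_n'$ lies in $[G',G']$ and hence $G'=G''$. By contrast $(G\wr\ZZ)'=\{f\in\bigoplus_\ZZ G:\sum_i \bar f(i)=0\ \text{in}\ G^{\mathrm{ab}}\}$ while $(G\wr\ZZ)''\subseteq\bigoplus_\ZZ G'$, so an element such as $\iota_0(t_1)\iota_1(t_1)^{-1}$ witnesses $(G\wr\ZZ)'\neq(G\wr\ZZ)''$; since derived series are preserved by isomorphisms, $G\not\cong G\wr\ZZ$. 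Everything downstream in your argument is built on this isomorphism --- the embedding $G\hookrightarrow\ker\bigl(\bigoplus_\ZZ G\to G^{\mathrm{ab}}\bigr)$, its identification with $[G,G]$, the iteration giving $G\hookrightarrow G^{(d)}$, and the commuting base factors $D_i$ with top generator $s$ in your locally elliptic lemma --- so the proof collapses as written.

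The facts you need are nevertheless true, and much of your proposal survives in ``local'' form. Your automorphism $\theta$ exists and is compatible with the inclusions $G_n\subset G_{n+1}$; your commuting-conjugates argument for locally elliptic targets works verbatim inside each $G_{n+1}=G_n\wr\ZZ$ with $t_{n+1}$ in the role of $s$, giving $G_n'\subseteq\ker\psi$ for every $n$, hence $[G,G]\subseteq\ker\psi$; and since $G^{(d)}=G'$ for all $d\geq1$ (perfectness above), soluble and virtually soluble targets also have kernel containing $[G,G]$. Thus the one missing fact is that $[G,G]$ contains a copy of $G$, and this genuinely cannot be obtained from a single global diagonal embedding: it needs a recursive construction in which the shifting element is re-chosen at each stage. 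For instance, given an injective $\psi_n\colon G_n\to G_M'$, put $\tau=[\iota_0(t_{M+1}),t_{M+2}]=\iota_0(t_{M+1})\iota_1(t_{M+1})^{-1}\in G_{M+2}'$; conjugation by $\tau$ acts on the coordinate-zero copy of $G_{M+1}$ exactly as conjugation by $t_{M+1}$, so the conjugates $\tau^k\psi_n(G_n)\tau^{-k}$ pairwise commute and generate their direct sum, and sending $t_{n+1}\mapsto\tau$ extends $\psi_n$ to an injective $\psi_{n+1}\colon G_{n+1}\to G_{M+2}'$. The union of the $\psi_n$ is the required copy of $G$ in $[G,G]$, and with that fact in hand your three-step peeling does prove the corollary.
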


\begin{proof}
This follows from Theorem~\ref{thm:StructureAmenable} once one observes that the kernel of any homomorphism from $G$ to any soluble group and to any locally \{residually finite\} group contains a subgroup isomorphic to $G$.
\end{proof}

%\begin{proof}[Proof of Corollary~\ref{cor:simple}]
%Let $G$ be a countable torsion-free group without any faithful isometric action on any proper cocompact \cat space (for instance the group of Corollary~\ref{cor:couronne} or the finitely generated example of~\cite{Meier82} mentionned after Corollary~\ref{cor:GGG}). Theorem~A in~\cite{Meier85} shows that $G$ embeds into a simple group $S$ which is obtained as an amalgamated product of $G*F_2$ with $F_2'$, where $F_2$ and $F_2'$ are free groups on two generators. Since any finite-order element of $S$ is conjugated to an element of  $G*F_2$ or $F_2'$, we see that $S$ is torsion-free since $G$ is so.
%\end{proof}

%%%%%%%%%%%%%%%%%%%%%%%%%%%%%%%%%%%%%%%%%%%%%%%%%%%%%%%%%%%%%%%%%
\subsection{Discrete amenable subgroups and soluble groups of finite rank}
%%%%%%%%%%%%%%%%%%%%%%%%%%%%%%%%%%%%%%%%%%%%%%%%%%%%%%%%%%%%%%%%%
In view of Theorem~\ref{thm:StructureAmenable}, discrete amenable subgroups of $\Isom(X)$ are \{locally finite\}-by-\{virtually soluble\}.  The main result of this section is the much more precise statement of Theorem~\ref{thm:DiscreteAmen}. We recall that a group is said be of \textbf{Pr\"ufer rank $r$} if every finitely generated subgroup can be generated by at most $r$ elements, and if $r$ is the smallest such integer.  This notion is sometimes simply called the \textbf{rank} in the literature and we shall do so for brevity's sake. For example, the additive group $\QQ^n$ is of rank $n$. Soluble groups of finite rank have been studied extensively since Mal'cev's  foundational work in the early 1950's.   An overview of their theory, including many far-reaching results can be consulted in the book~\cite{LennoxRobinson}. At this point, we only record the elementary fact that the class of groups of finite rank is closed under group extensions: more precisely, if $G$ is a group 
 with a normal subgroup $N$ such that $N$ and $G/N$ have rank $r$ and $r'$ respectively, then $G$ has rank at most $r+r'$. 

\medskip
The proof of Theorem~\ref{thm:DiscreteAmen} has a structure roughly parallel to that of Theorem~\ref{thm:StructureAmenable}, and will be given at the end of the section. We shall need to study stabilisers of refined flats;  the discreteness assumption on $\Gamma$ is exploited in combination with  some rigidity properties on discrete amenable subgroups of Lie groups. 

\medskip
Our first task is thus to collect those various subsidiary results, notably on Lie groups. Although they  are probably well-known, we nevertheless supply proofs,  by lack of appropriate references. 
The first is a variation on Bieberbach's theorem. 

\begin{prop}\label{prop:Bieberbach}
Let $n \geq 0$. Given a closed subgroup $H $ of the Lie group $\RR^n \rtimes O(n)$, the group of components $H/H^\circ$ is \{free abelian of rank~$\leq n$\}-by-finite.
\end{prop}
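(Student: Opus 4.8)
The plan is to induct on $n$, using throughout the linear part homomorphism $\theta\colon \RR^n\rtimes O(n)\to O(n)$, whose kernel is the translation subgroup $\RR^n$. Since $H$ is closed in a Lie group it is itself a Lie group, so $H^\circ$ is open and $H/H^\circ$ is discrete. Note that ``free abelian of rank $\le n$''-by-finite is the same as virtually free abelian of rank $\le n$ (pass to the normal core of a finite-index free abelian subgroup, which only decreases the rank), and that this class is stable under passing to finite-index subgroups and is unchanged if we quotient $H$ by a connected normal subgroup contained in $H^\circ$. I use these reductions freely, and I induct on $n$.

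First consider the translation subgroup $T=H\cap\RR^n$, a closed subgroup of $\RR^n$, hence $T\cong\RR^a\times\ZZ^b$ with $a+b\le n$. If $a>0$, then $U:=T^\circ\cong\RR^a$ is an $H$-invariant linear subspace contained in $H^\circ$. Quotienting it out identifies $H/U$ with a closed subgroup of $(\RR^n/U)\rtimes O(n)_U\cong\big(\RR^{n-a}\rtimes O(n-a)\big)\times O(a)$ having the same component group as $H$ (because $U$ is connected and lies in $H^\circ$). Projecting away the compact factor $O(a)$ changes the component group only by a finite group: this is precisely where Lemma~\ref{lem:image_conn} is used, guaranteeing that the projection carries the identity component onto the identity component, so that the discrete kernel contribution is finite. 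One then lands in $\RR^{n-a}\rtimes O(n-a)$ and concludes by induction. This disposes of every case in which $H$ contains a positive-dimensional group of translations.

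There remains the case in which $T\cong\ZZ^b$ is a discrete lattice spanning a subspace $W$. Since the orthogonal transformations preserving a lattice form a finite group, the conjugation action of $H$ on $T$ has finite image; after passing to a finite-index subgroup I may therefore assume $\theta(H)$ fixes $W$ pointwise. Put $K=\overline{\theta(H)}\le O(n)$ and split $\RR^n=V_0\oplus V_1$ into the $K$-fixed subspace $V_0=\Fix(K)$ (which contains $W$, hence $T$) and its orthogonal complement $V_1$. Because $K$ acts trivially on $V_0$, the map $\mu\colon H\to V_0$ sending an isometry to the $V_0$-component of its translation part is a genuine group homomorphism, with kernel $H_2=\{h\in H:\text{translation part lies in }V_1\}$. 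The crux of the whole argument is to show that $H_2$ is compact. Its linear parts lie in $K$, which has no nonzero fixed vector on $V_1$; and since $T\se V_0$ we have $H\cap V_1=0$, so $H_2$ contains no nontrivial translation. Viewing $H_2$ as a group of Euclidean isometries of $V_1$, the Adams--Ballmann alternative (Proposition~\ref{prop:AB}) gives it either a fixed point of $V_1$ or a fixed point of $\bd V_1$; an unbounded orbit or a fixed point at infinity would produce, using that $H_2$ is normal in $H$ (as $\mu$ is abelian-valued) and that $K$ moves every nonzero direction of $V_1$, a nonzero $K$-invariant translational drift and hence a pure $V_1$-translation in $H$, contradicting $H\cap V_1=0$. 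Thus $H_2$ has a bounded orbit, fixes a point, and being closed with linear part in the compact group $K$ it is compact. This Bieberbach-type rigidity, in which the closedness of $H$ is essential, is the main obstacle.

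Once $H_2$ is compact the finish is bookkeeping. Then $H_2\cap H^\circ$ is open in $H_2$, hence of finite index, so $H/H^\circ$ is finite-by-$\big(\mu(H)/\mu(H^\circ)\big)$. The quotient $H/H_2\cong\mu(H)$ is an abelian Lie group admitting a continuous injection into the vector group $V_0$, so it has neither torsion nor any nontrivial compact subgroup; consequently its identity component is a subspace $E=\mu(H^\circ)$, and the closedness of $H$ (properness of $\mu$, given that $H_2$ is compact) forces $\mu(H)/E$ to be a discrete, hence free abelian of rank $\le\dim V_0\le n$, subgroup of $V_0/E$ --- again a Bieberbach-type point where closedness is doing the work. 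Combining, $H/H^\circ$ is (free abelian of rank $\le n$)-by-finite, which completes the induction and the proof.
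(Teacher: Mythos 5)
There is a genuine gap: the claim you yourself identify as the crux of Step~2 --- that $H_2=\ker\mu$ is compact --- is false. Take $n=3$, let $g=(e_3,A)$ be the screw motion translating by $e_3$ along the $z$-axis and rotating about it by an angle which is an irrational multiple of $\pi$, and let $s=(0,S)$ where $S$ is the rotation by $\pi$ about the $x$-axis. Since $SAS\inv=A\inv$ and $Ae_3=e_3$, one checks $sgs\inv=g\inv$, so $H=\langle g,s\rangle\cong D_\infty$ is an infinite discrete (hence closed) subgroup of $\RR^3\rtimes O(3)$, with elements $(ke_3,A^k)$ and $(ke_3,A^kS)$, none of which is a nontrivial translation. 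Thus $T=H\cap\RR^3=0$ and you are in your Step~2 with $W=0$. Moreover $K=\overline{\theta(H)}$ contains all rotations about the $z$-axis together with $S$, so $\Fix(K)=0$; hence $V_0=0$, $V_1=\RR^3$, $\mu\equiv 0$ and $H_2=H$, which is not compact. (If you insist on $b\geq 1$, cross this example with a unit translation on an orthogonal $\EE^1$ factor.) The Proposition itself is of course fine here, since $D_\infty$ is virtually $\ZZ$; it is your proof scheme that collapses, because it derives the Proposition from the compactness of $H_2$. The same example refutes the dichotomy you attribute to Adams--Ballmann: $H$ has unbounded orbits yet fixes no point of $\bd\EE^3$, since it preserves the $z$-axis but swaps its two ends. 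Note also that Proposition~\ref{prop:AB} does not assert what you use: the Adams--Ballmann alternative is ``fixed point at infinity \emph{or invariant flat}'', and in Euclidean space the second branch is vacuous because the whole space is a flat.

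The underlying problem is that the splitting $\RR^n=V_0\oplus V_1$ with $V_0=\Fix(K)$ is blind to invariant lines (or subspaces) on which $H$ acts dihedrally, i.e.\ with linear part $-1$: these produce unbounded drift inside $H_2$ without yielding either a pure translation in $H$ or a $K$-fixed vector. Coping with exactly this phenomenon is the content of Bieberbach's theorem, which your sketch neither reproves nor cites; the two-sentence ``drift'' argument gives no mechanism for converting an unbounded orbit of $H_2$ into an actual translation of $H$, and no such mechanism can exist, as the example shows. Compare with the paper's proof: it first disposes of the case where $H$ stabilises a proper Euclidean subspace by induction (your $D_\infty$ preserves the $z$-axis, so it is caught there), and in the minimal case it invokes Di~Scala's transitivity theorem~\cite{DiScala} to split $\EE^n\cong\EE^m\times\EE^{n-m}$ with $H^\circ$ transitive on the first factor and trivial on the second, so that the projection of $H$ to $\Isom(\EE^{n-m})$ is a closed \emph{discrete} subgroup, to which the classical Bieberbach theorem in Oliver's form~\cite{Oliver} applies. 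Some reduction of this kind to the genuinely discrete case, together with the classical theorem (or a complete reproof of it), is what your argument is missing; Steps~1 and~3 are reasonable bookkeeping, but they rest on a false pivot. (Incidentally, even granting compactness of $H_2$, the properness of $\mu$ asserted in your final paragraph would itself require a proof.)
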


Notice that the order of the finite quotient cannot be bounded by a function of $n$, since the circle group $S^1$ contains cyclic subgroups of arbitrarily large order.

\begin{proof}[Proof of Proposition~\ref{prop:Bieberbach}]
Let $\EE^n$ be the Euclidean $n$-space, so that $\Isom(\EE^n) = \RR^n \rtimes O(n)$. We work by induction on $n$, the base case $n=0$ being trivial. 

If $H$ stabilises a proper Euclidean subspace, let $N$ be the (compact) kernel of the $H$-action on that space; thus $H/N$ can be identified with a closed subgroup of $\RR^m \rtimes O(m)$ for $m<n$. By induction, $(H/N)/(H/N)^\circ$ is \{free abelian of rank~$\leq m$\}-by-finite. But this group is a quotient of $H/H^\circ$ by Lemma~\ref{lem:image_conn}. The kernel of this quotient is finite, so that in this case we are done since a finite-by-\{free abelian of rank~$\leq m$\}-by-finite group is \{free abelian of rank~$\leq m$\}-by-finite.

We assume henceforth that $H$ acts minimally on $\EE^n$.  Let $F \subset E_n$ be a minimal $H^\circ$-invariant Euclidean subspace, and set $m = \dim(F)$. The union of the collection of all such subspaces is itself a Euclidean subspace which is  $H$-invariant, and must therefore coincide with $\EE^n$ by minimality. Therefore we obtain an $H$-invariant product decomposition
$$\EE^n  \cong \EE^m \times \EE^{n-m} $$
with the additional property that the $H^\circ$-action on the second factor $\EE^{n-m}$ is trivial. Let $p \colon H \to \Isom(\EE^m)$ and $p' \colon H \to \Isom(\EE^{n-m})$ be the canonical projections. We have seen that $H^\circ \leq  \Ker(p') = H'$.  

Pick a base point $x \in \EE^m$ and set $H_x = \Stab_H(x)$. Thus the intersection $H_x \cap H'$ is compact,  and hence the projection $p'(H_x) \cong H_x / (H_x \cap H')$ is a closed subgroup of  $\Isom(\EE^{n-m})$. 

By~\cite{DiScala}, the neutral component $H^\circ$ acts transitively on its minimal subspace $F \cong \EE^m$. Equivalently $p(H^\circ)$ is transitive on $\EE^m$, and thus $H = H_x \cdot H^\circ$. In particular  $H = H_x \cdot H'$, hence we have $p'(H) \cong H/H' \cong H_x / (H_x \cap H')\cong p'(H_x)$.  Therefore, the projection $p'(H) $  is a closed subgroup of $\Isom(\EE^{n-m})$, which must be discrete since $p'(H^\circ)$ is trivial. In particular $H/H' $  is virtually \{free abelian of rank~$\leq n-m$\} by a version of Bieberbach's theorem, see e.g.\ Theorem~2 in~\cite{Oliver}.

%As $H'$ acts trivially on $\EE^{n-m}$, it identifies to a closed subgroup of $\Isom(\EE^m)$. 
As $H^\circ \leq H'$, the transitivity of  $p(H^\circ)$  on $\EE^m$ also implies $H' = (H_x \cap H') \cdot H^\circ$. It follows that $H'/H^\circ$ is compact, hence finite.  Consequently $H/H^\circ$ is an extension of $H/H'$ by a finite normal subgroup. Thus $H/H^\circ$ is virtually a free abelian group of rank~$\leq n-m$, as desired. 
\end{proof}

%The following fact is well-known; we supply a proof by lack of a appropriate reference.

\begin{prop}\label{prop:Auslander}
Let $G$ be a connected Lie group. Then there is $r = r(G)$ such that every discrete amenable subgroup  $\Gamma < G$ has the following properties:
\begin{enumerate}[(i)]
\item $\Gamma$ is \{torsion-free polycyclic of rank~$\leq r$\}-by-finite. 
%-by-\{finite of order~$\leq r$\}. %finitely generated virtually torsion-free soluble of rank at most $r$. 

\item $\LF(\Gamma)$ is finite and $\Gamma/\LF(\Gamma)$ is \{torsion-free polycyclic of rank~$\leq r$\}-by-\{finite of order~$\leq r$\}.
\end{enumerate}
\end{prop}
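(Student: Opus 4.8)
The plan is to show that such a $\Gamma$ is virtually polycyclic, to bound its Pr\"ufer rank by $\dim G$, and finally to bound the residual finite part after factoring out $\LF(\Gamma)$. First I would reduce to the soluble case through the adjoint representation $\Ad\colon G\to\GL(\g)$, whose kernel is the centre $\centra(G)$. The amenable group $\Ad(\Gamma)$ contains no non-abelian free subgroup, hence is virtually soluble by the Tits alternative~\cite{Tits72}; so some finite-index subgroup $\Gamma_1\le\Gamma$ has soluble image, while $\Gamma_1\cap\centra(G)$ is abelian, whence $\Gamma_1$ is soluble. By the classical theorem of Mostow and Auslander, a discrete soluble subgroup of a connected Lie group is polycyclic (see~\cite{Raghunathan}), so $\Gamma$ is virtually polycyclic. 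In particular $\Gamma$ satisfies the maximal condition, so each of its locally finite subgroups is finite; applied to $\LF(\Gamma)$ this already yields the first assertion of~(ii), and it also provides a torsion-free polycyclic normal subgroup $N\lhd\Gamma$ of finite index.

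Next I would bound the rank. Choosing a maximal compact subgroup $K<G$, the space $G/K$ is diffeomorphic to a Euclidean space of dimension $\dim G-\dim K$, and the torsion-free discrete group $N$ acts on it freely and properly discontinuously, its point stabilisers being finite hence trivial. Therefore $\mathrm{cd}(N)\le\dim G-\dim K$. Since a torsion-free polycyclic group is a duality group whose cohomological dimension equals its Hirsch length, and since a subnormal series with infinite cyclic factors shows that the Pr\"ufer rank is at most the Hirsch length, we obtain $\mathrm{rank}(N)\le\dim G$. Taking $r\ge\dim G$ settles~(i).

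For the finite part in~(ii) I would pass to $\ud\Gamma=\Gamma/\LF(\Gamma)$. As an extension of a locally finite group by a locally finite group is locally finite, we have $\LF(\ud\Gamma)=1$; in particular $\ud\Gamma$ has no non-trivial finite normal subgroup, while the image $\ud N$ remains torsion-free polycyclic, normal and of finite index. Its Fitting subgroup $F$ is then torsion-free nilpotent (any torsion would be finite and characteristic, hence a trivial normal subgroup), of Hirsch length $\le r$, and the absence of a finite normal subgroup forces $F$ to be self-centralising, $\centra_{\ud\Gamma}(F)=\centra(F)\le F$. Thus $\ud\Gamma$ acts on $F^{\mathrm{ab}}$ modulo torsion, a free abelian group of rank $m\le r$, through a homomorphism $\rho\colon\ud\Gamma\to\GL_m(\ZZ)$; because $\centra(F)$ and the kernel of the natural map $\Aut(F)\to\GL_m(\ZZ)$ are both torsion-free, $\ker\rho$ is torsion-free. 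Pulling back along $\rho$ a torsion-free congruence subgroup of $\GL_m(\ZZ)$ (of index at most $|\GL_m(\ZZ/3)|$, by Minkowski's lemma) produces a torsion-free normal subgroup $\ud P\lhd\ud\Gamma$ of index bounded solely in terms of $r$; since $\ud P$ is polycyclic of rank $\le r$, enlarging $r$ if necessary yields~(ii).

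The main obstacle is precisely this last uniform bound on the finite part. Without first dividing out $\LF(\Gamma)$ no such estimate is available, as the Remark following Proposition~\ref{prop:Bieberbach} shows by exhibiting finite cyclic subgroups of unbounded order, so the argument must convert the triviality of the locally elliptic radical into genuine faithfulness. The self-centralising property of the Fitting subgroup is what achieves this, turning $\ud\Gamma$ into an almost-faithful integral linear group to which Minkowski's bound on finite subgroups of $\GL_m(\ZZ)$ applies. Establishing that self-centralising property for polycyclic-by-finite groups with trivial finite radical, and verifying that the congruence subgroup pulls back to a torsion-free subgroup, are the technical points demanding the most care; the reduction to the polycyclic case, by contrast, rests on well-known theorems quoted above.
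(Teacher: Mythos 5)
Your reduction to the virtually polycyclic case (Tits alternative plus Mostow's theorem), the finiteness of $\LF(\Gamma)$ via the maximal condition, and the cohomological bound $\mathrm{cd}(N)\le\dim G-\dim K$ are all correct; the last of these is a nice alternative to the paper's citation of Auslander and Zassenhaus, but note that it only bounds the \emph{Hirsch length} of $N$. The genuine gap is the conversion of that Hirsch length bound into a \emph{Pr\"ufer rank} bound. You justify ``rank $\le$ Hirsch length'' by invoking ``a subnormal series with infinite cyclic factors'', but a torsion-free polycyclic group need not admit any such series: the Hantzsche--Wendt group $\Delta$ (the fundamental group of the flat $3$-manifold with holonomy $(\ZZ/2)^2$) is torsion-free polycyclic of Hirsch length $3$, yet $\Delta^{\mathrm{ab}}\cong(\ZZ/4)^2$ is finite, so $\Delta$ admits no epimorphism onto $\ZZ$ and hence no subnormal series with infinite cyclic top factor. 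Thus ``torsion-free polycyclic $\Rightarrow$ poly-$\ZZ$'' is false, and your derivation of the rank bound collapses with it. The inequality $d(P)\le h(P)$ (minimal number of generators versus Hirsch length) for torsion-free polycyclic $P$ is not a formality --- even in the virtually abelian (Bieberbach) case it is a non-trivial theorem in its own right --- and it is exactly at this point that finiteness results for finite linear groups (Minkowski/Jordan, as in the paper's Proposition~\ref{prop:Bieberbach} and its appeal to Auslander and Zassenhaus) must enter; the series shortcut cannot replace them. Since your part~(ii) imports both $m\le r$ and the final rank bound for $\ud P$ from part~(i), the gap propagates there.

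A second, smaller jump occurs at the end of~(ii): you conclude that $\ud P$ is \emph{polycyclic}, but what you have proved is only that it is torsion-free and virtually polycyclic, which does not imply polycyclic (equivalently, soluble): by Auslander--Kuranishi there are Bieberbach groups with holonomy $A_5$, and these are torsion-free, virtually $\ZZ^n$, and non-soluble. Your construction can be salvaged --- $\rho(\ud P)$ lies in the level-$3$ congruence subgroup, which is residually (finite $3$-group) since it embeds in the pro-$3$ congruence kernel of $\GL_m(\ZZ_3)$; a virtually soluble, residually nilpotent group is soluble, and $\Ker\rho$ is soluble because it is abelian-by-(unipotent) --- but as written this is an unproved assertion rather than bookkeeping. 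Apart from these two points, your route through part~(ii) (self-centralising Fitting subgroup, torsion-freeness of $\Ker\rho$ via unipotence, Minkowski pull-back) is correct and genuinely different from the paper's argument, which instead runs an induction on the rank using a free abelian normal subgroup, the conjugation representation into $\GL_m(\ZZ)$, and the central-extension Proposition~\ref{prop:GenUshakov}.
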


\begin{proof}
Let $\Gamma < G$ be a discrete amenable subgroup. Then $\Gamma$ is virtually soluble by Proposition~\ref{prop:Lie}. By Corollary~8.5 from~\cite{Raghunathan}, any non-empty collection of soluble subgroups of $G$ has a maximal element. Recalling that polycyclic groups are precisely those soluble groups satisfying the ascending chain condition on subgroups (see e.g.~\cite[1.3.1]{LennoxRobinson}), we infer that $\Gamma$ is virtually polycyclic. Consequently it is finitely generated and  virtually \{torsion-free polycyclic\}. The bound on the rank of the polycylic kernel in terms of $G$ follows from~\cite{Auslander1960} (and can be traced back to Zassenhaus~\cite{Zassenhaus}). Thus (i) holds, and the finiteness of $\LF(\Gamma)$  follows.

Let now $\Gamma < G$ be a discrete subgroup which is \{torsion-free polycyclic of rank~$n$\}-by-finite. In order to establish (ii), we proceed by induction on $n$, the case $n=0$ being trivial. Assume now $n>0$. Then $\Gamma$ has a non-trivial free abelian normal subgroup, say $\ZZ^m \cong A  < \Gamma$, with $m \leq n$. The conjugation action of $\Gamma$ on $A$ yields a homomorphism $\pi : \Gamma \to \GL_m(\ZZ)$ whose kernel is $\centra_\Gamma(A) \cdot A$. 

The index of a torsion-free normal subgroup $\pi(\Gamma)$ is bounded above by a constant depending only on $m$. Therefore $\pi(\Gamma)$ is \{torsion-free polycyclic of rank~$\leq r'$\}-by-\{finite of order~$\leq r'$\} for some $r'$ depending only on $n$.

On the other hand, the induction hypothesis applies to the quotient $\Ker(\pi)/A$, which is a central extension of $\Ker(\pi)$. Bearing in mind that a central extension of a locally elliptic group is \{locally elliptic\}-by-\{torsion-free abelian\} (see Proposition~\ref{prop:GenUshakov}), we infer that the group $\Ker(\pi)/\LF(\Ker (\pi))$  is \{torsion-free polycyclic of rank~$\leq r''$\}-by-\{finite of order~$\leq r''$\} for some $r''$ depending only on $n$. Assertion~(ii) follows since $\LF(\Ker (\pi)) < \LF(\Gamma)$. 
\end{proof}

Finally, we record an abstract group theoretic property.

\begin{lem}\label{lem:AbstractRank}
Let $\Gamma$ be a countable group. Assume there are constants $r, M \geq 0$ such that for every finitely generated subgroup $\Lambda < \Gamma$, the radical $\LF(\Lambda)$ is finite and the quotient $\Lambda/\LF(\Lambda)$ is  \{torsion-free polycyclic of rank~$\leq r$\}-by-\{finite of order~$\leq M$\}. 

Then $\Gamma/\LF(\Gamma)$ is \{torsion-free soluble of rank~$\leq r$\}-by-finite.
\end{lem}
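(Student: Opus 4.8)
The plan is to reduce to the case $\LF(\Gamma)=1$ and then manufacture, by a single recurring device, a normal soluble subgroup of finite index which the hypothesis $\LF(\Gamma)=1$ forces to be torsion-free.

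First I would record the reduction. Writing $\bar\Gamma=\Gamma/\LF(\Gamma)$, note that for any subgroup $\Lambda$ the intersection $\Lambda\cap\LF(\Gamma)$ is a normal locally finite subgroup of $\Lambda$, so $\Lambda\cap\LF(\Gamma)\le\LF(\Lambda)$, while any locally finite normal subgroup of $\bar\Lambda$ pulls back to a normal subgroup of $\Lambda$ that is finite-by-locally-finite, hence locally finite. Lifting a finitely generated $\bar\Lambda\le\bar\Gamma$ to a finitely generated $\Lambda\le\Gamma$ and applying the third isomorphism theorem, one checks that $\LF(\bar\Lambda)=\LF(\Lambda)/(\Lambda\cap\LF(\Gamma))$ is finite and $\bar\Lambda/\LF(\bar\Lambda)\cong\Lambda/\LF(\Lambda)$ still lies in the class \{torsion-free polycyclic of rank~$\le r$\}-by-\{finite of order~$\le M$\}; the same computation yields $\LF(\bar\Gamma)=1$. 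Thus $\bar\Gamma$ inherits the hypotheses with trivial radical, and it suffices to assume $\LF(\Gamma)=1$ and prove that $\Gamma$ is \{torsion-free soluble of rank~$\le r$\}-by-finite.

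The device is the following. Fix an exhaustion $\Gamma=\bigcup_n\Lambda_n$ by finitely generated subgroups (possible since $\Gamma$ is countable). For any radical assignment $\Lambda\mapsto\rho(\Lambda)$ that is \emph{characteristic} and \emph{restricts downward} (meaning $\rho(\Lambda')\cap\Lambda\le\rho(\Lambda)$ whenever $\Lambda\le\Lambda'$), the eventual radical
$$\rho_\infty \;=\; \{\, g\in\Gamma : g\in\rho(\Lambda_m)\ \text{for all large}\ m \,\}$$
is a normal subgroup of $\Gamma$. I would apply this twice. First, to $\rho=\LF$: here $\LF_\infty$ is locally finite and normal, hence trivial; and for each $n$ the family $\big(\LF(\Lambda_m)\cap\Lambda_n\big)_{m\ge n}$ is a decreasing chain inside the \emph{finite} group $\LF(\Lambda_n)$, each term of index $\le M$ (its image in $\Lambda_m/\LF(\Lambda_m)$ is a finite subgroup of a group in the above class, so embeds in the order-$\le M$ factor). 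Stabilisation of this chain therefore forces $|\LF(\Lambda_n)|\le M$. From this uniform bound I would deduce that every finite subgroup of $\Gamma$ has order $\le M^2$, and, via a centraliser argument absorbing the finite radical into a soluble normal subgroup, that the soluble radical $R(\Lambda)$ has index $\le C=C(M)$ in every finitely generated $\Lambda$. Applying the device a second time to $\rho=R$ (characteristic, and restricting downward since $R(\Lambda')\cap\Lambda$ is soluble normal in $\Lambda$), Mal'cev's bound gives that soluble subgroups have rank~$\le r$ hence derived length $\le d=d(r)$, so $S:=R_\infty$ is soluble of derived length~$\le d$ and rank~$\le r$, and normal in $\Gamma$. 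For the index, the key point is that a finitely generated group has only finitely many subgroups of each finite index, so the decreasing chain $\big(R(\Lambda_m)\cap\Lambda_n\big)_{m\ge n}$ of index-$\le C$ subgroups of $\Lambda_n$ stabilises; this yields $[\Lambda_n:S\cap\Lambda_n]\le C$ for all $n$, whence $[\Gamma:S]\le C$.

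It remains to show $S$ is virtually torsion-free. Since $\LF(S)$ is characteristic in $S$ and $S\lhd\Gamma$, we get $\LF(S)\le\LF(\Gamma)=1$. One cannot simply quotient by the torsion elements, since in a soluble group of finite rank they need not form a subgroup (the infinite dihedral group already fails this); instead I would invoke the structure theory of soluble groups of finite rank from~\cite{LennoxRobinson}, by which such a group with trivial locally finite radical has a characteristic torsion-free subgroup of finite index. This produces a torsion-free soluble normal subgroup of $\Gamma$ of rank~$\le r$ and finite index, as required. I expect the main obstacle to be precisely the uniform control of the finite radicals $\LF(\Lambda)$: the finite radicals of the finitely generated pieces do not form an increasing family and their normal closures in $\Gamma$ could a priori be large, and it is the identity ``eventual $\LF$-radical $=\LF(\Gamma)=1$'' that tames them.
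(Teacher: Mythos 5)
Your route is genuinely different from the paper's, and its first half is correct: the reduction to $\LF(\Gamma)=1$, the ``eventual radical'' device (which reproduces the paper's key stabilisation step and yields $|\LF(\Lambda_n)|\le M$), the centraliser argument giving a soluble radical $R(\Lambda)$ of index $\le C(M)$ in every finitely generated $\Lambda$, and the stabilisation of the chains $R(\Lambda_m)\cap\Lambda_n$ producing a soluble normal subgroup $S$ with $[\Gamma:S]\le C$. This cleverly avoids the paper's appeal to the restricted Burnside problem (the paper instead passes to the subgroup $H'$ generated by all $N!$-th powers and shows $H/H'$ is locally finite of finite exponent and finite rank, hence finite). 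However, there is a genuine gap at the crucial rank bound.

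The assertion that ``Mal'cev's bound gives that soluble subgroups have rank $\le r$'' is false, and with it your concluding claim that the torsion-free finite-index subgroup of $S$ has rank $\le r$. The hypothesis bounds by $r$ only the rank of the distinguished torsion-free polycyclic normal subgroup of $\Lambda/\LF(\Lambda)$; a soluble, or even torsion-free, subgroup $\Xi<\Lambda$ is merely \{rank~$\le r$\}-by-\{finite of order~$\le M$\}, so the argument only yields a rank bound of roughly $r+\log_2 M$ (already $\LF(\Lambda)$ can be $(\ZZ/2)^k$ with $2^k\le M$ and $k>r$, killing the claim for $S$ itself; and for the torsion-free part one still has the possible $\log_2 M$ loss through the finite quotient). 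This weaker bound would in fact suffice for the application to Theorem~\ref{thm:DiscreteAmen}, where $M$ also depends only on $X$, but it does not prove the lemma as stated. Repairing your argument would require a substantive additional fact --- essentially that a torsion-free polycyclic group containing a normal subgroup of rank $\le r$ and finite index has rank $\le r$, a delicate statement in the spirit of theorems on generation of Bieberbach groups --- which you neither prove nor cite, and which does not follow from the Lennox--Robinson input you invoke (existence of a characteristic torsion-free subgroup of finite index). The paper's $N!$-th power trick is designed precisely to secure the sharp bound: every finitely generated subgroup of $H'=\la h^{N!} : h\in H\ra$ lies inside the subgroup of some finitely generated $\Lambda\le H$ generated by $N!$-th powers, and that subgroup sits inside the canonical torsion-free polycyclic normal subgroup of $\Lambda$ of rank $\le r$; hence rank $\le r$ is inherited on the nose, with torsion-freeness of $H'$ obtained by the same containment.
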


\begin{proof}
We enumerate the elements of $\Gamma$ as $\Gamma = \{\gamma_1, \gamma_2, \dots \}$. For each $n >0$, let $\Lambda_n = \la \gamma_1, \dots, \gamma_n \ra $ and $R_n = \LF(\Lambda_n)$. 

For $m \geq n$, the finite  quotient $R_n / R_m \cap R_n$ embeds in $\Lambda_m / R_m$. Its order is thus bounded above by $M$.  Since $R_n$ is finite, we find a sequence of indices $(m_i)_i$ such that $m_{i+1} > m_i \geq n$ and that the intersection $R_{m_i} \cap R_n$ is constant and has index at most~$M$ in $R_n$. Setting $R'_n = R_{m_i} \cap R_n$ and noticing that $\bigcap_i R_{m_i}$ is  finite group that is normalised by all elements of  $\Gamma$, we infer that 
$R'_n \se \bigcap_i R_{m_i} \se \LF(\Gamma)$. This shows that $[R_n  : R_n \cap \LF(\Gamma) ] \leq M$ for all $n$. It follows that the image of $\Lambda_n$ in the quotient $\Gamma/ \LF(\Gamma)$ is  \{finite of order~$\leq M$\}-by-\{torsion-free polycyclic of rank~$\leq r$\}-by-\{finite of order~$\leq M$\}.

We infer that there is some $N$ such that every finitely generated subgroup of the quotient $H = \Gamma/ \LF(\Gamma)$ is  \{torsion-free polycyclic of rank~$\leq r$\}-by-\{finite of order~$\leq N$\}. In particular $H$  is of finite rank. 
It only remains to show that $H$ is virtually torsion-free.

To this end, let $H'$ be the subgroup of $H$ generated by all $N!$ powers. Thus $H'$ is normal and the quotient $H/H'$ is of finite exponent~$\leq N!$. In particular, for any finitely generated subgroup $\Lambda < H$, the image of $\Lambda $ in $H/H'$ is polycyclic-by-finite of finite exponent, hence finite. Consequently $H/H'$ is locally finite. On the other hand  $H/H'$ is of finite rank since $H$ is so. Remark that a locally finite group of finite rank and of finite exponent must be finite by the solution to the restricted Burnside problem\footnote{It is quite possible that a softer argument could be provided in the present setting by invoking Corollary~2 page~141 of~\cite{Shunkov}.} (see~\cite{Zelmanov_Burnside} and references therein).  We conclude that  $H/H'$ is finite. 

Finally, given $h \in H'$, there is a finitely generated subgroup $\Lambda$ of $H$ such that $h $ belongs to the subgroup $\Lambda'$ of $\Lambda$ generated by all $N!$ powers of elements of $\Lambda$. Since $\Lambda$ is \{torsion-free\}-by-\{finite of order~$\leq N\}$, it follows that $\Lambda'$ is torsion-free. Consequently  $H'$ is torsion-free as well. 
\end{proof}

\begin{proof}[Proof of Theorem~\ref{thm:DiscreteAmen}]
Let $G=\Isom(X)$ and $\Gamma < \Isom(X)$ be a discrete amenable subgroup. We already know from Theorem~\ref{thm:StructureAmenable} that $\Gamma/\LF(\Gamma)$ is virtually soluble. We need to show that it is in fact virtually \{torsion-free soluble of rank~$\leq r$\} for some $r$ depending only on $X$. 

\medskip
We start with a preliminary observation.

Pick a basepoint $x \in X$ and let $R >0$ be large enough so that every $G$-orbit meets the closed ball $B(x, R)$ of radius $R$ around $x$. 
 For each $z \in X$, the stabiliser $G_z$ is compact. Its image in the totally disconnected quotient $G/G^\circ$ is compact as well, and hence contained in some compact open subgroup of $G/G^\circ$ by~\cite[Lemma~3.1]{CapraceTD}. Let $O_z < G$ be the preimage in $G$ of such a compact open subgroup. Thus $O_z$ is open, contains $G_z\cdot G^\circ$, and the quotient $O_z/G^\circ$ is compact.   

We next remark that the union $\bigcup_{z \in B(x, R)} G_z$  is a relatively compact subset of $G$. We can therefore find a finite set $z_1, \dots, z_n$ such that $\bigcup_{z \in B(x, R)} G_z \subset \bigcup_{i=1}^n O_{z_i}$. Let $O = \bigcap_{i=1}^n O_{z_i}$. Thus $O$ is open, hence contains $G^\circ$, and the quotient $O/G^\circ$ is compact. 

We claim that there is a uniform bound $N$ such that for all $z \in  B(x, R)$, we have $[ G_z G^\circ  : G_z G^\circ \cap O] \leq N$. The fact that this index is finite is clear since $O$ is open. Since $G_zG^\circ \subset \bigcup_{i=1}^n O_{z_i}$, it follows that $ G_z G^\circ $ is covered by left cosets of $O$ in the various groups $O_{z_i}$. Therefore we obtain
$$[G_z G^\circ  : G_z G^\circ \cap O]  \leq \sum_{i=1}^n \;  {[O_{z_i} : O]} .$$
The claim follows by setting $N =  \sum_{i=1}^n \, { [O_{z_i} : O] }$.

\medskip
We now return to the discrete amenable subgroup $\Gamma < G$. 
By Proposition~\ref{prop:AB}, there is a $\Gamma$-invariant refined flat $F \subset X_{\xi_1, \dots, \xi_k}$ of depth~$k \geq 0$. 

Let $f\colon \RR^{k}\times X_{\xi_1\ldots, \xi_k}\to X$ and $\ro' \colon  G_{\xi_1, \ldots, \xi_k} \to G$ be the maps provided by Proposition~\ref{prop:ro_it}. By assumption $\Gamma$ stabilises the flat $\RR^k \times F \se  \RR^{k}\times X_{\xi_1\ldots, \xi_k}$. Since $f$ is $\ro'$-equivariant, it follows that $\ro'(\Gamma)$ stabilises the flat $F' = f(\RR^k \times F) \se X$.  

\medskip
By cocompactness, there is some $h \in G$ such that $h(F')$ meets the ball $B(x, R)$. We set $\ro = \Ad h \circ \ro'$, so that $\ro(\Gamma)$ stabilises $h(F')$. 

Let $H = \overline{\ro(\Gamma) }$. Denote by $K < H$ the pointwise stabiliser of $h(F')$, which is thus compact and normal in $H$. The quotient $H/K$   embeds as a closed subgroup of the Lie group $\Isom(h(F'))$. Since $X$ is cocompact, the dimension of $F'$ is bounded above by a constant depending only on $X$.  By Lemma~\ref{lem:quot-Lie} (or Lemma~\ref{lem:image_conn}), the neutral component $H^\circ$ maps onto the neutral component of $H/K$. Consequently  Proposition~\ref{prop:Bieberbach} ensures that  the quotient $H/(H^\circ K)$ is \{free abelian of rank~$\leq r'$\}-by-finite, where $r'=r'(X)$ depends only on $X$. Let $\Gamma' = \ro\inv( H^\circ K )$. Thus $\Gamma/\Gamma'$ is \{free abelian of rank~$\leq r'$\}-by-finite.

By construction $K$ fixes some point $z \in B(x, R)$. Therefore we have $\ro(\Gamma') \leq H^\circ K \leq G^\circ G_z$. We finally set $\Gamma'' = \ro\inv(O)$. Thus $[\Gamma' : \Gamma''] \leq [G^\circ G_z : G^\circ G_z \cap O] \leq N$ be the claim above.

Let $\Lambda < \Gamma''$ be a finitely generated subgroup. Thus $\ro(\Lambda) < O$. Since $\Lambda$ is finitely generated and $O$ is open, it follows that the conjugate $\Lambda_n = hg_n \Lambda g_n\inv h\inv$ is contained in $O$ for all sufficiently large $n$. 

By Theorem~\ref{thm:Yamabis}, the radical $\LF(O)$ is compact and the group $O/\LF(O)$ is an almost connected Lie group. Moreover, the number of its connected components depends only on $X$. 
Notice moreover that $\Lambda_n \cap \LF(O)$ is a  finite normal subgroup of $\Lambda_n$,  thus contained in $\LF(\Lambda_n)$. 
By applying Proposition~\ref{prop:Auslander} to the quotient $\Lambda_n / \Lambda_n \cap \LF(O)$, we deduce  that   $\LF(\Lambda_n)$ is finite and that $\Lambda_n/\LF(\Lambda_n)$ is \{torsion-free polycyclic of rank~$\leq r''$\}-by-\{finite of order~$\leq r''$\}, where  $r''$ depends only on $X$. The same properties therefore hold for $\Lambda$, since it is conjugate to $\Lambda_n$. 

Recalling that $[\Gamma' : \Gamma''] \leq N$, we infer that for every finitely generated subgroup $\Lambda \leq \Gamma'$, the radical $\LF(\Lambda)$ is finitely generated and the quotient $\Lambda / \LF(\Lambda)$ is  \{torsion-free polycyclic of rank~$\leq r''$\}-by-\{finite of order~$\leq N r''$\}. We are thus in a position to apply Lemma~\ref{lem:AbstractRank} to $\Gamma'$, which ensures that $\Gamma'$ is \{torsion-free soluble of rank~$\leq r''$\}-by-finite.

Recall further that $\Gamma/\Gamma'$ is  \{free abelian of rank~$\leq r'$\}-by-finite. Therefore $\Gamma$ is \{torsion-free soluble of rank~$\leq r$\}-by-finite, where $r = r' + r''$ depends only on $X$. This concludes the proof. 
\end{proof}

%%%%%%%%%%%%%%%%%%%%%%%%%%%%%%%%%%%%%%%%%%%%%%%%%%%%%%%%%%%%%%%
\subsection{An easy comment on nilpotent groups}
%%%%%%%%%%%%%%%%%%%%%%%%%%%%%%%%%%%%%%%%%%%%%%%%%%%%%%%%%%%%%%%
For the record, we indicate a variation on the Adams--Ballmann theorem~\cite{AB98} that is apparently not available in the literature, giving a stronger conclusion in the special case of nilpotent groups. This easy comment is not needed for the results of this paper.

\begin{prop}\label{prop:AB:nilpotent}
Let $G$ be a nilpotent group acting by isometries on a proper \cat space $X$. Then either the $G$ preserves a flat or it fixes a point $\xi \in \bd X$ and annihilates the Busemann character $\beta_\xi$.
\end{prop}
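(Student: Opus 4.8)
The plan is to invoke the Adams--Ballmann theorem and then control the fixed-point-at-infinity case by exploiting the centre of $G$. Since a nilpotent group is amenable, Adams--Ballmann gives that $G$ either preserves a flat, in which case we are done, or fixes a point $\xi\in\bd X$; assume the latter. The Busemann character $\beta_\xi\colon G\to\RR$ is a homomorphism into an abelian group, so it annihilates $[G,G]$. What remains to prove is that, \emph{if $G$ does not preserve a flat}, then $\beta_\xi$ vanishes on all of $G$. So I would suppose, towards a contradiction, that $\beta_\xi(g_0)\neq 0$ for some $g_0\in G$ and manufacture a $G$-invariant flat.

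The engine I would use is a central element together with the splitting of its minimal set. Pick $z$ in the non-trivial centre $Z(G)$. If $z$ is semisimple and non-trivial, i.e.\ hyperbolic, then its minimal set is non-empty and splits as a metric product $\mathrm{Min}(z)\cong Y\times\RR$ \cite[II.6.8]{Bridson-Haefliger}, with $z$ acting by a translation of the $\RR$-factor and trivially on $Y$. Because $z$ is central, $G$ preserves $\mathrm{Min}(z)$ and this splitting, hence acts by isometries on the proper \cat space $Y$; I would then run the argument inductively on the $G$-action on $Y$, reassembling a flat of $Y$ with the $\RR$-factor into a flat of $X$, and lifting a Busemann-trivial fixed point of $\bd Y$ to one of $\bd X$ (in a product the Busemann character is the sum of the factor contributions). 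If instead $z$ is elliptic, I would pass to the non-empty $G$-invariant closed convex subspace $\Fix(z)$ and recurse there. The first subtlety is to set up the induction so that it is well-founded: each reduction kills the central element $z$ from the acting group, and one has to argue that this genuinely simplifies $G$ (e.g.\ via its nilpotency class or an invariant of its image) and that the recursion terminates even though $X$ need not be finite-dimensional.

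The main obstacle, however, is the case in which the relevant central elements are \emph{parabolic}, so that no minimal set is available to split off a Euclidean direction. This is precisely where the standing assumption $\beta_\xi(g_0)\neq 0$ has to be used: iterating $g_0\inv$ pushes the orbit away from $\xi$, and by properness of $X$ the sequence $g_0^{-n}x_0$ subconverges to a boundary point $\xi'\in\Op(\xi)$. The delicate point, which I expect to be the heart of the proof, is to show that such a limit point can be taken $G$-fixed and that the associated Busemann character $\beta_{\xi'}$ is strictly smaller than $\beta_\xi$ on $G$ (for instance that it vanishes on a strictly larger subgroup, or has strictly smaller image in $\RR$); replacing $\xi$ by $\xi'$ and iterating would then terminate at a fixed point with vanishing Busemann character, unless a flat has emerged along the way. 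Taming this non-semisimple behaviour, rather than the bookkeeping of the central splitting, is where the real work lies.
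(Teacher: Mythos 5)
Your reduction to the parabolic case is honest, but that case is exactly where the argument stops: you have no mechanism for producing either a flat or a fixed point with vanishing Busemann character when the relevant central elements are parabolic, and the sketch you offer (letting $g_0^{-n}x_0$ subconverge to some $\xi'\in\Op(\xi)$ and hoping $\xi'$ can be taken $G$-fixed with ``strictly smaller'' Busemann character) is not a proof --- nothing forces such a limit point to be $G$-fixed, and there is no well-ordering on Busemann characters that would make the proposed iteration terminate. Separately, your induction scheme (killing one central element at a time by passing to $\mathrm{Min}(z)$ or $\Fix(z)$) has no termination guarantee, e.g.\ when the centre is infinitely generated. So the proposal has a genuine gap, in fact two, and you have correctly located but not filled them.

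The paper's proof dissolves both difficulties with one idea your proposal is missing: do not case-split on the type (elliptic/hyperbolic/parabolic) of individual central elements, but consider the \emph{almost-minimal displacement sets}
\[
X_{F,\epsilon}=\big\{x\in X : d(zx,x)\le |z|+\epsilon\ \ \forall z\in F\big\},
\]
for $F\se\centra(G)$ finite and $\epsilon>0$, where $|z|$ is the infimal displacement. Each such set is non-empty, closed, convex, and $G$-invariant (invariance uses only centrality), and they form a decreasing net. If the net has non-empty intersection $Y$, then every central element attains its infimal displacement at every point of $Y$, hence acts as a Clifford translation there, and \cite[II.6.15]{Bridson-Haefliger} splits $Y\cong\RR^n\times Y_0$ with $\centra(G)$ translating the flat factor: the centre preserves a flat, with no case distinction whatsoever. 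If the intersection is empty, the projections of a basepoint onto these receding $G$-invariant convex sets accumulate at a point of $\bd X$ that carries a $G$-invariant Busemann function, i.e.\ a $G$-fixed point $\xi$ with $\beta_\xi\equiv 0$ on $G$. Thus parabolicity of central elements is not an obstacle: it forces the empty-intersection case, which directly yields the conclusion you were struggling to reach. Note also that this relative statement holds for \emph{arbitrary} groups (only the centre matters) and uses no amenability and no appeal to Adams--Ballmann, whereas your route starts from Adams--Ballmann and then cannot upgrade the fixed point it provides. The nilpotent case then follows by a clean induction on the nilpotency class rather than element by element: if $\centra(G)$ preserves a flat $F$ (taken minimal), the union of all such minimal flats splits $G$-equivariantly as $F\times C$ with a canonical $G/\centra(G)$-action on $C$ (Theorem~4.3 in \cite{Caprace-Monod_structure}); applying the induction hypothesis to $G/\centra(G)$ acting on $C$, either outcome (a flat $E\se C$, or a fixed point of $\bd C\se\bd X$ with vanishing Busemann character) assembles back to the desired conclusion for $G$ acting on $X$, since $\centra(G)$ acts trivially on $C$.
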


This will be proved inductively, using the following relative statement for general groups.

\begin{prop}\label{prop:AB:centre}
Let $G$ be a group acting by isometries on a proper \cat space $X$. Then either the centre $\centra(G)$ preserves a flat or $G$ fixes a point $\xi \in \bd X$ and annihilates the Busemann character $\beta_\xi$.
\end{prop}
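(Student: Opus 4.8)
The plan is to leverage one simple but decisive observation: the centre $\centra(G)$ of \emph{any} group is abelian, hence amenable. Thus, even though $G$ is completely arbitrary, its centre is subject to the Adams--Ballmann theorem~\cite{AB98}. I would use the basic, non-iterated form of that result (valid on every proper \cat space, with no finiteness hypothesis, unlike the refined Proposition~\ref{prop:AB}): $\centra(G)$ either stabilises a flat of $X$ or fixes a point of $\bd X$. First I would clear away the trivial reductions. If some $G$-orbit is bounded, then $G$ fixes a point of $X$ by the Bruhat--Tits fixed point theorem, so $\centra(G)$ fixes that point and preserves a flat of dimension $0$; hence I may assume every orbit is unbounded, so that in particular $\bd X\neq\emptyset$.

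If $\centra(G)$ stabilises a flat we are in the first alternative and done, so assume $\centra(G)$ fixes some $\xi\in\bd X$ but stabilises no flat. The two features of $\centra(G)$ I would exploit to pass from the centre back to $G$ are its \emph{normality} and its \emph{centrality}. Normality gives that the Tits-closed, Tits-convex set $\Fix_{\bd X}(\centra(G))$ is $G$-invariant: for $z\in\centra(G)$ and $g\in G$ one has $z\cdot g\xi=g\,(g\inv z g)\xi=g\xi$. Centrality gives that the assignment $\eta\mapsto\beta_\eta|_{\centra(G)}$ is constant on $G$-orbits inside this fixed set, since $\beta_{g\eta}(z)=\beta_\eta(g\inv z g)=\beta_\eta(z)$. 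The task is then to locate, inside $\Fix_{\bd X}(\centra(G))$, a point that is fixed by all of $G$ \emph{and} on which the Busemann character of $G$ vanishes.

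For the first requirement I would take a canonical circumcenter at infinity of a $G$-orbit closure in $\Fix_{\bd X}(\centra(G))$: when the Tits-radius is $<\pi/2$ the circumcenter is unique, hence canonical and $G$-fixed; otherwise one is in the regime where the Adams--Ballmann machinery builds a $\centra(G)$-invariant Euclidean direction $\RR\times Y$, and I would descend into the orthogonal factor $Y$ and repeat. The genuinely delicate second requirement is to make the Busemann character vanish, and this is where the main difficulty lies — the difficulty is real, not an artefact of the method. On $X=\HH^2\times\RR$ the isometry $z=(p,\tau)$ given by a parabolic $p$ on $\HH^2$ and a non-trivial translation $\tau$ on $\RR$ has $\beta_\eta(z)\neq 0$ for $\eta$ the endpoint of the $\RR$-factor, yet it \emph{does} fix the parabolic point of $p$, there with vanishing Busemann character. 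So the correct point to select is not the circumcenter but the ``parabolic direction'' in which the drift of $\centra(G)$ dies; I would isolate it by restricting to $\ker(\beta_\eta|_G)$ and iterating the construction along the directions that still carry a non-trivial Busemann character, aiming to reach in finitely many steps either a $G$-fixed point annihilating $\beta$ (the second alternative) or a genuine $\centra(G)$-invariant flat (the first).

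The hard part, and the step I expect to absorb essentially all of the work, is this last localisation together with the termination of the attendant descent. One cannot shortcut it by passing to a minimal set of a central element: a central isometry with non-zero Busemann character need not be semisimple (the example above has empty minimal set), so $\mathrm{Min}$-type arguments are unavailable. The iteration terminates transparently when $X$ is finite-dimensional, since the dimension of $\bd X$ drops at each descent (compare Corollary~\ref{cor:dim_chute}), while the general proper case requires an extra boundedness input to guarantee termination. By contrast, the amenability of the centre, the normality/centrality propagation, and the circumcenter construction are the routine ingredients.
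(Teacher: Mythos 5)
Your reduction via the Adams--Ballmann theorem is fine as far as it goes, but the proposal stops exactly where the real proof has to begin, and you say so yourself: the passage from ``$\centra(G)$ fixes some $\eta\in\bd X$'' to ``$G$ fixes a point $\xi$ \emph{and} annihilates $\beta_\xi$'' is the step you expect ``to absorb essentially all of the work,'' and it is left undone. Concretely, three things are missing. First, a circumcentre of a $G$-orbit in $\Fix_{\bd X}(\centra(G))$, when it exists, is $G$-fixed but carries no control whatsoever on the Busemann character of $G$ at that point; nothing in your sketch makes $\beta$ vanish there. Second, restricting to $\ker(\beta_\eta|_G)$ discards precisely the elements of $G$ on which $\beta_\eta$ is non-zero, yet the conclusion must hold for \emph{all} of $G$, so the iteration is not even set up on the right group. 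Third, the proposition carries no cocompactness or finite-dimensionality hypothesis, so the termination of your descent has no justification: the dimension-drop you invoke (compare Corollary~\ref{cor:dim_chute}) requires a radial point and a finite-dimensional boundary, neither of which is available in a general proper \cat space, and you explicitly concede that an ``extra boundedness input'' would be needed. This is an outline with an acknowledged hole, not a proof.

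The paper's own argument shows why the dead end is avoidable: it never invokes Adams--Ballmann and never fixes the point at infinity first. For finite $F\se\centra(G)$ and $\epsilon>0$ it considers $X_{F,\epsilon}=\{x\in X : d(zx,x)\le |z|+\epsilon\ \forall z\in F\}$, where $|z|$ is the infimal displacement. Centrality makes each displacement function $x\mapsto d(zx,x)$ with $z\in\centra(G)$ a $G$-invariant convex function, so each $X_{F,\epsilon}$ is a non-empty, closed, convex, $G$-invariant set. If the monotone net of these sets has non-empty intersection $Y$, then central elements have constant displacement on $Y$, hence act as Clifford translations, and Theorem~II.6.15 of~\cite{Bridson-Haefliger} splits $Y\cong\RR^n\times Y_0$ with $\centra(G)$ translating $\RR^n$ and acting trivially on $Y_0$: the first alternative. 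If the intersection is empty, the projections of a basepoint to the sets $X_{F,\epsilon}$ escape to infinity, and any accumulation point yields a Busemann function that is $G$-invariant \emph{as a function}, since it arises as a limit associated with $G$-invariant convex sets. Invariance of the horofunction itself, rather than merely of its endpoint $\xi$, asserts simultaneously that $G$ fixes $\xi$ and that $\beta_\xi$ vanishes on $G$ --- so the step your route cannot reach comes for free. This also explains your $\HH^2\times\RR$ example: for a central element that is parabolic times a translation, the sets $X_{F,\epsilon}$ drift toward the parabolic fixed point, so the construction selects the direction in which the character dies automatically, with no need to locate it by hand.
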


\begin{proof}[Proof of Proposition~\ref{prop:AB:centre}]
For any finite subset $F\se \centra(G)$ and any $\epsilon>0$ we define
$$X_{F, \epsilon} = \big\{x\in X : d(z x, x) \leq |z| + \epsilon \ \forall\,z\in F\big\},$$
where $|z|$ denotes the infimal displacement length of $z$. By centrality, this closed convex subset of $X$ is non-empty and $G$-invariant. Letting $F$ grow and $\epsilon$ shrink, we obtain a net of subsets, monotone decreasing with respect to inclusion. Assume first that the intersection $Y$ of all $X_{F, \epsilon}$ is non-empty. Since all elements of $\centra(G)$ have constant displacement on $Y$, there is a decomposition $Y\cong \RR^n \times Y_0$ (with $n\geq 0$) such that every element of $\centra(G)$ acts by (possibly trivial) translations on $\RR^n$ and trivially on $Y_0$, see Theorem~II.6.15 in~\cite{Bridson-Haefliger}. Thus $\centra(G)$ preserves indeed a flat.

We assume henceforth that the net $X_{F, \epsilon}$ has empty intersection. This implies that the distance from a base-point $p$ to $X_{F, \epsilon}$ tends to infinity. Denote by $p_{F, \epsilon}$ the projection of $p$ to $X_{F, \epsilon}$. Any accumulation point $\xi\in \bd X$ of the net $p_{F, \epsilon}$ will correspond to a $G$-invariant Busemann function since $X_{F, \epsilon}$ is $G$-invariant, finishing the proof. (Compare Proposition~2.1(2) in~\cite{AB98}.)
\end{proof}

\begin{proof}[Proof of Proposition~\ref{prop:AB:nilpotent}]
We argue by induction on the nilpotency class of $G$; the base case is when $G$ is the trivial group. For the inductive step, we apply Proposition~\ref{prop:AB:centre} and need only consider the case where $\centra(G)$ preserves a flat $F$ in $X$. Upon passing to a subflat of minimal dimension, we see that $F$ is minimal as (non-empty) convex $\centra(G)$-invariant subset. The union $U\se X$ of all such flats splits canonically as $U\cong F\times C$ for some proper \cat space $C$ endowed with a canonical $G/\centra(G)$-action, see Theorem~4.3 in~\cite{Caprace-Monod_structure}. We now apply the induction hypothesis to this $G/\centra(G)$-action on $C$. If $G/\centra(G)$ preserves a flat $E\se C$, then $G$ preserves the flat $F\times E \se X$ and we are done. If on the other hand $G/\centra(G)$ fixes a point $\xi \in \bd C \se \bd X$ and annihilates the corresponding Busemann character, then we are also done since $\centra(G)$ acts trivially on $C$.
\end{proof}

%%%%%%%%%%%%%%%%%%%%%%%%%%%%%%%%%%%%%%%%%%%%%%%%%%%%%%%%%%%%%%%%%%%%%%%%%%%%%%%%%%%%%%%%
%%%%%%%%%%%%%%%%%%%%%%%%%%%%%%%%%%%%%%%%%%%%%%%%%%%%%%%%%%%%%%%%%%%%%%%%%%%%%%%%%%%%%%%%

%{\small
%\bibliographystyle{amsalpha}
%\bibliography{../IsomCAT0}
%}
\providecommand{\bysame}{\leavevmode\hbox to3em{\hrulefill}\thinspace}
\providecommand{\MR}{\relax\ifhmode\unskip\space\fi MR }
% \MRhref is called by the amsart/book/proc definition of \MR.
\providecommand{\MRhref}[2]{%
  \href{http://www.ams.org/mathscinet-getitem?mr=#1}{#2}
}
\providecommand{\href}[2]{#2}

\end{document}